\newtheorem{theorem}{Theorem}[section]
\newtheorem{corollary}[theorem]{Corollary}
\newtheorem{lemma}[theorem]{Lemma}
\newtheorem{prop}[theorem]{Proposition}
\newtheorem{conjecture}[theorem]{Conjecture}
\theoremstyle{definition}
\newtheorem{definition}[theorem]{Definition}
\newtheorem{example}[theorem]{Example}
\newtheorem{remark}[theorem]{Remark}
\newtheorem{question}[theorem]{Question}
\newtheorem*{ack}{Acknowledgments}
\newcommand{\N}{\mathbb{N}}
\newcommand{\Z}{\mathbb{Z}}
\newcommand{\Q}{\mathbb{Q}}
\newcommand{\C}{\mathbb{C}}
\renewcommand{\L}{\mathbb{L}}
\newcommand{\TT}{\mathbb{T}}
\newcommand{\PP}{\mathbb{P}}
\renewcommand{\k}{\Bbbk}
\newcommand{\RR}{{\mathcal R}}
\newcommand{\VV}{{\mathcal V}}
\newcommand{\A}{{\mathcal{A}}}
\newcommand{\B}{{\mathfrak{B}}}
\newcommand{\g}{{\mathfrak{g}}}
\newcommand{\h}{{\mathfrak{h}}}
\newcommand{\gl}{{\mathfrak{gl}}}
\renewcommand{\sl}{{\mathfrak{sl}}}
\newcommand{\m}{{\mathfrak{m}}}
\renewcommand{\ss}{{\mathfrak{s}}}
\newcommand{\dd}{{\mathfrak{d}}}
\newcommand{\G}{{\Gamma}}
\newcommand{\T}{{\mathcal{T}}}
\newcommand{\wT}{\widetilde{\T}}
\newcommand{\wF}{{\widetilde{F}}}
\newcommand{\wJ}{{\widetilde{J}}}
\newcommand{\PS}{{{\rm P}\Sigma}}
\DeclareMathOperator{\rank}{rank}
\DeclareMathOperator{\gr}{gr}
\DeclareMathOperator{\im}{im}
\DeclareMathOperator{\coker}{coker}
\DeclareMathOperator{\id}{id}
\DeclareMathOperator{\ab}{{ab}}
\DeclareMathOperator{\Sym}{Sym}
\DeclareMathOperator{\ch}{char}
\DeclareMathOperator{\GL}{GL}
\DeclareMathOperator{\SL}{SL}
\DeclareMathOperator{\Sp}{Sp}
\DeclareMathOperator{\Hom}{{Hom}}
\DeclareMathOperator{\ann}{{ann}}
\DeclareMathOperator{\ev}{ev}
\DeclareMathOperator{\Out}{Out}
\DeclareMathOperator{\Inn}{Inn}
\DeclareMathOperator{\Aut}{Aut}
\DeclareMathOperator{\OA}{OA}
\DeclareMathOperator{\IA}{IA}
\DeclareMathOperator{\weight}{weight}
\DeclareMathOperator{\Der}{{Der}}
\DeclareMathOperator{\ad}{ad}
\DeclareMathOperator{\Ad}{Ad}
\DeclareMathOperator{\Lie}{Lie}
\DeclareMathOperator{\TC}{TC}
\DeclareMathOperator{\supp}{supp}
\DeclareMathOperator{\specm}{Specm}
\DeclareMathOperator{\height}{height}
\newcommand{\same}{\Longleftrightarrow}
\newcommand{\surj}{\twoheadrightarrow}
\newcommand{\inj}{\hookrightarrow}
\newcommand{\isom}{\xrightarrow{\,\simeq\,}}
\newcommand{\abs}[1]{\left| #1 \right|}
\newcommand{\cv}{\check{{\VV}}}
    \renewcommand{\@listi}
    {\setlength{\leftmargin}{\leftmargini}
    \setlength{\topsep} {2pt}
     \setlength{\parsep} {\parskip}
     \setlength{\itemsep}{2.0pt}}}
\numberwithin{equation}{section} 
\begin{document}


\title[Homological finiteness in the Johnson filtration]{%
Homological finiteness in the Johnson filtration of \\ 
the automorphism group of a free group}

\author[Stefan Papadima]{Stefan Papadima$^1$}
\address{Simion Stoilow Institute of Mathematics, 
P.O. Box 1-764,
RO-014700 Bucharest, Romania}
\email{Stefan.Papadima@imar.ro}
\thanks{$^1$Partially supported by 
PN-II-ID-PCE-2011-3-0288, grant 132/05.10.2011}

\author[Alexander~I.~Suciu]{Alexander~I.~Suciu$^2$}
\address{Department of Mathematics,
Northeastern University,
Boston, MA 02115, USA}
\email{a.suciu@neu.edu}
\thanks{$^2$Partially supported by NSA grant H98230-09-1-0021 
and NSF grant DMS--1010298}

\subjclass[2010]{Primary
20E36, 20J05. 
Secondary
20F14, 20G05, 55N25.
}

\keywords{Automorphism group of free group, Torelli group, 
Johnson filtration, Johnson homomorphism, resonance variety, 
characteristic variety, Alexander invariant.}

\begin{abstract}
We examine the Johnson filtration of the (outer) automorphism group 
of a finitely generated group. In the case of a free group, we find a 
surprising result: the first Betti number of the second subgroup in 
the Johnson filtration is finite. Moreover, the corresponding 
Alexander invariant is a module with non-trivial action over the 
Laurent polynomial ring. In the process, we show that the first 
resonance variety of the outer Torelli group of a free group 
is trivial.  We also establish a general relationship between 
the Alexander invariant and its infinitesimal counterpart.  
\end{abstract}

\maketitle
\tableofcontents

\newpage
\section{Introduction}
\label{sect:intro}

\subsection{Overview}
\label{subsec:intro setup}

Let $F_n$ be the free group of rank $n$, and let $J^s_n$ 
be the $s$-th term of the Andreadakis--Johnson filtration 
on the automorphism group $\Aut(F_n)$. In a recent paper, 
F.~Cohen, A.~Heap, and A.~Pettet formulated the following 
conjecture. 

\begin{conjecture}[\cite{CHP}]
\label{conj:chp}
If $n\ge 3$, $s\ge 2$, and $1\le i\le n-2$, the cohomology group 
$H^i(J^s_n,\Z)$ is not finitely generated.
\end{conjecture}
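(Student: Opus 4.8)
The plan is to attempt Conjecture~\ref{conj:chp} by carrying the Cohen--Heap--Pettet method for the bottom term $J^1_n=\IA_n$ down into the Johnson filtration, treating first the extreme case $i=1$, where the assertion is purely about rank: since $H^1(J^s_n,\Z)=\Hom(J^s_n,\Z)$, it suffices to show $b_1(J^s_n)=\infty$. The pivotal structural input is that, by the Andreadakis--Bachmuth computation of $\IA_n^{\ab}$, the second Johnson subgroup is already the commutator subgroup, $J^2_n=[\IA_n,\IA_n]$; hence $(J^2_n)^{\ab}$ is precisely the Alexander invariant $B(\IA_n)=\IA'_n/\IA''_n$, viewed as a module over the Laurent polynomial ring $\Z[\IA_n^{\ab}]=\Z[\Z^N]$ with $N=n^2(n-1)/2$. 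Thus the case $i=1$, $s=2$ is equivalent to the statement that this Alexander module has infinite $\Q$-dimension. For larger $s$ one would iterate through the five-term exact sequence of $1\to J^{s+1}_n\to J^s_n\to J^s_n/J^{s+1}_n\to 1$, reducing finiteness of $b_1(J^s_n)$ to finiteness of the $(J^s_n/J^{s+1}_n)$-coinvariants of $H_1(J^{s+1}_n)$; and for $i\ge 2$ one would look for infinitely many linearly independent cup products of degree-one classes, as in the $\IA_n$ case.

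The next step would be to bring in characteristic and resonance varieties. Since $\IA_n$ is finitely generated and $\IA_n/J^2_n$ is, the module $B(\IA_n)\otimes\C$ is finitely generated over $R=\C[\Z^N]$, so it has infinite $\C$-dimension exactly when its support in the character torus $(\C^*)^N$ is infinite, that is, when the first characteristic variety $\VV_1(\IA_n)$ is positive-dimensional. To produce such a component I would invoke the tangent cone theorem: the tangent cone to $\VV_1(\IA_n)$ at the origin is controlled by the first resonance variety $\RR_1(\IA_n)\subseteq\C^N$, the degeneracy locus of the Aomoto complex attached to the cup product $\wedge^2\IA_n^{\ab}\to H^2(\IA_n;\C)$ --- equivalently, to the quadratic relations of the holonomy Lie algebra $\h$ of $\IA_n$. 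Exhibiting a positive-dimensional component of $\RR_1(\IA_n)$, or of the resonance variety of the outer Torelli group $\IA_n/\Inn(F_n)$, would then force $\VV_1(\IA_n)$ to be infinite, hence $B(\IA_n)\otimes\C$ infinite-dimensional, hence $b_1(J^2_n)=\infty$, as predicted.

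The step I expect to be the real obstacle is precisely this resonance computation. The relevant quadrics are dual to Pettet's cup product and are of Jacobi type, living inside $\wedge^2\IA_n^{\ab}$, and whether they cut out a positive-dimensional variety is a delicate matter of multilinear algebra that no soft argument settles; as the abstract of this paper already hints, the computation may well go the opposite way, with the resonance variety of $\IA_n/\Inn(F_n)$ collapsing to $\{0\}$. Should that happen, the case $i=1$, $s=2$ of the conjecture is false, and one should instead prove finiteness: deduce finite $\C$-dimension of the infinitesimal Alexander invariant $\B(\h)\otimes\C$ --- a finitely generated module over a polynomial ring --- from the vanishing $\RR_1=\{0\}$, and transport this back to $B(\IA_n)$ through a general comparison between a group's Alexander invariant and its infinitesimal counterpart. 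Either way, the resonance variety of the (outer) Torelli group of $F_n$ is the crux.
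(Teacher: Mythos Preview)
Your plan for \emph{proving} the conjecture cannot succeed, because the conjecture is false in the very case you focus on: the paper shows $b_1(J^2_n)<\infty$ for $n\ge 5$. You anticipate this possibility and correctly identify the resonance variety of the outer Torelli group as the crux --- indeed, the paper proves $\RR(\OA_n)=\{0\}$ for $n\ge 4$ (Theorem~\ref{thm:res oan}) via the $\SL_n(\C)$-representation theory of Pettet's cup product, precisely the ``delicate multilinear algebra'' you flag.

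However, your sketched \emph{disproof} has a genuine gap. You propose to pass from $\RR=\{0\}$ to $\dim_{\C} B_{\C}(\IA_n)<\infty$ by a ``general comparison'' between $B$ and its infinitesimal version $\B$. But the available comparison (Theorem~\ref{thm:cv1}) presupposes $\VV(G)\subseteq\{1\}$; and since $1$-formality of $\IA_n$ and $\OA_n$ is open, the tangent cone inclusion $\TC_1(\VV)\subseteq\RR$ (Theorem~\ref{thm:tcone}\eqref{t1}) only tells you that $1$ is isolated in $\VV$, not that $\VV$ is finite. The missing idea is \emph{arithmetic symmetry}: $\VV(\OA_n)$ is a Zariski-closed, $\SL_n(\Z)$-invariant subset of the character torus $\TT(\OA_n)$, and by Theorem~\ref{thm:B} (from \cite{DP}) the $\SL_n(\Z)$-action on $\TT(\OA_n)$ is geometrically irreducible, so a proper invariant closed subset must be finite. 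This is how the paper converts $\RR(\OA_n)=\{0\}$ into finiteness of $\VV(\OA_n)$, hence $\dim_{\C}B_{\C}(\OA_n)<\infty$. A second step you omit is the passage from $\OA_n$ back to $\IA_n$: the paper uses the exact sequence $1\to F_n'\to\IA_n'\to\OA_n'\to 1$ (Corollary~\ref{cor:jprime}) together with a nilpotence lemma (Lemma~\ref{lem:isq}, valid only for $n\ge 5$) showing that the $\IA_n'$-coinvariants of $H_1(F_n',\C)$ form a finite-dimensional quotient. This, and not the resonance or comparison steps, is where the restriction $n\ge 5$ enters.
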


In this note, we disprove this conjecture, at least rationally, in 
the case when $n\ge 5$, $s=2$, and $i=1$.  One of our main 
results can be stated as follows. 

\begin{theorem}
\label{thm:intro1}
If $n\ge 5$, then $\dim_{\Q} H^1 (J^2_n, \Q)<\infty$. 
\end{theorem}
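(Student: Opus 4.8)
The plan is to recognize $H^1(J^2_n;\Q)$ as the dual of the rationalized Alexander invariant of the Torelli group, and then to bound that invariant by linearizing it and killing the linearization with a resonance computation.

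\emph{Reduction to an Alexander module.} The first Johnson homomorphism $\tau_1\colon\IA_n\to\Hom(H,\wedge^2H)$, with $H=H_1(F_n;\Z)$, has kernel $J^2_n$ by definition of the Andreadakis--Johnson filtration, and it induces an isomorphism on abelianizations (Andreadakis, Bachmuth, Magnus); since $\IA_n^{\ab}$ is torsion-free this forces $J^2_n=\gamma_2(\IA_n)=[\IA_n,\IA_n]$. Hence $H_1(J^2_n;\Q)=(\IA_n'/\IA_n'')\otimes\Q$ is the rationalized Alexander invariant $B(\IA_n)\otimes\Q$, a module over the Laurent ring $\Lambda=\Q[\IA_n^{\ab}]\cong\Q[t_1^{\pm1},\dots,t_N^{\pm1}]$ with $N=n\binom n2$, and the theorem asks exactly that this module be finite-dimensional over $\Q$. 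It is cleanest to run the argument for the outer Torelli group $\OA_n=\IA_n/\Inn(F_n)$, for which $\gamma_2(\OA_n)$ plays the role of $J^2_n$: the short exact sequence $1\to\Inn(F_n)\cap J^2_n\to J^2_n\to\gamma_2(\OA_n)\to1$ (whose kernel is $\gamma_2(F_n)$) and its five-term exact sequence reduce the statement for $J^2_n$ to finiteness of $H_1(\gamma_2(\OA_n);\Q)$ together with finiteness of the $\gamma_2(\OA_n)$-coinvariants of $B(F_n)\otimes\Q$, the latter being handled using the $\Aut(F_n)$-equivariant module structure.

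\emph{Linearization and resonance.} Attach to $G=\OA_n$ its holonomy Lie algebra $\h(G)$ --- the free Lie algebra on $H_1(G;\Q)$ modulo the ideal generated by the image of $H_2(G;\Q)\to\wedge^2H_1(G;\Q)$ dual to the cup product. Since this ideal lies in the finite-dimensional space $\wedge^2H_1(G;\Q)$, $\h(G)$ is finitely presented even though $G$ is not known to be, so its Alexander invariant $\B(G)=\h(G)'/\h(G)''$ is a finitely generated graded $\Sym(H_1(G;\Q))$-module whose support is $\RR^1(G)$ up to the origin; thus $\B(G)$ is finite-dimensional over $\Q$ if and only if $\RR^1(G)=\{0\}$. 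The core of the paper is then to prove $\RR^1(\OA_n;\C)=\{0\}$ for $n\ge5$: here $H^1(\OA_n;\C)$ is a single irreducible $\SL_n(\C)$-module, the cup product $\wedge^2H^1(\OA_n)\to H^2(\OA_n)$ is $\SL_n$-equivariant with image computable from the degree-two bracket of $\gr(\OA_n)\otimes\Q$ (hence from the second Johnson homomorphism and the Johnson relations, following Pettet and Cohen--Pakianathan), and one must rule out any two linearly independent classes $a,b\in H^1(\OA_n;\C)$ with $a\cup b=0$. This is a decomposition-into-$\SL_n$-irreducibles bookkeeping, which I expect to be the most laborious part; the bound $n\ge5$ appears because for $n=3,4$ low-rank coincidences enlarge $\RR^1$.

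\emph{Back to the honest Alexander module, and conclusion.} The augmentation-ideal-adic filtration on $B(G)\otimes\Q$ agrees with the lower-central-series filtration ($I^k\!\cdot\!B(G)$ is the image of $\gamma_{k+2}(G)$ in $G'/G''$), and the canonical surjection $\h(G)\surj\gr(G)\otimes\Q$ induces a surjection $\B(G)\surj\gr^I(B(G)\otimes\Q)$. Once $\B(\OA_n)$ is finite-dimensional, the quotients $B(\OA_n)/I^kB(\OA_n)$ thus have uniformly bounded $\Q$-dimension, so the $I$-adic filtration stabilizes and $B_\infty:=\bigcap_kI^kB(\OA_n)$ satisfies $B_\infty=I\!\cdot\!B_\infty$. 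To conclude $B_\infty=0$ --- equivalently, that the metabelian quotient $\OA_n/\OA_n''$ is residually nilpotent --- one uses that $\IA_n$, hence $\OA_n$, is residually torsion-free nilpotent (the Johnson filtration has trivial intersection with torsion-free nilpotent quotients) together with the triviality of the support of $\B(\OA_n)$: localizing $\Lambda$ at the augmentation ideal and applying Nakayama kills $B_\infty$. This yields $\dim_\Q H_1(\gamma_2(\OA_n);\Q)<\infty$, with the $\Lambda$-action non-trivial since $I\!\cdot\!B(\OA_n)\ne0$; feeding this into the reduction of the first step gives $\dim_\Q H^1(J^2_n;\Q)<\infty$.

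\emph{Main obstacle.} The delicate point is the interface between the resonance computation and the Alexander module: the resonance calculation only bounds the \emph{infinitesimal} Alexander invariant, and because $\IA_n$ is not known to be finitely presented one cannot transfer this to the honest Alexander module via Noetherianity of $\Lambda$. Making the ``general relationship between the Alexander invariant and its infinitesimal counterpart'' precise --- especially the separatedness of the augmentation-adic filtration, where residual nilpotence and the module structure enter --- is the crux, while the $\SL_n$-representation theory behind $\RR^1(\OA_n)=\{0\}$ is the most computation-heavy ingredient.
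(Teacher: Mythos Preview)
Your overall architecture matches the paper's: reduce to the Alexander invariant, pass to $\OA_n$, compute resonance, then lift back to $\IA_n$ via the extension $1\to F_n'\to\IA_n'\to\OA_n'\to1$ and a coinvariants argument. Two points, one minor and one serious.

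\smallskip
\textbf{Minor.} The resonance vanishing $\RR(\OA_n)=\{0\}$ holds already for $n\ge4$ (Theorem~\ref{thm:res oan}); the restriction $n\ge5$ enters only in the nilpotence step for the $B(F_n)$-coinvariants (Lemma~\ref{lem:isq}), where one needs a fifth index $m\notin\{i,j,k,l\}$ to manufacture an element of $\IA_n'$ that kills $(x_k-1)(x_l-1)\cdot\overline{(x_i,x_j)}$.

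\smallskip
\textbf{Serious.} The passage from $\dim_\C\B(\OA_n)<\infty$ to $\dim_\C B_\C(\OA_n)<\infty$ is where your proposal and the paper diverge, and your route does not close. Nakayama at the augmentation ideal yields only $(B_\infty)_\m=0$, i.e., the support of $B_\infty$ avoids $1\in\TT(\OA_n)$; it says nothing about components of $\VV(\OA_n)$ away from~$1$. Example~\ref{ex:orbi} exhibits a $1$-formal group with $\RR=\{0\}$ yet $\dim_\C B_\C=\infty$, precisely because $\VV$ has components off the identity --- so vanishing resonance alone cannot give the conclusion. Your alternative, residual torsion-free nilpotence of $\OA_n/\OA_n''$, would require $\bigcap_k\sqrt{\Gamma^k(\OA_n/\OA_n'')}=1$; this does not follow formally from residual torsion-free nilpotence of $\OA_n$ (the quotients $\OA_n/(\OA_n''\cdot\Gamma^k)$ need not be torsion-free), and you give no argument for it.

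The paper's replacement for this step is the arithmetic-symmetry input you omit entirely: $\VV(\OA_n)\subseteq\TT(\OA_n)$ is Zariski-closed and $\SL_n(\Z)$-invariant, and since $(\OA_n)_{\ab}\otimes\C$ is an \emph{irreducible} rational $\SL_n(\C)$-module, Theorem~\ref{thm:B} (Theorem~B of \cite{DP}) forces any proper closed $\SL_n(\Z)$-invariant subset of $\TT(\OA_n)$ to be finite. Combined with the tangent-cone inclusion $\TC_1(\VV)\subseteq\RR=\{0\}$ (so $\VV\ne\TT$), this gives $\VV(\OA_n)$ finite, whence $\dim_\C B_\C(\OA_n)<\infty$ by the Dwyer--Fried criterion (Corollary~\ref{cor:btest}). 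That \emph{global} control of $\VV$ via arithmetic-group rigidity --- not a local or completion argument at the identity --- is the missing idea.
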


The theorem follows at once from Corollary \ref{cor:jprime}\eqref{jo1} 
and Theorem \ref{thm:fhp} towards the end of the paper. To arrive 
at the result, though, we need to build a fair amount of machinery, 
and introduce a number of inter-related concepts, of a rather general 
nature.  The purpose of this introductory section is to provide a 
guide to the technology involved, and to sketch the architecture 
of the proof.  Along the way, we state some of the other results 
we obtain here, and indicate the context in which those results fit.

\subsection{Torelli group and Johnson filtration}
\label{subsec:intro torelli}

Given any group $G$, the commutator defines a descending 
filtration, $\{ \G^s (G)\}_{s\ge 1}$, known as the lower central series 
of $G$.  Taking the direct sum of the successive quotients in this series 
yields the associated graded Lie algebra, $\gr_{\G} (G)$. 

On the automorphism group $\Aut (G)$, there is 
another descending filtration, $\{ F^s\}_{s\ge 0}$, 
called the {\em Johnson filtration}: an automorphism belongs to $F^s$ if 
and only if it has the same `$s$-jet' as the identity, with respect to the
lower central series of $G$. Clearly, $F^0= \Aut (G)$. The next term, 
$F^1$---called the {\em Torelli group}, and denoted $\T_G$---plays 
a crucial role in our investigation. 

We emphasize throughout the paper various equivariance 
properties with respect to  the {\em symmetry group},  
$\A (G)=F^0/F^1$.   For instance, there are inclusions, 
$\G^s(\T_G) \subseteq F^s$, for all $s\ge 1$, giving rise 
to a graded Lie algebra morphism, 
$\iota_F\colon \gr_{\G} (\T_G) \to \gr_{F} (\T_G)$,   
which is readily seen to be equivariant with respect 
to naturally defined actions of $\A(G)$ on source and target. 

The graded Lie algebra of derivations, $\Der (\gr_{\G} (G))$, may be
viewed as an infinitesimal approximation to the Torelli group. 
This philosophy goes back to a remarkable sequence of papers 
starting with \cite{J80}, where 
D.~Johnson introduced and studied what is now known as the 
{\em Johnson homomorphism}. We show that his construction 
works for an arbitrary group $G$, giving an
$\A (G)$-equivariant embedding of graded Lie algebras, 
$J \colon \gr_{F} (\T_G) \inj \Der (\gr_{\Gamma} (G))$.

We are  guided in our study by known results about the 
Torelli group $\T_g=\T_{\pi_1(\Sigma_g)}$ associated to   
the fundamental group of a closed Riemann surface of genus $g$.  
In this case, a key role is played by the symmetry group 
$\A(\pi_1(\Sigma_g))$, which may be identified with the symplectic 
group $\Sp(2g,\Z)$.  A delicate argument due to S.~Morita \cite{Mo}  
shows that $\Gamma^s (\T_g)$ can have infinite index in $F^s$.

When $G$ is the free group of rank $n$, the Torelli group $\T_{F_n}$ 
is known as the group of IA-automorphisms, denoted $\IA_n$,   
while the symmetry group $\A (F_n)$ is simply $\GL_n (\Z)$.  
Computations by Andreadakis \cite{An}, Cohen--Pakianathan, Farb, 
and Kawazumi, summarized by Pettet in \cite{Pe} show that 
$J_n^2$, the second term of the Johnson filtration of $\Aut (F_n)$, 
is equal to $\IA_n'$, the derived subgroup of $\IA_n$, 
for all $n\ge 3$.

\subsection{Outer automorphisms}
\label{subsec:intro outer}

Similar considerations apply to the outer automorphism group, 
$\Out (G)=\Aut (G)/\Inn (G)$.  The quotient Johnson filtration 
and the outer Torelli group are denoted by $\{ \wF^s \}$ and $\wT_G$, 
respectively. 

When the associated graded Lie algebra of $G$ is 
centerless, we construct an ``outer Johnson homomorphism", 
$\widetilde{J} \colon \gr_{\wF} (\wT_G) \inj 
\widetilde{\Der} (\gr_{\G} (G))$, 
which fits into the following commuting diagram 
in the category of graded Lie algebras endowed with a 
compatible $\A(G)$-module structure,
\begin{equation}
\label{eq:intro cd}
\xymatrixcolsep{36pt}
\xymatrix{
\gr_{\G} (G)  \ar^{=}[r]  \ar@{^{(}->}[d]  
& \gr_{\Gamma} (G)  \ar^{=}[r]  \ar@{^{(}->}[d]  
& \gr_{\Gamma} (G) \ar@{^{(}->}[d]\\  
\gr_{\G} (\T_G) \ar^(.4){\iota_F}[r]  \ar@{->>}[d] 
&\gr_{F} (\T_G) \ar@{^{(}->}^(.4){J}[r]  \ar@{->>}[d]  
& \Der (\gr_{\Gamma} (G)) \ar@{->>}[d]\\
\gr_{\G} (\wT_G) \ar^(.4){\iota_{\wF}}[r]  
&\gr_{\wF} (\wT_G) \ar@{^{(}->}^(.4){\widetilde{J}}[r]  
& \widetilde{\Der} (\gr_{\Gamma} (G))\, ,
}
\end{equation}
where the top vertical arrows are given by the relevant adjoint 
maps, the bottom vertical arrows are canonical projections, 
and the columns are all exact.

It turns out that $J\circ \iota_F$ and
$\widetilde{J}\circ \iota_{\wF}$ are simultaneously injective 
or surjective, in any fixed degree. Furthermore, the injectivity of
$J\circ \iota_F$, up to a given degree $q$,  characterizes the 
equalities $\G^s(\T_G) = F^s$ and  $\G^s(\wT_G) = \wF^s$, for $s\le q+1$.  
In Corollary \ref{cor:exot}, we obtain the following useful consequence. 

\begin{prop}
\label{prop:46intro}
Let $G$ be a residually nilpotent group. Suppose that 
$Z(\gr_{\G} (G))=0$ and $J\circ \iota_F$ is injective, up to degree $q$.
Then, for each $s\le q+1$ there is an exact sequence
\begin{equation}
\label{eq:grintro}
\xymatrix{1\ar[r]& \G^s (G)\ar[r]
& \G^s (\T_G)\ar[r]& \G^s (\wT_G) \ar[r]& 1} .
\end{equation}
\end{prop}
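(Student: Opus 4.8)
The plan is to apply the lower central series functor $\G^s(-)$ to a short exact sequence relating $G$, $\T_G$ and $\wT_G$, and then to identify each of the three terms it produces.

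\emph{The basic extension.} Since $G$ is residually nilpotent, $\bigcap_{s\ge 1}\G^s(G)=1$; combined with $Z(\gr_{\G}(G))=0$ this forces $Z(G)=1$, because a nontrivial central element $g$, lying in $\G^t(G)\setminus\G^{t+1}(G)$ for some $t\ge 1$, would map to a nonzero element of $Z(\gr_{\G}(G))$. Hence $\Inn\colon G\to\Inn(G)$ is an isomorphism. Every inner automorphism acts trivially on $G/\G^2(G)$, so $\Inn(G)\subseteq F^1=\T_G$; being normal in $\Aut(G)$ it is normal in $\T_G$. Thus $1\to G\xrightarrow{\,\Inn\,}\T_G\to\wT_G\to 1$ is exact.

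\emph{Taking $\G^s$.} A surjection of groups induces a surjection on each lower central series quotient, so $\G^s(\T_G)\surj\G^s(\wT_G)$, with kernel $\G^s(\T_G)\cap\Inn(G)$. Hence \eqref{eq:grintro} is equivalent to the equality $\G^s(\T_G)\cap\Inn(G)=\Inn(\G^s(G))$ for $s\le q+1$. The inclusion $\supseteq$ holds because $\Inn(\G^s(G))=\G^s(\Inn(G))\subseteq\G^s(\T_G)$ by functoriality applied to $\Inn(G)\subseteq\T_G$. For $\subseteq$, the hypothesis that $J\circ\iota_F$ is injective up to degree $q$ gives, via the characterization recalled just before the statement, $\G^s(\T_G)=F^s$ for $s\le q+1$ (in fact only the unconditional inclusion $\G^s(\T_G)\subseteq F^s$ is needed here), so it suffices to prove that $F^s\cap\Inn(G)=\Inn(\G^s(G))$ for every $s\ge 1$. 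Now $c_g\in F^s$ exactly when $[g,x]\in\G^{s+1}(G)$ for all $x\in G$, which certainly holds if $g\in\G^s(G)$. Conversely, if $c_g\in F^s$ but $g\notin\G^s(G)$, then by residual nilpotence $g\in\G^t(G)\setminus\G^{t+1}(G)$ with $t\le s-1$, so $\G^{s+1}(G)\subseteq\G^{t+2}(G)$ and the nonzero class $\bar g\in\gr^t_{\G}(G)$ brackets trivially with every element of $\gr^1_{\G}(G)$; since $\gr_{\G}(G)$ is generated in degree one, $\bar g\in Z(\gr_{\G}(G))=0$, a contradiction. This proves the claimed equality, and hence the exact sequence \eqref{eq:grintro}. (Alternatively, the sequence can be read off the exact left-hand column of the diagram \eqref{eq:intro cd}, once its three terms have been identified with $G$, $\T_G$ and $\wT_G$ in the appropriate filtration.)

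The step I expect to be the main obstacle is the last one: recovering, from the information that $c_g$ has trivial $s$-jet, the statement that $g$ lies deep in the lower central series of $G$. This is precisely where the two hypotheses earn their keep—residual nilpotence is what lets us attach to $g$ a well-defined nonzero symbol in $\gr_{\G}(G)$, and centerlessness of $\gr_{\G}(G)$ is what rules that symbol out. Without $Z(\gr_{\G}(G))=0$ the identification of $F^s\cap\Inn(G)$ with $\Inn(\G^s(G))$ can genuinely fail, and the exactness of \eqref{eq:grintro} along with it.
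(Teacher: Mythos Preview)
Your proof is correct, and in fact more direct than the paper's. The paper first establishes the Johnson-filtration version of the sequence,
\[
1 \longrightarrow \G^s(G) \xrightarrow{\ \Ad\ } F^s(\T_G) \xrightarrow{\ \pi\ } \wF^s(\wT_G) \longrightarrow 1,
\]
valid for all $s\ge 1$ under the hypotheses of residual nilpotence and $Z(\gr_\G(G))=0$ (this is Proposition~\ref{prop:exone}, whose key step is exactly the argument you give, namely Lemma~\ref{lem:adx}\eqref{ad3}: $\Ad_x\in F^s\Rightarrow x\in\G^s$). It then invokes the injectivity hypothesis on $J\circ\iota_F$ via Theorem~\ref{thm:two outfilt} to \emph{identify} $F^s(\T_G)=\G^s(\T_G)$ and $\wF^s(\wT_G)=\G^s(\wT_G)$ for $s\le q+1$, and substitutes.

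You bypass this substitution step entirely: starting from $1\to\Inn(G)\to\T_G\to\wT_G\to 1$, you compute the kernel of $\G^s(\T_G)\surj\G^s(\wT_G)$ as $\G^s(\T_G)\cap\Inn(G)$, and then observe that the unconditional inclusion $\G^s(\T_G)\subseteq F^s$, together with the same implication $\Ad_x\in F^s\Rightarrow x\in\G^s$, already forces this kernel to equal $\Inn(\G^s(G))$. As you note parenthetically, the injectivity of $J\circ\iota_F$ is never used, so your argument actually proves the exact sequence for \emph{all} $s\ge 1$, not merely $s\le q+1$. What the paper's route buys in exchange is the identifications $\G^s(\T_G)=F^s$ and $\G^s(\wT_G)=\wF^s$ themselves, which are needed independently (e.g., in Corollary~\ref{cor:jprime}\eqref{jo1}).

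Two small remarks. First, when you locate $g\in\G^t\setminus\G^{t+1}$ with $t\le s-1$, residual nilpotence is not needed: since $g\in G=\G^1$ and $g\notin\G^s$, such a $t$ exists automatically. Residual nilpotence is genuinely used only in your first paragraph, to conclude $Z(G)=1$. Second, your closing comment that ``without $Z(\gr_\G(G))=0$ the identification can genuinely fail'' is apt; the paper isolates exactly this step as Lemma~\ref{lem:adx}\eqref{ad3}.
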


Our main interest is in the outer Torelli group 
associated to the free group of rank $n\ge 3$.    
This group, $\OA_n:=\wT_{F_n}$, fits into the exact sequence 
\begin{equation}
\label{eq:deftintro}
\xymatrix{1\ar[r]& \OA_n \ar[r]& \Out (F_n) \ar[r]& \GL_n (\Z) \ar[r]& 1}.
\end{equation}
As is well-known, the free group $F_n$ is residually nilpotent;  
moreover, its associated graded Lie algebra is a free Lie algebra, 
and thus centerless. In view of Proposition \ref{prop:46intro}, 
we obtain the exact sequence 
\begin{equation}
\label{eq:gexintro}
\xymatrix{1 \ar[r]& F_n' \ar[r]& \IA_n' \ar[r]& \OA_n'  \ar[r]& 1} .
\end{equation}

\subsection{Alexander invariant and characteristic varieties}
\label{subsec:intro alexcv}

Returning to the general situation, let $G$ be a group, 
and consider its {\em Alexander invariant}, $B(G)=H_1 (G', \Z)$.  
This is a classical object, with roots in the universal 
abelian covers arising in low-dimensional topology. 
The group $B(G)$ carries the structure of a module 
over the group ring of the abelianization, $R=\Z{G}_{\ab}$, 
and comes endowed with a $\Z$-linear action of $\Aut (G)$.   
As shown in Proposition \ref{prop:rlin}, 
restricting this action to the Torelli group $\T_G$ preserves 
the $R$-module structure on $B(G)$. 

Assume now that $G$ is finitely generated, so that $B(G)$ 
is also finitely generated as an $R$-module. As noted in 
Proposition \ref{prop:alex gr}, if the $\Q{G}_{\ab}$-module 
$B(G)\otimes \Q$ has trivial $G_{\ab}$-action, then 
$\gr_{\G}^3 (G)\otimes \Q=0$, and thus 
$\dim_{\Q} \gr_{\G} (G)\otimes \Q <\infty$.

Let $\TT (G)= \Hom (G_{\ab}, \C^{\times})$ be the character group, 
parametrizing complex one-dimensional representations of $G$. 
The {\em characteristic varieties}\/ $\VV^i_d (G)\subseteq \TT (G)$ 
are the jump loci for the homology of $G$ with such
coefficients; see \S\ref{subsec:scv} for full details. 
We are mostly interested here in the set 
$\VV(G)=\VV^1_1(G)$; since $G$ is assumed to 
be finitely generated, this is a Zariski closed subset  
of $\TT (G)$.   In fact, away from the identity $1\in \TT (G)$, 
the characteristic variety $\VV(G)$ coincides with the 
support variety of the module $B(G)\otimes \C$.

More generally, one can talk about the characteristic varieties 
of a connected CW-complex $X$.  It turns out that these varieties 
control the homological finiteness properties of abelian covers of 
$X$. The precise statement is to be found in Theorem \ref{thm:dfext}, 
an extension of a basic result due to Dwyer and Fried \cite{DF}. 
For our purposes here, the following consequence will be 
particularly important. 

\begin{prop}
\label{prop:vtestintro}
Let $G$ be a finitely generated group. Then 
$\dim_{\Q} B(G)\otimes \Q <\infty$ if and only if $\VV (G)$ is finite.
\end{prop}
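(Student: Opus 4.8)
The plan is to realize $B(G)\otimes\Q$ as the Alexander invariant of a suitable space and then invoke the Dwyer--Fried type machinery (Theorem~\ref{thm:dfext}) together with standard facts about the support of a finitely generated module over a Laurent polynomial ring. Since $G$ is finitely generated, we may pick a connected CW-complex $X$ with $\pi_1(X)=G$ and finite $1$-skeleton; then $H_1(X',\Z)=H_1(G',\Z)=B(G)$, where $X'\to X$ is the cover corresponding to $G'$, i.e. the maximal torsion-free abelian cover. Write $R=\Z G_{\ab}$ and let $\overline{R}=R/(\text{torsion})$; after tensoring with $\Q$, the module $B(G)\otimes\Q$ is finitely generated over $\overline{R}\otimes\Q$, which is the coordinate ring of the identity component of $\TT(G)$ (a torus $(\C^\times)^r$ with $r=\rank G_{\ab}$).

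\textbf{Key steps.} First I would recall from the discussion preceding Proposition~\ref{prop:vtestintro} that, away from the trivial character $1$, the variety $\VV(G)=\VV^1_1(G)$ coincides with the support variety of $B(G)\otimes\C$ inside $\TT(G)$. Second, a finitely generated module $M$ over the Noetherian ring $\overline{R}\otimes\C$ satisfies $\dim_\C M<\infty$ if and only if its support $\supp(M)$ is a finite set of points (a zero-dimensional closed subvariety of the affine torus): indeed, if the support is finite then $M$ is supported on finitely many maximal ideals and, being finitely generated, is Artinian hence finite-dimensional; conversely a finite-dimensional module is annihilated by a finite-codimension ideal, whose zero locus is finite. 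Third, I would bridge $\C$ and $\Q$: $\dim_\Q B(G)\otimes\Q<\infty$ iff $\dim_\C B(G)\otimes\C<\infty$ (dimension is unchanged under the faithfully flat extension $\Q\subset\C$), and the support of $B(G)\otimes\C$ over the torus differs from $\VV(G)$ only possibly at the point $1$. Hence $\dim_\Q B(G)\otimes\Q<\infty$ $\iff$ $\supp(B(G)\otimes\C)$ is finite $\iff$ $\VV(G)\setminus\{1\}$ is finite $\iff$ $\VV(G)$ is finite (adjoining or deleting the single point $1$ does not affect finiteness).

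\textbf{Main obstacle.} The delicate point is the behavior at the identity character $1\in\TT(G)$, where the correspondence between $\VV(G)$ and $\supp(B(G)\otimes\C)$ can genuinely fail (the stalk of $B(G)\otimes\C$ at $1$ need not reflect $H_1(G,\C)$). One must therefore argue that this single point is irrelevant to the finiteness statement on both sides: it contributes at most a point to $\VV(G)$, and removing it from the support does not change whether $B(G)\otimes\C$ is finite-dimensional, since a module finite-dimensional away from one point but not at it is impossible for a finitely generated module over a Noetherian domain component (finite length is local). Making this last reduction precise — handling torsion in $G_{\ab}$ by passing to $\overline{R}$ and checking that the torus components other than the identity one only enlarge a finite set by a finite set — is the part that requires care, but it is routine once the support-theoretic dictionary above is in place.
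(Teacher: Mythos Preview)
Your approach is correct and essentially the same as the paper's: both rest on the fact (Corollary~\ref{cor:btest}) that, away from the trivial character, $\supp(B_{\C}(G))=\VV(G)$ inside $\TT(G)$, together with the standard commutative-algebra fact that a finitely generated module over a finitely generated $\C$-algebra is finite-dimensional if and only if its support is finite (this is exactly how the paper proves Theorem~\ref{thm:dfext}, from which Corollary~\ref{cor:btest}\eqref{bt2} and hence Proposition~\ref{prop:vtestintro} follow at once). Two minor slips to clean up: the cover corresponding to $G'$ is the universal \emph{abelian} cover, with deck group all of $G_{\ab}$ (which may well have torsion), not the maximal torsion-free abelian cover; and the detour through $\overline{R}$ and the identity component of $\TT(G)$ is both unclear and unnecessary---work directly over $\C G_{\ab}$ on all of $\TT(G)$, and your ``main obstacle'' at $1$ evaporates, since adjoining or removing the single point $1$ affects neither the finiteness of $\supp(B_{\C}(G))$ nor that of $\VV(G)$.
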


Returning now to the free group $F_n$ and the Torelli groups 
$\IA_n$ and $\OA_n$, it is known from the work of Magnus 
that both these groups are finitely generated.  Moreover, 
as mentioned previously, $\IA_n'=J_n^2$.  Thus, 
Theorem \ref{thm:intro1} can be rephrased as saying that 
$\dim_{\Q} B(\IA_n)\otimes \Q <\infty$, for all $n\ge 5$. 

For technical reasons, we find it easier to work first with 
the group $\OA_n$, and prove the analogous statement 
for this group.  In view of Proposition \ref{prop:vtestintro}, 
it is enough to show that $\VV (\OA_n)$ is a finite set. 
To establish this fact, we follow the strategy developed 
in \cite{DP}, based on symmetry.  The first step is to note that 
extension \eqref{eq:deftintro} yields natural actions of the discrete 
group $\SL_n (\Z)$ on both the lattice $L= (\OA_n)_{\ab}$ and 
on the algebraic torus $\TT (L)$, so that the latter action leaves 
the subvariety $\VV (\OA_n) \subseteq \TT (L)$ invariant.

\subsection{Infinitesimal Alexander invariant and resonance varieties}
\label{subsec:intro res}

To progress further, we turn to some very useful approximations 
to the Alexander invariant and characteristic varieties of a finitely 
generated group $G$.

The {\em infinitesimal Alexander invariant}, $\B (G)$, 
is a complex vector space that approximates $B(G)\otimes \C$.  
By definition, $\B(G)$ is a finitely presented module over the 
polynomial ring $S=\Sym(G_{\ab}\otimes \C)$, with presentation 
derived from the co-restriction to the image of the cup-product 
map, $\cup_G\colon H^1(G,\C)\wedge H^1(G,\C)\to H^2(G,\C)$.

The {\em resonance varieties}, $\RR^i_d (G)$, 
approximate the corresponding characteristic varieties of $G$;   
they sit inside the tangent space at the origin to $\TT (G)$, 
which may be identified with the vector space 
$\Hom (G_{\ab}, \C)= H^1(G, \C)$.  Our main 
interest here is in the set $\RR(G)=\RR^1_1(G)$, 
defined in \eqref{eq:rv},
which is a Zariski closed affine cone.  The resonance variety 
$\RR(G)$ depends only on the co-restriction to the image of 
$\cup_G$, and coincides with the support of the module $\B(G)$, 
at least away from the origin $0\in H^1(G,\C)$.

Typically, the computation of the resonance varieties is 
much more manageable than that of the characteristic varieties. 
Nevertheless, knowledge of the former often sheds light on the 
latter. For instance, if $\RR (G)\subseteq \{ 0\}$, then 
either $1\notin \VV(G)$, or $1$ is an isolated point in $\VV(G)$; 
see Corollary \ref{cor:zerores}. 

If, in fact, $\VV (G) \subseteq \{1\}$, then more can be said.  
To start with, Proposition \ref{prop:vtestintro} guarantees 
that the complexified Alexander invariant, $B(G)\otimes \C$, 
has finite dimension as a $\C$-vector space. Furthermore, 
the result below (proved in Theorem \ref{thm:cv1}) relates 
the dimension of $B(G)\otimes \C$  to the dimension of its 
infinitesimal analogue, $\B(G)$.

\begin{theorem}
\label{thm:cvintro}
Let $G$ be a finitely generated group. Suppose  
$\VV (G) \subseteq \{1\}$.  Then, the following hold.
\begin{enumerate}
\item \label{v1intro}
$\dim_{\C} B(G)\otimes \C \le \dim_{\C} \B(G)$.

\item \label{v2intro}
If $\RR (G) \subseteq \{0\}$, then 
$\dim_{\C} B(G)\otimes \C \le \dim_{\C} \B(G) <\infty$.

\item \label{v3intro}
If $G$ is $1$-formal, then 
$\dim_{\C} B(G)\otimes \C = \dim_{\C} \B(G) <\infty$.
\end{enumerate}
\end{theorem}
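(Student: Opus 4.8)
The plan is to realize the Alexander invariant and its infinitesimal counterpart as cokernels of closely related presentation matrices and then to compare them through the augmentation‑ideal filtration on the group algebra of $G_{\ab}$. Set $\Lambda=\C[G_{\ab}]$ and let $\mathfrak a\subset\Lambda$ be its augmentation ideal; since the torsion subgroup of $G_{\ab}$ contributes an idempotent summand, the associated graded ring $\gr_{\mathfrak a}\Lambda$ is canonically $S=\Sym(G_{\ab}\otimes\C)$, with $\mathfrak a/\mathfrak a^2\cong H_1(G,\C)$. The hypothesis $\VV(G)\subseteq\{1\}$ makes $\VV(G)$ finite, so Proposition~\ref{prop:vtestintro} gives $\dim_\C B(G)\otimes\C<\infty$; moreover the support of the finitely generated $\Lambda$‑module $B(G)\otimes\C$ lies in the single point $1$, whence $\mathfrak a^N(B(G)\otimes\C)=0$ for $N\gg0$. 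The $\mathfrak a$‑adic filtration on $B(G)\otimes\C$ is therefore finite, and so
\[
\dim_\C B(G)\otimes\C\;=\;\dim_\C\gr_{\mathfrak a}\bigl(B(G)\otimes\C\bigr).
\]

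For part~\eqref{v1intro} the crux is a natural epimorphism of graded $S$‑modules $\B(G)\surj\gr_{\mathfrak a}(B(G)\otimes\C)$. To build it, I would present $B(G)\otimes\C$ as the cokernel of a $\Lambda$‑linear map $f\colon\Lambda^{a}\to\Lambda^{b}$ obtained from a finite presentation of $G$ by Fox calculus; the entries of $f$ lie in $\mathfrak a$, and, with the grading conventions built into the definition of $\B(G)$, passing to leading terms recovers the presentation of $\B(G)$ out of the cup product $\cup_G$ — this is the classical relationship between the Magnus expansion and cup products, going back to Sullivan and Massey. Since $\gr_{\mathfrak a}$ of a cokernel is always a quotient of the cokernel of $\gr_{\mathfrak a}$, we get $\B(G)=\coker(\gr_{\mathfrak a}f)\surj\gr_{\mathfrak a}(B(G)\otimes\C)$, and combining this with the displayed equality gives $\dim_\C B(G)\otimes\C\le\dim_\C\B(G)$.

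Part~\eqref{v2intro} then follows immediately: $\RR(G)$ is the support of the finitely generated $S$‑module $\B(G)$ away from the origin, so $\RR(G)\subseteq\{0\}$ forces some power of the irrelevant ideal of $S$ to annihilate $\B(G)$, whence $\dim_\C\B(G)<\infty$ and part~\eqref{v1intro} applies. For part~\eqref{v3intro} I would upgrade the epimorphism of part~\eqref{v1intro} to an isomorphism: $1$‑formality of $G$ means the Malcev Lie algebra of $G$ is the degree completion of the holonomy Lie algebra $\h(G)$, and passing to maximal metabelian quotients identifies $\gr_{\mathfrak a}(B(G)\otimes\C)$ with $\B(G)$ (equivalently, the Chen ranks of $G$ coincide with their infinitesimal analogues). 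Then $\dim_\C\B(G)=\dim_\C\gr_{\mathfrak a}(B(G)\otimes\C)=\dim_\C B(G)\otimes\C$, which is finite by Proposition~\ref{prop:vtestintro}.

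The main obstacle is the comparison of presentations in part~\eqref{v1intro}: verifying that linearizing the Fox‑calculus presentation of $B(G)$ reproduces, with compatible gradings, the cup‑product presentation of $\B(G)$ — and, for part~\eqref{v3intro}, strengthening this to an isomorphism via the Malcev/Sullivan formality machinery. The remaining steps are routine manipulations with supports of finitely generated modules over Noetherian rings.
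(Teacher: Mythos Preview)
Your overall architecture matches the paper's proof (Theorem~\ref{thm:cv1}) exactly: use $\VV(G)\subseteq\{1\}$ to deduce that $B_{\C}(G)$ is $I$-nilpotent, so that $\dim_{\C}B_{\C}(G)=\dim_{\C}\gr_I(B_{\C}(G))$; then produce a graded surjection $\B(G)\surj\gr_I(B_{\C}(G))$ for part~\eqref{v1intro}, invoke the support characterization of $\RR(G)$ for part~\eqref{v2intro}, and upgrade the surjection to an isomorphism under $1$-formality for part~\eqref{v3intro}.

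The one substantive difference is how the surjection $\B(G)\surj\gr_I(B_{\C}(G))$ is obtained. The paper does not linearize a Fox-calculus presentation; instead it composes two ready-made facts: Massey's isomorphism $\gr_I^q(B)\cong\gr_{\Gamma}^{q+2}(G/G'')$ (Proposition~5.2 in the paper) and the holonomy epimorphism $\h(G)/\h(G)''\surj\gr_{\Gamma}(G/G'')\otimes\C$ (formula~\eqref{eq:bepi}). Both of these need only finite generation of $G$. Your Fox-calculus route, as written, presupposes a \emph{finite} presentation of $G$ (you take $f\colon\Lambda^a\to\Lambda^b$ with $a$ finite), which the theorem does not assume; you would have to either allow infinitely many relators and check that the linearized relators still land in, and generate, $\im(\partial_G)$, or else argue separately that a module-theoretic finite presentation of the finite-dimensional $B_{\C}(G)$ linearizes correctly --- and at that point you are essentially reproving \eqref{eq:bepi}. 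The paper's Lie-algebraic packaging sidesteps this issue entirely.

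For part~\eqref{v3intro} the paper likewise avoids reproving anything: it quotes the filtered isomorphism $\widehat{B_{\C}(G)}\cong\widehat{\B(G)}$ from \cite{DPS-duke} (Theorem~\ref{thm:dps-bmod}), which is exactly the ``Chen ranks coincide'' statement you invoke, and then observes that finite dimension makes the completions trivial. So your part~\eqref{v3intro} is the same argument in different words.
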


Here, the $1$-formality of a group is considered in the sense of 
D. Sullivan \cite{Su}.   For instance, by the main result of 
R.~Hain from \cite{Ha}, the Torelli group $\wT_g= \wT_{\pi_1(\Sigma_g)}$ 
is $1$-formal, for $g\ge 6$.  The $1$-formality assumption 
in part \eqref{v3intro} is needed in order to be able to 
invoke the Tangent Cone theorem from \cite{DPS-duke}, 
which states that $\TC_1(\VV(G))=\RR(G)$ in this case.

Returning now to the Torelli groups of the free group $F_n$, 
we obtain in Theorem \ref{thm:gr ia} and Corollary \ref{cor:gr ia} the 
following result. 

\begin{theorem}
\label{thm:intro2}
Let $G$ be either $\IA_n$ or $\OA_n$.  If 
$n\ge 3$, then $\gr_{\G}(G) \otimes \Q$ is infinite 
dimensional (as a $\Q$-vector space), and 
$B(G)\otimes \Q$ has non-trivial $G_{\ab}$-action.
\end{theorem}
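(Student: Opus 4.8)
The two assertions in Theorem~\ref{thm:intro2} are really one: by Proposition~\ref{prop:alex gr}, if $B(G)\otimes\Q$ had \emph{trivial} $G_{\ab}$-action, then $\gr_{\G}(G)\otimes\Q$ would be finite dimensional; so it suffices to produce, for each of $\IA_n$ and $\OA_n$ (with $n\ge 3$), a proof that $\gr_{\G}(G)\otimes\Q$ is infinite dimensional. The cleanest route is to exploit the Johnson homomorphism machinery set up in the excerpt. First I would pass to the outer case, where the combinatorics of derivations is most transparent: the outer Johnson homomorphism $\wJ\colon\gr_{\wF}(\OA_n)\inj\widetilde{\Der}(\gr_{\G}(F_n))$ embeds the graded pieces of the outer Johnson filtration into the (positively graded) Lie algebra of outer derivations of the free Lie algebra $\gr_{\G}(F_n)$. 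The point is that this target is a \emph{huge}, infinite-dimensional, positively-graded Lie algebra — it contains, in each degree, copies of $\GL_n(\Z)$-representations whose dimensions grow without bound (this is exactly the classical Johnson-image computation of Andreadakis, Cohen--Pakianathan, Farb, Kawazumi recalled in \S\ref{subsec:intro torelli}, where $\im(J)$ in degree $1$ is already known explicitly). Since $\gr^s_{\wF}(\OA_n)$ is nonzero (indeed unboundedly large) for all $s\ge 1$, and $\iota_{\wF}\colon\gr_{\G}(\OA_n)\to\gr_{\wF}(\OA_n)$ — wait: that map goes the wrong way for a surjectivity argument. Instead I would use the \emph{reverse} comparison: the content of the excerpt's discussion is that injectivity of $J\circ\iota_F$ up to degree $q$ forces $\G^s=\wF^s$ (resp.\ $F^s$) for $s\le q+1$, and for the free group this injectivity is known in low degrees, so $\gr^s_{\G}(\OA_n)\otimes\Q\cong\gr^s_{\wF}(\OA_n)\otimes\Q$ at least through some range — but more importantly, even without that, one has the a~priori surjection $\gr_{\G}(\T_G)\surj\gr_{\wF}(\wT_G)$ composed onto a Lie algebra that surjects onto arbitrarily large pieces of $\widetilde{\Der}(\gr_\G(F_n))$... let me restructure.

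**Structure I would actually carry out.**
Step~1: Reduce the statement about $B(G)\otimes\Q$ to the statement $\dim_{\Q}\gr_{\G}(G)\otimes\Q=\infty$, via the contrapositive of Proposition~\ref{prop:alex gr}. Step~2: Prove the $\gr$-statement for $G=\OA_n$. For this, combine the bottom row of diagram~\eqref{eq:intro cd} with the known nontriviality of the Johnson image: the composite $\gr_{\G}(\IA_n)\surj\gr_{\G}(\OA_n)$ (the surjection in \eqref{eq:gexintro}, or rather the associated-graded version from Proposition~\ref{prop:46intro}) sits above an embedding into $\widetilde{\Der}(\gr_\G(F_n))$, and it is enough to show the \emph{latter} Lie algebra receives nonzero contributions in infinitely many degrees from the Johnson image. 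The key input is that $\gr^1_F(\IA_n)\otimes\Q$ is the well-known $\GL_n(\Z)$-module $H^*\otimes\bigwedge^2 H$ (with $H=\Z^n$), and that brackets of degree-$1$ Johnson elements are nonzero in all degrees — concretely, the free Lie algebra $\gr_\G(F_n)$ has elements of every degree, and ``inner'' derivations (adjoint actions) give nonzero classes of every degree in $\Der(\gr_\G(F_n))$, which survive to infinitely many degrees of $\widetilde{\Der}$ once $n\ge 3$ by a rank count on the center/inner-derivation subalgebra. Pulling this back along the embeddings $J$, $\wJ$ and the surjections of \eqref{eq:intro cd} shows $\gr^s_{\G}(\OA_n)\otimes\Q\ne 0$ for infinitely many $s$. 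Step~3: Deduce the $\IA_n$ case from the $\OA_n$ case using the exact sequence of graded Lie algebras underlying \eqref{eq:gexintro} (the surjection $\gr_{\G}(\IA_n)\surj\gr_{\G}(\OA_n)$ from Proposition~\ref{prop:46intro}, valid because $F_n$ is residually nilpotent with centerless $\gr_\G$): an infinite-dimensional quotient forces the source to be infinite-dimensional. Step~4: Feed the $\gr$-statements back through Step~1 to conclude the $G_{\ab}$-action on $B(G)\otimes\Q$ is nontrivial for each group.

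**The main obstacle.**
The delicate point is Step~2: showing that the Johnson image contributes nonzero classes in \emph{infinitely many} degrees, rather than just finitely many. Establishing a single nonzero degree-$1$ class is routine (it is the classical Johnson homomorphism calculation), but to get infinitely many degrees one must argue that iterated Lie brackets of these degree-$1$ generators do not all vanish inside $\Der(\gr_\G(F_n))$ and, crucially, that they do not all die upon passing to the \emph{outer} derivations $\widetilde{\Der}$. The cleanest way I see to settle this is to observe that the subalgebra of $\Der(\gr_\G(F_n))$ generated by degree-$1$ Johnson elements maps onto a subalgebra of $\widetilde{\Der}(\gr_\G(F_n))$ whose image, after reduction mod inner derivations, is still unbounded in degree when $n\ge 3$ — this ultimately reduces to a dimension count comparing $\dim\Der^s(\gr_\G(F_n))$ (which grows super-exponentially in $s$) against $\dim$(inner derivations in degree $s$)$=\dim\gr^{s+1}_\G(F_n)$ (which grows exponentially but strictly slower once $n\ge 3$, by Witt's formula), leaving room in every degree. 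Handling the borderline small ranks ($n=3,4$) and making the ``room in every degree'' argument precise — keeping track of which $\GL_n$-isotypic pieces actually lie in the Johnson image in each degree — is where the real work lies; everything else is bookkeeping with the exact sequences already provided in the excerpt.
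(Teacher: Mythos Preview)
Your overall architecture is sound: reducing the Alexander-invariant claim to the associated-graded claim via Proposition~\ref{prop:alex gr} (Step~1), then deducing the $\IA_n$ case from the $\OA_n$ case via the surjection $\gr_\G(\IA_n)\surj\gr_\G(\OA_n)$ furnished by Theorem~\ref{thm:piad} (Step~3), matches the paper's logic exactly.

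The gap is in Step~2, and it is more serious than your hedging suggests. The dimension count you propose---comparing $\dim\Der^s(\L_n)$ against the dimension of inner derivations in degree $s$---establishes only that $\widetilde{\Der}^s(\L_n)\neq 0$ in every degree; it says \emph{nothing} about whether the Lie subalgebra of $\Der(\L_n)$ generated by the degree-$1$ Johnson image meets those pieces nontrivially modulo $\ad$. (Whether $\Der^{\ge 1}(\L_n)$ is generated by its degree-$1$ part is essentially the surjectivity side of the Andreadakis problem, which is open.) Your aside that inner derivations ``survive to infinitely many degrees of $\widetilde{\Der}$'' is also backwards: inner derivations are exactly what gets killed in $\widetilde{\Der}$, so they cannot witness infinite-dimensionality for $\OA_n$---they only help for $\IA_n$, which you have already reduced away.

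The paper sidesteps all of this with a different device. Rather than analyzing iterated brackets of degree-$1$ Johnson classes, it exhibits a concrete \emph{free subgroup} $K_n\cong F_{n-1}$ inside $\IA_n$, arising from the upper-triangular McCool group $\PS_n^+$ (see \S\ref{subsec:mccool}). Since $n-1\ge 2$, the associated graded $\gr_\G(K_n)\cong\L_{n-1}$ is already infinite-dimensional for free. A direct calculation (Lemma~\ref{lem:ev psi}, due to Cohen--Heap--Pettet) shows that the composite $\psi=J\circ\iota_F\circ\gr_\G(\kappa)\colon\L_{n-1}\to\Der(\L_n)$ satisfies $\ev_n\circ\psi=\pm\ad_{\bar x_n}$ on each graded piece, so $\im(\psi)$ has infinite rank. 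A second short computation (Lemma~\ref{lem:im j}) shows $\im(\psi)\cap\im(\ad)=0$, so this infinite-rank image survives intact in $\widetilde{\Der}(\L_n)$. Both the $\IA_n$ and $\OA_n$ statements fall out at once, with no representation-theoretic bookkeeping and no appeal to growth rates.
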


This result is similar to Proposition 9.5 from \cite{Ha}, where it is shown 
that $\dim_{\Q} \gr_{\G}(\wT_{g})\otimes \Q =\infty$, for $g\ge 3$.

\subsection{Homological finiteness for $\OA_n'$ and $\IA_n'$}
\label{ssi5}

We are now in a position to explain how the proof of 
Theorem \ref{thm:intro1} is completed.  We start with 
the analogous result for $\OA_n$.  As noted above, 
it is enough to show that $\VV(\OA_n)$ is finite. The 
first step is to establish that $\RR (\OA_n)=\{ 0\}$.

As explained by Pettet in \cite{Pe}, the group $L=(\OA_n)_{\ab}$ is free 
abelian, and the $\SL_n (\Z)$-representation in $T_1 \TT (L)= (L\otimes \C)^*$
coming from \eqref{eq:deftintro} extends to a rational, {\em irreducible}\/ 
$\SL_n (\C)$-representation.   (This is the reason why we treat first 
the outer Torelli groups.) Furthermore, it is readily seen that  
the subset $\RR (\OA_n)\subseteq (L\otimes \C)^*$
is $\SL_n (\C)$-invariant.

\begin{theorem}
\label{thm:outintro}
For $n\ge 4$, the following hold.
\begin{enumerate}
\item \label{outi0}
$\RR (\OA_n)=\{ 0\}$.
\item \label{outi1}
$\VV (\OA_n)$ is a finite set.
\item \label{outi2}
The Alexander polynomial of $\OA_n$ is a non-zero constant.
\item \label{outi3}
If $N$ is a subgroup of $\OA_n$ containing $\OA'_n$, then $b_1(N)<\infty$.
\end{enumerate}
\end{theorem}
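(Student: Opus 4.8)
The plan is to prove the four assertions in cascade, each feeding into the next, with the group-theoretic irreducibility input doing the heavy lifting at the start.

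First I would establish \eqref{outi0}. The key structural fact, due to Pettet, is that the $\SL_n(\Z)$-action on $(L\otimes\C)^*$, where $L=(\OA_n)_{\ab}$, extends to a rational \emph{irreducible} $\SL_n(\C)$-representation, and that $\RR(\OA_n)\subseteq (L\otimes\C)^*$ is an $\SL_n(\C)$-invariant Zariski-closed cone. An invariant closed cone inside an irreducible representation is either $\{0\}$ or the whole space, so it suffices to rule out $\RR(\OA_n)=(L\otimes\C)^*$. For this I would use the dimension/codimension constraints on resonance varieties: $\RR(G)=\RR^1_1(G)$ is the support (away from $0$) of the infinitesimal Alexander invariant $\B(\OA_n)$, which is a finitely presented module over $S=\Sym((\OA_n)_{\ab}\otimes\C)$ with presentation governed by the cup product $\cup_{\OA_n}\colon H^1\wedge H^1\to H^2$. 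Since $\OA_n$ is finitely generated but \emph{not} a free group (indeed $\gr_\G(\OA_n)\otimes\Q$ is infinite-dimensional by Theorem \ref{thm:intro2}, so $\OA_n$ is far from free), the image of $\cup_{\OA_n}$ is forced to be large enough that $\RR(\OA_n)$ has positive codimension; combined with the dichotomy, this gives $\RR(\OA_n)=\{0\}$. The hypothesis $n\ge 4$ enters here: for small $n$ the relevant $\SL_n(\C)$-representation is too small for the codimension bound to bite, whereas for $n\ge 4$ one has enough room.

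Next, \eqref{outi1} follows from \eqref{outi0} together with the symmetry argument of \cite{DP}. Since $\RR(\OA_n)\subseteq\{0\}$, Corollary \ref{cor:zerores} tells us that either $1\notin\VV(\OA_n)$ or $1$ is an isolated point of $\VV(\OA_n)$; in either case a neighborhood of $1$ meets $\VV(\OA_n)$ only at $1$. Now $\VV(\OA_n)$ is an $\SL_n(\Z)$-invariant Zariski-closed subset of the torus $\TT(L)$, and the $\SL_n(\Z)$-action on $L$ is, again by Pettet's description, essentially an irreducible lattice representation. A positive-dimensional subvariety of the torus that is invariant under such an action would have to contain a positive-dimensional piece through many translates, contradicting the local triviality near $1$ unless $\VV(\OA_n)$ is a finite union of torsion translates of subtori; the irreducibility of the $\SL_n(\C)$-action then forces each such subtorus to be either $\{1\}$ or all of $\TT(L)$, and the latter is excluded since $B(\OA_n)\otimes\C$ is a proper (torsion) module. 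Hence $\VV(\OA_n)$ is finite. I expect \emph{this} step — pinning down the invariant subvarieties of $\TT(L)$ from the resonance vanishing plus the group action — to be the main obstacle, since it requires carefully transporting the infinitesimal (resonance) information to the multiplicative (characteristic) setting via the structure of $B(\OA_n)\otimes\C$ as an $R$-module, and invoking the Dwyer--Fried type machinery of Theorem \ref{thm:dfext}.

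Finally, \eqref{outi2} and \eqref{outi3} are formal consequences. For \eqref{outi2}: the Alexander polynomial of $\OA_n$ is (up to units) a generator of the smallest principal ideal containing the zeroth Fitting ideal of $B(\OA_n)\otimes\C$, and its zero set is contained in $\VV(\OA_n)\subseteq\{1\}$; a Laurent polynomial whose zero locus in the torus is contained in a single point $\{1\}$, yet which is not identically zero (nonvanishing because $B(\OA_n)$ is a genuine finitely generated $R$-module, cf.\ the support-variety identification), must be a nonzero constant, since in a Laurent polynomial ring in $\ge 1$ variables any nonconstant element vanishes on a positive-dimensional set. For \eqref{outi3}: by Proposition \ref{prop:vtestintro}, $\VV(\OA_n)$ finite implies $\dim_\Q B(\OA_n)\otimes\Q<\infty$, i.e.\ $b_1(\OA_n')<\infty$; and for any $N$ with $\OA_n'\subseteq N\subseteq\OA_n$, the Dwyer--Fried extension (Theorem \ref{thm:dfext}) shows that the homological finiteness of the cover corresponding to $N$ is again controlled by $\VV(\OA_n)$ — finiteness of that variety forces $b_1(N)<\infty$. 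This completes the proof.
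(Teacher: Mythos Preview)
Your proposal has a genuine gap in part~\eqref{outi0}, which is the linchpin of the whole argument. The dichotomy you invoke --- ``an invariant closed cone inside an irreducible representation is either $\{0\}$ or the whole space'' --- is \emph{false}. Irreducible $\SL_n(\C)$-representations routinely contain proper nontrivial invariant closed cones: the closure of the highest-weight orbit (the affine cone over the associated flag variety) is always such a cone whenever the representation has dimension $>1$. For instance, in $\Sym^2(\C^2)$ the Veronese cone $\{v\cdot v\}$ is a $2$-dimensional $\SL_2(\C)$-invariant closed cone inside a $3$-dimensional irreducible module. Irreducibility rules out invariant \emph{linear} subspaces, not invariant subvarieties. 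Consequently, even if you could show $\RR(\OA_n)\ne H^1(\OA_n,\C)$ (and your ``codimension'' argument for this is itself only a gesture, not a proof), that would not force $\RR(\OA_n)=\{0\}$.

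The paper's proof of \eqref{outi0} is substantially more delicate. Using a Borel fixed-point argument (Lemma~\ref{lem:tor32}), one shows that if $\RR(\OA_n)\ne\{0\}$ then it must contain the maximal vector $v_0=e_1\otimes(e_{n-1}^*\wedge e_n^*)$ of $V=V(\lambda)$. Pettet's identification $K=\ker(\cup_{\OA_n})=V(\mu)$ with $\mu=\lambda-t_n$ then forces (Lemma~\ref{lem:lem3.5}) the existence of a weight-$(-t_n)$ vector $w\in V$ with $v_0\wedge w\ne 0$ a maximal vector of $K$. An explicit computation with the $\sl_n^+(\C)$-action shows no such $w$ exists when $n\ge 4$. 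This is where the real work lies, and your proposal bypasses it with an invalid shortcut.

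Your sketch for \eqref{outi1} is in the right spirit but imprecise; the paper makes it clean by invoking Theorem~\ref{thm:B} directly: the $\SL_n(\Z)$-action on $\TT(L)$ is geometrically irreducible, so the invariant closed set $\VV(\OA_n)$ is either finite or all of $\TT(L)$, and the latter is excluded because $\TC_1(\VV(\OA_n))\subseteq\RR(\OA_n)=\{0\}$ (Theorem~\ref{thm:tcone}). Your treatments of \eqref{outi2} and \eqref{outi3} are essentially correct, though for \eqref{outi3} the paper uses Hochschild--Serre rather than Dwyer--Fried; either works.
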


Claim \eqref{outi0} is proved in Theorem \ref{thm:res oan}. 
We use  $\SL_n (\C)$-representation theory and 
the explicit $\SL_n (\C)$-equivariant description 
given by Pettet in \cite{Pe} for the co-restriction 
of the cup-product map $\cup_{\OA_n}$.

The key claim for the rest of the proof (given in Theorem \ref{thm:a}) 
is part \eqref{outi1}.   We know from part \eqref{outi0} that $\VV (\OA_n)$ 
is a proper, Zariski closed and $\SL_n(\Z)$-{\em invariant} subset of $\TT (L)$. 
Applying Theorem B from \cite{DP}, we conclude that $\VV(\OA_n)$ is finite. 

The above theorem is used to prove (in Theorem \ref{thm:b}) 
the analogous result for $\IA_n$.  

\begin{theorem}
\label{thm:autintro}
For $n\ge 5$, the following hold.
\begin{enumerate}
\item \label{auti1}
$\VV (\IA_n)$ is a finite set.
\item \label{auti2}
The Alexander polynomial of $\IA_n$ is a non-zero constant.
\item \label{auti3}
If $N$ is a subgroup of $\IA_n$ containing $\IA'_n$, then $b_1(N)<\infty$.
\end{enumerate}
\end{theorem}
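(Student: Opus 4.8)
The plan is to reduce Theorem~\ref{thm:autintro} to Theorem~\ref{thm:outintro} by a d\'evissage along the extension
\[
1 \longrightarrow F_n \longrightarrow \IA_n \xrightarrow{\pi} \OA_n \longrightarrow 1,
\]
the restriction of $1\to\Inn(F_n)\to\Aut(F_n)\to\Out(F_n)\to1$ under the identification $\Inn(F_n)\cong F_n$. Since the inclusion $\Inn(F_n)\hookrightarrow\IA_n$ is split on abelianizations by the trace of the Johnson homomorphism, $(\IA_n)_{\ab}=H\oplus(\OA_n)_{\ab}$ with $H=(F_n)_{\ab}$, and hence $\TT(\IA_n)=\TT(F_n)\times\TT(\OA_n)$. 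Let $p\colon\TT(\IA_n)\to\TT(F_n)=(\C^{\times})^{n}$ be the projection dual to $H\hookrightarrow(\IA_n)_{\ab}$; its fibres are the translates of $\pi^{*}\TT(\OA_n)$. By Proposition~\ref{prop:vtestintro}, part~\eqref{auti1} is equivalent to the finiteness of $\VV(\IA_n)$, while parts~\eqref{auti2} and~\eqref{auti3} follow formally from~\eqref{auti1} exactly as~\eqref{outi2} and~\eqref{outi3} follow from~\eqref{outi1}. Thus the whole theorem reduces to showing that $\VV(\IA_n)$ is finite.

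To analyse $\VV(\IA_n)$, I would fix $\rho\in\TT(\IA_n)$ and run the Hochschild--Serre spectral sequence of the extension above with coefficients in $\C_{\rho}$. As $F_n$ is free, $H^{q}(F_n,\C_{\rho})=0$ for $q\ge2$, so this spectral sequence has at most two nonzero columns and degenerates at $E_{2}$ in total degree $1$. If $p(\rho)=1$, write $\rho=\pi^{*}\sigma$: then $H^{0}(F_n,\C)=\C$ and $H^{1}(F_n,\C)=\C^{n}$ carry the $\OA_n$-action twisted by $\sigma$, and since $\OA_n$ acts trivially on $H^{*}(F_n,\C)$ the computation of $H^{1}(\IA_n,\C_{\rho})$ reduces to that of $H^{0}(\OA_n,\C_{\sigma})$ and $H^{1}(\OA_n,\C_{\sigma})$; hence $\rho\in\VV(\IA_n)$ forces $\sigma\in\VV(\OA_n)\cup\{1\}$, which is finite by Theorem~\ref{thm:outintro}\eqref{outi1}. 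If instead $\bar{\rho}:=p(\rho)\neq1$, then $H^{0}(F_n,\C_{\bar{\rho}})=0$ while $H^{1}(F_n,\C_{\bar{\rho}})\cong\C^{n-1}$, and the spectral sequence collapses to an isomorphism $H^{1}(\IA_n,\C_{\rho})\cong H^{1}(F_n,\C_{\bar{\rho}})^{\OA_n}$, the $\OA_n$-invariants of an $(n-1)$-dimensional representation $M_{\rho}$. A short check shows $M_{\rho}\cong M_{\bar{\rho}}\otimes\C_{\sigma}$, where $M_{\bar{\rho}}$ depends only on $\bar{\rho}$ and $\sigma$ is the $\TT(\OA_n)$-coordinate of $\rho$, viewed as a character of $\OA_n$; consequently $\rho\in\VV(\IA_n)$ precisely when $\C_{\sigma^{-1}}$ occurs as an $\OA_n$-subrepresentation of $M_{\bar{\rho}}$. (Equivalently, via the five-term exact sequence of~\eqref{eq:gexintro}, the finiteness of $\dim_{\Q}B(\IA_n)\otimes\Q$ reduces to controlling the cokernel of $H_{2}(\OA_n',\Q)\to(B(F_n)\otimes\Q)_{\OA_n'}$; this is the same difficulty, repackaged.)

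\textbf{The main obstacle} is the assertion that, for $\bar{\rho}$ outside a proper Zariski-closed subset $Z\subseteq(\C^{\times})^{n}$, the $\OA_n$-module $M_{\bar{\rho}}$ contains no $\OA_n$-invariant line --- equivalently, that $H^{1}(\IA_n,\C_{\rho})=0$ for every $\rho$ lying over a generic $\bar{\rho}$. I would attack this with $\SL_n(\C)$-representation theory, in the spirit of the proof of Theorem~\ref{thm:outintro}\eqref{outi0} and Pettet's equivariant computations in~\cite{Pe}; this is presumably where the hypothesis must be strengthened from $n\ge4$ to $n\ge5$. Granting the assertion, $Z$ is $\GL_n(\Z)$-invariant and therefore finite, since a proper $\GL_n(\Z)$-invariant Zariski-closed subset of $(\C^{\times})^{n}$ is finite for $n\ge2$ (a special case of Theorem~B of~\cite{DP}). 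Over each $\bar{\rho}_{0}\in Z$, the set $p^{-1}(\bar{\rho}_{0})\cap\VV(\IA_n)$ consists of the finitely many characters $\sigma$ with $\C_{\sigma^{-1}}\hookrightarrow M_{\bar{\rho}_{0}}$, because a fixed finite-dimensional representation has only finitely many one-dimensional subrepresentations. Combined with the case $p(\rho)=1$, this shows $\VV(\IA_n)$ is finite, proving~\eqref{auti1}.

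To conclude: for~\eqref{auti2}, the support of $B(\IA_n)\otimes\C$ is contained in the finite set $\VV(\IA_n)\cup\{1\}$, so the zero locus of the $0$-th Fitting ideal of $B(\IA_n)$ has codimension $\ge2$ in $\TT(\IA_n)$, which has dimension $n\binom{n}{2}\ge2$; in the Laurent polynomial ring $\C[(\IA_n)_{\ab}]$, a unique factorization domain, this forces a generator of that ideal up to units --- the Alexander polynomial of $\IA_n$ --- to be a non-zero constant. For~\eqref{auti3}, part~\eqref{auti1} together with Proposition~\ref{prop:vtestintro} gives $\dim_{\Q}B(\IA_n)\otimes\Q<\infty$, and then Theorem~\ref{thm:dfext}, applied to the finite abelian covers of $K(\IA_n,1)$ classified by the subgroups $N$ with $\IA_n'\subseteq N\subseteq\IA_n$, yields $b_{1}(N)<\infty$, exactly as in Theorem~\ref{thm:outintro}\eqref{outi3}.
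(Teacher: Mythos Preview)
Your spectral-sequence framework for the extension $1\to F_n\to\IA_n\to\OA_n\to 1$ is sound, and the reduction of parts~\eqref{auti2} and~\eqref{auti3} to~\eqref{auti1} is correct. However, the argument has a genuine gap precisely at the point you flag as the ``main obstacle'': you have not shown that for generic $\bar\rho\in(\C^{\times})^n$ the $\OA_n$-module $M_{\bar\rho}$ admits no one-dimensional subrepresentation. Your proposed attack via $\SL_n(\C)$-representation theory is not obviously viable, since fixing $\bar\rho$ breaks the $\SL_n$-symmetry and leaves you with an $(n-1)$-dimensional representation of the infinite discrete group $\OA_n$ depending on a continuous parameter; nothing in Pettet's analysis addresses such twisted modules. (A minor imprecision: ``a fixed finite-dimensional representation has only finitely many one-dimensional subrepresentations'' is false---think of a $2$-dimensional trivial module---though the intended statement, that only finitely many \emph{characters} occur, is correct and is what you actually use.)

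The paper sidesteps this obstacle entirely by passing to the \emph{derived} extension~\eqref{eq:gexintro},
\[
1\longrightarrow F_n'\longrightarrow \IA_n'\longrightarrow \OA_n'\longrightarrow 1,
\]
and proving directly that $\dim_{\C}H_1(\IA_n',\C)<\infty$; finiteness of $\VV(\IA_n)$ then follows from Corollary~\ref{cor:btest}. The associated five-term sequence reduces this to showing that the coinvariants $H_1(F_n',\C)_{\IA_n'}=B_{\C}(F_n)_{\IA_n'}$ are finite-dimensional, since $b_1(\OA_n')<\infty$ by Theorem~\ref{thm:outintro}. Here the key input is Proposition~\ref{prop:rlin}: the $\IA_n$-action on $B(F_n)$ is $\Z[\Z^n]$-linear, so the coinvariants are still a finitely generated $\Z[\Z^n]$-module, and it suffices to prove they are \emph{nilpotent}. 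This is Lemma~\ref{lem:isq}: for $n\ge 5$ one has $I^2\cdot B(F_n)_{\IA_n'}=0$. The proof is an explicit one-line construction: to kill $(x_k-1)(x_l-1)\cdot\overline{(x_i,x_j)}$ in the coinvariants, choose an index $m\notin\{i,j,k,l\}$---which exists precisely because $n\ge 5$---and exhibit the automorphism $\alpha\in J_n^2=\IA_n'$ sending $x_m\mapsto (x_l,(x_i,x_j))\,x_m$, whose effect on $\overline{(x_m,x_k)}$ realizes that element as $\bar y-\alpha\cdot\bar y$. This elementary computation is both where the bound $n\ge 5$ enters and why no further representation-theoretic input is needed on the $\IA_n$ side.
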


As above, the main point is to establish the fact 
that $\dim_{\C} H_1 (\IA_n', \C)< \infty$, which is 
precisely the statement needed to finish the 
proof of Theorem \ref{thm:intro1}.
For $n\ge 3$, we have the exact sequence \eqref{eq:gexintro}. 
Due to Theorem \ref{thm:outintro}\eqref{outi3},
it is enough to check that the co-invariants of the $\IA_n'$-action 
on $H_1(F_n', \C)$ are finite-dimensional, whenever $n\ge 5$.
This in turn follows from Lemma \ref{lem:isq}, where we exploit 
the $\C {\Z^n}$-linearity of the $\IA_n$-action on 
$B(F_n)\otimes \C$ to reach the desired conclusion.

\subsection{Organization of the paper}
\label{subsec:intro org}

Roughly speaking, the paper is divided into three parts.  

In the first part (sections \ref{sect:johnson}--\ref{sect:outer johnson}), 
we study the Johnson filtrations on $\Aut(G)$ and $\Out(G)$, as well 
as the corresponding Johnson homomorphisms, $J$ and $\wJ$, 
paying particular attention to their $\A(G)$-equivariance  
properties.

In the second part (sections \ref{sect:alexinv}--\ref{sect:resalex}), 
we discuss the Alexander-type invariants $B(G)$ and $\B(G)$, as well as 
the characteristic and resonance varieties $\VV(G)$ and $\RR(G)$, 
with a focus on the varied connections between these objects and 
sundry graded Lie algebras. 

In the third part (sections \ref{sect:autfn}--\ref{sect:oian}), 
we apply the machinery developed in the first two parts to the problem 
at hand, namely, the investigation of homological finiteness properties 
in the Andreadakis--Johnson filtration of $\Aut(F_n)$. 

\section{Torelli group and Johnson homomorphism}
\label{sect:johnson}

In this section, we recall the definition of the Johnson filtration 
on the automorphism group of an arbitrary group $G$.  
Next, we recall the definition of the Johnson homomorphism, 
and prove a key equivariance property. 

\subsection{Filtered groups and graded Lie algebras}
\label{subsec:grg}
We start by reviewing some basic notions about groups 
and Lie algebras, following the exposition from Serre's 
book \cite{Se}.   

Let $G$ be a group.  Given two elements 
$x,y\in G$, write ${}^x y= xyx^{-1}$ and  
$(x,y)=xyx^{-1}y^{-1}$.  The following `Witt--Hall' 
identities then hold:
\begin{align}
\label{eq:bilg}
& (xy, z)= {}^x (y,z)\cdot (x,z),\\
\label{eq:jacobi}
& ( {}^y x, (z, y)) \cdot ({}^z y, (x, z)) \cdot ({}^x z,(y, x))= 1.
\end{align}

Given subgroups $K_1$ and $K_2$ of $G$, define $(K_1,K_2)$ 
to be the subgroup of $G$ generated by all commutators of the form 
$(x_1,x_2)$, with $x_i\in K_i$.  In particular, $G'=(G,G)$ is the 
derived subgroup of $G$. 

Now suppose we are given a decreasing filtration 
\begin{equation}
\label{eq:Phi}
G=\Phi^1 \supset \Phi^2   \supset  \cdots \supset 
\Phi^s  \supset\cdots 
\end{equation}
by subgroups of $G$ satisfying 
$(\Phi^s, \Phi^t) \subseteq \Phi^{s+t}$, for all $s,t\ge 1$.  
Clearly, each term $\Phi^s$ in the series is a normal 
subgroup of $G$, and the successive quotients, 
$\gr^{s}_{\Phi} (G)=\Phi^s/\Phi^{s+1}$, 
are abelian groups.  

Set
\begin{equation}
\label{eq:grPhi}
\gr_{\Phi} (G)  = \bigoplus_{s\ge 1} \gr^{s}_{\Phi} (G). 
\end{equation} 
Using identities \eqref{eq:bilg} and \eqref{eq:jacobi}, 
it is readily verified that $\gr_{\Phi} (G)$ has the structure 
of a graded Lie algebra, with Lie bracket $[\:,\:]$ induced 
by the group commutator
(there are no extra signs in the Lie identities!). 

The most basic example of this construction is provided 
by the lower central series, with terms 
$\Gamma^s=\Gamma^s(G)$ defined inductively by 
$\Gamma^1 =G$ and $\Gamma^{s+1} =(\Gamma^s ,G)$. 
Note that $(\Gamma^s, \Gamma^t) \subseteq \Gamma^{s+t}$.  
The resulting Lie algebra, $\gr_{\Gamma} (G)$, 
is called the {\em associated graded Lie algebra}\/ of $G$.
Note that $\gr^{1}_{\G} (G)$ coincides with the abelianization 
$G_{\ab} = G/G'$, and that $\gr_{\Gamma} (G)$ 
is generated (as a Lie algebra) by this degree $1$ piece.
Clearly,  $\gr_{\Gamma}$ is a functor from groups to graded 
Lie algebras.

Given any filtration $\{\Phi^s\}_{s\ge 1}$ as in \eqref{eq:Phi}, 
we have $\Gamma^s\subseteq \Phi^s$, for all $s\ge 1$.  
Thus, there is a canonical map  
\begin{equation}
\label{eq:iota phi}
\xymatrix{\iota_{\Phi} \colon \gr_{\Gamma} (G) \ar[r]  
& \gr_{\Phi} (G)}.
\end{equation}
Clearly, $\iota_{\Phi}$ is a morphism of graded Lie algebras. 
In degree $1$, the map $\iota_{\Phi}$ is surjective.  For higher 
degrees, though, $\iota_{\Phi}$ is neither injective nor surjective 
in general. 

To illustrate these concepts, consider the free group 
on $n$ generators, denoted $F_n$.   Work of P.~Hall, E.~Witt, 
and W.~Magnus from the 1930s (see \cite{MKS}) elucidated 
the structure of the associated graded Lie algebra of $F_n$.  
We summarize those results, as follows.  

\begin{theorem} 
\label{thm:free lie}
For all $n\ge 1$,
\begin{enumerate}
\item \label{fl1}
The group $F_n$ is residually nilpotent, i.e., 
$\bigcap_{s\ge 1} \Gamma^s (F_n) =\{1\}$. 
\item \label{fl2}
The  Lie algebra $\gr_{\Gamma}(F_n)$ is 
isomorphic to the free Lie algebra $\L_n=\Lie(\Z^n)$.
\item  \label{fl3}
For each $s\ge 1$, the group $\gr^s_{\Gamma}(F_n)$ is 
free abelian. 
\item  \label{fl4}
If $n\ge 2$, the center of $\L_n=\gr_{\Gamma}(F_n)$ is trivial,
and $\gr_{\Gamma}^s(F_n) \ne 0$, for all $s\ge 1$.
\end{enumerate}
\end{theorem}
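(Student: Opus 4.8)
The plan is to establish the four assertions about $\gr_{\Gamma}(F_n)$ by combining the Hall--Witt--Magnus description of the free Lie algebra with the Magnus embedding of $F_n$ into the ring of formal power series in noncommuting variables. First I would recall the Magnus homomorphism $\mu\colon F_n \to \widehat{\Lambda}_n = \Z\brac{X_1,\dots,X_n}$, sending the $i$-th free generator $x_i$ to $1+X_i$; the grading on $\widehat{\Lambda}_n$ by total degree filters the target, and a classical calculation (e.g.\ \cite{MKS}) shows that $\mu(\Gamma^s(F_n)) \subseteq 1 + \widehat{\Lambda}_n^{\ge s}$, with the induced map on the $s$-th graded piece factoring through $\gr_{\Gamma}^s(F_n)$. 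The fundamental theorem of Magnus--Witt states that $\mu$ is injective, and that the associated graded of this filtered map identifies $\gr_{\Gamma}(F_n)$ with the Lie subalgebra of $\widehat{\Lambda}_n$ generated by $X_1,\dots,X_n$, which is precisely the free Lie algebra $\L_n = \Lie(\Z^n)$; this gives part \eqref{fl2}.

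From there the remaining parts follow with little extra work. For part \eqref{fl1}, the inclusion $\mu(\Gamma^s(F_n)) \subseteq 1+\widehat{\Lambda}_n^{\ge s}$ together with injectivity of $\mu$ forces $\bigcap_{s\ge 1}\Gamma^s(F_n) \subseteq \mu^{-1}(\bigcap_s (1+\widehat{\Lambda}_n^{\ge s})) = \mu^{-1}(\{1\}) = \{1\}$, so $F_n$ is residually nilpotent. For part \eqref{fl3}, one invokes the explicit basis for the homogeneous components of a free Lie algebra over $\Z$ --- namely the basic commutators (Hall basis) of weight $s$ in the generators --- which shows that $\L_n^s = \gr_{\Gamma}^s(F_n)$ is a free abelian group (of rank given by Witt's necklace formula, though the precise rank is not needed here). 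For part \eqref{fl4}, assuming $n\ge 2$: Witt's formula shows the rank of $\L_n^s$ is strictly positive for every $s\ge 1$, so $\gr_{\Gamma}^s(F_n)\ne 0$; and the triviality of the center of the free Lie algebra on at least two generators is a standard fact --- if $0\ne u$ lies in the center, pick a generator $X_i$ not "dividing" the leading term of $u$ appropriately and compute $[u, X_i]\ne 0$ using that $\L_n$ sits inside the free associative algebra, where $[\,\cdot\,,\,\cdot\,]$ is the commutator of noncommuting polynomials and one can read off nonvanishing from the coefficient of a suitable monomial.

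The main obstacle --- really the only nontrivial input --- is the Magnus--Witt identification in part \eqref{fl2}, i.e.\ that the natural surjection $\L_n = \Lie(\Z^n) \surj \gr_{\Gamma}(F_n)$ (coming from freeness of $F_n$, since $\gr_{\Gamma}$ is a functor and $(F_n)_{\ab}=\Z^n$) is actually an isomorphism. The surjectivity is formal; the injectivity is where one must show that no "extra" relations appear in the lower central series quotients beyond those forced by the Jacobi and anticommutativity identities. This is exactly what the Magnus embedding delivers: the filtered injection $\mu$ induces an injection on associated graded objects, and the image is known to be the free Lie algebra on the $X_i$. Since this is classical and documented in \cite{MKS}, I would simply cite it rather than reprove it; the proof of the theorem then amounts to assembling these standard facts and noting the elementary consequences \eqref{fl1}, \eqref{fl3}, \eqref{fl4} as above.
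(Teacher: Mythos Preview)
Your proposal is correct and outlines exactly the classical argument one finds in \cite{MKS}. Note, however, that the paper does not supply its own proof of this theorem: it is stated purely as a summary of the Hall--Witt--Magnus results, with a reference to \cite{MKS}. So there is nothing to compare against beyond observing that your sketch is the standard route to these facts, and that your final remark --- that one should simply cite \cite{MKS} rather than reprove the Magnus--Witt identification --- is precisely what the paper does.
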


\subsection{The Johnson filtration}
\label{subsec:jfilt}

Returning to the general situation, let $G$ be a group, and let 
$\Aut(G)$ be its automorphism group, with group operation 
$\alpha\cdot \beta:=\alpha\circ \beta$.   Note that each term 
in the lower central series of $G$ is a characteristic subgroup, 
i.e., $\alpha(\Gamma^s) = \Gamma^s$, for all automorphisms 
$\alpha\in \Aut(G)$.  

\begin{definition}
\label{def:jfilt}
The {\em Johnson filtration}\/ is the decreasing filtration 
$\{F^s\}_{s\ge 0}$ on $\Aut(G)$ given by 
\begin{equation}
\label{eq:jfilt}
F^s (\Aut(G))= \{ \alpha \in \Aut (G) 
\mid \alpha \equiv \id\: \bmod\: \Gamma^{s+1}(G)\}.
\end{equation}
\end{definition}

In other words, an automorphism $\alpha\in \Aut(G)$ 
belongs to the subgroup $F^s$ if and only if 
$\alpha(x)\cdot x^{-1} \in \Gamma^{s+1}(G)$, 
for all $x\in G$.  In fact, as noted in the proof of
Proposition 2.1 from \cite{Pa}, slightly more is true: 
if $\alpha\in F^s$ and $x\in \Gamma^t(G)$, then 
$\alpha(x)\cdot x^{-1} \in \Gamma^{s+t}(G)$. 

Since $\Gamma^{s+1}(G)$ is a characteristic subgroup, 
reducing modulo this subgroup yields a map 
$\kappa_s\colon \Aut(G) \to \Aut(G/\Gamma^{s+1}(G))$.  
It is readily seen that $\kappa_s$ is a group homomorphism, 
and $F^s = \ker (\kappa_s)$.  Hence, $F^s$ is a normal 
subgroup of $\Aut(G)$. 

A result of Kaloujnine \cite{Ka} gives that 
$(F^s, F^t) \subseteq F^{s+t}$, 
for all $s, t \ge 0$.  
Clearly, $F^{0}=\Aut(G)$.  The next term in the Johnson 
filtration is one of our key objects of study. 

\begin{definition}
\label{def:torelli}
The {\em Torelli group}\/ of $G$ is the subgroup 
$\T_G=\ker(\Aut(G)\to \Aut(G_{\ab}))$ of $\Aut(G)$ 
consisting of all automorphisms of $G$ inducing 
the identity on $G_{\ab}$.
\end{definition}

By construction, the Torelli group $F^{1}=\T_G$ 
is a normal subgroup of $F^{0}=\Aut(G)$. For 
reasons that will become apparent later on, we call the 
quotient group, $\A(G)=F^{0}/F^{1}$, the {\em symmetry group}\/ 
of $\T_G$.  Clearly, $\A(G)=\im(\Aut(G)\to \Aut(G_{\ab}))$, 
and we have a short exact sequence
\begin{equation}
\label{eq:autgseq}
\xymatrix{
1\ar[r] &\T_G \ar[r] &\Aut(G) \ar[r] & \A(G) \ar[r] & 1
}.
\end{equation}

The Torelli group inherits the Johnson filtration 
$\{F^s(\T_G)\}_{s\ge 1}$ from $\Aut(G)$. The 
corresponding graded Lie algebra, $\gr_{F} (\T_G)$, 
admits an action of $\A(G)$, defined as follows.  
Pick a homogeneous element $\bar{\alpha}\in \gr_{F}^{s} (\T_G)$, 
represented by an automorphism $\alpha\in F^s$, 
and let $\bar{\sigma} \in \A(G)$, represented by an 
automorphism $\sigma\in \Aut(G)$.  Set
\begin{equation}
\label{eq:sigma alpha}
\bar{\sigma} \cdot \bar{\alpha} = \overline{\sigma \alpha \sigma^{-1}}. 
\end{equation}
It is readily verified that this action does not depend on the choices 
made, and that it preserves the graded Lie algebra structure on 
$\gr_{F} (\T_G)$.

The Torelli group also comes endowed with its own lower 
central series filtration, $\{\Gamma^s(\T_G)\}$.  
In turn, the associated graded Lie algebra, $\gr_{\Gamma} (\T_G)$, 
admits an action of $\A(G)$, defined in a similar manner.
It is now easily checked that the morphism 
$\iota_{F} \colon \gr_{\Gamma} (\T_G) \to \gr_{F} (\T_G)$ is 
equivariant with respect to the action of $\A(G)$ on the source 
and target Lie algebras.

\subsection{The Johnson homomorphism}
\label{subsec:jh}

Given a graded Lie algebra $\g$, define the graded Lie 
algebra of positively-graded derivations of $\g$ as 
\begin{equation}
\label{eq:dplus}
\Der(\g) = \bigoplus_{s\ge 1} \Der^s(\g),
\end{equation}
where $\Der^s(\g)$ is the set of linear maps 
$\delta\colon \g^{\bullet} \to \g^{\bullet+s}$ with the property that 
$\delta [x,y]= [\delta x, y]+ [x, \delta y]$ for all $x,y\in \g$,  
and with the Lie bracket on $\Der(\g)$ given by 
$[\delta, \delta']= \delta \circ\delta'- \delta' \circ \delta$. 

The adjoint map, $\ad\colon \g \to \Der(\g)$, sends each 
$x\in \g$ to the inner derivation $\ad_x \colon \g \to \g$  
given by $\ad_x(y)=[x,y]$.   It is readily seen that 
the adjoint map is a morphism of graded Lie algebras, 
with kernel equal to the center $Z(\g)$ of $\g$.  

The following theorem/definition generalizes Johnson's 
original construction from \cite{J80}.

\begin{theorem}[\cite{Pa}]
\label{thm:johnson-hom}
For every group $G$, there is a well-defined map 
\begin{equation}
\label{eq:jhom}
\xymatrix{J \colon \gr_{F} (\T_G) \ar[r]  & \Der (\gr_{\Gamma} (G))},
\end{equation}
given on homogeneous elements $\alpha\in F^s(\T_G)$ and 
$x\in \Gamma^t(G)$ by 
\begin{equation}
\label{eq:jdef}
J (\bar{\alpha}) (\bar{x}) = \overline{\alpha(x) \cdot x^{-1}}.
\end{equation}
Moreover, $J$ is a monomorphism of graded Lie algebras. 
\end{theorem}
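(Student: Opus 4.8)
The plan is to verify that the formula \eqref{eq:jdef} does define a map with the stated properties, in three stages: well-definedness, derivation property, and injectivity (the Lie algebra morphism property being the most routine of the lot, though it does require the Witt--Hall identities).

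\medskip

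\textbf{Well-definedness.} First I would check that for $\alpha\in F^s(\T_G)$ and $x\in\Gamma^t(G)$, the element $\alpha(x)\cdot x^{-1}$ actually lies in $\Gamma^{s+t}(G)$, so that $\overline{\alpha(x)\cdot x^{-1}}\in\gr^{s+t}_{\Gamma}(G)$; this is exactly the refined statement quoted from Proposition~2.1 of \cite{Pa} right after Definition~\ref{def:jfilt}. Next I would show the value depends only on the class $\bar x=x\Gamma^{t+1}(G)$, not on the representative $x$: if $x'=x\gamma$ with $\gamma\in\Gamma^{t+1}(G)$, then $\alpha(x')x'^{-1}=\alpha(x)\alpha(\gamma)\gamma^{-1}x^{-1}$, and since $\alpha(\gamma)\gamma^{-1}\in\Gamma^{s+t+1}(G)$ and commutators of $\Gamma^{t}$-elements with $\Gamma^{s}$-type corrections land in $\Gamma^{s+t+1}(G)$, a short manipulation using \eqref{eq:bilg} shows $\alpha(x')x'^{-1}\equiv\alpha(x)x^{-1}\bmod\Gamma^{s+t+1}(G)$. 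Finally I would check the value depends only on the class $\bar\alpha=\alpha F^{s+1}$: if $\alpha'=\alpha\beta$ with $\beta\in F^{s+1}$, then $\alpha'(x)x^{-1}=\alpha(\beta(x))x^{-1}=\alpha(x\cdot(\beta(x)x^{-1}))x^{-1}$, and since $\beta(x)x^{-1}\in\Gamma^{s+t+1}(G)$ and $\alpha$ preserves each $\Gamma^k$, expanding with \eqref{eq:bilg} gives $\alpha'(x)x^{-1}\equiv\alpha(x)x^{-1}\bmod\Gamma^{s+t+1}(G)$. So $J(\bar\alpha)$ is a well-defined additive map $\gr^t_{\Gamma}(G)\to\gr^{s+t}_{\Gamma}(G)$ for each $t$ (additivity in $\bar x$ again being an application of \eqref{eq:bilg} modulo the appropriate filtration term).

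\medskip

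\textbf{Derivation property and Lie morphism.} To see $J(\bar\alpha)\in\Der^s(\gr_\Gamma(G))$, I would compute $J(\bar\alpha)([\bar x,\bar y])=\overline{\alpha((x,y))\cdot(x,y)^{-1}}$ for $x\in\Gamma^t$, $y\in\Gamma^u$, write $\alpha(x)=xa$, $\alpha(y)=yb$ with $a\in\Gamma^{s+t}$, $b\in\Gamma^{s+u}$, so $\alpha((x,y))=(xa,yb)$, and then expand $(xa,yb)(x,y)^{-1}$ using the bilinearity identity \eqref{eq:bilg}: modulo $\Gamma^{s+t+u+1}(G)$ this collapses to the product of $(a,y)$ and $(x,b)$ (all other terms being commutators of too-high weight), which is precisely $[J(\bar\alpha)(\bar x),\bar y]+[\bar x,J(\bar\alpha)(\bar y)]$ in $\gr_\Gamma(G)$. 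The same bookkeeping, applied to a product $\alpha\beta$ of two Torelli automorphisms in filtration degrees $s$ and $s'$, shows $J(\overline{\alpha\beta})=J(\bar\alpha)+J(\bar\beta)$ on the nose while $J$ of the commutator $(\alpha,\beta)$ computes to $J(\bar\alpha)\circ J(\bar\beta)-J(\bar\beta)\circ J(\bar\alpha)$ modulo the next filtration term, giving $J[\bar\alpha,\bar\beta]=[J(\bar\alpha),J(\bar\beta)]$; here \eqref{eq:jacobi} is what guarantees the signs work out with no correction.

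\medskip

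\textbf{Injectivity — the main obstacle.} Suppose $J(\bar\alpha)=0$ for $\alpha\in F^s(\T_G)$; I must show $\alpha\in F^{s+1}$, i.e.\ $\alpha(x)x^{-1}\in\Gamma^{s+2}(G)$ for all $x\in G$. The hypothesis $J(\bar\alpha)=0$ says $\alpha(x)x^{-1}\in\Gamma^{s+2}(G)$ whenever $x$ ranges over a \emph{generating set} of $G$ (taking $t=1$); the work is to bootstrap this from generators to all of $G$, and up the lower central series, by induction. The key inductive tool is again the cocycle-type identity coming from \eqref{eq:bilg}: if $\alpha(x)x^{-1}$ and $\alpha(y)y^{-1}$ both lie in $\Gamma^{k}(G)$ with $k\ge s+2$, then writing $\alpha(xy)(xy)^{-1}=\alpha(x)\cdot\alpha(y)y^{-1}\cdot x^{-1}\cdot(x,\,\cdot\,)$-corrections, one checks $\alpha(xy)(xy)^{-1}\in\Gamma^{k}(G)$ as well, and similarly for inverses; so the set $\{x: \alpha(x)x^{-1}\in\Gamma^{k}(G)\}$ is a subgroup. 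Since it contains a generating set of $G$ when $k=s+2$, it is all of $G$, which is exactly $\alpha\in F^{s+1}$. (One should note that this last step is where residual nilpotence is morally in play, but in fact for injectivity of $J$ in each fixed degree we only need the finite-stage statement just given; no intersection over all $s$ is required.) I expect this closure-under-group-operations argument — getting the right $\Gamma$-weight estimates on the error terms in $\alpha(xy)(xy)^{-1}$ — to be the fiddliest part, and it is essentially the content of the cited Proposition~2.1 of \cite{Pa}, so in the write-up I would lean on that reference rather than redo the weight bookkeeping from scratch.
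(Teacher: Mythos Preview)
The paper does not prove this result; it is quoted from \cite{Pa} with no argument supplied, so there is no in-paper proof to compare against. Your outline is the standard one, and the well-definedness and derivation/Lie-bracket verifications are correctly sketched.

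Your injectivity argument, however, is substantially overcomplicated. You write that $J(\bar\alpha)=0$ gives $\alpha(x)x^{-1}\in\Gamma^{s+2}(G)$ only ``whenever $x$ ranges over a generating set of $G$'', and then propose to bootstrap to all of $G$ via a subgroup-closure argument. But taking $t=1$ means $x\in\Gamma^1(G)=G$: \emph{every} element $x\in G$ represents a class $\bar x\in\gr^1_\Gamma(G)$, and $J(\bar\alpha)(\bar x)=\overline{\alpha(x)x^{-1}}=0$ in $\gr^{s+1}_\Gamma(G)$ says directly that $\alpha(x)x^{-1}\in\Gamma^{s+2}(G)$ for that $x$. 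Since this holds for every $x\in G$, we have $\alpha\in F^{s+1}$ by definition, hence $\bar\alpha=0$ in $\gr^s_F(\T_G)$. Injectivity is therefore immediate from the definitions once the map is well-defined---no bootstrap, no subgroup closure, and certainly no residual-nilpotence considerations are needed. Your extra argument is not wrong, merely unnecessary.
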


Next, we compare the two natural filtrations on the Torelli 
group $\T_G$:  the  lower central series filtration, $\Gamma^s(\T_G)$, 
and the Johnson filtration, $F^s(\T_G)$.  From the discussion 
in \S\ref{subsec:grg}, we know that there always exist inclusions 
$\Gamma^s (\T_G) \inj F^s (\T_G)$ inducing homomorphisms 
$\iota_F\colon \gr^s_{\G}(\T_G) \to \gr^s_F(\T_G)$, for all $s\ge 1$. 
In general, the two filtrations are not equal.  Nevertheless, the 
next result provides a necessary and sufficient condition (in terms 
of the Johnson homomorphism) under which the two filtrations 
coincide, up to a certain degree. 

\begin{theorem}
\label{thm:two filt}
Let $G$ be a group. For each $q\ge 1$, the following are equivalent:
\begin{enumerate} 
\item \label{gf1}
 $J\circ \iota_{F}\colon \gr^s_{\G}(\T_G) \to \Der^s (\gr_{\Gamma} (G))$ 
is injective, for all $s\le q$. 
\item \label{gf2} $\Gamma^s(\T_G)= F^s(\T_G)$, for all $s\le q+1$. 
\end{enumerate}
\end{theorem}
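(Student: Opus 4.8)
The plan is to prove the two implications \eqref{gf1}$\Rightarrow$\eqref{gf2} and \eqref{gf2}$\Rightarrow$\eqref{gf1} by a simultaneous induction on the degree $s$, using at each stage the commuting square relating the maps $\iota_F$, $\iota_\Gamma$ (the canonical map into the associated graded of the lower central series, which is the identity here since we grade $\T_G$ by its own $\Gamma$-filtration) and $J$. The key structural input is Theorem \ref{thm:johnson-hom}: $J\colon \gr_F(\T_G)\to\Der(\gr_\Gamma(G))$ is a \emph{monomorphism} of graded Lie algebras. Consequently $J\circ\iota_F$ is injective in degree $s$ if and only if $\iota_F$ itself is injective in degree $s$. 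So the whole statement reduces to: $\iota_F\colon\gr^s_\Gamma(\T_G)\to\gr^s_F(\T_G)$ is injective for all $s\le q$ if and only if $\Gamma^s(\T_G)=F^s(\T_G)$ for all $s\le q+1$. This is really a statement about the pair consisting of the group $\T_G$ and the fixed subgroup filtration $\{F^s(\T_G)\}$ satisfying $(F^s,F^t)\subseteq F^{s+t}$, with $\Gamma^1(\T_G)=F^1(\T_G)=\T_G$.

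First I would establish the ``easy'' direction \eqref{gf2}$\Rightarrow$\eqref{gf1}: if $\Gamma^s(\T_G)=F^s(\T_G)$ for all $s\le q+1$, then $\gr^s_\Gamma(\T_G)=\Gamma^s/\Gamma^{s+1}=F^s/F^{s+1}=\gr^s_F(\T_G)$ for all $s\le q$, with $\iota_F$ the identity; hence $\iota_F$, and therefore $J\circ\iota_F$, is injective in those degrees. For the converse, I would argue by induction on $s$, proving simultaneously that $\Gamma^s(\T_G)=F^s(\T_G)$. The base case $s=1$ is the definition $\Gamma^1(\T_G)=\T_G=F^1(\T_G)$. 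For the inductive step, suppose $\Gamma^t(\T_G)=F^t(\T_G)$ for all $t\le s$, where $s\le q$; we want $\Gamma^{s+1}(\T_G)=F^{s+1}(\T_G)$. The inclusion $\Gamma^{s+1}(\T_G)\subseteq F^{s+1}(\T_G)$ is automatic. For the reverse inclusion, take $\alpha\in F^{s+1}(\T_G)$. Then $\alpha\in F^s(\T_G)=\Gamma^s(\T_G)$ by the inductive hypothesis, so $\alpha$ represents a class $\bar\alpha\in\gr^s_\Gamma(\T_G)$. Since $\alpha\in F^{s+1}$, the class $\iota_F(\bar\alpha)\in\gr^s_F(\T_G)=F^s/F^{s+1}$ is zero. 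By hypothesis \eqref{gf1}, $J\circ\iota_F$ is injective in degree $s$ (here we use $s\le q$), and since $J$ is injective, $\iota_F$ is injective in degree $s$; hence $\bar\alpha=0$ in $\gr^s_\Gamma(\T_G)$, i.e.\ $\alpha\in\Gamma^{s+1}(\T_G)$. This closes the induction and gives \eqref{gf2}.

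The main obstacle — and the point requiring the most care — is the logical bookkeeping of the induction: at stage $s$ the argument that $\alpha\in F^{s+1}$ maps to $0$ under $\iota_F$ \emph{presupposes} that $\alpha$ already lies in $\Gamma^s(\T_G)$, which is exactly the content of the inductive hypothesis at the previous stage, so the simultaneous induction must be set up so that the equalities $\Gamma^t=F^t$ for $t\le s$ are available before analyzing degree $s$. One should also be careful that the injectivity hypothesis \eqref{gf1} is only assumed up to degree $q$, so the induction can only push the equality $\Gamma^s=F^s$ up to $s=q+1$, which matches the range in \eqref{gf2}; no further. I expect no serious analytic difficulty beyond this: everything else is the formal interplay between $\iota_F$ being injective in a degree and the corresponding graded pieces of the two filtrations agreeing, together with the already-proven injectivity of the Johnson homomorphism $J$.
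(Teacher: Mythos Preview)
Your proposal is correct and follows essentially the same approach as the paper's proof: both reduce the statement to the injectivity of $\iota_F$ via the injectivity of $J$ (Theorem~\ref{thm:johnson-hom}), prove \eqref{gf2}$\Rightarrow$\eqref{gf1} by noting that equality of filtrations forces $\iota_F$ to be an isomorphism in the relevant degrees, and prove \eqref{gf1}$\Rightarrow$\eqref{gf2} by the same induction on $s$ with base case $s=1$ and inductive step using injectivity of $\iota_F$ in degree $s$ to upgrade $\Gamma^s=F^s$ to $\Gamma^{s+1}=F^{s+1}$. The only cosmetic difference is that the paper phrases the inductive step via a commuting diagram of short exact sequences, while you do the equivalent element chase directly.
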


\begin{proof}
To prove \eqref{gf1} $\Rightarrow$ \eqref{gf2}, we use induction 
on $s$, for $s\le q+1$.  For $s=1$, we have $\Gamma^1(\T_G)=F^1(\T_G)=\T_G$. 
Assume now that $\Gamma^s (\T_G) =F^s (\T_G)$, for some $s\le q$, 
and consider the following commuting diagram:
\begin{equation}
\label{eq:tgprime}
\xymatrix{
1\ar[r] & \Gamma^{s+1}(\T_G) \ar[r] \ar@{^{(}->}[d] 
& \Gamma^{s}(\T_G) \ar[r] \ar^{=}[d]
& \gr^s_{\G}(\T_G) \ar[r]  \ar@{^{(}->}^{\iota_F}[d]&  1\,\phantom{.}\\
1\ar[r] & F^{s+1}(\T_G)  \ar[r] &  F^{s}(\T_G) \ar[r] 
& \gr^s_{F}(\T_G) \ar[r] &  1\,.
}
\end{equation}

The assumption that 
$J\circ \iota_{F}$ is injective in degree $s$ implies that 
$\iota_F$ is injective in degree $s$.   From diagram \eqref{eq:tgprime}, 
we conclude that the inclusion $\Gamma^{s+1}(\T_G) \inj F^{s+1}(\T_G)$ 
is an equality, thereby finishing the induction step.

We now prove \eqref{gf2} $\Rightarrow$ \eqref{gf1}.  Looking 
again at diagram \eqref{eq:tgprime}, the assumption that 
$\Gamma^s(\T_G)= F^s(\T_G)$ for all $s\le q+1$ implies that 
$\iota_F$ is an isomorphism in degrees $s\le q$. By 
Theorem \ref{thm:johnson-hom}, then, the map 
$J\circ \iota_{F}$ is injective in degrees $s\le q$.
\end{proof}

\subsection{Equivariance of $J$}
\label{subsec:equiv johnson}
Recall that the symmetry group $\A(G)=\im(\Aut(G)\to \Aut(G_{\ab}))$ 
acts on the graded Lie algebra $\gr_F(\T_G)$ via the action 
given by \eqref{eq:sigma alpha}.  In turns out that this group 
also acts on $\Der (\gr_{\Gamma} (G))$, in a manner we 
now describe. 

First, let us define an action of $\A(G)$ on the graded 
Lie algebra $\g=\gr_{\Gamma} (G)$.  Pick an element 
$\bar{x}\in \gr_{\Gamma}^s (G)$, represented 
by $x\in \Gamma^s(G)$, and let $\bar{\sigma} \in \A(G)$, 
represented by $\sigma\in \Aut(G)$.  Set
\begin{equation}
\label{eq:sigma x}
\bar{\sigma} \cdot \bar{x} = \overline{\sigma(x)}. 
\end{equation}
It is readily verified that this action is well-defined, and that it 
preserves both the Lie bracket and the grading on $\g$.  The 
action of $\A(G)$ on $\g$ extends to $\Der(\g)$, by setting
\begin{equation}
\label{eq:der act}
\bar{\sigma} \cdot \delta = \bar{\sigma} \delta \bar{\sigma}^{-1},
\end{equation}
for any $\bar{\sigma}\in \A(G)$ and $\delta\in \Der(\g)$.  
Again, it is routine to verify that this action is well-defined, 
and preserves the graded Lie algebra structure on $\Der(\g)$.

The next proposition sharpens Theorem \ref{thm:johnson-hom}.

\begin{prop}
\label{prop:johnson equiv}
The Johnson homomorphism 
$J \colon \gr_{F} (\T_G) \to \Der (\gr_{\Gamma} (G))$ is equivariant 
with respect to the actions of $\A(G)$ on source and target defined above.
\end{prop}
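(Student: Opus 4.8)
The plan is to prove the required equivariance by a direct computation at the level of representative automorphisms, checking that the defining formula \eqref{eq:jdef} for $J$ intertwines the conjugation action \eqref{eq:sigma alpha} on $\gr_F(\T_G)$ with the conjugation action \eqref{eq:der act} on $\Der(\g)$. Since all the maps and actions involved preserve the grading, it suffices to fix a degree $s$, pick a homogeneous element $\bar\alpha \in \gr_F^s(\T_G)$ represented by $\alpha \in F^s(\T_G)$, and an element $\bar\sigma \in \A(G)$ represented by $\sigma \in \Aut(G)$, and verify the identity $J(\bar\sigma\cdot\bar\alpha) = \bar\sigma\cdot J(\bar\alpha)$ in $\Der^s(\g)$.

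First I would unwind the left-hand side. By definition, $\bar\sigma\cdot\bar\alpha = \overline{\sigma\alpha\sigma^{-1}}$, and $\sigma\alpha\sigma^{-1} \in F^s(\T_G)$ because $F^s$ is normal in $\Aut(G)$ (it is the kernel of $\kappa_s$). Applying \eqref{eq:jdef}, for a homogeneous $\bar x \in \gr_\Gamma^t(G)$ represented by $x \in \Gamma^t(G)$ we get
\begin{equation}
\label{eq:lhs}
J(\overline{\sigma\alpha\sigma^{-1}})(\bar x) = \overline{(\sigma\alpha\sigma^{-1})(x)\cdot x^{-1}} = \overline{\sigma\bigl(\alpha(\sigma^{-1}(x))\bigr)\cdot x^{-1}}.
\end{equation}
Next I would unwind the right-hand side. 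By \eqref{eq:der act}, $(\bar\sigma\cdot J(\bar\alpha))(\bar x) = \bar\sigma\bigl(J(\bar\alpha)(\bar\sigma^{-1}(\bar x))\bigr)$. Now $\bar\sigma^{-1}(\bar x) = \overline{\sigma^{-1}(x)}$ by \eqref{eq:sigma x}, and since $\sigma^{-1}(x) \in \Gamma^t(G)$ this lies in $\gr_\Gamma^t(G)$; applying \eqref{eq:jdef} gives $J(\bar\alpha)(\overline{\sigma^{-1}(x)}) = \overline{\alpha(\sigma^{-1}(x))\cdot \sigma^{-1}(x)^{-1}}$. This is represented by an element of $\Gamma^{t+s}(G)$ — indeed, since $\alpha \in F^s$ and $\sigma^{-1}(x)\in\Gamma^t(G)$, the remark following Definition~\ref{def:jfilt} (the sharpening noted from \cite{Pa}) guarantees $\alpha(\sigma^{-1}(x))\cdot\sigma^{-1}(x)^{-1} \in \Gamma^{s+t}(G)$. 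Applying $\bar\sigma$ via \eqref{eq:sigma x} then yields
\begin{equation}
\label{eq:rhs}
(\bar\sigma\cdot J(\bar\alpha))(\bar x) = \overline{\sigma\bigl(\alpha(\sigma^{-1}(x))\cdot\sigma^{-1}(x)^{-1}\bigr)} = \overline{\sigma\bigl(\alpha(\sigma^{-1}(x))\bigr)\cdot \sigma(\sigma^{-1}(x))^{-1}} = \overline{\sigma\bigl(\alpha(\sigma^{-1}(x))\bigr)\cdot x^{-1}},
\end{equation}
using that $\sigma$ is a homomorphism and $\sigma(\sigma^{-1}(x)) = x$. Comparing \eqref{eq:lhs} and \eqref{eq:rhs} gives the desired equality.

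Beyond this core identity, I would include the routine bookkeeping needed to make the argument rigorous: that the action \eqref{eq:sigma x} of $\A(G)$ on $\g$ is well-defined and that \eqref{eq:der act} indeed lands in $\Der(\g)$ — both are asserted in the text preceding the proposition and can be invoked — and that $J(\bar\sigma\cdot\bar\alpha)$ and $\bar\sigma\cdot J(\bar\alpha)$ are both genuine elements of $\Der^s(\g)$, which follows from Theorem~\ref{thm:johnson-hom} applied to $\sigma\alpha\sigma^{-1}\in F^s$ together with the fact that $\bar\sigma\cdot(-)$ is a Lie algebra automorphism of $\Der(\g)$. I do not expect a genuine obstacle here: the only subtlety is the passage from coset representatives in $\Gamma^{s+t}(G)/\Gamma^{s+t+1}(G)$ back and forth, i.e.\ checking that every expression written inside an overline really represents a class in the correct graded piece; this is exactly where the refined statement "if $\alpha\in F^s$ and $x\in\Gamma^t(G)$ then $\alpha(x)x^{-1}\in\Gamma^{s+t}(G)$" is used, and once that is in hand the computation above is purely formal. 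Thus the "hard part" is merely keeping the indices and the well-definedness conditions straight, not any deep structural input.
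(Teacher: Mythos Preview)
Your proposal is correct and follows essentially the same approach as the paper: a direct unwinding of the definitions, picking representatives $\sigma\in\Aut(G)$, $\alpha\in F^s$, $x\in\Gamma^t$, and verifying that $J(\overline{\sigma\alpha\sigma^{-1}})(\bar x)$ and $\bar\sigma J(\bar\alpha)\bar\sigma^{-1}(\bar x)$ both reduce to $\overline{\sigma(\alpha(\sigma^{-1}(x)))\cdot x^{-1}}$. The paper's version compresses this into a single chain of equalities and omits the well-definedness bookkeeping you spell out, but the argument is the same.
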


\begin{proof}
Let $\bar{\sigma}\in \A(G)$, represented by $\sigma \in \Aut (G)$, 
and $\bar{\alpha}\in \gr_F^s (\T_G)$, represented by 
$\alpha\in F^s$.  We need to verify  that
$J(\overline{\sigma\alpha\sigma^{-1}})= 
\bar{\sigma} J(\bar{\alpha}) \bar{\sigma}^{-1}$.
Taking an arbitrary $x\in \Gamma^t$ and evaluating 
on $\bar{x}\in \gr_{\Gamma}^t (G)$, we get
\[
J(\overline{\sigma\alpha\sigma^{-1}})(\bar{x}) = 
\overline{\sigma\alpha\sigma^{-1}(x) \cdot x^{-1}} =
\overline{\sigma (\alpha(\sigma^{-1} (x)) \cdot 
\sigma^{-1} (x)^{-1})} = \bar{\sigma} J(\bar{\alpha}) 
\bar{\sigma}^{-1} (\bar{x}),
\]
and this finishes the proof.
\end{proof}

\section{The outer Torelli group}
\label{sect:outer torelli}

In this section, we study the quotient of the Torelli group 
by the subgroup of inner automorphisms. 

\subsection{The outer Torelli group}
\label{subsec:out torelli}

Let $\Ad\colon G\to \Aut(G)$ be the adjoint map, sending 
an element $x\in G$ to the inner automorphism $\Ad_x\colon G\to G$, 
$y\mapsto xyx^{-1}$.  Clearly, $\Ad_{xy}=\Ad_x \Ad_y$, and so 
$\Ad$ is a homomorphism; its kernel is the center of $G$, while 
its image is the group of inner automorphisms, $\Inn(G)$.  

The adjoint homomorphism is equivariant with respect to 
the action of $\Aut(G)$ on source (by evaluation), and target 
(by conjugation); that is, 
\begin{equation}
\label{eq:adequiv}
\Ad_{\alpha(x)}=\alpha\circ \Ad_x \circ \alpha^{-1},
\end{equation}
for all $\alpha\in \Aut (G)$ and $x\in G$.  Consequently, $\Inn(G)$ 
is a normal subgroup of $\Aut(G)$. Let $\Out(G)$ be the factor group.  
We then have an exact sequence,
\begin{equation}
\label{eq:outg}
\xymatrix{
1\ar[r] &\Inn(G) \ar[r] &\Aut(G) \ar^{\pi}[r] & \Out(G) \ar[r] & 1
}.
\end{equation}

The Johnson filtration $\{F^s\}_{s\ge 0}$ on $\Aut(G)$ 
yields a filtration $\{\wF^s\}_{s\ge 0}$  on $\Out(G)$, 
by setting $\wF^s:=\pi(F^s)$.  Clearly, 
$(\wF^s, \wF^t) \subseteq \wF^{s+t}$, 
for all $s, t \ge 0$.  

\begin{definition}
\label{def:outer torelli}
The {\em outer Torelli group}\/ of $G$ is the subgroup 
$\wT_G=\wF^{1}$ of $\Out(G)$, consisting of all outer 
automorphisms inducing the identity on $G_{\ab}$.
\end{definition}

By construction, the outer Torelli group $\wF^{1}=\wT_G$ 
is a normal subgroup of $\wF^{0}=\Out(G)$.  The quotient group,  
$\wF^{0}/\wF^{1}=\im(\Out(G)\to \Aut(G_{\ab}))$, is isomorphic 
to $\A(G)$, and we have a short exact sequence
\begin{equation}
\label{eq:outgseq}
\xymatrix{
1\ar[r] &\wT_G \ar[r] &\Out(G) \ar[r] & \A(G) \ar[r] & 1
}.
\end{equation}
It is readily seen that $\Inn(G)$ is a normal subgroup of $\T_G$, and   
the quotient group, $\T_G/\Inn(G)$, is isomorphic to $\wT_G$.

The outer Torelli group inherits the Johnson filtration 
$\{\wF^s(\wT_G)\}_{s\ge 1}$ from $\Out(G)$. The 
corresponding graded Lie algebra, $\gr_{\wF} (\wT_G)$, 
admits an action of $\A(G)$, defined as in \eqref{eq:sigma alpha}. 
Evidently, the canonical projection $\pi\colon \Aut(G)\surj \Out(G)$ 
induces an epimorphism of graded Lie algebras, 
$\bar{\pi}\colon \gr_{F} (\T_G)\surj \gr_{\wF} (\wT_G)$, 
which is $\A(G)$-equivariant with respect to the given actions. 

The conjugation action of $\Out (G)$ on $\wT_G$ 
induces an action of $\A(G)$ on 
$\gr_{\Gamma}(\wT_G)$, preserving the graded Lie algebra 
structure.  Moreover, both 
$\iota_{\wF}\colon \gr_{\Gamma}(\wT_G) \to \gr_{\wF} (\wT_G)$ and
$\gr_{\Gamma} (\pi)\colon \gr_{\Gamma}(\T_G) \surj \gr_{\Gamma}(\wT_G)$ 
are $\A(G)$-equivariant, and the following diagram commutes in the 
category of graded Lie algebras endowed with compatible 
$\A(G)$-module structures:
\begin{equation}
\label{eq:iotapi cd}
\xymatrixcolsep{36pt}
\xymatrix{
\gr_{\G} (\T_G) \ar^(.5){\iota_F}[r]  \ar@{->>}^(.46){\gr_{\G}(\pi)}[d]
&\gr_{F}(\T_G) \ar@{->>}^{\bar{\pi}}[d]\\
\gr_{\G} (\wT_G) \ar^(.5){\iota_{\wF}}[r]  
&\gr_{\wF}(\wT_G) \,.
}
\end{equation}

\subsection{Adjoint homomorphism and Johnson filtration}
\label{subsec:ses}

The image of the homomorphism $\Ad\colon G\to \Aut(G)$ is 
the inner automorphism group $\Inn(G)$, which is contained 
in the Torelli group $\T_G$.  The co-restriction 
$\Ad\colon G\to \T_G$ has good compatibility properties 
with respect to the filtrations $\Gamma^s=\Gamma^s(G)$ 
on source and $\Gamma^s(\T_G)$ and $F^s(\T_G)$ on target.  
More precisely, we have the following lemma. In \eqref{ad2} below,
we will be mainly interested in the particular case when $A$ is 
$\Ad\colon G\to \T_G$.

\begin{lemma}
\label{lem:adx}
Let $G$ be a group, and let $A\colon G\to \T$ be a group 
homomorphism.
\begin{enumerate}
\item \label{ad1}
If $x\in \Gamma^s$, then $\Ad_x \in F^s(\T_G)$.

\item \label{ad2}
Suppose $\gr_{\G} (A)\colon \gr_{\G} (G) \to \gr_{\G} (\T)$ is 
injective.  If $A(x) \in \Gamma^s(\T)$, then $x\in \Gamma^s (G)$.

\item \label{ad3}
Suppose $\gr_{\G}(G)$ has trivial center. If 
$\Ad_x \in F^s(\T_G)$, then $x\in \Gamma^s$.
\end{enumerate}
\end{lemma}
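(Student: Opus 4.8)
The plan is to prove the three parts in order, since each rests on the machinery already set up. For part \eqref{ad1}, I would argue directly from the definition of the Johnson filtration: I must show $\Ad_x(y)\cdot y^{-1}\in\Gamma^{s+1}(G)$ for every $y\in G$, whenever $x\in\Gamma^s$. But $\Ad_x(y)\cdot y^{-1}=xyx^{-1}y^{-1}=(x,y)$, which lies in $(\Gamma^s,\Gamma^1)\subseteq\Gamma^{s+1}$ by the basic property of the lower central series. Hence $\Ad_x\in F^s(\T_G)$. (The refinement noted after Definition \ref{def:jfilt}, that $\alpha\in F^s$ and $y\in\Gamma^t$ force $\alpha(y)\cdot y^{-1}\in\Gamma^{s+t}$, is consistent with this but not needed.)

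For part \eqref{ad2}, the hypothesis is that $\gr_\G(A)$ is injective, and I want to deduce $x\in\Gamma^s(G)$ from $A(x)\in\Gamma^s(\T)$. I would run an induction (or equivalently a minimal-counterexample argument) on the lower central series of $G$: suppose $x\in\Gamma^r(G)\setminus\Gamma^{r+1}(G)$ with $r<s$; then $x$ has a nonzero image $\bar x\in\gr^r_\G(G)$, and by functoriality of $\gr_\G$ its image $\gr_\G(A)(\bar x)$ is the class of $A(x)$ in $\gr^r_\G(\T)$. Since $A(x)\in\Gamma^s(\T)\subseteq\Gamma^{r+1}(\T)$, that class is zero, contradicting injectivity of $\gr_\G(A)$. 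Therefore no such $r<s$ exists, i.e. $x\in\Gamma^s(G)$. One mild point to be careful about is the edge behaviour when $x$ lies in $\bigcap_r\Gamma^r(G)$; there the conclusion $x\in\Gamma^s(G)$ holds trivially, so the argument is unaffected.

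For part \eqref{ad3}, the center of $\gr_\G(G)$ is assumed trivial and I want to upgrade the conclusion of \eqref{ad2} to the special homomorphism $A=\Ad$. The key observation is that the composite $\gr_\G(G)\xrightarrow{\gr_\G(\Ad)}\gr_\G(\T_G)\xrightarrow{\;J\circ\iota_F\;}\Der(\gr_\G(G))$ is, up to sign, the adjoint map $\ad\colon\gr_\G(G)\to\Der(\gr_\G(G))$: indeed $J(\overline{\Ad_x})(\bar y)=\overline{\Ad_x(y)\cdot y^{-1}}=\overline{(x,y)}=[\bar x,\bar y]$, so $J\circ\iota_F\circ\gr_\G(\Ad)=\ad$. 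By the remark in \S\ref{subsec:jh}, $\ker(\ad)=Z(\gr_\G(G))=0$, so this composite is injective; a fortiori $\gr_\G(\Ad)$ is injective. Now part \eqref{ad2} applied with $A=\Ad$ and $\T=\T_G$ gives: if $\Ad_x\in F^s(\T_G)$ then $x\in\Gamma^s(G)$, which is exactly the assertion.

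I expect the main (though still modest) obstacle to be part \eqref{ad2}: one has to be slightly attentive that "$x\in\Gamma^r\setminus\Gamma^{r+1}$" genuinely produces a nonzero class in $\gr^r_\G(G)$ on which injectivity of $\gr_\G(A)$ can be invoked, and that the various filtration-preserving maps interact correctly under $\gr_\G(\,\cdot\,)$. Parts \eqref{ad1} and \eqref{ad3} are then essentially formal consequences — \eqref{ad1} a one-line commutator computation, and \eqref{ad3} a combination of \eqref{ad2} with the identification of $J\circ\iota_F\circ\gr_\G(\Ad)$ with $\ad$ and the centerlessness hypothesis.
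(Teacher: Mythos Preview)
Your arguments for parts \eqref{ad1} and \eqref{ad2} are correct and match the paper's proof exactly.

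There is, however, a genuine gap in your proof of part \eqref{ad3}. You correctly show that $\gr_\G(\Ad)$ is injective (via the identity $J\circ\iota_F\circ\gr_\G(\Ad)=\ad$ and centerlessness), and then you write: ``part \eqref{ad2} applied with $A=\Ad$ and $\T=\T_G$ gives: if $\Ad_x\in F^s(\T_G)$ then $x\in\Gamma^s(G)$.'' But that is not what part \eqref{ad2} says. Part \eqref{ad2} concludes $x\in\Gamma^s(G)$ from the hypothesis $A(x)\in\Gamma^s(\T)$, i.e.\ from $\Ad_x\in\Gamma^s(\T_G)$, whereas the hypothesis of part \eqref{ad3} only gives you $\Ad_x\in F^s(\T_G)$. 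Since in general $\Gamma^s(\T_G)\subsetneq F^s(\T_G)$, you cannot invoke part \eqref{ad2} directly.

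Your idea can be rescued, but you must rerun the induction with the correct filtration on the target: the injective map you actually need is $\overline{\Ad}\colon\gr_\G(G)\to\gr_F(\T_G)$ (which is $\iota_F\circ\gr_\G(\Ad)$, and your computation $J\circ\overline{\Ad}=\ad$ shows it is injective). Then, assuming $x\in\Gamma^r(G)$ with $r<s$, part \eqref{ad1} gives $\Ad_x\in F^r(\T_G)$, and the hypothesis $\Ad_x\in F^s(\T_G)\subseteq F^{r+1}(\T_G)$ forces $\overline{\Ad}(\bar x)=0$ in $\gr_F^r(\T_G)$, hence $\bar x=0$. This is essentially the paper's argument, which carries out this induction directly (without naming $J$ or $\overline{\Ad}$): from $\Ad_x\in F^s$ one gets $(x,y)\in\Gamma^{s+1}\subseteq\Gamma^{r+2}$ for all $y$, so $\bar x$ is central in $\gr_\G(G)$, hence zero.
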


\begin{proof}
\eqref{ad1} 
If $x$ belongs to $\Gamma^s(G)$, then 
clearly  $\Ad_x(y) \equiv y \pmod{\Gamma^{s+1}}$, 
and thus $\Ad_x $ belongs to $F^s(\T_G)$.

\eqref{ad2} 
Let $x\in G$, and assume $A(x) \in \Gamma^s(\T)$.  
We prove by induction on $r$ that $x\in \Gamma^r (G)$, for 
all $r\le s$.  For $r=1$, the conclusion is tautologically true. 
So assume $x\in \Gamma^{r}(G)$, for some $r<s$. 
Let $\bar{x}$ be the class of $x$ in $\gr^r_{\G} (G)$.
Then $\gr_{\G} (A) (\bar{x})=\overline{A(x)}=0$ in $\gr^r_{\G} (\T)$, 
since $A(x) \in \G^s(\T)$ and $r<s$.  From our injectivity 
assumption, we obtain $\bar{x}=0$, and thus  $x\in \Gamma^{r+1}(G)$.  
This finishes the induction step, and thus proves the claim.

\eqref{ad3} 
Again, we prove by induction on $r$ that $x\in \Gamma^r$, 
for all $r\le s$.  For $r=1$, the conclusion is tautologically 
true.  So assume $x\in \Gamma^{r}$, for some $r<s$. 
By definition of the Johnson filtration, $\Ad_x\in F^s(\T_G)$ 
means that ${}^x y \equiv y \pmod{\Gamma^{s+1}}$, or 
equivalently, $(x,y)\in \Gamma^{s+1}$, for all $y\in G$. 

Now, since $r<s$, we must have $(x,y)\in \Gamma^{r+2}$, 
for all $y\in G$.  Denoting by $\bar{x}$ the class of $x$ in 
$\gr_{\G}^r(G)$, we find that $[\bar{x},\bar{y}] = 0$, for all 
$\bar{y}\in \gr^{1}_{\G}(G)$.   Using the fact that 
$\gr_{\Gamma}$ is generated in degree $1$, we conclude 
that $\bar{x}$ belongs to the center of $\gr_{\Gamma} (G)$.   
By hypothesis, $Z(\gr_{\Gamma} (G))=0$;  therefore, $\bar{x}=0$, 
that is, $x\in \Gamma^{r+1}$.  This finishes the induction 
step, and thus proves the claim.
\end{proof}

\begin{prop}
\label{prop:exone}
Suppose the group $G$ is residually nilpotent, i.e., 
$\bigcap_{s\ge 1} \Gamma^s (G) =\{1\}$, 
and the Lie algebra $\gr_{\Gamma} (G)$ has trivial center. 
Then, for each $s\ge 1$, we have an exact sequence 
\begin{equation}
\label{eq:ex1}
\xymatrix{
1\ar[r] &\Gamma^s (G) \ar^(.45){\Ad}[r] & F^s (\T_G)  
\ar^{\pi}[r] &\wF^s (\wT_G)  \ar[r] & 1
}.
\end{equation}
\end{prop}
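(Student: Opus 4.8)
The plan is to exhibit the sequence $1 \to \Gamma^s(G) \xrightarrow{\Ad} F^s(\T_G) \xrightarrow{\pi} \wF^s(\wT_G) \to 1$ by verifying exactness at each of the three spots, drawing on Lemma \ref{lem:adx} and the two standing hypotheses. First I would observe that $\Ad\colon G \to \T_G$ does land in $\T_G$ (inner automorphisms act trivially on $G_{\ab}$), and that by Lemma \ref{lem:adx}\eqref{ad1} it carries $\Gamma^s(G)$ into $F^s(\T_G)$, so the maps in the sequence at least make sense. Exactness on the left, i.e.\ injectivity of $\Ad\colon \Gamma^s(G) \to F^s(\T_G)$, is immediate: the kernel of $\Ad$ on all of $G$ is the center $Z(G)$, and $Z(G) \subseteq \bigcap_{t\ge 1}\Gamma^t(G) = \{1\}$ since $G$ is residually nilpotent and centers are contained in every lower central term — actually more simply, $Z(G)\subseteq\Gamma^s(G)$ forces $Z(G)$ into the intersection, which is trivial. (Alternatively one notes $\gr_\Gamma(G)$ centerless already forces $Z(G)=1$.)

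Next I would treat exactness in the middle, $\ker(\pi|_{F^s(\T_G)}) = \im(\Ad|_{\Gamma^s(G)})$. The inclusion $\supseteq$ follows since $\pi\circ\Ad$ is trivial by definition of $\Out$. For $\subseteq$, take $\alpha\in F^s(\T_G)$ with $\pi(\alpha)=1$, so $\alpha = \Ad_x$ for some $x\in G$. Now apply Lemma \ref{lem:adx}\eqref{ad3}: since $\gr_\Gamma(G)$ has trivial center and $\Ad_x\in F^s(\T_G)$, we get $x\in\Gamma^s(G)$, so $\alpha$ is in the image of $\Ad|_{\Gamma^s(G)}$. This is the step where the centerless hypothesis does real work, via the induction in Lemma \ref{lem:adx}\eqref{ad3} that promotes the degree-$1$ bracket vanishing to membership in $\Gamma^{r+1}$.

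Finally, exactness on the right — surjectivity of $\pi\colon F^s(\T_G) \to \wF^s(\wT_G)$ — holds essentially by construction, since $\wF^s = \pi(F^s)$ by definition of the quotient Johnson filtration and $\wF^s(\wT_G)$ is just $\wF^s$ for $s\ge 1$; one only needs to observe that the restriction $\pi\colon F^s(\Aut G)\to\wF^s(\Out G)$ is already onto and that $F^s(\T_G)=F^s(\Aut G)$ for $s\ge 1$ while $\wF^s(\wT_G)=\wF^s(\Out G)$, so no preimage issue arises. I expect the main obstacle to be purely expository: being careful that the three occurrences of $F^s$ and $\wF^s$ (as subgroups of $\Aut G$/$\Out G$ versus as subgroups of $\T_G$/$\wT_G$) are correctly identified for $s\ge 1$, and invoking residual nilpotence only where it is genuinely needed (the left-exactness), while the centerlessness of $\gr_\Gamma(G)$ is what powers middle-exactness through Lemma \ref{lem:adx}\eqref{ad3}. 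No new calculation beyond what Lemma \ref{lem:adx} already supplies should be required.
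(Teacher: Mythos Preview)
Your proposal is correct and follows essentially the same route as the paper: injectivity of $\Ad$ from $Z(G)=\{1\}$, surjectivity of $\pi$ from the definition $\wF^s=\pi(F^s)$, and middle exactness via Lemma~\ref{lem:adx}\eqref{ad3}. One small slip: the assertion that ``centers are contained in every lower central term'' is false in general (consider $G=\Z$); the correct argument is that if $z\in Z(G)$ then $\Ad_z\in F^t$ for all $t$, so Lemma~\ref{lem:adx}\eqref{ad3} gives $z\in\Gamma^t$ for all $t$, and residual nilpotence then forces $z=1$---so both hypotheses are needed here, as the paper's one-line ``The assumptions on $G$ and $\gr_\Gamma(G)$ imply that $G$ has trivial center'' implicitly acknowledges.
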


\begin{proof}
The assumptions on $G$ and $\gr_{\Gamma}(G)$ imply that 
$G$ has trivial center.  Consequently, the map $\Ad$ is injective. 
By definition of $\widetilde{F}^s$, the map $\pi$ is surjective, 
and $\pi\circ \Ad=0$. 

Now let $\alpha\in F^s$, and assume $\pi(\alpha)=0$, that is, 
$\alpha=\Ad_x$, for some $x\in G$. By Lemma \ref{lem:adx}\eqref{ad3}, 
we must have $x\in \G^s$. Hence, $\alpha\in \im(\Ad)$.
This shows that $\ker (\pi)\subseteq \im (\Ad)$, thereby 
establishing the exactness of \eqref{eq:ex1}.
\end{proof}

\subsection{Passing to the associated graded}
\label{subsec:assoc gr}
Returning to the general situation, consider an arbitrary group $G$.  
The homomorphism $\Ad\colon G \to \T_G$ induces a morphism 
$\gr_{\G}(\Ad)\colon \gr_{\G}(G) \to \gr_{\G}(\T_G)$ between the 
respective associated graded Lie algebras.  Making use of 
Lemma \ref{lem:adx}\eqref{ad1}, we may also define a map
\begin{equation}
\label{eq:barad}
\xymatrix{\overline{\Ad} \colon \gr_{\Gamma} (G) \ar[r]& \gr_{F} (\T_G)},
\end{equation}
by sending each $\bar{x}\in \gr^s_{\Gamma}(G)$ to 
$\overline{\Ad_{x}} \in  \gr^s_{F} (\T_G)$.   

\begin{lemma}
\label{lem:barad}
Let $G$ be a group.
\begin{enumerate}
\item \label{bar1}
The map $\overline{\Ad}\colon \gr_{\G}(G) \rightarrow \gr_F (\T_G)$ 
is an $\A(G)$-equivariant morphism of graded Lie algebras.

\item \label{bar2}
The map $\gr_{\G}(\Ad)\colon \gr_{\G}(G) \rightarrow \gr_{\G} (\T_G)$ 
is an $\A(G)$-equivariant morphism of graded Lie algebras.  
Moreover, $\iota_F \circ \gr_{\G}(\Ad)= \overline{\Ad}$. 

\item \label{bar3}
Let $J\colon   \gr_{F}(\T_G) \to \Der ( \gr_{\G}(G))$ 
be the Johnson homomorphism, and let 
$\ad\colon  \gr_{\G}(G) \to \Der ( \gr_{\G}(G))$ be 
the adjoint map.  Then $J\circ \overline{\Ad}=\ad$.  
\end{enumerate}
\end{lemma}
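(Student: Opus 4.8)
The plan is to verify each of the three assertions in Lemma \ref{lem:barad} by direct computation, using the explicit formulas for the various maps and actions that have already been set up.

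For part \eqref{bar1}, the fact that $\overline{\Ad}$ lands in $\gr_F(\T_G)$ is exactly Lemma \ref{lem:adx}\eqref{ad1}. That it is a morphism of graded Lie algebras follows from the functoriality of $\Ad$ together with the identity $\Ad_{(x,y)} = (\Ad_x, \Ad_y)$, which says $\Ad$ intertwines the group commutator on $G$ with the group commutator on $\T_G$; passing to the associated graded then turns this into compatibility of Lie brackets, with no extra signs since both $\gr_\Gamma$ and $\gr_F$ induce their brackets from group commutators. For $\A(G)$-equivariance, I would pick $\bar\sigma\in\A(G)$ represented by $\sigma\in\Aut(G)$ and $\bar x\in\gr^s_\Gamma(G)$ represented by $x\in\Gamma^s(G)$, and compute using \eqref{eq:adequiv}: $\overline{\Ad}(\bar\sigma\cdot\bar x) = \overline{\Ad_{\sigma(x)}} = \overline{\sigma\Ad_x\sigma^{-1}}$, which by the definition \eqref{eq:sigma alpha} of the $\A(G)$-action on $\gr_F(\T_G)$ equals $\bar\sigma\cdot\overline{\Ad_x} = \bar\sigma\cdot\overline{\Ad}(\bar x)$.

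For part \eqref{bar2}, that $\gr_\G(\Ad)$ is a morphism of graded Lie algebras is just functoriality of $\gr_\Gamma$ applied to the group homomorphism $\Ad$; its $\A(G)$-equivariance is the same computation as above, using \eqref{eq:adequiv} and the definition of the conjugation action of $\A(G)$ on $\gr_\Gamma(\T_G)$. The identity $\iota_F\circ\gr_\G(\Ad) = \overline{\Ad}$ is a matter of tracing definitions: $\gr_\G(\Ad)$ sends $\bar x\in\gr^s_\Gamma(G)$ to the class of $\Ad_x$ in $\gr^s_\Gamma(\T_G) = \Gamma^s(\T_G)/\Gamma^{s+1}(\T_G)$, and $\iota_F$ then maps this to the class of $\Ad_x$ in $\gr^s_F(\T_G) = F^s(\T_G)/F^{s+1}(\T_G)$, which is by definition $\overline{\Ad}(\bar x)$; the only point to check is that the relevant representative $\Ad_x$ is simultaneously a valid representative in both quotients, which it is because $\Gamma^s(\T_G)\subseteq F^s(\T_G)$.

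For part \eqref{bar3}, I would take a homogeneous $\bar x\in\gr^s_\Gamma(G)$ represented by $x\in\Gamma^s(G)$ and evaluate $J(\overline{\Ad}(\bar x)) = J(\overline{\Ad_x})$ on an arbitrary homogeneous $\bar y\in\gr^t_\Gamma(G)$ represented by $y\in\Gamma^t(G)$. By the defining formula \eqref{eq:jdef} for the Johnson homomorphism, $J(\overline{\Ad_x})(\bar y) = \overline{\Ad_x(y)\cdot y^{-1}} = \overline{xyx^{-1}y^{-1}} = \overline{(x,y)}$, which is precisely $[\bar x,\bar y] = \ad_{\bar x}(\bar y)$ by definition of the Lie bracket on $\gr_\Gamma(G)$. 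Hence $J\circ\overline{\Ad} = \ad$. I do not anticipate a serious obstacle here; the main thing to be careful about is that $\Ad_x$ genuinely lies in $F^s(\T_G)$ so that formula \eqref{eq:jdef} applies in degree $s$, which is guaranteed by Lemma \ref{lem:adx}\eqref{ad1}, and that all the identifications of successive quotients are being made consistently.
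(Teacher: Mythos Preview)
Your proposal is correct and follows essentially the same approach as the paper: direct computation using the identity $\Ad_{(x,y)}=(\Ad_x,\Ad_y)$ for the Lie bracket compatibility, the equivariance identity \eqref{eq:adequiv} for the $\A(G)$-action, and the defining formula \eqref{eq:jdef} for the Johnson homomorphism in part \eqref{bar3}. The paper's proof is slightly more terse but the logical structure and the key computations are identical to yours.
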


\begin{proof}
\eqref{bar1}
Let $\bar{x}\in \gr_{\Gamma}^s (G)$, represented by $x\in \G^s(G)$,  
and let $\bar{y}\in \gr_{\Gamma}^t (G)$, represented by $y\in \G^t(G)$.  
We then have:
\begin{equation}
\label{eq:ch0}
\overline{\Ad}([\bar{x},\bar{y}]) = 
\overline{\Ad}\big(\overline{(x,y)}\big) = 
\overline{\Ad_{(x,y)}} = \overline{(\Ad_{x},\Ad_{y})} = 
[\overline{\Ad_x},\overline{\Ad_y}] =
[\overline{\Ad}(\bar{x}),\overline{\Ad}(\bar{y})],
\end{equation}
thereby showing that $\overline{\Ad}$ respects Lie brackets. 

Next, let $\bar{\sigma}\in \A(G)$, represented by $\sigma \in \Aut (G)$.  
Using \eqref{eq:adequiv}, we compute
\begin{equation}
\label{eq:ch1}
\overline{\Ad}(\bar\sigma\cdot\bar{x}) = \overline{\Ad_{\sigma(x)}} = 
\overline{\sigma\circ \Ad_x \circ \sigma^{-1}}=
\bar{\sigma}\cdot \overline{\Ad}(\bar{x})\cdot \bar{\sigma}^{-1},
\end{equation}
thereby showing that $\overline{\Ad}$ is $\A(G)$-equivariant.  

\eqref{bar2}
The  $\A(G)$-equivariance of the morphism $\gr_{\G}(\Ad)$ 
is proved exactly as above, while the equality 
$\iota_F \circ \gr_{\G}(\Ad)= \overline{\Ad}$ follows  
directly from the definitions. 

\eqref{bar3}
Finally, 
\begin{equation}
\label{eq:ch2}
J\circ \overline{\Ad}(\bar{x}) (\bar{y})= 
J(\overline{\Ad_x})(\bar{y}) = \overline{\Ad_x(y)\cdot y^{-1}} = 
\overline{(x,y)} = [\bar{x},\bar{y}] = \ad (\bar{x}) (\bar{y}),
\end{equation}
thereby verifying the last assertion. 
\end{proof}

\begin{theorem}
\label{thm:piad}
Suppose $Z(\gr_{\Gamma} (G))=0$.  Then
\begin{equation}
\label{eq:exact}
\xymatrixcolsep{36pt}
\xymatrix{
0\ar[r] &\gr_{\Gamma} (G) \ar^(.48){\gr_{\G}(\Ad)}[r] \ar^{=}[d] 
& \gr_{\G} (\T_G) \ar^{\gr_{\G}(\pi)}[r] \ar^{\iota_F}[d] 
&\gr_{\G} (\wT_G) \ar^{\iota_{\wF}}[d] \ar[r] & 0\\
0\ar[r] &\gr_{\Gamma} (G) \ar^(.48){\overline{\Ad}}[r] 
& \gr_{F} (\T_G) \ar^{\bar\pi}[r] &\gr_{\wF} (\wT_G) \ar[r] & 0
}
\end{equation}
is a commuting diagram of graded Lie algebras with 
compatible $\A(G)$-module structures, and has exact rows.
\end{theorem}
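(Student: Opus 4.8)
The plan is to prove the bottom row is exact first, then deduce the top row, and finally check compatibility of the two rows together with the claimed equivariance — most of which has already been assembled in the preceding lemmas.

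First I would establish exactness of the bottom row. The key input is Proposition \ref{prop:exone}, which under the hypothesis $Z(\gr_\Gamma(G))=0$ — note this also forces $G$ to have trivial center, so $\Ad\colon G\to\T_G$ is injective — gives, for every $s\ge 1$, the short exact sequence
\begin{equation*}
\xymatrix{1\ar[r] & \Gamma^s(G) \ar^(.45){\Ad}[r] & F^s(\T_G) \ar^{\pi}[r] & \wF^s(\wT_G) \ar[r] & 1}.
\end{equation*}
Since $\Ad$ carries $\Gamma^{s+1}(G)$ into $F^{s+1}(\T_G)$ and $\pi$ carries $F^{s+1}(\T_G)$ onto $\wF^{s+1}(\wT_G)$, the snake lemma (or a direct diagram chase) applied to the map of these sequences for consecutive values of $s$ yields a short exact sequence of the degree-$s$ graded pieces
\begin{equation*}
\xymatrix{0\ar[r] & \gr_\Gamma^s(G) \ar^(.45){\overline{\Ad}}[r] & \gr_F^s(\T_G) \ar^{\bar\pi}[r] & \gr_{\wF}^s(\wT_G) \ar[r] & 0};
\end{equation*}
summing over $s$ gives exactness of the bottom row. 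Here I am using that $\Gamma^s(G)\cap F^{s+1}(\T_G)$ (computed inside $F^s(\T_G)$ via $\Ad$) equals exactly $\Ad(\Gamma^{s+1}(G))$, which is Lemma \ref{lem:adx}\eqref{ad3}: if $\Ad_x\in F^{s+1}(\T_G)$ then $x\in\Gamma^{s+1}(G)$.

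Next I would handle the top row. Here $\gr_\Gamma(\pi)$ is surjective essentially by construction, and $\gr_\Gamma(\Ad)$ is injective because, by Lemma \ref{lem:barad}\eqref{bar2}, $\iota_F\circ\gr_\Gamma(\Ad)=\overline{\Ad}$, and $\overline{\Ad}$ is injective by the exactness of the bottom row just proved; an injective composite forces the first factor to be injective. For exactness in the middle of the top row, the inclusion $\im(\gr_\Gamma(\Ad))\subseteq\ker(\gr_\Gamma(\pi))$ is immediate since $\pi\circ\Ad$ is trivial. For the reverse inclusion I would run the diagram chase in the commuting square
\begin{equation*}
\xymatrix{\gr_\Gamma(G) \ar^{\gr_\Gamma(\Ad)}[r] \ar^{=}[d] & \gr_\Gamma(\T_G) \ar^{\iota_F}[d] \\ \gr_\Gamma(G) \ar^{\overline{\Ad}}[r] & \gr_F(\T_G)},
\end{equation*}
using $\iota_F\circ\gr_\Gamma(\Ad)=\overline{\Ad}$ and the exactness of the bottom row: given a class in $\ker(\gr_\Gamma(\pi))$, push it down via $\iota_F$, land in $\ker(\bar\pi)=\im(\overline{\Ad})$, pull back a preimage in $\gr_\Gamma(G)$, and compare with the original using injectivity of $\gr_\Gamma(\Ad)$ — together with the commutativity of the right-hand square $\bar\pi\circ\iota_F=\iota_{\wF}\circ\gr_\Gamma(\pi)$ from diagram \eqref{eq:iotapi cd}. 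This is the step I expect to need the most care, because $\iota_F$ is not injective in general, so one cannot simply transport the exactness downstairs; one has to exploit that the offending ambiguity lies in $\im(\overline{\Ad})=\im(\iota_F\circ\gr_\Gamma(\Ad))$.

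Finally, commutativity of the diagram and $\A(G)$-equivariance of every arrow are already recorded: the left square commutes with both vertical maps the identity (trivially), the middle square is the identity $\iota_F\circ\gr_\Gamma(\Ad)=\overline{\Ad}$ of Lemma \ref{lem:barad}\eqref{bar2}, and the right square is the commuting square of \eqref{eq:iotapi cd}; equivariance of $\gr_\Gamma(\Ad)$ and $\overline{\Ad}$ is Lemma \ref{lem:barad}\eqref{bar1}--\eqref{bar2}, equivariance of $\iota_F$ and $\iota_{\wF}$ was noted in \S\ref{subsec:jfilt} and \S\ref{subsec:out torelli}, and equivariance of $\bar\pi$ and $\gr_\Gamma(\pi)$ was noted just after diagram \eqref{eq:iotapi cd}. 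So the only real content beyond citation is the exactness of the two rows, and the genuine obstacle is the middle-exactness of the top row, which I would dispatch by the diagram chase described above rather than by any direct computation with commutators.
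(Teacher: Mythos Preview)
There are two problems. First, your citation of Proposition~\ref{prop:exone} is illegitimate: that proposition assumes $G$ is residually nilpotent, which is \emph{not} among the hypotheses of Theorem~\ref{thm:piad}, and your parenthetical claim that $Z(\gr_\Gamma(G))=0$ forces $Z(G)=1$ is false (take any perfect group with nontrivial center). This is reparable---the bottom row can be obtained directly from Lemma~\ref{lem:adx}\eqref{ad1},\eqref{ad3} together with the group-level identity $\ker(\pi|_{F^s})=\Ad(\Gamma^s(G))$, and indeed the paper proves injectivity of $\overline{\Ad}$ via $J\circ\overline{\Ad}=\ad$---but as written your argument rests on a false premise.

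The serious gap is your plan for middle-exactness of the \emph{top} row. Your chase produces only $\bar\alpha-\gr_\Gamma(\Ad)(\bar x)\in\ker(\iota_F)$, and your proposed fix (``the offending ambiguity lies in $\im(\overline{\Ad})$'') does not close this: there is no reason for $\ker(\iota_F)$ to sit inside $\im(\gr_\Gamma(\Ad))$, and the difference still lies in $\ker(\gr_\Gamma(\pi))$, so you cannot bootstrap. The paper avoids this entirely by arguing at the group level rather than diagrammatically: if $\bar\alpha\in\ker(\gr_\Gamma(\pi))$ with representative $\alpha\in\Gamma^s(\T_G)$, write $\pi(\alpha)=\pi(\beta)$ for some $\beta\in\Gamma^{s+1}(\T_G)$, so that $\alpha=\beta\Ad_x$ and hence $\Ad_x\in\Gamma^s(\T_G)$; then Lemma~\ref{lem:adx}\eqref{ad2} (applicable because $\gr_\Gamma(\Ad)$ is already known to be injective) forces $x\in\Gamma^s(G)$, whence $\bar\alpha=\gr_\Gamma(\Ad)(\bar x)$. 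The key ingredient you are missing is precisely part~\eqref{ad2} of that lemma---the statement that $\Ad_x\in\Gamma^s(\T_G)$ pulls back to $x\in\Gamma^s(G)$---which has no purely graded analogue and cannot be recovered from your diagram chase.
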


\begin{proof}
We already know that all the maps in this diagram are 
morphisms of graded Lie algebras with compatible 
$\A(G)$-module structures.  Moreover, the left square 
commutes by Lemma \ref{lem:barad}\eqref{bar2}, 
while the right square is just the commuting square from 
diagram \eqref{eq:iotapi cd}.  By construction, the morphisms 
$\gr_{\G}(\pi)$ and $\bar\pi$ are surjective, while 
$\gr_{\G}(\pi) \circ \gr_{\G}(\Ad)=0$ and $\bar{\pi}\circ \overline{\Ad}=0$.
Thus, we are left with verifying four assertions. 

(1) $\ker(\overline{\Ad})=0$. 
By Lemma \ref{lem:barad}\eqref{bar3}, we have 
$J\circ \overline{\Ad}=\ad$. By the hypothesis on the center of 
$\gr_{\Gamma} (G)$, the map $\ad\colon  \gr_{\G}(G) \to 
\Der (\gr_{\G}(G))$ is injective. Thus, $\overline{\Ad}$ is injective. 

(2) $\ker (\gr_{\G}(\Ad))=0$. By commutativity of the left square, 
$\gr_{\G}(\Ad)$ is also injective.  

(3) $\ker(\gr_{\G}(\pi))\subseteq \im(\gr_{\G}(\Ad))$.  
Let $\bar{\alpha}\in \gr_{\G}^s(\T_G)$, represented by an automorphism 
$\alpha\in \G^{s}(\T_G)$, and assume $\bar{\alpha}$ belongs to the 
kernel of $\gr_{\G}(\pi)$.  
This is equivalent to $\pi(\alpha)\in \G^{s+1}(\wT_G)$, that is to say, 
$\pi(\alpha) = \pi(\beta)$, for some $\beta\in \G^{s+1}(\T_G)$. 
We may rewrite this condition as $\alpha=\beta \Ad_x$, for some $x\in G$. 
In particular, $\Ad_x \in \Gamma^s(\T_G)$.
By Lemma \ref{lem:adx}\eqref{ad2}, we must have $x\in \G^s$.  
Hence, $\bar{\alpha}= \overline{\Ad_x}= \gr_{\G}(\Ad)(\bar{x})$ 
belongs to the image of $\gr_{\G}(\Ad)$.

(4) $\ker(\bar\pi)\subseteq \im(\overline{\Ad})$. 
Let $\bar{\alpha}\in \gr_{F}^s(\T_G)$, represented by an automorphism 
$\alpha\in F^{s}$, and assume $\bar{\alpha}\in \ker(\bar\pi)$.  
This is equivalent to $\pi(\alpha)\in \wF^{s+1}$, that is to say, 
$\pi(\alpha) = \pi(\beta)$, for some $\beta\in F^{s+1}$.  We may 
rewrite this condition as $\alpha=\beta \Ad_x$, for some $x\in G$. 
In particular, $\Ad_x \in F^s$.  By Lemma \ref{lem:adx}\eqref{ad3}, 
we must have $x\in \G^s$. Hence, 
$\bar{\alpha}= \overline{\Ad_x}= \overline{\Ad} (\bar{x})$, 
and we are done.
\end{proof}

\section{The outer Johnson homomorphism}
\label{sect:outer johnson}

In this section we develop an outer version of the Johnson 
homomorphism, and we use both homomorphisms to compare 
the natural filtrations on the outer Torelli group.

\subsection{Outer derivations}
\label{subsec:out der}

Let $\g$ be a graded Lie algebra, and 
let $\ad\colon \g \to \Der(\g)$ be the adjoint map.   
It is readily seen that the image of this map is a Lie ideal 
in $\Der(\g)$.  Define the Lie algebra of outer, positively 
graded derivations, $\widetilde{\Der}(\g)$, to be the 
quotient Lie algebra by this ideal. When $Z(\g)=0$, 
we obtain a short exact sequence of graded Lie algebras, 
\begin{equation}
\label{eq:outder}
\xymatrix{
0\ar[r] &\g \ar^(.35){\ad}[r] &\Der(\g) \ar^{q}[r] 
&\widetilde{\Der}(\g) \ar[r] & 0
}.
\end{equation}

Now let $G$ be a group. 
Recall that both the associated graded Lie algebra $\gr_{\G}(G)$ 
and its Lie algebra of positively-graded derivations, $\Der (\gr_{\G}(G))$, 
come equipped with naturally defined actions of the group 
$\A(G)=\im(\Aut(G)\to \Aut(G_{\ab}))$. 

\begin{lemma}
\label{lem:eqder}
The adjoint map $\ad\colon \gr_{\Gamma} (G) \to 
\Der (\gr_{\Gamma} (G))$ is $\A(G)$-equivariant.
\end{lemma}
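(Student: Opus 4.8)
The plan is to verify directly that the adjoint map $\ad\colon \gr_{\Gamma}(G)\to\Der(\gr_{\Gamma}(G))$ intertwines the $\A(G)$-action on $\gr_{\Gamma}(G)$ given by \eqref{eq:sigma x} with the $\A(G)$-action on $\Der(\gr_{\Gamma}(G))$ given by \eqref{eq:der act}. Concretely, fix $\bar\sigma\in\A(G)$, represented by $\sigma\in\Aut(G)$, and a homogeneous element $\bar x\in\gr_{\Gamma}^s(G)$, represented by $x\in\Gamma^s(G)$. I must show that the two derivations $\ad(\bar\sigma\cdot\bar x)$ and $\bar\sigma\,\ad(\bar x)\,\bar\sigma^{-1}$ coincide, which means checking they agree on an arbitrary homogeneous element $\bar y\in\gr_{\Gamma}^t(G)$, represented by $y\in\Gamma^t(G)$.

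The first step is to unwind the left-hand side: by \eqref{eq:sigma x}, $\bar\sigma\cdot\bar x=\overline{\sigma(x)}$, so $\ad(\bar\sigma\cdot\bar x)(\bar y)=[\overline{\sigma(x)},\bar y]=\overline{(\sigma(x),y)}$. The second step is to unwind the right-hand side: $\bar\sigma^{-1}(\bar y)=\overline{\sigma^{-1}(y)}$, hence $\ad(\bar x)(\bar\sigma^{-1}(\bar y))=[\bar x,\overline{\sigma^{-1}(y)}]=\overline{(x,\sigma^{-1}(y))}$, and applying $\bar\sigma$ gives $\overline{\sigma\big((x,\sigma^{-1}(y))\big)}$. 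The third step is the elementary observation that $\sigma$, being an automorphism, satisfies $\sigma\big((x,\sigma^{-1}(y))\big)=(\sigma(x),\sigma(\sigma^{-1}(y)))=(\sigma(x),y)$, so the two sides are equal. This is essentially the same computation pattern as in the proof of Proposition \ref{prop:johnson equiv}, specialized via the relation $J\circ\overline{\Ad}=\ad$ of Lemma \ref{lem:barad}\eqref{bar3} — indeed one could alternatively deduce the lemma by combining the $\A(G)$-equivariance of $\overline{\Ad}$ (Lemma \ref{lem:barad}\eqref{bar1}) with that of $J$ (Proposition \ref{prop:johnson equiv}) and the surjectivity of $\overline{\Ad}$ onto the inner derivations, but the direct verification is cleaner and avoids surjectivity subtleties.

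There is no real obstacle here: the only points requiring a word of care are that the actions \eqref{eq:sigma x} and \eqref{eq:der act} are already known to be well defined and to preserve the relevant structures (stated in the text), and that the degree bookkeeping is consistent, since $\sigma$ preserves each $\Gamma^s(G)$ and hence $\bar\sigma$ acts on each graded piece. The mild subtlety — if any — is simply making sure one composes $\ad$ with the correct representatives so that the identity $\sigma\big((x,\sigma^{-1}(y))\big)=(\sigma(x),y)$ is applied in the right place; once the representatives are chosen consistently the chain of equalities closes immediately.
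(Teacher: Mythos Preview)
Your proof is correct and follows essentially the same approach as the paper: both verify directly that $\ad(\bar\sigma\cdot\bar x)(\bar y)=\bar\sigma\,\ad(\bar x)\,\bar\sigma^{-1}(\bar y)$ for arbitrary $\bar y$. The only cosmetic difference is that the paper phrases the key step as ``$\bar\sigma$ preserves the Lie bracket'' at the level of $\gr_\Gamma(G)$, whereas you descend to group representatives and use that $\sigma$ preserves commutators; these are the same computation.
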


\begin{proof}
We must show that $\ad (\bar{\sigma}\cdot \bar{x}) = 
\bar{\sigma}\cdot \ad(\bar{x})\cdot \bar{\sigma}^{-1}$,
for all $\bar{\sigma}\in \A(G)$ and all $\bar{x}\in \gr_{\G}(G)$. 
Evaluating on an element $\bar{y}\in \gr_{\G}(G)$, we find that 
\[
\ad(\bar{\sigma}\cdot \bar{x})(\bar{y}) = 
[\bar{\sigma}\cdot \bar{x}, \bar{y}] =
\bar{\sigma} ( [\bar{x}, \bar{\sigma}^{-1}(\bar{y})] ) = 
\bar{\sigma}\ad(\bar{x}) \bar{\sigma}^{-1}(\bar{y}),
\]
since $\bar{\sigma}$ preserves the Lie bracket.
\end{proof}

Using this lemma and the discussion preceding it, 
we obtain the following immediate corollary.

\begin{corollary}
\label{cor:eqder}
Suppose $Z(\gr_{\Gamma} (G))=0$.  Then 
\begin{equation}
\label{eq:outder grg}
\xymatrix{
0\ar[r] &\gr_{\Gamma} (G) \ar^(.4){\ad}[r] &\Der(\gr_{\Gamma} (G)) \ar^{q}[r] 
&\widetilde{\Der}(\gr_{\Gamma} (G)) \ar[r] & 0
}
\end{equation}
is an exact sequence of graded 
Lie algebras with a compatible $\A(G)$-module structure.
\end{corollary}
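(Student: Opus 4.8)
The plan is to assemble the exact sequence \eqref{eq:outder grg} directly from the exact sequence \eqref{eq:outder} applied to $\g = \gr_{\Gamma}(G)$, which is legitimate precisely because the hypothesis $Z(\gr_{\Gamma}(G)) = 0$ is exactly what \eqref{eq:outder} requires. So the underlying sequence of graded Lie algebras is already in hand; the only genuine content left is to check that every object and every arrow in it carries a compatible $\A(G)$-module structure.

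The first step is to recall that $\gr_{\Gamma}(G)$ and $\Der(\gr_{\Gamma}(G))$ already come equipped with $\A(G)$-actions, defined in \eqref{eq:sigma x} and \eqref{eq:der act} respectively, and that these actions preserve the graded Lie algebra structure (this was verified when those actions were introduced). The second step is Lemma \ref{lem:eqder}, which tells us the adjoint map $\ad$ is $\A(G)$-equivariant; hence its image is an $\A(G)$-invariant Lie ideal of $\Der(\gr_{\Gamma}(G))$. Consequently the quotient $\widetilde{\Der}(\gr_{\Gamma}(G))$ inherits an $\A(G)$-action, and the quotient map $q$ is automatically $\A(G)$-equivariant. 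Since all three terms are graded Lie algebras on which $\A(G)$ acts by graded Lie algebra automorphisms, and both maps $\ad$ and $q$ respect all of this structure, the sequence \eqref{eq:outder grg} is exact in the category of graded Lie algebras endowed with a compatible $\A(G)$-module structure, as claimed.

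I do not anticipate a real obstacle here: the corollary is, as the text says, "immediate." The one point requiring a word of care is that the $\A(G)$-module structure on the quotient $\widetilde{\Der}(\gr_{\Gamma}(G))$ is \emph{well-defined}, i.e. independent of the choice of lift — but this is automatic from the fact that $\im(\ad)$ is $\A(G)$-invariant, which is precisely the output of Lemma \ref{lem:eqder}. Thus the proof is essentially a one-line citation of \eqref{eq:outder} together with Lemma \ref{lem:eqder}, and the statement follows.

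\begin{proof}
By hypothesis, $Z(\gr_{\Gamma}(G))=0$, so applying \eqref{eq:outder} to the graded Lie algebra $\g=\gr_{\Gamma}(G)$ yields the short exact sequence \eqref{eq:outder grg} in the category of graded Lie algebras. It remains to observe that this sequence is compatible with the relevant $\A(G)$-actions. The graded Lie algebras $\gr_{\Gamma}(G)$ and $\Der(\gr_{\Gamma}(G))$ carry $\A(G)$-actions, given respectively by \eqref{eq:sigma x} and \eqref{eq:der act}, which preserve their graded Lie algebra structures. By Lemma \ref{lem:eqder}, the map $\ad$ is $\A(G)$-equivariant; hence $\im(\ad)$ is an $\A(G)$-invariant Lie ideal of $\Der(\gr_{\Gamma}(G))$, and therefore the quotient $\widetilde{\Der}(\gr_{\Gamma}(G))$ acquires a well-defined $\A(G)$-action for which the projection $q$ is equivariant. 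Thus both $\ad$ and $q$ are morphisms of graded Lie algebras compatible with the $\A(G)$-module structures, and the sequence \eqref{eq:outder grg} has the asserted form.
\end{proof}
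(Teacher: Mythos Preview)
Your proof is correct and follows exactly the same approach as the paper, which simply states that the corollary is immediate from Lemma \ref{lem:eqder} and the discussion preceding it (i.e., the exact sequence \eqref{eq:outder}). You have merely unpacked what ``immediate'' means here, supplying the routine verification that the $\A(G)$-action descends to the quotient because $\im(\ad)$ is invariant.
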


\subsection{The outer Johnson homomorphism}
\label{subsec:ojh}

Recall from Theorem \ref{thm:johnson-hom}  the Johnson homomorphism, 
$J \colon \gr_{F} (\T_G) \inj \Der (\gr_{\Gamma} (G))$, defined on 
homogeneous elements by 
$J (\bar{\alpha}) (\bar{x}) = \overline{\alpha(x) \cdot x^{-1}}$.
The next theorem/definition provides an ``outer" version 
of this homomorphism. 

\begin{theorem}
\label{thm:outer johnson}
Suppose $Z(\gr_{\Gamma} (G))=0$.  Then the Johnson 
homomorphism induces a monomorphism of graded Lie algebras, 
\begin{equation}
\label{eq:outer jhom}
\xymatrix{\widetilde{J} \colon \gr_{\wF} (\wT_G) \ar[r]  
& \widetilde{\Der} (\gr_{\Gamma} (G))},
\end{equation}
which is equivariant with respect to the naturally defined 
actions of $\A(G)$ on source and target.
\end{theorem}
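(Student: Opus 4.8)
The plan is to produce $\widetilde{J}$ as the map induced on quotients by fitting $J$ into the commuting diagram relating the exact sequences of Theorem \ref{thm:piad} with the exact sequence \eqref{eq:outder grg} of Corollary \ref{cor:eqder}. Concretely, I would first recall from Lemma \ref{lem:barad}\eqref{bar3} that $J\circ\overline{\Ad}=\ad$, which says precisely that the square
\begin{equation*}
\xymatrix{
\gr_{\Gamma}(G) \ar^{\overline{\Ad}}[r] \ar^{=}[d] & \gr_F(\T_G) \ar^{J}[d]\\
\gr_{\Gamma}(G) \ar^{\ad}[r] & \Der(\gr_{\Gamma}(G))
}
\end{equation*}
commutes. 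Combining this with the bottom row of \eqref{eq:exact} and the row \eqref{eq:outder grg}, the image of $\overline{\Ad}$ maps under $J$ into the image of $\ad$, so $J$ descends to a well-defined linear map $\widetilde{J}\colon \gr_{\wF}(\wT_G) \to \widetilde{\Der}(\gr_{\Gamma}(G))$ on the cokernels, characterized by $q\circ J = \widetilde{J}\circ\bar\pi$. Since both $\bar\pi$ and $q$ are morphisms of graded Lie algebras, and $J$ is as well by Theorem \ref{thm:johnson-hom}, the induced map $\widetilde{J}$ is automatically a morphism of graded Lie algebras; similarly, since $J$, $\bar\pi$, $q$ are all $\A(G)$-equivariant (Proposition \ref{prop:johnson equiv}, Theorem \ref{thm:piad}, Corollary \ref{cor:eqder}), so is $\widetilde{J}$.

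It remains to check injectivity of $\widetilde{J}$, and this is the one point requiring a genuine argument rather than diagram-chasing bookkeeping. I would argue as follows: suppose $\bar\xi\in\gr_{\wF}^s(\wT_G)$ satisfies $\widetilde{J}(\bar\xi)=0$. Lift $\bar\xi$ to some $\bar\alpha\in\gr_F^s(\T_G)$ via the surjection $\bar\pi$. Then $q(J(\bar\alpha)) = \widetilde{J}(\bar\pi(\bar\alpha)) = \widetilde{J}(\bar\xi)=0$, so by exactness of \eqref{eq:outder grg} there is $\bar x\in\gr_{\Gamma}^s(G)$ with $J(\bar\alpha)=\ad(\bar x) = J(\overline{\Ad}(\bar x))$, using Lemma \ref{lem:barad}\eqref{bar3} again. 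Since $J$ is injective (Theorem \ref{thm:johnson-hom}), $\bar\alpha = \overline{\Ad}(\bar x)$, hence $\bar\xi = \bar\pi(\bar\alpha) = \bar\pi(\overline{\Ad}(\bar x)) = 0$ because $\bar\pi\circ\overline{\Ad}=0$ (the bottom row of \eqref{eq:exact} is a complex). This shows $\ker\widetilde{J}=0$.

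The hypothesis $Z(\gr_{\Gamma}(G))=0$ is used twice and is essential: it is what makes \eqref{eq:outder} (hence \eqref{eq:outder grg}) an exact sequence defining $\widetilde{\Der}$ via the injectivity of $\ad$, and it is what makes the bottom row of \eqref{eq:exact} exact so that $\overline{\Ad}$ is injective and $\bar\pi$ has kernel exactly $\im\overline{\Ad}$. I do not anticipate a serious obstacle here; the only care needed is to state the universal property of the induced map cleanly (that $\widetilde{J}$ is the unique map with $\widetilde{J}\circ\bar\pi = q\circ J$, which forces all the structure-preservation properties from those of $J$, $\bar\pi$, $q$) and to invoke the injectivity of $J$ at exactly the right moment in the kernel computation. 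One small subtlety to flag is ensuring the lift $\bar x$ of the derivation $\ad(\bar x)$ back to $\gr_{\Gamma}(G)$ is in the correct degree $s$; this is immediate since all the maps in sight are graded of degree zero, so the argument takes place entirely within degree-$s$ components.
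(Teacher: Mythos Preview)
Your proposal is correct and follows essentially the same approach as the paper: both fit $J$ into the commuting ladder between the exact sequence of Theorem~\ref{thm:piad} (the $\overline{\Ad}$--$\bar\pi$ column) and the exact sequence of Corollary~\ref{cor:eqder} (the $\ad$--$q$ column), using Lemma~\ref{lem:barad}\eqref{bar3} for the top square, and then pass to cokernels. The paper compresses the existence, structure-preservation, and injectivity into a single sentence invoking the diagram, whereas you spell out the injectivity diagram chase explicitly; this extra detail is harmless and arguably clearer.
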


\begin{proof}

Consider the following diagram, in the category of graded Lie 
algebras endowed with a compatible $\A(G)$-module structure:
\begin{equation}
\label{eq:jcd}
\xymatrixcolsep{36pt}
\xymatrix{
\gr_{\Gamma} (G)  \ar^{=}[r]  \ar@{^{(}->}^(.46){\overline{\Ad}}[d]
& \gr_{\Gamma} (G) \ar@{^{(}->}^(.46){\ad}[d]\\
\gr_{F} (\T_G) \ar^(.4){J}[r]  \ar@{->>}^(.46){\bar{\pi}}[d]
& \Der (\gr_{\Gamma} (G)) \ar@{->>}^{q}[d]\\
\gr_{\wF} (\wT_G) \ar@{..>}^(.4){\widetilde{J}}[r]  
& \widetilde{\Der} (\gr_{\Gamma} (G))
}
\end{equation}

By Lemma \ref{lem:barad}, Part \eqref{bar3}, the top square in this 
diagram commutes.  In view of the hypothesis that $\gr_{\G}(G)$ is 
centerless, Corollary \ref{cor:eqder} shows that the right-hand column 
in \eqref{eq:jcd} is exact. By Theorem \ref{thm:piad}, the 
left-hand column is also exact.  These facts together imply 
the existence and uniqueness of the dotted arrow $\widetilde{J}$ 
having the desired properties, and making the bottom square commute.  
\end{proof}

\subsection{Comparing two filtrations}
\label{subsec:two outer filt}
We conclude this section with a comparison between the 
two natural filtrations on the outer Torelli group $\wT_G$:  
the  lower central series filtration, $\Gamma^s(\wT_G)$, 
and the Johnson filtration, $\wF^s(\wT_G)$.  We start 
with a comparison of the two Johnson homomorphisms.

\begin{lemma}
\label{lem:filt tg}
Suppose $Z(\gr_{\Gamma} (G))=0$. For a fixed $s\ge 1$, 
the following statements are equivalent:
\begin{enumerate} 
\item \label{ft1}
$J\circ \iota_{F}\colon \gr^s_{\G}(\T_G) \to 
\Der^s (\gr_{\Gamma} (G))$ is injective (respectively, surjective). 
\item \label{ft2} $\widetilde{J}\circ \iota_{\wF}\colon \gr^s_{\G}(\wT_G) \to 
\widetilde{\Der}^s (\gr_{\Gamma} (G))$ is injective (respectively, surjective).
\end{enumerate}
\end{lemma}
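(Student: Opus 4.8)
The plan is to exploit the two short exact sequences of graded Lie algebras with compatible $\A(G)$-module structures established in Theorem \ref{thm:piad} and Corollary \ref{cor:eqder}, together with the map $\widetilde J$ from Theorem \ref{thm:outer johnson}. Assemble these into the commuting ladder with exact columns
\begin{equation}
\label{eq:ladder}
\xymatrixcolsep{30pt}
\xymatrix{
0\ar[r] & \gr^s_{\Gamma}(G) \ar^{=}[d] \ar^(.45){\overline{\Ad}}[r]
& \gr^s_F(\T_G) \ar^{J}[d] \ar^{\bar\pi}[r]
& \gr^s_{\wF}(\wT_G) \ar^{\widetilde J}[d] \ar[r] & 0\\
0\ar[r] & \gr^s_{\Gamma}(G) \ar^(.4){\ad}[r]
& \Der^s(\gr_{\Gamma}(G)) \ar^{q}[r]
& \widetilde\Der^s(\gr_{\Gamma}(G)) \ar[r] & 0,
}
\end{equation}
whose commutativity is exactly the content of the two squares in diagram \eqref{eq:jcd}. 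Restricting the top row to the subalgebra coming from the lower central series of the Torelli groups, one also has the commuting square \eqref{eq:iotapi cd} relating $\iota_F$ and $\iota_{\wF}$ via $\gr_{\G}(\pi)$ and $\bar\pi$, and $\gr_{\G}(\pi)$ is surjective. The upshot is that in each fixed degree $s$ one gets a commuting diagram whose rows are the two exact sequences of \eqref{eq:ladder} and whose three vertical maps are $\id$, $J\circ\iota_F$, and $\widetilde J\circ\iota_{\wF}$ — more precisely, one needs $\bar\pi\circ(J\circ\iota_F)=(\widetilde J\circ\iota_{\wF})\circ\gr_{\G}(\pi)$, which follows by combining the commutativity of the right square in \eqref{eq:iotapi cd} with that of the bottom square in \eqref{eq:jcd}, and $(J\circ\iota_F)\circ\overline{\Ad}$-compatibility, which follows from Lemma \ref{lem:barad}\eqref{bar3} and Lemma \ref{lem:barad}\eqref{bar2} since $\overline{\Ad}=\iota_F\circ\gr_{\G}(\Ad)$.

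With this diagram in hand, both equivalences are diagram chases in the degree-$s$ piece. For injectivity: the leftmost vertical map is the identity, hence injective; so a standard application of the snake lemma (or four lemma) shows that $J\circ\iota_F$ injective in degree $s$ forces $\widetilde J\circ\iota_{\wF}$ injective in degree $s$, using surjectivity of $\gr_{\G}(\pi)$ and $\bar\pi$ for the converse direction. Concretely, if $\widetilde J\circ\iota_{\wF}$ is injective and $\xi\in\gr^s_{\G}(\T_G)$ lies in the kernel of $J\circ\iota_F$, then its image under $\gr_{\G}(\pi)$ dies under $\widetilde J\circ\iota_{\wF}$, hence vanishes, so $\xi$ comes from $\gr^s_{\Gamma}(G)$ via $\gr_{\G}(\Ad)$; but then $\overline{\Ad}$ of that class lands in $\ker(J)=0$ by exactness of the bottom row together with $J\circ\overline{\Ad}=\ad$ and $Z(\gr_{\Gamma}(G))=0$, forcing $\xi=0$ after one more appeal to injectivity of $\gr_{\G}(\Ad)$ (Theorem \ref{thm:piad}). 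The reverse implication for injectivity is the easier half and follows directly from commutativity and surjectivity of $\gr_{\G}(\pi)$.

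For surjectivity: again the leftmost vertical map is onto, so the five-lemma style argument gives that $J\circ\iota_F$ surjective in degree $s$ implies $\widetilde J\circ\iota_{\wF}$ surjective in degree $s$ by chasing through $q$; conversely, given surjectivity of $\widetilde J\circ\iota_{\wF}$, any $\delta\in\Der^s(\gr_{\Gamma}(G))$ projects under $q$ to something hit by $\widetilde J\circ\iota_{\wF}$, so after subtracting an element of $\im(J\circ\iota_F)$ we reduce to the case $\delta\in\ker(q)=\im(\ad)$, and then $\ad=J\circ\overline{\Ad}=(J\circ\iota_F)\circ\gr_{\G}(\Ad)$ shows $\delta$ is in the image. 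The one genuinely load-bearing point — and the step I expect to require the most care — is verifying the compatibility of the vertical maps on the nose (the equality $\bar\pi\circ(J\circ\iota_F)=(\widetilde J\circ\iota_{\wF})\circ\gr_{\G}(\pi)$ and the matching relation on the $\overline{\Ad}$/$\ad$ side), since everything else is a formal consequence of the two exact columns once that square is known to commute; but that commutativity is precisely what diagrams \eqref{eq:iotapi cd} and \eqref{eq:jcd} were set up to provide, so the argument is short modulo carefully citing which square gives which identity.
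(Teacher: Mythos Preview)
Your proposal is correct and follows exactly the approach the paper takes: the paper's proof is the single sentence ``chase diagram \eqref{eq:exact} from Theorem \ref{thm:piad} and diagram \eqref{eq:jcd} from Theorem \ref{thm:outer johnson}'', and you have spelled out that chase in detail, assembling the ladder with rows the top exact sequence of \eqref{eq:exact} and the sequence of Corollary \ref{cor:eqder}, and vertical maps $\id$, $J\circ\iota_F$, $\widetilde J\circ\iota_{\wF}$.

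One small slip to fix: in the compatibility relation you write $\bar\pi\circ(J\circ\iota_F)=(\widetilde J\circ\iota_{\wF})\circ\gr_{\G}(\pi)$, but this does not typecheck, since $\bar\pi$ has source $\gr_F(\T_G)$ while $J\circ\iota_F$ lands in $\Der(\gr_\Gamma(G))$. The identity you need (and the one your cited squares actually yield) is $q\circ(J\circ\iota_F)=(\widetilde J\circ\iota_{\wF})\circ\gr_{\G}(\pi)$; with $q$ in place of $\bar\pi$ your derivation via the right square of \eqref{eq:iotapi cd} composed with the bottom square of \eqref{eq:jcd} goes through verbatim.
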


\begin{proof}
Chase diagram \eqref{eq:exact} from Theorem \ref{thm:piad} and 
diagram \eqref{eq:jcd} from Theorem \ref{thm:outer johnson}. 
\end{proof}

As we know, there always exist inclusions 
$\Gamma^s (\wT_G) \inj \wF^s (\wT_G)$ inducing homomorphisms 
$\iota_{\wF}\colon \gr^s_{\G}(\wT_G) \to \gr^s_{\wF}(\wT_G)$, 
for all $s\ge 1$.  In general, the two natural filtrations on 
$\wT_G$ are not equal.  Nevertheless, the next result reformulates 
the equality of the two filtrations, up to a 
certain degree, in terms of the Johnson homomorphisms. 

\begin{theorem}
\label{thm:two outfilt}
Suppose $Z(\gr_{\Gamma} (G))=0$. For each $q\ge 1$, 
the following statements are equivalent:
\begin{enumerate} 
\item \label{of1}
$J\circ \iota_{F}\colon \gr^s_{\G}(\T_G) \to 
\Der^s (\gr_{\Gamma} (G))$ is injective, for all $s\le q$. 

\item \label{of2} 
$\widetilde{J}\circ \iota_{\wF}\colon \gr^s_{\G}(\wT_G) \to 
\widetilde{\Der}^s (\gr_{\Gamma} (G))$ is injective, for all $s\le q$.

\item \label{of3} $\Gamma^s(\T_G)= F^s(\T_G)$, for all $s\le q+1$.

\item \label{of4} $\Gamma^s(\wT_G)= \wF^s(\wT_G)$, for all $s\le q+1$.

\end{enumerate}
\end{theorem}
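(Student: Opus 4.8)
The plan is to prove the cycle of implications $\eqref{of1} \Leftrightarrow \eqref{of2}$, $\eqref{of1} \Leftrightarrow \eqref{of3}$, and $\eqref{of1} \Leftrightarrow \eqref{of4}$, leveraging the machinery already in place. The equivalence $\eqref{of1} \Leftrightarrow \eqref{of2}$ is immediate: it is simply Lemma \ref{lem:filt tg} (the injectivity half) applied for each $s \le q$. The equivalence $\eqref{of1} \Leftrightarrow \eqref{of3}$ is precisely the content of Theorem \ref{thm:two filt}, so nothing new is needed there. Thus the real work is to link the equality of the two filtrations on $\T_G$ with the equality of the two filtrations on $\wT_G$, i.e. $\eqref{of3} \Leftrightarrow \eqref{of4}$.

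For the direction $\eqref{of3} \Rightarrow \eqref{of4}$, I would argue by induction on $s \le q+1$, just as in the proof of Theorem \ref{thm:two filt}. The base case $s=1$ is trivial since $\Gamma^1(\wT_G) = \wF^1(\wT_G) = \wT_G$. For the inductive step, assuming $\Gamma^s(\wT_G) = \wF^s(\wT_G)$, I would consider the analogue of diagram \eqref{eq:tgprime} for $\wT_G$ and note that $\eqref{of3}$ gives, via Theorem \ref{thm:two filt} and Lemma \ref{lem:filt tg}, that $\widetilde{J} \circ \iota_{\wF}$ is injective in degree $s$, hence $\iota_{\wF}$ is injective in degree $s$; then the diagram chase forces $\Gamma^{s+1}(\wT_G) \hookrightarrow \wF^{s+1}(\wT_G)$ to be an equality. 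For the converse $\eqref{of4} \Rightarrow \eqref{of3}$, the cleanest route is to use Theorem \ref{thm:piad}: the exact sequences $1 \to \Gamma^s(G) \to F^s(\T_G) \to \wF^s(\wT_G) \to 1$ (from Proposition \ref{prop:exone}, using that $\gr_\Gamma(G)$ is centerless) together with the analogous statement for the lower central series. Actually, the slickest approach is to observe that $\eqref{of4}$ gives, again via Theorem \ref{thm:two filt} applied on the $\wT_G$ side combined with Lemma \ref{lem:filt tg}, that $J \circ \iota_F$ is injective up to degree $q$, which is exactly $\eqref{of1}$, and then $\eqref{of1} \Rightarrow \eqref{of3}$ is Theorem \ref{thm:two filt} once more.

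In fact, I would streamline the whole proof into a single loop: show $\eqref{of2} \Leftrightarrow \eqref{of4}$ by the same inductive diagram-chase argument used for Theorem \ref{thm:two filt}, but carried out with the outer objects $\gr_{\wF}(\wT_G)$, $\widetilde{J}$, and $\widetilde{\Der}(\gr_\Gamma(G))$ in place of their inner counterparts — every ingredient needed (existence of $\iota_{\wF}$, injectivity of $\widetilde{J}$ from Theorem \ref{thm:outer johnson}, the short exact sequences relating $\Gamma^{s+1}$ and $\gr^s$) is available. Then the square is closed by chaining $\eqref{of1} \Leftrightarrow \eqref{of2}$ (Lemma \ref{lem:filt tg}), $\eqref{of1} \Leftrightarrow \eqref{of3}$ (Theorem \ref{thm:two filt}), and $\eqref{of2} \Leftrightarrow \eqref{of4}$.

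The main obstacle, such as it is, lies in carrying out the inductive diagram chase on the outer side: one must be careful that the short exact sequence $1 \to \Gamma^{s+1}(\wT_G) \to \Gamma^s(\wT_G) \to \gr^s_\Gamma(\wT_G) \to 1$ and its Johnson-filtration analogue fit into a commuting ladder with vertical map $\iota_{\wF}$ in degree $s$, exactly mirroring \eqref{eq:tgprime}. This is routine given that $\Gamma^s(\wT_G) \subseteq \wF^s(\wT_G)$ for all $s$, but it is the one place where one genuinely re-does work rather than merely citing an earlier result. Everything else is bookkeeping with the equivalences already established.

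\begin{proof}
By Lemma \ref{lem:filt tg}, statements \eqref{of1} and \eqref{of2} are equivalent (apply the injectivity half of that lemma for each $s\le q$). By Theorem \ref{thm:two filt}, statements \eqref{of1} and \eqref{of3} are equivalent. It remains to prove that \eqref{of2} is equivalent to \eqref{of4}; the argument is the exact analogue of the proof of Theorem \ref{thm:two filt}, carried out with the outer objects.

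To prove \eqref{of2} $\Rightarrow$ \eqref{of4}, we use induction on $s$, for $s\le q+1$. For $s=1$, we have $\Gamma^1(\wT_G)=\wF^1(\wT_G)=\wT_G$. Assume now that $\Gamma^s(\wT_G)=\wF^s(\wT_G)$, for some $s\le q$, and consider the commuting diagram
\begin{equation}
\label{eq:wtgprime}
\xymatrix{
1\ar[r] & \Gamma^{s+1}(\wT_G) \ar[r] \ar@{^{(}->}[d]
& \Gamma^{s}(\wT_G) \ar[r] \ar^{=}[d]
& \gr^s_{\G}(\wT_G) \ar[r]  \ar@{^{(}->}^{\iota_{\wF}}[d]&  1\,\phantom{.}\\
1\ar[r] & \wF^{s+1}(\wT_G)  \ar[r] &  \wF^{s}(\wT_G) \ar[r]
& \gr^s_{\wF}(\wT_G) \ar[r] &  1\,.
}
\end{equation}
The assumption that $\widetilde{J}\circ \iota_{\wF}$ is injective in degree $s$ implies, since $\widetilde{J}$ is injective by Theorem \ref{thm:outer johnson}, that $\iota_{\wF}$ is injective in degree $s$. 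From diagram \eqref{eq:wtgprime}, we conclude that the inclusion $\Gamma^{s+1}(\wT_G)\inj \wF^{s+1}(\wT_G)$ is an equality, finishing the induction step.

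For the converse, \eqref{of4} $\Rightarrow$ \eqref{of2}, looking again at diagram \eqref{eq:wtgprime}, the assumption that $\Gamma^s(\wT_G)=\wF^s(\wT_G)$ for all $s\le q+1$ implies that $\iota_{\wF}$ is an isomorphism in degrees $s\le q$. Since $\widetilde{J}$ is a monomorphism by Theorem \ref{thm:outer johnson}, the composite $\widetilde{J}\circ \iota_{\wF}$ is injective in degrees $s\le q$, which is \eqref{of2}. This completes the cycle of equivalences.
\end{proof}
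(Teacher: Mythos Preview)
Your proof is correct and takes essentially the same approach as the paper: the equivalences $\eqref{of1}\Leftrightarrow\eqref{of2}$ and $\eqref{of1}\Leftrightarrow\eqref{of3}$ are cited from Lemma~\ref{lem:filt tg} and Theorem~\ref{thm:two filt} respectively, and $\eqref{of2}\Leftrightarrow\eqref{of4}$ is proved by rerunning the inductive diagram-chase of Theorem~\ref{thm:two filt} with the outer objects, which is exactly what the paper indicates. One minor redundancy: in the step $\eqref{of2}\Rightarrow\eqref{of4}$, injectivity of $\widetilde{J}\circ\iota_{\wF}$ already forces injectivity of $\iota_{\wF}$ without invoking Theorem~\ref{thm:outer johnson}; you only need that theorem in the converse direction.
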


\begin{proof}
\eqref{of1} $\same$ \eqref{of2}.  This was established 
in Lemma \ref{lem:filt tg}. 

\eqref{of1} $\same$ \eqref{of3}. This fact was proved in
Theorem \ref{thm:two filt}.  

\eqref{of2} $\same$ \eqref{of4}.  
The claim follows by the same argument used in proving 
Theorem \ref{thm:two filt}.  
\end{proof}

\begin{corollary}
\label{cor:exot}
Let $G$ be a residually nilpotent group.  Suppose $\gr_{\Gamma} (G)$ 
is centerless and $J\circ \iota_{F}\colon \gr^s_{\G}(\T_G) \to 
\Der^s (\gr_{\Gamma} (G))$ is injective, for all $s\le q$.   Then
the sequence 
\begin{equation}
\label{eq:ads}
\xymatrix{
1\ar[r] &\G^s (G) \ar^(.45){\Ad}[r] & \G^s (\T_G)
\ar^{\pi}[r] & \G^s (\wT_G) \ar[r] & 1
}
\end{equation}
is exact, for all $s\le q+1$.
\end{corollary}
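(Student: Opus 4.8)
The plan is to deduce Corollary \ref{cor:exot} by combining Proposition \ref{prop:exone} with the filtration-comparison result of Theorem \ref{thm:two outfilt}. The point is that the exact sequence \eqref{eq:ex1} from Proposition \ref{prop:exone} already reads
\[
1\to \G^{s}(G)\xrightarrow{\Ad} F^{s}(\T_G)\xrightarrow{\pi}\wF^{s}(\wT_G)\to 1,
\]
so all that is needed is to replace the Johnson-filtration terms $F^{s}(\T_G)$ and $\wF^{s}(\wT_G)$ by the lower-central-series terms $\G^{s}(\T_G)$ and $\G^{s}(\wT_G)$ in the relevant range of $s$.

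First I would check that the hypotheses of Proposition \ref{prop:exone} are in force: $G$ is residually nilpotent by assumption, and $\gr_{\Gamma}(G)$ is centerless by assumption, so \eqref{eq:ex1} holds for every $s\ge 1$. Next I would invoke Theorem \ref{thm:two outfilt}: the standing hypothesis $Z(\gr_{\Gamma}(G))=0$ together with the assumed injectivity of $J\circ\iota_{F}$ in degrees $s\le q$ is precisely condition \eqref{of1} of that theorem, hence conditions \eqref{of3} and \eqref{of4} hold as well, i.e.
\[
\G^{s}(\T_G)=F^{s}(\T_G)\quad\text{and}\quad\G^{s}(\wT_G)=\wF^{s}(\wT_G)\qquad\text{for all }s\le q+1.
\]
Substituting these equalities into \eqref{eq:ex1} for $s\le q+1$ yields exactly the exact sequence \eqref{eq:ads}, with the same maps $\Ad$ and $\pi$ (note that $\pi$ here denotes the restriction of the projection $\Aut(G)\surj\Out(G)$, which by definition carries $\G^{s}(\T_G)$ onto $\G^{s}(\wT_G)$ once the two filtrations agree). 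One small compatibility point worth a sentence: the map $\Ad\colon\G^{s}(G)\to\G^{s}(\T_G)$ of \eqref{eq:ads} is the corestriction of the map in \eqref{eq:ex1}, which makes sense precisely because $F^{s}(\T_G)=\G^{s}(\T_G)$ in this range; and the inclusion $\Gamma^{s}(\Inn(G))\subseteq\Inn(\G^{s}(G))$ (equivalently the surjectivity of $\Ad$ onto the relevant piece) is already encoded in the exactness of \eqref{eq:ex1}.

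I do not expect any serious obstacle here: the corollary is essentially a formal splicing of two previously established results, and the only thing requiring care is bookkeeping of which filtration index ($s$ versus $q$ versus $q+1$) each cited statement applies to, so that the ranges line up. The mild subtlety — the one place where a careless argument could go wrong — is making sure that the identification $\G^{s}(\wT_G)=\wF^{s}(\wT_G)$ is used for the quotient term on the right while $\G^{s}(\T_G)=F^{s}(\T_G)$ is used for the middle term, both valid only for $s\le q+1$, and that $\G^{s}(G)$ on the left needs no such identification since it is intrinsic to $G$; conflating these would give the wrong range. Once that is tracked correctly, the proof is a one-line deduction and can be written as such.
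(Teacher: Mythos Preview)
Your proposal is correct and follows essentially the same approach as the paper: invoke Proposition~\ref{prop:exone} to obtain the exact sequence \eqref{eq:ex1}, then use Theorem~\ref{thm:two outfilt} to replace $F^{s}(\T_G)$ by $\G^{s}(\T_G)$ and $\wF^{s}(\wT_G)$ by $\G^{s}(\wT_G)$ for $s\le q+1$. The paper's proof is a two-sentence version of exactly this argument.
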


\begin{proof}
Consider the exact sequence \eqref{eq:ex1} 
from Proposition \ref{prop:exone}.  Using 
Theorem \ref{thm:two outfilt} 
to replace $F^s(\T_G)$ by $\Gamma^s(\T_G)$ 
and $\wF^s(\wT_G)$ by $\Gamma^s(\wT_G)$ 
ends the proof.
\end{proof}

It is worth highlighting separately the case $q=1$   
of the above results. 

\begin{corollary}
\label{cor:outer johnson2}
Suppose $G$ is residually nilpotent, $\gr_{\Gamma} (G)$ is centerless, 
and $J \circ \iota_{F}\colon \gr^1_{\G}(\T_G) \to \Der^1 (\gr_{\Gamma} (G))$ 
is injective. Then $F^2(\T_G)= \T_G'$, $\wF^2(\wT_G)= \wT_G'$, 
and we have an exact sequence
\begin{equation}
\label{eq:adprime}
\xymatrix{
1\ar[r] &G' \ar^(.45){\Ad}[r] & \T_G' 
\ar^{\pi}[r] &\wT_G' \ar[r] & 1
}.
\end{equation}
\end{corollary}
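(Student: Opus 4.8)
The plan is to deduce Corollary~\ref{cor:outer johnson2} from Corollary~\ref{cor:exot} and Theorem~\ref{thm:two outfilt} by specializing to $q=1$, together with the standard identification of the second term of the lower central series with the derived subgroup. First I would invoke Theorem~\ref{thm:two outfilt} with $q=1$: since the hypothesis is precisely that $J\circ\iota_F$ is injective in degree $s=1$, condition \eqref{of1} holds, so conditions \eqref{of3} and \eqref{of4} hold as well, giving $\Gamma^s(\T_G)=F^s(\T_G)$ and $\Gamma^s(\wT_G)=\wF^s(\wT_G)$ for all $s\le 2$. In particular, taking $s=2$, we get $F^2(\T_G)=\Gamma^2(\T_G)$ and $\wF^2(\wT_G)=\Gamma^2(\wT_G)$.

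Next I would recall that for any group $H$, the second term of the lower central series is $\Gamma^2(H)=(\Gamma^1(H),H)=(H,H)=H'$, by the very definition of the lower central series given in \S\ref{subsec:grg}. Applying this with $H=\T_G$ and $H=\wT_G$ yields $\Gamma^2(\T_G)=\T_G'$ and $\Gamma^2(\wT_G)=\wT_G'$. Combining with the previous paragraph gives the asserted equalities $F^2(\T_G)=\T_G'$ and $\wF^2(\wT_G)=\wT_G'$.

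For the exact sequence, I would apply Corollary~\ref{cor:exot} with $q=1$. Its hypotheses are exactly those of the present corollary (residual nilpotence of $G$, centerlessness of $\gr_\Gamma(G)$, and injectivity of $J\circ\iota_F$ in degree $\le 1$), so it produces an exact sequence
\[
\xymatrix{
1\ar[r] &\G^2 (G) \ar^(.45){\Ad}[r] & \G^2 (\T_G)
\ar^{\pi}[r] & \G^2 (\wT_G) \ar[r] & 1
}
\]
for $s=2\le q+1$. Using $\Gamma^2(G)=G'$, $\Gamma^2(\T_G)=\T_G'$, and $\Gamma^2(\wT_G)=\wT_G'$ as above, this is exactly the sequence \eqref{eq:adprime}, completing the proof.

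I do not expect any genuine obstacle here: the corollary is a pure specialization, and every ingredient is already in place. The only point requiring a line of care is the passage from $\Gamma^2$ to the derived subgroup, which is immediate from the definition but should be stated explicitly so the reader sees why $F^2$ and $\wF^2$ coincide with $\T_G'$ and $\wT_G'$ rather than merely with $\Gamma^2(\T_G)$ and $\Gamma^2(\wT_G)$.
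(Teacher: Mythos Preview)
Your proposal is correct and matches the paper's approach exactly: the paper simply remarks that Corollary~\ref{cor:outer johnson2} is the case $q=1$ of Theorem~\ref{thm:two outfilt} and Corollary~\ref{cor:exot}, and you have spelled this out in full, including the identification $\Gamma^2(H)=H'$.
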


\section{The Alexander invariant and its friends}
\label{sect:alexinv}

We now turn to the Alexander invariant of a group. We discuss  
the action of the Torelli group on this module, and some of its 
connections with associated graded algebras.

\subsection{The abelianization of the derived subgroup}
\label{subsec:alex inv}

Let $G$ be a group.  Recall $G'=(G,G)$ is  
the derived subgroup, and $G_{\ab}=G/G'$ is the  
maximal abelian quotient of $G$.  Similarly, $G''=(G',G')$ 
is the second derived subgroup, and $G/G''$ is the maximal 
metabelian quotient.  The abelianization map 
$\ab\colon G\surj G_{\ab}$ factors through $G/G''$, 
and so we get an exact sequence
\begin{equation} 
\label{eq:ggg}
\xymatrix{0 \ar[r]& G'/G'' \ar[r]& 
G/G'' \ar[r]& G_{\ab} \ar[r]& 0}. 
\end{equation}

Conjugation in $G/G''$ naturally makes the abelian group 
$G'/G''$ into a module over the group ring $R=\Z{G_{\ab}}$.   
We call this module, 
\begin{equation} 
\label{eq:alex inv}
B(G) := G'/G'' = H_1(G',\Z),
\end{equation}
the {\em Alexander invariant}\/ of $G$. By definition, 
the $R$-module structure on $B(G)$ is given by 
\begin{equation} 
\label{eq:ai action}
\bar{h}\cdot \bar{g}= \overline{hgh^{-1}},
\end{equation}
for $\bar{h}\in G_{\ab}=G/G'$, represented by $h\in G$, and 
$\bar{g}\in B(G)=G'/G''$, represented by $g\in G'$. Since 
$G'=\Gamma^2(G)$, it follows that $G'' \subseteq \Gamma^4(G)
\subseteq \Gamma^3(G)$;  
in particular, $\gr_{\Gamma}^2 (G)=\Gamma^2(G)/\Gamma^3(G)$ 
is a (generally proper) quotient of $B(G)$.

\begin{example}
\label{ex:bfree}
Let $F_n$ be the free group on generators $x_1,\dots, x_n$. 
Identify the ring $R=\Z{\Z^n}$ with the ring of Laurent 
polynomials in the variables $\bar{x}_1,\dots ,\bar{x}_n$. 
A standard argument based on induction on word length 
shows that $B(F_n)$ is generated as an $R$-module 
by all elements of the form $\overline{(x_i,x_j)}$, 
with $1\le i<j\le n$, subject to the relations 
\begin{equation}
\label{eq:bf rels}
(\bar{x}_i-1) \cdot \overline{(x_j,x_k)} - 
(\bar{x}_j-1) \cdot \overline{(x_i,x_k)} + 
(\bar{x}_k-1) \cdot \overline{(x_i,x_j)}=0,
\end{equation}
for all $1\le i<j<k \le n$.  (The fact that these relations hold in 
$F_n'/F_n''$ is a direct consequence of the Witt--Hall identity 
\eqref{eq:jacobi}; it is an exercise to show that no other relations hold.)
\end{example}

\begin{remark}
\label{rem:bpres}
More generally, if $G$ is a finitely generated group, one can 
write down an explicit presentation for the $\Z{G_{\ab}}$-module $B(G)$, 
starting from a presentation for the group.  The procedure involves 
computing the (abelianized) Fox derivatives of the relators, and 
solving a matrix lifting problem. For details, we refer to \cite{Ms}, 
and also \cite[\S8.3]{PS-imrn}.
\end{remark}

The module $B(G)$ is said to have {\em trivial $G_{\ab}$-action}\/ 
if $\bar{h}\cdot \bar{g}=\bar{g}$, for all $\bar{h}\in G/G'$ and 
$\bar{g}\in G'/G''$.  This happens precisely when $(h,g)\in G''$, 
for all $h\in G$ and $g\in G'$; that is to say, $(G,G')\subseteq G''$. 

In the sequel, we will also consider Alexander invariants 
with field coefficients.  Let $\k$ be a field, and view the 
$\k$-vector space $B_{\k}(G):=B(G)\otimes \k$ as a module 
over the group algebra $\k{G_{\ab}}$ by setting 
$\bar{h}\cdot (\bar{g}\otimes 1)= (\bar{h}\cdot \bar{g})\otimes 1$. 

Now suppose $\ch\k=0$.  Then $B_{\k}(G)$ 
has trivial $G_{\ab}$-action if and only if $\bar{h}\cdot 
\bar{g}-\bar{g}$ is a torsion element in $B(G)$, for all 
$\bar{h}$ and $\bar{g}$ as above.  Put another way, $B_{\k}(G)$ 
has trivial $G_{\ab}$-action if and only if, for each $h\in G$ and 
$g\in G'$, there is an integer $m>0$ such that $(h,g)^m\in G''$. 

\subsection{Action of the Torelli group on the Alexander invariant}
\label{subsec:tg bg}

Since both $G'$ and $G''$ are characteristic subgroups of $G$, 
the natural action of $\Aut(G)$ on $G$ induces an action on $B(G)$. 
Explicitly, if $\alpha$ is an automorphism of $G$, and 
$\bar{x}$ is an element in $B(G)=G'/G''$, represented by $x\in G'$, then 
\begin{equation}
\label{eq:ax}
\alpha \cdot \bar{x} = \overline{\alpha(x)}.
\end{equation}
Alternatively, if we identify $B(G)=H_1(G',\Z)$, then $\alpha$ 
acts by the induced homomorphism in homology, $\alpha_*\colon 
H_1(G',\Z) \to H_1(G',\Z)$.  

In general, this action does not respect the $R$-module 
structure on $B(G)$.  Restricting to the Torelli group $\T_G< \Aut(G)$, 
though, remedies this problem.

\begin{prop}
\label{prop:rlin}
The Torelli group $\T_G$ acts $R$-linearly on the Alexander invariant 
$B(G)$.
\end{prop}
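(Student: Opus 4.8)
The plan is to verify directly that the action of $\T_G$ on $B(G)$, given by formula \eqref{eq:ax}, commutes with multiplication by ring elements of $R=\Z{G_{\ab}}$. Since $R$ is generated as a ring by the classes $\bar h\in G_{\ab}$ with $h\in G$, and since an automorphism acts $\Z$-linearly on $B(G)$, it suffices to check that for every $\alpha\in\T_G$, every $\bar h\in G_{\ab}$, and every $\bar g\in B(G)=G'/G''$, one has $\alpha\cdot(\bar h\cdot\bar g)=\bar h\cdot(\alpha\cdot\bar g)$. Unwinding the definitions \eqref{eq:ai action} and \eqref{eq:ax}, with $h\in G$ a lift of $\bar h$ and $g\in G'$ a lift of $\bar g$, the left-hand side is $\overline{\alpha(hgh^{-1})}=\overline{\alpha(h)\,\alpha(g)\,\alpha(h)^{-1}}$, while the right-hand side is $\overline{h\,\alpha(g)\,h^{-1}}$, where both bars denote the class in $G'/G''$. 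So the whole point is to show that these two elements of $G'/G''$ agree.

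The key step is the observation that $\alpha\in\T_G$ means $\alpha(h)\equiv h \pmod{G'}$, i.e. $\alpha(h)=h\,c$ for some $c\in G'$. Then $\overline{\alpha(h)\,\alpha(g)\,\alpha(h)^{-1}} = \overline{h\,c\,\alpha(g)\,c^{-1}h^{-1}}$, and it remains to see that $c\,\alpha(g)\,c^{-1}$ and $\alpha(g)$ have the same image in $G'/G''$. But $\alpha(g)\in G'$ and $c\in G'$, so their commutator $(c,\alpha(g))=c\,\alpha(g)\,c^{-1}\alpha(g)^{-1}$ lies in $G''=(G',G')$; hence $c\,\alpha(g)\,c^{-1}\equiv\alpha(g)\pmod{G''}$. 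Conjugation by $h$ preserves $G''$ (it is a characteristic subgroup), so applying $\bar h\cdot$ to both sides gives $\overline{h\,c\,\alpha(g)\,c^{-1}h^{-1}}=\overline{h\,\alpha(g)\,h^{-1}}$, which is exactly the right-hand side. This completes the computation, and hence the proof.

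I do not expect any genuine obstacle here: the statement is essentially a bookkeeping identity, and the only substantive input is that $(G',G')=G''$ together with the defining property of the Torelli group. The one point to be careful about is that the identity $\alpha(hgh^{-1})=\alpha(h)\alpha(g)\alpha(h)^{-1}$ must be taken in $G$ before passing to $G'/G''$, and that one should record that the map is already known (from \S\ref{subsec:tg bg}) to be additive, so checking compatibility on ring generators $\bar h$ of $R$ suffices rather than on arbitrary elements of $R$. One could alternatively phrase the argument homologically, noting that $\alpha$ restricts to an automorphism of $G'$ covering the conjugation automorphism $\Ad_h$ up to an inner automorphism of $G'$, which acts trivially on $H_1(G',\Z)$; but the direct computation above is shorter and self-contained.
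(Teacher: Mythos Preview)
Your proof is correct and follows essentially the same approach as the paper: both reduce to showing $\overline{\alpha(h)\,\alpha(g)\,\alpha(h)^{-1}}=\overline{h\,\alpha(g)\,h^{-1}}$ in $G'/G''$, write $\alpha(h)$ as $h$ times an element of $G'$ (the paper uses the left convention $\alpha(h)=zh$, you use the right convention $\alpha(h)=hc$), and observe that the resulting discrepancy lies in $G''$. Your direct computation is slightly more streamlined than the paper's, which expands via the Witt--Hall identity \eqref{eq:bilg}, but the substance is identical.
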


\begin{proof}
We need to check that $\alpha\cdot \bar{g}\bar{x} = \bar{g} 
(\alpha \cdot \bar{x})$, for all $\alpha\in \T_G$, and for 
all $g\in G$ and $x\in G'$.  That is, we need to show that 
$\overline{\alpha(g)\alpha(x)\alpha(g)^{-1}} = 
\overline{g \alpha(x) g^{-1}}$, or,
\[
(\alpha(g), y) \equiv (g, y) \,\bmod G'', \quad 
\text{for all $y\in G'$}.
\]

Now, since $\alpha\in \T_G$, we must have $\alpha(g)=zg$, for 
some $z\in G'$.  Using identity \eqref{eq:bilg}, we get 
\begin{align*}
(\alpha(g), y) \cdot (g,y)^{-1} & = {}^z (g,y)\cdot 
(z,y) \cdot (g,y)^{-1}\\
& =(z,(g,y))\cdot ((g,y),(z,y))\cdot (z,y).
\end{align*}
Clearly, this last expression belongs to $G''$, and we are done. 
\end{proof}

As an immediate consequence, we see that the Torelli group $\T_G$ 
acts by $\k{G_{\ab}}$-linear transformations on $B_{\k}(G)$, for 
all fields $\k$.

\subsection{Alexander invariant and associated graded Lie algebras}
\label{subsec:bg}

In \cite{Ms}, W.~Massey  found a simple, yet important relationship 
between the Alexander invariant of a group and the associated 
graded Lie algebra of its maximal metabelian quotient.  

As before, let $B=B(G)$ be the Alexander invariant of $G$, 
viewed as a module over the ring $R=\Z{G_{\ab}}$, and let 
$\gr_I (B)= \bigoplus_{q\ge 0} I^q\cdot B/I^{q+1} \cdot B$ be 
the associated graded module over the ring 
$\gr_I(R)=\bigoplus_{q\ge 0} I^q/I^{q+1}$, where $I$ is 
the augmentation ideal of the group ring $R$. 

Consider also the {\em Chen Lie algebra}\/ of $G$,  
i.e., the associated graded Lie algebra of its maximal 
metabelian quotient, $\gr_{\G} (G/G'')$.

\begin{prop}[\cite{Ms}]
For each $q\ge 0$, there is a natural isomorphism
\begin{equation}
\label{eq:mass}
\gr_I^q (B) \cong \gr_{\G}^{q+2} (G/G'')  \, .
\end{equation}
\end{prop}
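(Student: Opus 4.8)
The plan is to compare the lower central series of the metabelian quotient $G/G''$ with the $I$-adic filtration on the Alexander invariant $B=B(G)$, using the extension \eqref{eq:ggg}. First I would record the key observation that underlies Massey's isomorphism: inside $G/G''$, the subgroup $G'/G''$ is abelian and normal, with quotient $G_{\ab}$ acting by conjugation, so $G/G''$ is the semidirect-product-like extension \eqref{eq:ggg} and its lower central series is governed entirely by iterated commutators $(g,h)$ with $g\in G'$ and $h$ ranging over $G$. Concretely, I would show that $\Gamma^{q+2}(G/G'')$ maps, under the identification $G'/G'' = B$, onto $I^q\cdot B$. The inclusion $\Gamma^{q+2}(G/G'')\subseteq I^q B$ comes from the fact that a commutator $(g,h)$ with $g\in G'$ corresponds, in the module notation \eqref{eq:ai action}, to $(\bar h - 1)\cdot\bar g\in I\cdot B$, and each further commutator with an element of $G$ multiplies by another factor $\bar h - 1\in I$; the reverse inclusion $I^q B\subseteq \Gamma^{q+2}(G/G'')$ is the same computation read backwards, together with the remark that $I$ is generated by the elements $\bar h - 1$.

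Next I would assemble the graded pieces. Since $\Gamma^2(G/G'')=G'/G''=B$ and, more generally, $\Gamma^{q+2}(G/G'')$ corresponds to $I^q B$, the successive quotient $\gr_{\G}^{q+2}(G/G'') = \Gamma^{q+2}(G/G'')/\Gamma^{q+3}(G/G'')$ corresponds to $I^q B/I^{q+1}B = \gr_I^q(B)$. One has to be slightly careful that $\Gamma^{q+3}(G/G'')$ really is the next term and not something smaller — i.e. that no part of $\Gamma^{q+2}$ beyond $I^{q+1}B$ gets killed — but this follows because the terms $\Gamma^s(G/G'')$ for $s\ge 2$ all lie in the abelian group $G'/G''$, so the group commutator $(\Gamma^{q+2},\Gamma^1)$ is computed purely via the module action and equals exactly $(\bar h-1)$-multiples of $\Gamma^{q+2}$, i.e. $I\cdot(I^q B) = I^{q+1}B$. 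I would also note the edge case $q=0$: $\gr_I^0(B)=B/IB$ is the group of coinvariants, and $\gr_{\G}^2(G/G'')=G'/(G',G)G''$, and these agree by the same dictionary. Finally, naturality in $G$ is immediate since every map in sight — the identification $G'/G''=B$, the $I$-adic filtration, the lower central series — is functorial for group homomorphisms.

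The main obstacle, and the step I would spend the most care on, is pinning down the precise correspondence between the group-theoretic filtration $\{\Gamma^s(G/G'')\}$ and the module-theoretic filtration $\{I^q B\}$ — in particular verifying that it is an equality of filtrations and not merely an inclusion in one direction. The subtlety is that $\Gamma^{q+2}(G/G'')$ is generated by long commutators, and one must check that expanding such a commutator using the Witt--Hall identities \eqref{eq:bilg} and \eqref{eq:jacobi} (all the "error terms" living in $G''$, hence trivial in $G/G''$) yields exactly an element of the form $(\text{product of }(\bar h_i - 1))\cdot\bar g$ and, conversely, that every monomial $(\bar h_1-1)\cdots(\bar h_q-1)\cdot\overline{(g_1,g_2)}$ is realized by an honest iterated commutator in $G/G''$. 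This is a bookkeeping argument with iterated commutators modulo $G''$; once it is done cleanly, the passage to associated graded objects and the naturality statement are formal.
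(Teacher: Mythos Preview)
The paper does not actually give its own proof of this proposition; it is quoted directly from Massey \cite{Ms} and stated without proof. So there is no in-paper argument to compare against.

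That said, your proposal is correct and is essentially Massey's original argument. The key identity you use---that for $g\in G'/G''$ and $h\in G/G''$ the group commutator $(g,h)$ equals, in additive notation on the abelian group $B=G'/G''$, the module element $-(\bar h -1)\cdot \bar g$---is exactly the dictionary that identifies the lower central filtration $\{\Gamma^{q+2}(G/G'')\}_{q\ge 0}$ on $B$ with the $I$-adic filtration $\{I^q B\}_{q\ge 0}$. Your inductive step (each further commutator contributes one more factor from $I$) and the converse (every monomial $(\bar h_1-1)\cdots(\bar h_q-1)\cdot\bar g$ is realized by a left-normed commutator of weight $q+2$) are both sound; the Witt--Hall identities are not even strictly needed here, since once you are inside the abelian normal subgroup $G'/G''$ the commutator computation reduces to the module action directly. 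The only cosmetic point is a sign: $(g,h)$ gives $-(\bar h-1)\cdot\bar g$ rather than $+(\bar h-1)\cdot\bar g$, but of course this is irrelevant for the filtration.
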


The next proposition provides another link between 
the Alexander invariant and the associated graded 
Lie algebra of a group.

\begin{prop}
\label{prop:alex gr}
Let $G$ be a finitely generated group, with Alexander invariant 
$B(G)$.  Assume $\ch (\k)=0$.  If $B_{\k}(G)$ has trivial 
$G_{\ab}$-action, then $\gr^3_{\Gamma}(G) \otimes \k=0$.  
\end{prop}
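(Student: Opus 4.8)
The plan is to exploit the Chen Lie algebra description from Massey's isomorphism \eqref{eq:mass} together with the presentation of the Alexander invariant over $R = \Z{G_{\ab}}$. First I would pass to $\k$-coefficients: set $B = B_{\k}(G)$, a finitely generated module over the Noetherian ring $R_{\k} = \k{G_{\ab}}$, and note that by \eqref{eq:mass} (after tensoring with $\k$) we have $\gr^{q+2}_{\G}(G/G'')\otimes\k \cong \gr_I^q(B)$, where $I$ is the augmentation ideal of $R_{\k}$. In particular $\gr^3_{\G}(G/G'')\otimes\k \cong \gr_I^1(B) = I\cdot B/I^2\cdot B$. So it suffices to show that, under the hypothesis, $I\cdot B = I^2 \cdot B$, and then to relate $\gr^3_{\G}(G)$ to $\gr^3_{\G}(G/G'')$.

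For the second point, observe that $G'' \subseteq \Gamma^4(G) \subseteq \Gamma^3(G)$ (this is remarked in \S\ref{subsec:alex inv}), so the quotient map $G \surj G/G''$ induces an \emph{isomorphism} $\gr^3_{\G}(G) \xrightarrow{\ \simeq\ } \gr^3_{\G}(G/G'')$ — indeed $\Gamma^3(G/G'') = \Gamma^3(G)/(\Gamma^3(G)\cap G'') = \Gamma^3(G)/G''$ and $\Gamma^4(G/G'') = \Gamma^4(G)/G''$, so the successive quotients agree. Hence $\gr^3_{\G}(G)\otimes\k \cong \gr_I^1(B)$ as well, and the whole problem reduces to showing $I\cdot B = I^2\cdot B$.

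The hypothesis that $B = B_{\k}(G)$ has trivial $G_{\ab}$-action means exactly that $(\bar h - 1)\cdot \bar g = 0$ in $B$ for all $\bar h \in G_{\ab}$ and $\bar g \in B$; that is, $I \cdot B = 0$. This immediately gives $I\cdot B = 0 = I^2 \cdot B$, hence $\gr_I^1(B) = 0$, hence $\gr^3_{\G}(G)\otimes\k = 0$. The finite-dimensionality statement of Theorem \ref{thm:intro2}'s analogue then follows formally: $\gr_{\G}(G)\otimes\k$ is generated in degree $1$ (being a quotient of the free Lie algebra on $G_{\ab}\otimes\k$), and the vanishing of the degree-$3$ part of a Lie algebra generated in degree $1$ forces all higher degrees to vanish, since $\gr^{s+1} = [\gr^1, \gr^s]$ and one shows inductively $\gr^s = 0$ for $s \ge 3$; the statement as given only asserts $\gr^3 = 0$, which is what we have proved.

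The main obstacle — really the only subtlety — is making sure the degree-$3$ identification $\gr^3_{\G}(G) \cong \gr^3_{\G}(G/G'')$ is clean, i.e.\ checking that $\Gamma^3(G)\cap G'' = G''$ and $\Gamma^4(G)\cap G'' = G''$, which follow from $G'' \subseteq \Gamma^4(G)$; and citing Massey's isomorphism \eqref{eq:mass} in the correct degree with $\k$-coefficients (the original is stated over $\Z$, but tensoring with a field of characteristic $0$ is harmless since everything in sight is finitely generated and the construction is natural). Everything else is bookkeeping: unwinding "trivial $G_{\ab}$-action" to "$I \cdot B = 0$" and reading off $\gr_I^1(B) = 0$.
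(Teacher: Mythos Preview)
Your proof is correct but takes a different route from the paper's. The paper gives a more elementary argument that avoids Massey's isomorphism entirely: it uses the characterization (from \S\ref{subsec:alex inv}) that, in characteristic~$0$, trivial $G_{\ab}$-action on $B_\k(G)$ means that for each $h\in G$ and $g\in G'$ there is $m>0$ with $(h,g)^m\in G''$. Since $(G,G')=\Gamma^3$ and $G''\subseteq\Gamma^4$, this shows directly that the abelian group $\gr^3_\Gamma(G)=\Gamma^3/\Gamma^4$ is generated by torsion elements, hence is torsion, hence vanishes upon tensoring with~$\k$. Your argument via Massey's isomorphism \eqref{eq:mass} is more systematic---it transparently yields $\gr^{q+2}_\Gamma(G/G'')\otimes\k=0$ for \emph{all} $q\ge 1$, not just $q=1$---but at the cost of two extra verifications: the compatibility of \eqref{eq:mass} with base change to~$\k$, and the identification $\gr^3_\Gamma(G)\cong\gr^3_\Gamma(G/G'')$. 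The paper's direct approach sidesteps both. One small correction: your justification for the base-change step (``everything in sight is finitely generated'') is not the operative reason; what you actually need is that $\k$ is flat over~$\Z$ in characteristic~$0$, so that $I^qB\otimes\k$ embeds in $B\otimes\k$ as $I_\k^q\,B_\k$.
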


\begin{proof}
For any group $G$ with lower series terms $\Gamma^s=\Gamma^s(G)$, 
we have $(G,G')=(\Gamma^1,\Gamma^2)= \Gamma^3$, 
and $G''=(G',G')=(\Gamma^2,\Gamma^2)\subseteq \Gamma^4$.  
Thus, if $B_\k(G)$ has trivial $G_{\ab}$-action then, for each $x\in \G^3$ 
there is an integer $m>0$ such that $x^m \in \Gamma^4$.
In other words, $\gr^3_{\Gamma}(G)$ must be a torsion 
$\Z$-module. Since $\k$ has characteristic $0$, we conclude 
that $\gr^3_{\Gamma}(G)\otimes \k=0$.
\end{proof}

\subsection{Holonomy Lie algebra}
\label{subsec:holo lie}
Next, we review a notion that goes back to the work of 
K.-T. Chen \cite{Ch}.  For more details on this material,  
we refer to \cite{PS-imrn}, and references therein.

Let $G$ be a finitely generated group. Write $H=H_1(G,\Z)$ 
and $H_{\C}=H_1(G,\C)=H\otimes \C$.  Let $\Lie_{\bullet}(H_{\C})$ 
be the free Lie algebra generated by the vector space $H_{\C}$, 
with grading given by bracket length. Use the Lie bracket to identify 
$\Lie_{2}(H_{\C})$ with $H_{\C}\wedge H_{\C}$. 

Denote by $\partial_G\colon H_2(G,\C)\to H_{\C}\wedge H_{\C}$
the comultiplication map, that is, the linear map whose dual is the 
cup-product map, $\cup_G\colon H^1(G,\C)\wedge H^1(G,\C)\to H^2(G,\C)$.
The {\em holonomy Lie algebra}\/ of $G$, denoted $\h_{\bullet} (G)$, 
is the quotient of $\Lie_{\bullet}(H_{\C})$ by the (quadratic) Lie ideal 
generated by the image of $\partial_G$,
with grading inherited from the free Lie algebra. 
Since the dual of $\partial_G$ is $\cup_G$, 
it follows that the graded Lie algebra $\h_{\bullet} (G)$ depends 
only on the co-restriction of $\cup_G$ to its image.

Recall that the associated graded Lie algebra $\gr_{\G}(G)$ is 
generated in degree $1$. Thus, there is a natural 
epimorphism $\Lie_{\bullet}(H_{\C}) \surj \gr_{\G}^{\bullet} (G)\otimes \C$, 
induced by the identification $H_{\C} = G_{\ab}\otimes \C$.  
Using the relation between $\cup_G$ and the group 
commutator, due to D.~Sullivan, it is readily seen that 
this morphism factors through the holonomy Lie algebra, 
cf.~\cite{PS-imrn, DP}.  Thus, we have an epimorphism 
\begin{equation}
\label{eq:hepi}
\xymatrix{\h_{\bullet} (G) \ar@{->>}[r] & \gr_{\G}^{\bullet} (G)\otimes \C}.
\end{equation}

For an arbitrary Lie algebra $\g$, denote by $\g'=[\g, \g]$ the derived 
Lie algebra, and by $\g''=[\g', \g']$ the second derived Lie algebra.  
When $\g=\g_{\bullet}$ is graded, and generated as a Lie algebra by 
$\g_1$, we have $\g'=\g_{\ge 2}$.  

Let us return now to our group $G$. Composing the canonical projection 
$\gr_{\G}^{\bullet} (G)\otimes \C\surj \gr_{\G}^{\bullet} (G/G'')\otimes \C$ 
with the epimorphism \eqref{eq:hepi}, we obtain a morphism which factors 
through the metabelian quotient $\h/\h''$ to yield an epimorphism 
\begin{equation}
\label{eq:bepi}
\xymatrix{\h_{\bullet} (G)/ \h''_{\bullet} (G)\ar@{->>}[r] 
& \gr_{\G}^{\bullet} (G/G'')\otimes \C}.
\end{equation}

\subsection{Infinitesimal Alexander invariant}
\label{ss52}
As before, let $\h=\h_{\bullet}(G)$ be the holonomy Lie algebra 
of $G$.  The universal enveloping algebra of the abelian Lie algebra 
$\h/ \h'=H_{\C}$ may be identified with the symmetric algebra 
$S=\Sym (H_{\C})$.  In turn, $S$ may be viewed as a polynomial 
algebra, endowed with the usual grading. 

Following \cite{PS-imrn}, let us consider the 
{\em infinitesimal Alexander invariant}\/ of $G$, defined as 
\begin{equation}
\label{eq:infb}
\B_{\bullet}(G)= \h_{\bullet}'(G)/\h_{\bullet}''(G). 
\end{equation}
The exact sequence of graded Lie algebras
\begin{equation}
\label{eq:defb}
\xymatrix{0 \ar[r]& \h'/\h'' \ar[r] 
& \h/\h'' \ar[r]& \h/\h' \ar[r]&  0}
\end{equation}
yields a positively graded $S$-module structure on 
$\B_{\bullet}(G)$, with grading coming from the one on 
$\h_{\bullet}(G)$, and with $S$-action defined by the 
adjoint action. Explicitly, a monomial $x_1\cdots x_l \in S$
in variables $x_i \in \h/\h'$ represented by $a_i\in \h$ acts on 
an element $\bar{b} \in \h'/\h''$ represented by $b\in \h'$ as 
\begin{equation}
\label{eq:s-action}
(x_1\cdots x_l) \cdot \bar{b}= 
\overline{\ad_{a_1}\circ \dots \circ \ad_{a_l} (b)}.
\end{equation}

In \cite[Theorem 6.2]{PS-imrn}, the following presentation 
of $\B(G)$ by free $S$-modules was given:
\begin{equation} 
\label{eq:lin alex pres}
\xymatrix{
\left(S \otimes_{\C} \bigwedge^3 H_{\C}\right) \oplus 
(S\otimes_{\C} H_2(G,\C)) \ar^(.65){\delta _3 + 
\id \otimes \partial _G}[rr] &&
S \otimes_{\C}  \bigwedge^2 H_{\C} \ar@{->>}[r] & \B(G)
}.
\end{equation}
Here, the group $\bigwedge^3 H_{\C}$ is in degree $3$, 
the groups $H_2(G,\C)$ and $\bigwedge^2 H_{\C}$ are in 
degree $2$, and the $S$-linear map $\delta_3$ is given by 
$ \delta _3 (x \wedge y  \wedge z)
= x\otimes y   \wedge z -y \otimes x  \wedge z +z 
\otimes x \wedge y$.

\subsection{$1$-Formality}
\label{subsec:1formal}
We conclude this section by recalling a basic notion from rational homotopy 
theory, introduced by D.~Sullivan in \cite{Su}.  A finitely generated group 
$G$ is said to be {\em $1$-formal}\/ if the Malcev Lie algebra  
$\m(G)$---constructed by D.~Quillen \cite{Q}---is the completion of a 
quadratic Lie algebra.  Relevant to us is the following result, which  
relates the $B$- and $\B$-modules of a group in this formal setting.

\begin{theorem}[\cite{DPS-duke}]
\label{thm:dps-bmod}
Let $G$ be a $1$-formal group.  There is  then a filtered isomorphism 
\begin{equation}
\label{eq:bbcompl}
\widehat{B_{\C}(G)}\cong \widehat{\B(G)} 
\end{equation}
between the $I$-adic completion of the Alexander invariant of $G$ 
and the degree completion of the infinitesimal Alexander invariant of $G$.
\end{theorem}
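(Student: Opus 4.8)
The plan is to deduce the statement from an identification, at the level of (pro-nilpotent) Malcev Lie algebras, of the object that computes $\widehat{B_{\C}(G)}$ with the one that computes $\widehat{\B(G)}$. Recall that to any finitely generated group $G$ one attaches its Malcev Lie algebra $\m(G)$, a complete filtered Lie algebra with $\gr \m(G) \cong \gr_{\G}(G)\otimes\C$. The Alexander invariant sees only the maximal metabelian quotient, and Malcev completion carries $G/G''$ to the metabelianization $\m(G)/\overline{\m(G)''}$ of $\m(G)$ (the quotient by the closure of the second derived subalgebra); an $I$-adic version of Massey's isomorphism \eqref{eq:mass} then identifies $\widehat{B_{\C}(G)}$ with $\overline{\g'}$ for $\g$ this metabelian Lie algebra, as a filtered module over the completed group ring $\widehat{\C G_{\ab}}$. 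On the infinitesimal side, $\widehat{\B(G)}$ is by definition the degree completion of $\h'(G)/\h''(G)$, that is, the module $\h'/\h''$ attached to the metabelian quotient $\h(G)/\h''(G)$ of the holonomy Lie algebra, over $\widehat{S}$ with $S=\Sym(H_{\C})$.

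First I would invoke $1$-formality. By definition $\m(G)$ is the degree completion of a quadratic Lie algebra; since the quadratic relations of $\gr_{\G}(G)\otimes\C$ are governed by the co-restriction of $\cup_G$ to its image --- precisely the data defining $\h(G)$ --- one obtains a filtered isomorphism $\m(G)\cong\widehat{\h(G)}$. Passing to metabelian quotients, and using that completion and metabelianization are compatible here, $\m(G)/\overline{\m(G)''}\cong\widehat{\h(G)/\h''(G)}$, whence the closed derived subalgebra on the left is identified with $\widehat{\h'(G)/\h''(G)}=\widehat{\B(G)}$. Combining this with the description of $\widehat{B_{\C}(G)}$ above gives the asserted isomorphism, once the two ground rings are matched: the $I$-adic completion $\widehat{\C G_{\ab}}$ is identified with $\widehat{S}$ by sending a grouplike element $\overline{h}$ to $\exp$ of the corresponding degree-one element of $H_{\C}$, and under this identification the $R$-action \eqref{eq:ai action} on $B_{\C}(G)$ corresponds to the adjoint $S$-action \eqref{eq:s-action} on $\B(G)$.

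The main obstacle is the careful handling of completions and closures: one must verify that the algebraic second derived subalgebra $\m(G)''$ has the same topological closure as $\widehat{\h}''$, that forming the metabelian quotient commutes with degree completion for $\h(G)$, and --- most delicately --- that the filtration induced on $\g'/\g''$ from the Malcev (lower central) filtration coincides, under the isomorphism, with the grading-induced filtration on $\B(G)$, so that the resulting isomorphism is genuinely \emph{filtered}. A secondary technical point, needed to make the module identification precise rather than merely abstract, is the compatibility of the $I$-adic filtration on $\widehat{\C G_{\ab}}$ with the degree filtration on $\widehat{S}$ under the exponential identification. Alternatively, one could argue more directly by completing the $S$-module presentation \eqref{eq:lin alex pres} of $\B(G)$ and showing, via $1$-formality, that the abelianized Fox-derivative presentation of $\widehat{B_{\C}(G)}$ (as in Remark \ref{rem:bpres}) becomes isomorphic to it after completion; this replaces the Lie-algebra bookkeeping by a comparison of presentation matrices, but still requires the formality input to control the higher-order terms of the relators.
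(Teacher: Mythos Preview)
The paper does not give its own proof of this theorem: it is quoted from \cite{DPS-duke} as a black-box result, with no argument supplied here. So there is no proof in the present paper to compare your proposal against.

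That said, your outline is essentially the approach taken in the cited reference (building on \cite{PS-imrn}): $1$-formality yields a filtered isomorphism $\m(G)\cong\widehat{\h(G)}$, one passes to metabelian quotients, and identifies the resulting derived piece with $\widehat{B_{\C}(G)}$ on one side (via a completed version of Massey's identification \eqref{eq:mass}) and with $\widehat{\B(G)}$ on the other. One minor gap in your write-up: the definition of $1$-formal used here is that $\m(G)$ is the completion of \emph{some} quadratic Lie algebra, not a priori $\h(G)$. Your parenthetical remark about the quadratic relations being governed by $\cup_G$ is the right idea, but the actual argument goes through $\gr\m(G)\cong\gr_{\G}(G)\otimes\C$ together with the fact that a quadratic Lie algebra is determined by its degree-$2$ part; this deserves a sentence. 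The technical points you flag---commutation of completion with metabelianization, matching of the $I$-adic and degree filtrations, and the exponential identification $\widehat{\C G_{\ab}}\cong\widehat{S}$---are real and are the substance of the proof in \cite{DPS-duke} and \cite{PS-imrn}; you have located them accurately.
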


Here, $\widehat{B_{\C}(G)}$ is viewed as a module over the $I$-adic completion 
of the group ring $\C{G_{\ab}}$, where $I$ is the augmentation ideal, 
while $\widehat{\B(G)}$ is viewed as a module over the $m$-adic completion 
of the polynomial ring $S$, where $m$ is the maximal ideal at $0$. 
The isomorphism \eqref{eq:bbcompl} covers the canonical identification 
of $\widehat{\C{G_{\ab}}}$ with the ring of formal power series $\widehat{S}$.

\begin{example}
\label{ex:free formal}
The free group $F_n$ is $1$-formal. Indeed, the holonomy Lie algebra 
$\h(F_n)$ is the (complex) free Lie algebra, $\L_n\otimes \C$, 
while the Malcev Lie algebra $\m(F_n)$ is just the degree 
completion of $\h(F_n)$. In this case, $\widehat{S}=\C[[ x_1,\dots, x_n]]$ 
and $\widehat{\B (F_n)}$ is the cokernel of the Koszul differential 
$\delta_3 \colon 
\widehat{S}\otimes_{\C}  \bigwedge^3 \C^n \to 
\widehat{S}\otimes_{\C}  \bigwedge^2 \C^n$. 
\end{example}

Both the pure braid group $P_n$ and the  McCool 
group $\PS_n$ (which we shall encounter in \S\ref{subsec:mccool}) 
are $1$-formal. 
For more details and references on the topic of $1$-formality, 
we refer to \cite{PS-imrn} and the recent survey \cite{PS-bmssmr}.

\section{Characteristic varieties and homology of abelian covers}
\label{sect:hac}

We devote this section to an analysis of various homological 
finiteness properties of Alexander type invariants. We refer 
the reader to the book of Eisenbud \cite{E} for standard tools 
from commutative algebra.

\subsection{Support and characteristic varieties}
\label{subsec:scv}

Let $X$ be a connected CW-complex, with finite $1$-skeleton 
$X^{(1)}$, basepoint $x_0$, and (finitely generated) fundamental 
group $G=\pi_1(X,x_0)$.   Let $\nu\colon G\surj A$ be an 
epimorphism onto an abelian group $A$,
and denote by $X^{\nu}$ the corresponding Galois cover of $X$. 

Fix a coefficient field $\k$.  
The group algebra $R=\k{A}$ is a commutative ring, which is 
finitely generated as a $\k$-algebra. 
The action of $A$ on $X^{\nu}$ by deck-transformations induces 
an $R$-module structure on the homology groups $H_*(X^{\nu}, \k)$.

Assume now that the $q$-skeleton of $X$ is finite, for some fixed $q\ge 1$.  
Then, for each $i\le q$, the homology group $M=H_i(X^{\nu}, \k)$ is 
a finitely generated module over the Noetherian ring $R$. 
Hence, the support of this module,  
\begin{equation}
\label{eq:supp}
\supp (M):=V(\ann (M)), 
\end{equation}
is a Zariski closed subset in the maximal 
spectrum $\specm (R)$.

From now on, we will also assume, without any essential 
loss of generality, that the field $\k$ is algebraically closed. 
In this case, the ring $R=\k{A}$ is the coordinate ring of the affine 
algebraic group $\TT_{\k} (A)=\Hom (A, \k^{\times})$, the 
character group of $A$. Furthermore, the augmentation 
ideal $I\triangleleft \k{A}$ is the maximal ideal vanishing 
at the unit $1\in \TT_{\k}(A)$.  Clearly, $M=H_0(X^{\nu}, \k)=\k$ 
is an $R$-module with trivial $A$-action (that is, $I\cdot M=0$), 
and $\ann(M)=I$.  Consequently, $\supp(M)= \{1\}$. 

The character group 
$\TT_{\k} (G)=\Hom (G, \k^{\times})= \Hom (G_{\ab}, \k^{\times})$ 
parametrizes all rank one local systems on $X$.  For each 
character $\rho\colon G\to \k^{\times}$, denote by $\k_{\rho}$ 
the $1$-dimensional $\k$-vector space, viewed as a 
$\k{G}$-module via the action $g\cdot a=\rho(g) a$. 
For each $0\le i\le q$ and $d>0$, define 
\begin{equation}
\label{eq:defcv}
\VV^i_d (X, \k)= \{ \rho\in \Hom (G, \k^{\times}) \mid 
\dim_{\k} H_i(X, \k_{\rho})\ge d \}. 
\end{equation}
These sets,  called the {\em characteristic varieties}\/ of $X$, 
are Zariski closed subsets of the algebraic group $\TT_{\k}(G)$, and depend 
only on the homotopy type of $X$.  In particular, $\VV^0_1 (X, \k)=\{ 1\}$, 
and $1\in \VV^i_1(X,\k)$ if and only if $H_i(X,\k)\ne 0$. 

Given a finitely generated group $G$, pick a classifying 
space $X=K(G,1)$ with finitely many $q$-cells, for some 
$q\ge 1$, and set $\VV^i_d (G, \k):=\VV^i_d(X,\k)$.  In the 
default situation when $\k=\C$, we abbreviate $\TT_{\C}(G)$ 
by $\TT(G)$ and $\VV^1_1 (G, \C)$ by $\VV(G)$.

\begin{example}
\label{ex:fn}
Let $G=F_n$ be a free group of rank $n\ge 1$, with generators 
$x_1,\dots, x_n$.   In this case, $X=K(G,1)$ is a bouquet of $n$ 
circles, and the cellular chain complex of the universal cover 
$\widetilde{X}$ is concentrated in degrees $1$ and $0$,
with $\Z F_n$-linear differential, $(\Z F_n)^n \to \Z F_n$, 
given by the vector $(x_1-1, \dots, x_n-1)$. 

Now identify the character group $\TT_\k(F_n)$ 
with the algebraic torus $(\k^{\times})^n$. Under 
this identification, a character $\rho\colon F_n\to \k^{\times}$ 
corresponds to the point in $(\k^{\times})^n$ whose 
$i$-th coordinate is $\rho(x_i)$. 
Thus, the twisted homology $H_{\bullet}(X, \k_{\rho})$ 
is the homology of the specialized chain complex,
$\k^n \to \k$, with $\k$-linear differential given by 
the vector $(\rho(x_1)-1, \dots, \rho(x_n)-1)$.

It follows from definition \eqref{eq:defcv} that  
$\VV^1_d(F_n, \k)=(\k^{\times})^n$ for $d\le n-1$, 
while $\VV^1_n(F_n,\k)=\{1\}$ and 
$\VV^i_d(F_n,\k)=\emptyset$ for $i>1$ or $d>n$. 
\end{example}

Characteristic varieties enjoy several functoriality properties.  
For instance, suppose $\nu\colon G\surj K$ is an epimorphism 
between finitely generated groups. Then, the induced algebraic morphism 
between character groups, $\nu^*\colon \TT_\k(K)\inj \TT_\k(G)$,  
restricts to an embedding $\VV^1_d(K, \k) \inj \VV^1_d(G, \k)$, 
for each $d\ge 1$.

\subsection{Alexander polynomial}
\label{subsec:alex poly}

Let $G$ be a finitely generated group, and let $H$ be its 
maximal torsion-free abelian quotient.  Identify the group 
ring $R=\Z{H}$ with the ring of Laurent polynomials in 
$n$ variables, where $n=b_1(G)$.  

The {\em Alexander module}\/ of $G$ is the $\Z{H}$-module 
defined as $A(G)=\Z{H}\otimes_{\Z{G}} I_{G}$, where $I_{G}$ 
is the augmentation ideal of $\Z{G}$. Since $\Z{H}$ is a 
Noetherian ring, the finite generation of $G$ implies the 
finite presentability of the $\Z{H}$-module $A(G)$. The
first elementary ideal of $A(G)$ is the ideal of $\Z{H}$ 
generated by the codimension $1$ minors of a finite 
$\Z{H}$-presentation matrix for $A(G)$. By standard 
commutative algebra (see for instance \cite{E}), this ideal 
is independent of the chosen presentation for $A(G)$. 
 
The {\em Alexander polynomial}, $\Delta_G$, is 
the greatest common divisor of all elements in the 
first elementary ideal of $A(G)$.  It is readily seen that 
the polynomial $\Delta_G\in \Z{H}$ depends only on the 
group $G$, modulo units in $\Z{H}$.  

As shown in \cite{DPS-imrn}, the Alexander polynomial 
of $G$  is determined by the first characteristic variety 
of the group.  More precisely, let 
$\cv (G)$ be the union of all codimension-one 
irreducible components of $\VV(G) \cap \TT(G)^0$, 
where $\TT(G)^0\cong \TT(H)$ is the identity 
component of the character group.    

\begin{theorem}[\cite{DPS-imrn}]  
\label{thm:deltav1}
For a finitely generated group $G$, the following hold:
\begin{enumerate}
\item \label{dc1}
$\Delta_G=0$ if and only if 
$\TT(G)^0 \subseteq \VV(G)$.
In this case, $\cv (G)=\emptyset$.
\item \label{dc2}
If $b_1(G)\ge 1$ and $\Delta_G\ne 0$, then
\begin{equation*}
\cv (G) =\begin{cases}
V(\Delta_G) & \text{if $b_1(G)>1$}\\
V(\Delta_G)\coprod \{ 1\}  & \text{if $b_1(G)=1$}.
\end{cases}
\end{equation*}
\item \label{dc3} If $b_1(G)\ge 2$, then 
$\cv (G)=\emptyset$ if and only if $\Delta_G$ is a 
constant, up to units, i.e., $\Delta_G=c\cdot u$, 
for some $c\in \Z \le \Z{H}$ and $u\in (\Z{H})^{\times}$. 
\end{enumerate}
\end{theorem}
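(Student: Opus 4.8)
The plan is to identify the part of $\VV(G)$ lying in the identity component $\TT(G)^0\cong\TT(H)$ with the zero locus of the first elementary ideal $E_1(A(G))$ of the Alexander module (away from the trivial character), and then to pass from that ideal to its generator $\Delta_G$ by invoking unique factorization in $\Z{H}$.

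The first step is to set up a Fox-calculus model. Choose a generating set $x_1,\dots,x_k$ for $G$ together with a (possibly infinite) set of defining relators $\{r_j\}$; Fox differentiation, followed by abelianization and reduction modulo the torsion of $G_{\ab}$, produces a presentation
\[
(\Z{H})^{(\infty)}\xrightarrow{\;\Phi\;}(\Z{H})^k\longrightarrow A(G)\longrightarrow 0 ,
\]
where $\Phi$ is the matrix of the abelianized, reduced partial Fox derivatives $\overline{\partial r_j/\partial x_i}$. Since $\Z{H}$ is Noetherian, $A(G)$ is finitely presented, and by \cite{E} the ideal $E_1(A(G))$---generated by the $(k-1)\times(k-1)$ minors of $\Phi$---does not depend on the chosen presentation; by definition $\Delta_G=\gcd\bigl(E_1(A(G))\bigr)$. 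Now fix a character $\rho\in\TT(G)^0$, i.e. one trivial on the torsion of $G_{\ab}$. For such $\rho$, specializing the cellular chain complex of a classifying space $K(G,1)$ at $\rho$ and specializing $\Phi$ at $\rho$ give the same matrix, so the standard Fox-calculus computation of twisted homology yields, for $\rho\neq 1$,
\[
\dim_{\C}H_1(G,\C_{\rho})=(k-1)-\rank\rho(\Phi) .
\]
Hence $\rho\in\VV(G)$ if and only if $\rank\rho(\Phi)\le k-2$, i.e. if and only if $\rho$ annihilates every $(k-1)\times(k-1)$ minor of $\Phi$; as these minors generate $E_1(A(G))$, this gives
\[
\VV(G)\cap\bigl(\TT(G)^0\setminus\{1\}\bigr)=V\bigl(E_1(A(G))\bigr)\cap\bigl(\TT(G)^0\setminus\{1\}\bigr) .
\]
A separate, elementary inspection at $\rho=1$ shows $\rank\rho(\Phi)=k-b_1(G)$ there, so $1\in V(E_1(A(G)))$ exactly when $b_1(G)\ge2$, while $1\in\VV(G)$ exactly when $b_1(G)\ge1$.

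With this dictionary in hand, \eqref{dc1} and \eqref{dc2} follow from commutative algebra. The ring $\Z{H}\cong\Z[t_1^{\pm1},\dots,t_n^{\pm1}]$, with $n=b_1(G)$, is a unique factorization domain, so its height-one primes are the principal ideals $(p)$ generated by irreducible Laurent polynomials, and $(p)\supseteq E_1(A(G))$ if and only if $p\mid\Delta_G$. Thus, when $\Delta_G\neq0$, the hypersurface $V(\Delta_G)$ is precisely the union of the codimension-one irreducible components of $V(E_1(A(G)))$; and $\Delta_G=0$ if and only if $E_1(A(G))=0$, if and only if $V(E_1(A(G)))$ is all of $\TT(G)^0$. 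Intersecting the dictionary above with the possible ranges of $b_1(G)$---adjoining the codimension-one point $\{1\}$ when $b_1(G)=1$ (in which case $1\notin V(E_1(A(G)))$, which is why the union is disjoint), and noting that $1$ already lies in $V(E_1(A(G)))$ when $b_1(G)\ge2$---one reads off exactly the two cases of \eqref{dc2}; and \eqref{dc1} is the $\Delta_G=0$ instance, since then $\VV(G)\cap\TT(G)^0=\TT(G)^0$ has no codimension-one component, so $\cv(G)=\emptyset$. Finally, \eqref{dc3} is a formal consequence of \eqref{dc1} and \eqref{dc2}: for $b_1(G)\ge2$ one has $\cv(G)=V(\Delta_G)$ when $\Delta_G\neq0$ and $\cv(G)=\emptyset$ when $\Delta_G=0$, and a nonzero Laurent polynomial has empty zero set in $(\C^{\times})^n$ if and only if it is a nonzero constant times a monomial, i.e. a constant up to units of $\Z{H}$; so $\cv(G)=\emptyset$ precisely when $\Delta_G=c\cdot u$ with $c\in\Z$ and $u\in(\Z{H})^{\times}$, the degenerate case $\Delta_G=0$ corresponding to $c=0$.

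I expect the main obstacle to be the bookkeeping around the trivial character together with the torsion of $G_{\ab}$: the clean formula for $\dim_{\C}H_1(G,\C_{\rho})$ is valid only for $\rho\neq1$ (the rank of the degree-one boundary map drops at $1$), the passage to $\Z{H}$ is \emph{invisible} only on $\TT(G)^0$, and whether $1$ itself lies in $V(E_1(A(G)))$---equivalently, whether $V(\Delta_G)$ contains $1$---is governed by $b_1(G)$, which is exactly what forces \eqref{dc2} to split into two subcases. Everything else is routine Fox calculus and unique factorization.
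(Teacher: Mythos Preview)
The paper does not prove this theorem; it is quoted from \cite{DPS-imrn} without argument, so there is no in-paper proof to compare against. Your reconstruction is the standard one and is correct: identify $\VV(G)\cap\TT(G)^0$ away from $1$ with $V(E_1(A(G)))$ via Fox calculus, use that $\Z H\cong\Z[t_1^{\pm1},\dots,t_n^{\pm1}]$ is a UFD to extract the codimension-one locus as $V(\Delta_G)$, and handle the point $1$ separately according to whether $b_1(G)\ge 2$; part \eqref{dc3} then follows formally since a nonzero Laurent polynomial with empty zero set in $(\C^{\times})^n$ is a unit in $\C H$, hence a constant times a monomial.
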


\begin{corollary}
\label{cor:delta const} 
Let $G$ be a finitely generated group, with $b_1(G)\ge 2$.  
If $\VV(G)$ is finite, then $\Delta_G$ equals, up to units, 
a non-zero constant. 
\end{corollary}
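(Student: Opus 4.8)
The plan is to read off the statement directly from Theorem \ref{thm:deltav1}, by excluding the two ways in which $\Delta_G$ could fail to be a non-zero constant modulo units: it could vanish, or its zero set could acquire a codimension-one component inside the identity component of the character torus.

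First I would rule out the possibility $\Delta_G=0$. By part \eqref{dc1} of Theorem \ref{thm:deltav1}, this would force $\TT(G)^0\subseteq \VV(G)$; but $\TT(G)^0\cong \TT(H)$ is an algebraic torus of dimension $b_1(G)\ge 2$, hence an infinite subset of $\TT(G)$, contradicting the hypothesis that $\VV(G)$ is finite. Thus $\Delta_G\ne 0$, and we are in the situation covered by parts \eqref{dc2} and \eqref{dc3}.

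It then remains to show that $\cv(G)=\emptyset$. By construction, $\cv(G)$ is the union of the codimension-one irreducible components of the Zariski closed set $\VV(G)\cap \TT(G)^0$; since $\VV(G)$ is finite, this intersection is finite, so each of its irreducible components is a point, which has codimension $b_1(G)\ge 2$ in the $b_1(G)$-dimensional torus $\TT(G)^0$ and is therefore not of codimension one. Hence $\cv(G)=\emptyset$, and part \eqref{dc3} of Theorem \ref{thm:deltav1} (applicable since $b_1(G)\ge 2$) yields $\Delta_G=c\cdot u$ with $c\in \Z$ and $u\in(\Z{H})^{\times}$; by the previous step $c\ne 0$, as claimed. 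The only step calling for any attention is the exclusion of $\Delta_G=0$; the remainder is a formal consequence of the cited theorem, so I do not anticipate a genuine obstacle here.
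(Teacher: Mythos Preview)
Your proof is correct and matches the paper's intended argument: the paper states this corollary without proof, as an immediate consequence of Theorem~\ref{thm:deltav1}, and your write-up supplies exactly the details one would expect---ruling out $\Delta_G=0$ via part~\eqref{dc1} and the positive dimension of $\TT(G)^0$, then concluding $\cv(G)=\emptyset$ from finiteness and invoking part~\eqref{dc3}.
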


The next example illustrates the necessity of the condition 
on the first Betti number.

\begin{example}
\label{ex:delta knot}
Let $k$ be a tame knot in $S^3$, and let $G$ be the fundamental 
group of the knot complement.  By Theorem \ref{thm:deltav1}\eqref{dc2}, 
the characteristic variety $\VV(G)\subset \C^{\times}$ consists 
of $1$, together with the roots of the Alexander polynomial 
of the knot, $\Delta_G \in \Z[t^{\pm 1}]$.  In particular, $\VV(G)$ 
is always finite, yet typically, $\Delta_G$ is not constant.
\end{example}

\subsection{Dwyer-Fried test}
\label{subsec:df}

Returning to the setup from \S\ref{subsec:scv}, let $X$ be a 
connected CW-complex, with fundamental group $G=\pi_1(X,x_0)$,
and $\nu\colon G \surj A$ 
be an homomorphism onto an abelian group $A$.  
In \cite{DF}, Dwyer and Fried proved a remarkable 
result: assuming $X$ is finite and $A$ has no torsion, 
they characterized the finite-dimensionality of the 
$\C$-vector space $H_*(X^{\nu}, \C)$ in terms of the 
intersection of the support of the homology of the 
universal free abelian cover with the character group of $A$. 

Using the approach from \cite{PS-plms}, this result may be 
extended---with a different proof---to a more general situation. 
Fix a coefficient field $\k$ with $\k=\bar{\k}$.

\begin{theorem}
\label{thm:dfext}
Let $X$ be a connected CW-complex with finite $q$-skeleton, 
for some $q\ge 1$, and let $G$ be its fundamental group. 
Let $\nu\colon G \surj A$ be an 
epimorphism onto an abelian group $A$, and denote by 
$\nu^*\colon \TT_\k(A) \inj \TT_\k(G)$ the induced 
homomorphism on character groups.  Then
\begin{equation}
\label{eq:df}
\sum_{i=1}^q \dim_{\k}  H_{i}(X^{\nu}, \k)<\infty \same 
\im (\nu^*) \cap \Big( \bigcup_{i=1}^q \VV^i_1 (X, \k) \Big)\
\text{is finite}.
\end{equation}
\end{theorem}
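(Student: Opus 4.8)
The plan is to work with the equivariant chain complex of the cover $X^{\nu}$ and to recast both sides of \eqref{eq:df} as statements about the support of its homology, comparing the two via a universal coefficients spectral sequence; this follows the strategy of \cite{DF,PS-plms}. Write $C_\bullet=C_\bullet(X^{\nu},\k)$ for the cellular chain complex of $X^{\nu}$, a bounded-below complex of free modules over the Noetherian ring $R=\k{A}$ that is finitely generated in degrees $\le q$. For a character $\rho\in\TT_\k(A)$, with $m_\rho\triangleleft R$ the associated maximal ideal (so $R/m_\rho\cong\k_{\rho}$ as $R$-modules), the standard dictionary between covers, local systems, and group algebras gives $H_i(X^{\nu},\k)=H_i(C_\bullet)$ and $H_i(X,\k_{\nu^*\rho})\cong H_i(C_\bullet\otimes_R R/m_\rho)$. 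Since $\nu^*$ is injective, the right-hand side of \eqref{eq:df} is finite precisely when $W:=\{\rho\in\TT_\k(A)\mid H_i(C_\bullet\otimes_R R/m_\rho)\ne 0\text{ for some }1\le i\le q\}$ is finite. On the other side, each $M_i:=H_i(C_\bullet)$ with $1\le i\le q$ is a finitely generated $R$-module, as recorded in \S\ref{subsec:scv}; since $R$ is a finitely generated $\k$-algebra and $\k=\bar{\k}$, such a module has finite $\k$-dimension if and only if its support is a finite set of closed points. So the theorem reduces to: $\bigcup_{i=1}^{q}\supp(M_i)$ is finite if and only if $W$ is finite.

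Both implications are read off the universal coefficients spectral sequence $E^2_{p,j}=\operatorname{Tor}^R_p(M_j,R/m_\rho)\ \Rightarrow\ H_{p+j}(C_\bullet\otimes_R R/m_\rho)$, which is all that is needed in total degrees $\le q$, where only the finitely generated modules $M_j$ with $j\le q$ intervene. For the ``only if'' direction, assume every $\supp(M_i)$ is finite; since $\supp(M_0)=\{1\}$, the set $S:=\{1\}\cup\bigcup_{i=1}^{q}\supp(M_i)$ is finite, and for $\rho\notin S$ all the groups $\operatorname{Tor}^R_p(M_j,R/m_\rho)$ with $j\le q$ vanish, so $H_i(C_\bullet\otimes_R R/m_\rho)=0$ for $1\le i\le q$; hence $W\subseteq S$ is finite.

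The ``if'' direction is the substantive one, and the single delicate point is that $\rho\in\supp(M_i)$ does not by itself force $H_i(C_\bullet\otimes_R R/m_\rho)\ne 0$: by Nakayama, $\rho\in\supp(M_i)$ is equivalent to $E^2_{0,i}=M_i\otimes_R R/m_\rho\ne 0$, but this edge term may be killed by an incoming differential before reaching $E^\infty$. I would handle this by induction on $i$ for $1\le i\le q$. Since $E^2_{p,j}=0$ for $p<0$, no differential leaves the spot $(0,i)$, so $E^\infty_{0,i}$ is a quotient of $E^2_{0,i}$, while the differentials entering $(0,i)$ are exactly $d_r\colon E^r_{r,\,i-r+1}\to E^r_{0,i}$ for $r\ge 2$. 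Thus, given $\rho\in\supp(M_i)$, either $E^\infty_{0,i}\ne 0$, in which case $H_i(C_\bullet\otimes_R R/m_\rho)\ne 0$ and $\rho\in W$; or some $d_r$ as above is nonzero, which forces $\operatorname{Tor}^R_r(M_{i-r+1},R/m_\rho)\ne 0$ and hence $\rho\in\supp(M_{i-r+1})$ with $i-r+1\in\{0,1,\dots,i-1\}$, a case covered by $\supp(M_0)=\{1\}$ or, for $i-r+1\ge 1$, by the inductive hypothesis. Therefore $\supp(M_i)\subseteq W\cup\{1\}\cup\bigcup_{j=1}^{i-1}\supp(M_j)$ is finite, and running the induction from $i=1$ up to $i=q$ shows that each $\supp(M_i)$ is finite, completing the argument.

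The main obstacle, then, is bridging the gap between the fiberwise condition defining $W$ and the localization condition defining the supports $\supp(M_i)$; the spectral-sequence bookkeeping above—using that $\operatorname{Tor}^R_p$ vanishes for $p<0$, together with an induction that recycles lower-degree supports—is precisely the device that closes this gap. Everything else is routine commutative algebra and the standard identifications of the first paragraph.
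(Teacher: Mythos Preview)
Your proof is correct, and it takes a somewhat different path from the paper's. The paper's argument is brief: it invokes \cite[Theorem~3.6]{PS-plms}, which furnishes the set-theoretic identity
\[
\bigcup_{i=0}^q \supp \big(H_i(X^{\nu}, \k)\big) \;=\; \im (\nu^*) \cap \Big( \bigcup_{i=0}^q \VV^i_1 (X, \k) \Big),
\]
and then simply observes that a finitely generated module over a finitely generated $\k$-algebra has finite $\k$-dimension if and only if its support is finite. You instead prove the equivalence of finiteness directly, without establishing that identity: you link the fiberwise condition (nonvanishing of $H_i(X,\k_{\nu^*\rho})$) to the localization condition ($\rho\in\supp M_i$) via the K\"unneth spectral sequence $E^2_{p,j}=\operatorname{Tor}^R_p(M_j,R/m_\rho)\Rightarrow H_{p+j}(C_\bullet\otimes_R R/m_\rho)$ and an induction on the degree. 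The gain is that your argument is self-contained and makes explicit the one genuine subtlety---that a nonzero edge term $E^2_{0,i}$ can be killed by an incoming differential, which then forces $\rho$ into the support of some lower-degree $M_j$. The cost is a bit more bookkeeping; the paper's route is shorter on the page but exports the real work to the earlier reference, whose proof in \cite{PS-plms} ultimately rests on the same spectral-sequence mechanism you have written out.
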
 

\begin{proof}
By \cite[Theorem 3.6]{PS-plms}, we have
\begin{equation}
\label{eq:ps}
\bigcup_{i=0}^q \supp (H_i(X^{\nu}, \k)) = 
\im (\nu^*) \cap \left( \bigcup_{i=0}^q \VV^i_1 (X, \k) \right) .
\end{equation}
Now, if $M$ is a finitely generated module over a finitely 
generated $\k$-algebra $R$, then $\dim_{\k}M<\infty$ if and 
only if $\supp(M)$ is a finite subset of $\specm (R)$. 
In view of the discussion from \S\ref{subsec:scv}, this 
finishes the proof.
\end{proof}

The Alexander invariant has a well-known topological 
interpretation.  Let $X=K(G, 1)$ be a classifying space with finite 
$1$-skeleton, and let $X^{\ab}$ be the universal abelian cover.  
Then $B_{\k}(G)= H_1(G', \k)$ is isomorphic to the module 
$M=H_1(X^{\ab}, \k)$ discussed in \S\ref{subsec:scv}. 

\begin{corollary}
\label{cor:btest}
Let $G$ be  a finitely generated group, and $\k$ 
an algebraically closed field. 
\begin{enumerate}
\item \label{bt1}
The support $\supp (B_{\k}(G))$ is equal (away from $1$) 
to the variety $\VV^1_1(G, \k)$.
\item \label{bt2}
The $\k$-vector space $B_{\k}(G)$ is finite-dimensional if and 
only if $\VV^1_1(G, \k)$ is finite.
\end{enumerate}
\end{corollary}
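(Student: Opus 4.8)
The plan is to obtain both parts by specializing the Dwyer--Fried-type machinery of Section~\ref{sect:hac} to the universal abelian cover. Fix an algebraically closed field $\k$, choose a classifying space $X=K(G,1)$ with finite $1$-skeleton (possible since $G$ is finitely generated), set $A=G_{\ab}$, and take $\nu=\ab\colon G\surj A$, so that the Galois cover $X^{\nu}$ is the universal abelian cover $X^{\ab}$. By the topological interpretation recalled just above the statement, $B_{\k}(G)=H_1(G',\k)\cong H_1(X^{\ab},\k)$ as modules over $R=\k{G_{\ab}}$. The key elementary observation is that, under the canonical identifications $\TT_{\k}(G)=\Hom(G_{\ab},\k^{\times})=\TT_{\k}(A)$, the induced map $\nu^{*}\colon\TT_{\k}(A)\inj\TT_{\k}(G)$ is the identity, so $\im(\nu^{*})=\TT_{\k}(G)$; this is what makes the intersections appearing in \eqref{eq:df} and \eqref{eq:ps} collapse.

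For part \eqref{bt1}, I would invoke identity \eqref{eq:ps} (established inside the proof of Theorem~\ref{thm:dfext}) with $q=1$. Since $\im(\nu^{*})=\TT_{\k}(G)$, it reads
\begin{equation*}
\supp\big(H_0(X^{\ab},\k)\big)\cup\supp\big(H_1(X^{\ab},\k)\big)=\VV^0_1(X,\k)\cup\VV^1_1(X,\k).
\end{equation*}
Now $H_0(X^{\ab},\k)=\k$ carries the trivial $A$-action, so $\supp\big(H_0(X^{\ab},\k)\big)=\{1\}=\VV^0_1(X,\k)$; combined with $H_1(X^{\ab},\k)\cong B_{\k}(G)$ and $\VV^1_1(X,\k)=\VV^1_1(G,\k)$, this yields $\{1\}\cup\supp(B_{\k}(G))=\{1\}\cup\VV^1_1(G,\k)$, i.e., the two sets agree away from $1$, as claimed.

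For part \eqref{bt2}, I would apply Theorem~\ref{thm:dfext} directly, with the same $X$, $\nu$, and $q=1$. The left-hand side of \eqref{eq:df} becomes $\dim_{\k}H_1(X^{\ab},\k)=\dim_{\k}B_{\k}(G)$, while the right-hand side becomes the finiteness of $\im(\nu^{*})\cap\VV^1_1(X,\k)=\VV^1_1(G,\k)$; hence $\dim_{\k}B_{\k}(G)<\infty$ if and only if $\VV^1_1(G,\k)$ is finite. (Alternatively, \eqref{bt2} follows from \eqref{bt1} together with the standard fact, already used in the proof of Theorem~\ref{thm:dfext}, that a finitely generated module over a finitely generated $\k$-algebra has finite $\k$-dimension exactly when its support is finite.)

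I do not expect any genuine obstacle here: the entire substance is packaged in Theorem~\ref{thm:dfext} and the support identity \eqref{eq:ps}, both of which may be assumed. The only care needed is bookkeeping---checking that $\nu^{*}$ is onto $\TT_{\k}(G)$ for $\nu=\ab$, so that the intersections in \eqref{eq:df} and \eqref{eq:ps} disappear, and keeping track of the degree-zero homology, which is precisely what produces the harmless ``away from $1$'' proviso in \eqref{bt1}.
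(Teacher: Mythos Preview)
Your proposal is correct and follows essentially the same approach as the paper: part~\eqref{bt1} is obtained from the support identity \eqref{eq:ps} (which the paper attributes to \cite[Theorem 3.6]{PS-plms}), and part~\eqref{bt2} from Theorem~\ref{thm:dfext}, both specialized to $\nu=\ab$. You have simply written out the bookkeeping that the paper leaves implicit.
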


\begin{proof}
Part \eqref{bt1} is proved in  \cite[Theorem 3.6]{PS-plms}, 
while part \eqref{bt2} follows from Theorem \ref{thm:dfext}.
\end{proof}

For instance, take $G$ to be the free group $F_n$, $n\ge 2$.  
From Example \ref{ex:fn}, we know that 
$\VV^1_1(F_n,\k)= (\k^{\times})^n$.  
Since $\k=\bar{\k}$, this set is infinite, 
and thus $\dim_{\k} B_{\k}(F_n) = \infty$. 

\subsection{Nilpotency and finiteness properties}
\label{ss44}

As before, let $A$ be a finitely generated abelian 
group, and let $\k$ be an algebraically closed field. 
Set $R=\k{A}$, and let $I\subseteq R$ be the 
augmentation ideal. 
A finitely generated $R$-module $M$ is said to 
be {\em nilpotent}\/ if $I^m\cdot M=0$,
for some $m>0$. Clearly, every module with trivial $A$-action is nilpotent. 

\begin{lemma}
\label{lem:nilpv}
The module $M$ is nilpotent if and only if $\supp (M)\subseteq \{1\}$.
\end{lemma}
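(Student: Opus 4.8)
The plan is to translate the statement entirely into commutative algebra and then apply the Nullstellensatz. Recall that, by definition, $\supp(M) = V(\ann(M))$ inside $\specm(R)$, that $R = \k{A}$ is a finitely generated algebra over the algebraically closed field $\k$, and that the augmentation ideal $I$ is precisely the maximal ideal vanishing at the unit $1 \in \TT_{\k}(A)$, so that $V(I) = \{1\}$.

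For the easy implication, suppose $M$ is nilpotent, say $I^m \cdot M = 0$ for some $m > 0$. Then $I^m \subseteq \ann(M)$, and hence $\supp(M) = V(\ann(M)) \subseteq V(I^m) = V(I) = \{1\}$.

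For the converse, assume $\supp(M) \subseteq \{1\}$. If $M = 0$ there is nothing to prove, so assume $M \neq 0$, whence $V(\ann(M)) = \{1\} = V(I)$. Since $R$ is a finitely generated algebra over the algebraically closed field $\k$, the Nullstellensatz identifies $\sqrt{\ann(M)}$ with the ideal of all elements of $R$ vanishing on $V(\ann(M)) = \{1\}$, which is exactly $I$ itself (radical, being maximal). Thus $I = \sqrt{\ann(M)}$; in particular $I \subseteq \sqrt{\ann(M)}$. Because $R$ is Noetherian, $I$ is finitely generated, say $I = (f_1, \dots, f_k)$, and for each $j$ there is an exponent $n_j$ with $f_j^{n_j} \in \ann(M)$. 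A pigeonhole argument then shows that for $m = 1 + \sum_{j=1}^{k}(n_j - 1)$ every degree-$m$ monomial in the $f_j$ lies in $\ann(M)$, so $I^m \subseteq \ann(M)$; that is, $I^m \cdot M = 0$, and $M$ is nilpotent.

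There is no serious obstacle here: the only points meriting a word of justification are the identity $V(I) = \{1\}$ (already recorded in \S\ref{subsec:scv}) and the elementary fact that a finitely generated ideal contained in a radical has a power contained in the corresponding ideal, which is precisely the pigeonhole step above.
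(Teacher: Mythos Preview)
Your proof is correct and follows essentially the same approach as the paper: both reduce the statement to the Nullstellensatz, translating $\supp(M)\subseteq\{1\}$ into $V(\ann(M))\subseteq V(I)$ and then into $I^m\subseteq\ann(M)$ for some $m$. The paper simply states the last equivalence as a consequence of the Nullstellensatz, whereas you spell out the pigeonhole step showing that a finitely generated ideal contained in $\sqrt{\ann(M)}$ has a power in $\ann(M)$; your case distinction $M=0$ versus $M\ne 0$ is harmless but unnecessary, since only the inclusion $V(\ann(M))\subseteq V(I)$ is needed.
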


\begin{proof}
Plainly, $\supp (M)\subseteq \{1\}$ if and only if $V(\ann(M)) 
\subseteq V(I)$.  By the Hilbert Nullstellensatz, this last 
inclusion is equivalent to $I^m \subseteq \ann(M)$, for some $m>0$. 
\end{proof}

The typical example we have in mind is the (complexified) 
Alexander invariant of a finitely generated group $G$.  
In this situation, $R=\C{G_{\ab}}$ and $M=H_1(G', \C)=B_{\C}(G)$.

For instance, let $G$ be a finitely generated nilpotent group.  Then 
the module $B_{\C}(G)$ is nilpotent.  Indeed, according 
to \cite{MP}, we have in this case $\VV(G)\subseteq \{1\}$, and 
the claim follows from Lemma \ref{lem:nilpv} 
and Corollary \ref{cor:btest}\eqref{bt1}. 
However, this module need not have trivial $G_{\ab}$-action.

\begin{example}
\label{ex:nilptriv}
Let $G=F_n/\G^4(F_n)$ be the free, $3$-step nilpotent 
group of rank $n\ge 2$.  Then $B_{\C}(G)$ does {\em not}\/ 
have trivial $G_{\ab}$-action.
For, otherwise, Proposition \ref{prop:alex gr} would imply 
$\gr_{\G}^3 (G) \otimes \C=0$.  On the other hand, 
$\gr_{\G}^3 (G)\otimes \C=\gr_{\G}^3 (F_n)\otimes \C$, 
and the latter group is non-zero by 
Theorem \ref{thm:free lie}\eqref{fl4}.
\end{example}

Returning to the general situation, let $M$ be an $R$-module as above. 
The $I$-adic completion, $\widehat{M}=\varprojlim_{r} M/I^r\cdot M$, 
is a module over the ring $\widehat{R}=\varprojlim_{r} R/I^r$.  
Denote by $I^{\infty}M = \bigcap_{r\ge 0} I^r \cdot M$ the kernel 
of the completion map, $M\to \widehat{M}$.  The proof of the next 
lemma is straightforward.

\begin{lemma}
\label{lem:finm}
For a finitely generated $\k{A}$-module $M$, the following hold.
\begin{enumerate}
\item \label{fin1}
$M$ nilpotent $\Longrightarrow \dim_{\k} M<\infty 
\Longrightarrow \dim_{\k} \widehat{M}<\infty$.
\item \label{fin2} 
Assume $\dim_{\k} M<\infty $.  Then $M$ is nilpotent 
if and only if $I^{\infty}M=0$. 
\item \label{fin3} 
Assume $\dim_{\k} \widehat{M}<\infty$.  Then 
$\dim_{\k} M<\infty$ if and only if $\dim_{\k} I^{\infty}M<\infty$.
\end{enumerate}
\end{lemma}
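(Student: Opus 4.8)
All three statements are standard facts about finitely generated modules over the Noetherian ring $R = \k{A}$, and the proof proceeds by reducing everything to two basic principles: (i) for a finitely generated $R$-module $M$, one has $\dim_{\k} M < \infty$ if and only if $\supp(M)$ is a finite set of closed points (this is exactly the fact already invoked at the end of the proof of Theorem \ref{thm:dfext}); and (ii) Lemma \ref{lem:nilpv}, which identifies nilpotency of $M$ with the condition $\supp(M) \subseteq \{1\}$. With these in hand, the three parts become short.

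For part \eqref{fin1}, the first implication is immediate: if $M$ is nilpotent then $\supp(M) \subseteq \{1\}$ is finite by Lemma \ref{lem:nilpv}, hence $\dim_{\k} M < \infty$ by principle (i). For the second implication, note that $\widehat{M} = \varprojlim_r M/I^r M$ is a quotient-system of $M$, so if $M$ is already finite-dimensional over $\k$, the descending chain $I^r M$ stabilizes (Noetherian/Artinian), the inverse limit is eventually constant, and $\dim_{\k}\widehat M \le \dim_{\k} M < \infty$. For part \eqref{fin2}, assume $\dim_{\k} M < \infty$. If $M$ is nilpotent, then $I^m M = 0$ for some $m$, so certainly $I^\infty M = \bigcap_r I^r M = 0$. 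Conversely, since $M$ is finite-dimensional the chain $M \supseteq IM \supseteq I^2 M \supseteq \cdots$ must stabilize at some $I^m M = I^{m+1}M = \cdots$, and this stable term equals $I^\infty M$; if $I^\infty M = 0$ then $I^m M = 0$, so $M$ is nilpotent. For part \eqref{fin3}, assume $\dim_{\k}\widehat M < \infty$. From the completion exact sequence $0 \to I^\infty M \to M \to \widehat M \to 0$ (or rather its image, which is all of $\widehat M$ once $\dim_{\k}\widehat M<\infty$ forces stabilization), we read off $\dim_{\k} M = \dim_{\k} I^\infty M + \dim_{\k}(M/I^\infty M)$, and $M/I^\infty M$ injects into $\widehat M$, hence is finite-dimensional; therefore $\dim_{\k} M < \infty$ if and only if $\dim_{\k} I^\infty M < \infty$.

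The one point requiring a little care is whether the completion map $M \to \widehat M$ is surjective with kernel exactly $I^\infty M$: this is automatic here because $R$ is Noetherian and $M$ is finitely generated, so by the Artin--Rees lemma the $I$-adic filtration on $M$ is well-behaved and $M/I^\infty M \hookrightarrow \widehat M$ with dense (hence, in the finite-length situations of parts \eqref{fin1}--\eqref{fin3}, equal) image. I do not expect any genuine obstacle; the lemma is stated as ``straightforward'' and the proof is essentially bookkeeping with supports, the Artin--Rees lemma, and finite length. The only thing to be slightly careful about is not to conflate ``$\dim_{\k} M < \infty$'' with ``$\dim_{\k} \widehat M < \infty$'': the implication goes only one way in general (a non-nilpotent finite-length $M$ can still have $I^\infty M \ne 0$ only if $\dim_\k M = \infty$, which is why part \eqref{fin3} is not vacuous), and part \eqref{fin2} is precisely the statement that pins down when the two coincide.
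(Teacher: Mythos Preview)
The paper gives no proof of this lemma, simply declaring it ``straightforward,'' so there is nothing to compare against; your argument is correct and is exactly the kind of bookkeeping the authors had in mind. Two minor remarks: first, the appeal to Artin--Rees in your last paragraph is unnecessary---the injection $M/I^{\infty}M \hookrightarrow \widehat{M}$ is immediate from the definition of $I^{\infty}M$ as the kernel of the completion map, and surjectivity under the hypothesis $\dim_{\k}\widehat{M}<\infty$ follows, as you essentially say, because each projection $\widehat{M}\twoheadrightarrow M/I^rM$ forces $\dim_{\k}M/I^rM\le\dim_{\k}\widehat{M}$, so the tower stabilizes and $\widehat{M}=M/I^mM=M/I^{\infty}M$. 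Second, your closing parenthetical (``a non-nilpotent finite-length $M$ can still have $I^{\infty}M\ne 0$ only if $\dim_{\k}M=\infty$'') is garbled---finite length already means $\dim_{\k}M<\infty$, and part~\eqref{fin2} says precisely that such an $M$ is non-nilpotent iff $I^{\infty}M\ne 0$; what makes part~\eqref{fin3} non-vacuous is rather that $\dim_{\k}\widehat{M}<\infty$ does not by itself force $\dim_{\k}M<\infty$ (the paper's Example~\ref{ex:orbi} illustrates this). These are cosmetic; the substance of your proof is fine.
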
 

As we shall see in Examples \ref{ex:knots} and \ref{ex:orbi} below, 
neither implication in Part \eqref{fin1} can be reversed, 
even for (complex) Alexander invariants.

\begin{example}
\label{ex:knots}
Let $G$ be a knot group.  As we saw in Example \ref{ex:delta knot}, 
the characteristic variety $\VV(G)$ is finite. Thus, by 
Corollary \ref{cor:btest}\eqref{bt2}, the $\C\Z$-module 
$M=B_{\C}(G)$ satisfies $\dim_{\C} M< \infty$; in fact,
as is well-known, $\dim_{\C} M = \deg \Delta_G$.

Now assume the Alexander polynomial of the knot is 
not equal to $1$.  Then, by Corollary \ref{cor:btest}\eqref{bt1}, 
we must have $\supp(M) \not\subseteq \{1\}$.  Hence, by 
Lemma \ref{lem:nilpv}, the module $M$ is not nilpotent.  
This shows that the first implication from 
Lemma \ref{lem:finm}\eqref{fin1} cannot be reversed.
\end{example}

We finish this section with a class of groups for which 
the complexified Alexander invariant is infinite-dimensional. 
A group $G$ is said to be {\em very large}\/ if it 
surjects onto a non-abelian free group.

\begin{prop}
\label{prop:cork}
Let $G$ be a finitely generated group.  
If $G$ is very large, then $\dim_{\C} B_{\C}(G) =\infty$.
\end{prop}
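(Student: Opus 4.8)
The plan is to reduce the claim to the already-established fact that free groups have infinite-dimensional complexified Alexander invariant (see the remark after Corollary \ref{cor:btest}), using functoriality of the characteristic variety $\VV^1_1$ under epimorphisms. Concretely, suppose $G$ is very large, so there is an epimorphism $\phi\colon G\surj F$ onto a non-abelian free group $F=F_m$ with $m\ge 2$. By the functoriality property recorded at the end of \S\ref{subsec:scv}, the induced map on character groups $\phi^*\colon \TT(F)\inj\TT(G)$ restricts to an embedding $\VV^1_1(F,\C)\inj\VV^1_1(G,\C)$. By Example \ref{ex:fn} (with $\k=\C$, $n=m\ge 2$), we have $\VV^1_1(F_m,\C)=(\C^{\times})^m$, which is an infinite set. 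Hence $\VV^1_1(G,\C)=\VV(G)$ is infinite as well.

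First I would therefore invoke Corollary \ref{cor:btest}\eqref{bt2}, which states that $B_{\C}(G)$ is finite-dimensional if and only if $\VV^1_1(G,\C)$ is finite. Since we have just shown $\VV(G)$ is infinite, this immediately gives $\dim_{\C} B_{\C}(G)=\infty$, as claimed. A minor point to dispatch along the way: Corollary \ref{cor:btest} requires $G$ finitely generated and the coefficient field algebraically closed; both hypotheses are in force here ($\C=\bar{\C}$, and $G$ is assumed finitely generated), and the functoriality statement likewise requires the target $F$ to be finitely generated, which holds since a finitely generated group cannot surject onto an infinitely generated free group — so we may take $F=F_m$ with $m$ finite and $m\ge 2$.

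There is really no serious obstacle here; the content of the proposition is entirely carried by the earlier machinery. The one place where a reader might want a word of care is the functoriality of characteristic varieties under a surjection: this is the statement that $\nu^*\colon\TT_\C(F)\inj\TT_\C(G)$ restricts to $\VV^1_1(F,\C)\inj\VV^1_1(G,\C)$, which follows because a rank-one local system on $K(F,1)$ pulls back along the map $K(G,1)\to K(F,1)$ induced by $\nu$, and the induced map on $H_1$ with these coefficients is surjective (or one simply cites the functoriality paragraph in \S\ref{subsec:scv} directly). With that in hand, the proof is a two-line chain: very large $\Rightarrow$ $\VV(G)\supseteq(\C^{\times})^m$ infinite $\Rightarrow$ $\dim_{\C}B_{\C}(G)=\infty$ by Corollary \ref{cor:btest}\eqref{bt2}.

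Alternatively, and perhaps more in keeping with the module-theoretic spirit of the section, I could argue directly: the surjection $\phi\colon G\surj F$ induces a surjection $G'\surj F'$ hence a surjection $H_1(G',\C)\surj H_1(F',\C)$ compatible with the ring map $\C{G_{\ab}}\to\C{F_{\ab}}$, so $B_\C(F)$ is a quotient of $B_\C(G)$ (after base change of rings), and since $\dim_\C B_\C(F_m)=\infty$ by the displayed remark following Corollary \ref{cor:btest}, we conclude $\dim_\C B_\C(G)=\infty$. Either route works; I would present the characteristic-variety version as primary since it makes the finiteness dichotomy explicit, and mention the direct surjection argument as a remark if space permits.
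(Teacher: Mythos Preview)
Your proof is correct and follows essentially the same argument as the paper: use the functoriality of $\VV^1_1$ under epimorphisms to embed $\VV(F_m)=(\C^{\times})^m$ into $\VV(G)$, conclude $\VV(G)$ is infinite, and apply Corollary~\ref{cor:btest}\eqref{bt2}. The paper's proof is just the terse version of your primary route; your additional remarks on finite generation of the target free group and the alternative direct surjection argument on $B_\C$ are correct but not needed.
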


\begin{proof}
Let $\nu\colon G\surj F_n$ be an epimorphism onto a free group of 
rank $n\ge 2$.   By the discussion at the end of \S\ref{subsec:scv}, 
the morphism $\nu^*\colon \TT(F_n)\to \TT(G)$ embeds 
$\VV(F_n)= (\C^{\times})^n$ into $\VV(G)$.  Hence, the set  
$\VV(G)$ is infinite. The desired conclusion follows from 
Corollary \ref{cor:btest}\eqref{bt2}.
\end{proof}

We shall see in \S\ref{subsec:mccool} some concrete examples where 
this Proposition applies.

\section{Resonance varieties and the dimension of the Alexander invariant}
\label{sect:resalex}

Under favorable circumstances, we show that there is an (attainable) 
upper bound for the size of the Alexander invariant, a bound that can 
be described using only untwisted cohomological information 
in low degrees. 

\subsection{Resonance varieties}
\label{subsec:cjl}

As before, let $X$ be a connected CW-complex with finite $q$-skeleton, 
and let $\k$ be a field, with $\ch\k\ne 2$.  Given a cohomology class  
$z\in H^1(X,\k)$, left-multiplication by $z$ turns the cohomology 
ring $H^*(X,\k)$ into a cochain complex. The {\em resonance varieties}\/ 
of $X$ are the jump loci for the cohomology of this cochain complex.  
More precisely, for each $0\le i\le q$ and $d>0$, define 
\begin{equation}
\label{eq:defres}
\RR^i_d (X, \k)= \{ z\in H^1 (X, \k) \mid 
\dim_{\k} H^i(H^*(X,\k), \cdot z)\ge d \}. 
\end{equation}
These sets are homogeneous subvarieties of the affine space 
$H^1 (X, \k)$, and depend only on the homotopy type of $X$.  
In particular, $\RR^0_1 (X, \k)=\{ 0\}$, and $0\in \RR^i_1(X,\k)$ 
if and only if $H_i(X,\k)\ne 0$. 

Now suppose $G$ is a finitely generated group.  Pick a classifying 
space $X=K(G,1)$ with finitely many $1$-cells, and set 
$\RR^i_d (G, \k):=\RR^i_d(X,\k)$.  We will only be interested 
here in the case when $\k=\C$ and $i=d=1$. To simplify notation, 
we shall write $\RR(G)=\RR^1_1 (G, \C)$.  Concretely, 
\begin{equation}
\label{eq:rv}
\RR (G)=\{z \in H^1(G,\C)  \mid  \text{$\exists u\in H^1(G,\C), 
u \notin \C \cdot z$,  and $z\cup u=0$} \} \, , 
\end{equation}
where $\cup\colon H^1(G,\C)\wedge  H^1(G,\C)\to H^2(G,\C)$ 
is the cup-product map in low degrees. Clearly, the variety 
$\RR (G)$ depends only on the co-restriction of $\cup$ to its image.  
In fact, the resonance variety $\RR(G)$ coincides, 
away from the origin $0\in H^1(G,\C)$, with the 
support variety $V(\ann (\B(G)))$. 

The jump loci $\VV(G)$ and $\RR(G)$ are related, as follows. 
Let $1$ be the identity of the character group $\TT(G)=H^1(G,\C^{\times})$, 
and let $\TC_1(\VV(G))$ be the tangent cone at $1$ to the characteristic 
variety, viewed as a subset of the tangent space $T_1(\TT(G))=H^1(G,\C)$. 

\begin{theorem}[\cite{Li}, \cite{DPS-duke}]
\label{thm:tcone}
Let $G$ be a finitely generated group. 
\begin{enumerate}
\item \label{t1} 
$\TC_1(\VV(G))\subseteq \RR(G)$.

\item \label{t2} 
If $G$ is $1$-formal, then $\TC_1(\VV(G)) = \RR(G)$.
\end{enumerate}
\end{theorem}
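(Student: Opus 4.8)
The plan is to realize both $\TC_1(\VV(G))$ and $\RR(G)$ as supports of graded modules over the polynomial ring $S=\Sym(G_{\ab}\otimes\C)$ from \S\ref{ss52}, and then compare those modules. Write $B=B_{\C}(G)$, set $R=\C{G_{\ab}}$ with augmentation ideal $I$, and recall that $\gr_I R$ is canonically $S$, with $\specm S=H^1(G,\C)=T_1\TT(G)$; let $\m\subset S$ be the maximal ideal at $0$. We may assume $b_1(G)\ge 1$, as otherwise $H^1(G,\C)=0$ and there is nothing to prove. Since then $1\in\VV(G)$, and since $\VV(G)$ agrees with $\supp_R B$ away from $1$ by Corollary \ref{cor:btest}\eqref{bt1}, we get the sandwich $\supp_R B\subseteq\VV(G)\subseteq\supp_R B\cup\{1\}$. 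Passing to tangent cones at $1$ (which is monotone and satisfies $\TC_1(A\cup\{1\})=\TC_1(A)\cup\{0\}$ for closed $A$) and using that $0\in\TC_1(\VV(G))$ because $1\in\VV(G)$, one obtains $\TC_1(\VV(G))=\TC_1(\supp_R B)\cup\{0\}$; and by the standard identification of a tangent cone with the support of an associated graded module (see \cite{E}), $\TC_1(\supp_R B)=\supp_S(\gr_I B)$. Likewise, the description of $\RR(G)$ recalled after \eqref{eq:rv}, together with $0\in\RR(G)$, gives $\RR(G)=\supp_S(\B(G))\cup\{0\}$. Thus everything reduces to comparing the graded $S$-modules $\gr_I B$ and $\B(G)$.

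For part \eqref{t1}, the plan is to produce a surjection of graded $S$-modules $\B(G)\surj\gr_I B$. Restricting the epimorphism \eqref{eq:bepi}, $\h_{\bullet}(G)/\h''_{\bullet}(G)\surj\gr_{\G}^{\bullet}(G/G'')\otimes\C$, to derived Lie subalgebras — legitimate because both sides are generated in degree $1$ — gives an epimorphism $\B(G)=\h'_{\bullet}/\h''_{\bullet}\surj\gr_{\G}^{\ge 2}(G/G'')\otimes\C$. Composing with Massey's isomorphism \eqref{eq:mass}, $\gr_{\G}^{q+2}(G/G'')\otimes\C\cong\gr_I^q B$, produces the desired surjection (up to the degree shift $q\mapsto q+2$), where the source carries the adjoint $S$-action \eqref{eq:s-action} and the target the natural action of $\gr_I R=S$. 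Hence $\supp_S(\gr_I B)\subseteq\supp_S(\B(G))$, and the dictionary of the previous paragraph yields $\TC_1(\VV(G))\subseteq\RR(G)$.

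For part \eqref{t2}, suppose $G$ is $1$-formal; I would then invoke Theorem \ref{thm:dps-bmod}, which provides a filtered isomorphism $\widehat{B}\cong\widehat{\B(G)}$ compatible with $\widehat R\cong\widehat S$. Since completion does not change the associated graded, this induces an isomorphism of graded $S$-modules $\gr_I B\cong\gr_{\m}\B(G)$. A short computation then finishes the job: $\B(G)$ is a finitely generated graded $S$-module generated in degree $2$ (by the image of $\bigwedge^2 H^1(G,\C)$ in the presentation \eqref{eq:lin alex pres}), so $\m^j\B(G)=\B(G)_{\ge j+2}$ for all $j\ge 0$, whence $\gr_{\m}\B(G)\cong\B(G)$. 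Therefore $\gr_I B\cong\B(G)$, so $\supp_S(\gr_I B)=\supp_S(\B(G))$, and the dictionary gives $\TC_1(\VV(G))=\RR(G)$.

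I expect the main obstacle to be the $S$-linearity asserted in part \eqref{t1}: one must verify that the adjoint action of $S$ on $\B(G)=\h'/\h''$ corresponds, under the composite of \eqref{eq:bepi} with Massey's isomorphism, to the $\gr_I(\C{G_{\ab}})$-module structure on $\gr_I B$ — and, relatedly in part \eqref{t2}, that the filtered isomorphism of Theorem \ref{thm:dps-bmod} is strict, so that it genuinely descends to associated graded objects. Neither point is deep, but both demand careful tracking of the module structures through a chain of canonical identifications; the topological input (Corollary \ref{cor:btest}) and the commutative algebra (\cite{E}) are otherwise invoked as black boxes.
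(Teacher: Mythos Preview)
The paper does not supply its own proof of this theorem: immediately after stating it, the authors record that part \eqref{t1} is due to Libgober \cite{Li} (extended to finitely generated groups in \cite[Lemma 5.5]{DP}) and that part \eqref{t2} is \cite[Theorem A]{DPS-duke}. There is thus no in-paper argument to compare against, so let me evaluate your proposal on its own terms.

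Your plan---translate both sides into supports of graded $S$-modules and then compare $\gr_I B$ with $\B(G)$---is natural, and for part \eqref{t2} it essentially repackages the module-theoretic content of \cite{DPS-duke}: granting Theorem \ref{thm:dps-bmod} (itself from that paper), your computation $\gr_{\m}\B(G)\cong\B(G)$ together with the easy inclusion $\supp_S(\gr_I B)\subseteq\TC_1(\supp_R B)$ yields $\RR(G)\subseteq\TC_1(\VV(G))$, and then part \eqref{t1} closes the loop. The strictness worry you flag is harmless: a filtered isomorphism between complete filtered modules induces an isomorphism on associated gradeds.

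The genuine gap is in part \eqref{t1}. You invoke $\TC_1(\supp_R B)=\supp_S(\gr_I B)$ as a ``standard identification'' from \cite{E}. The textbook statement concerns \emph{rings}: the tangent cone to $V(J)$ at the point $I$ is cut out in $\specm S$ by the ideal of initial forms of $J$. For a \emph{module} $M$, what follows immediately is that if $f\in\ann_R M$ then its initial form annihilates $\gr_I M$; hence the initial-form ideal of $\ann_R M$ sits inside $\ann_S(\gr_I M)$, giving only $\supp_S(\gr_I M)\subseteq\TC_1(\supp_R M)$. This is the \emph{wrong} direction for your argument: to deduce $\TC_1(\VV(G))\subseteq\RR(G)$ via the surjection $\B(G)\surj\gr_I B$ you need the reverse inclusion $\TC_1(\supp_R B)\subseteq\supp_S(\gr_I B)$, i.e., that $\ann_S(\gr_I B)$ is no larger, up to radical, than the initial-form ideal of $\ann_R B$. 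That is not in \cite{E} and is not obvious---$\gr_I$ is only right exact, so a prime filtration of $B$ does not directly bound $\supp(\gr_I B)$ from below. Libgober's proof in \cite{Li} sidesteps associated graded modules entirely, working instead with the local biholomorphism $\exp\colon H^1(G,\C)\to\TT(G)$ and a semicontinuity argument for twisted Betti numbers along one-parameter families $t\mapsto\exp(tz)$.
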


Part \eqref{t1} of this theorem was proved by Libgober \cite{Li} 
for finitely presented groups, while the extension to finitely generated 
groups was given in \cite[Lemma 5.5]{DP}  (see also the discussions 
from \cite[\S 2.6]{DPS-imrn} and \cite[\S 2.2]{PS-plms}).  Finally, 
Part \eqref{t2} was proved in \cite[Theorem A]{DPS-duke}.  
As an immediate corollary, we have the following result.

\begin{corollary}
\label{cor:zerores}
Let $G$ be a finitely generated group. 
\begin{enumerate}
\item \label{rz1} 
If $\RR(G)\subseteq \{0\}$, then either $1\notin \VV(G)$,  
or $1$ is an isolated point in $\VV(G)$.  

\item \label{rz2}
If $\VV(G)\subseteq \{1\}$ and $G$ is $1$-formal, 
then $\RR(G)\subseteq \{0\}$. 
\end{enumerate}
\end{corollary}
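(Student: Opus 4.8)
The plan is to deduce both statements directly from Theorem~\ref{thm:tcone}, using in addition only the elementary fact from the local theory of analytic varieties that, for a subvariety $W\subseteq \TT(G)$ and a point $p\in \TT(G)$, the tangent cone $\TC_p(W)$ is empty precisely when $p\notin W$, and otherwise has dimension equal to the local dimension $\dim_p W$. In particular, $\TC_1(\VV(G))\subseteq \{0\}$ if and only if either $1\notin \VV(G)$, or $1\in \VV(G)$ and $\dim_1\VV(G)=0$, i.e., $1$ is an isolated point of $\VV(G)$.

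For part~\eqref{rz1}, I would start from the hypothesis $\RR(G)\subseteq \{0\}$ and invoke Theorem~\ref{thm:tcone}\eqref{t1} to obtain the inclusions $\TC_1(\VV(G))\subseteq \RR(G)\subseteq \{0\}$. If $1\notin \VV(G)$, there is nothing further to prove. Otherwise $1\in \VV(G)$, so $\TC_1(\VV(G))$ is nonempty; being contained in $\{0\}$, it equals $\{0\}$, hence has dimension $0$. By the dimension fact recalled above, $\dim_1\VV(G)=0$, which is exactly the assertion that $1$ is an isolated point of $\VV(G)$.

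For part~\eqref{rz2}, the hypothesis $\VV(G)\subseteq \{1\}$ forces $\TC_1(\VV(G))\subseteq \{0\}$ at once: the germ of $\VV(G)$ at $1$ is either empty or the reduced point $\{1\}$, and in both cases its tangent cone sits inside the origin of $T_1(\TT(G))=H^1(G,\C)$. Since $G$ is assumed $1$-formal, Theorem~\ref{thm:tcone}\eqref{t2} yields the equality $\RR(G)=\TC_1(\VV(G))$, and therefore $\RR(G)\subseteq \{0\}$.

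I do not expect a genuine obstacle here, since this is essentially a repackaging of Theorem~\ref{thm:tcone}. The only points requiring a moment of care are the bookkeeping around the degenerate case $b_1(G)=0$ (where both $\VV(G)$ and $\RR(G)$ are empty away from, respectively, containing $1$ and $0$) and the clean invocation of the standard statement that the dimension of the tangent cone at a point records the local dimension of the variety there, which is precisely what converts ``$\TC_1(\VV(G))\subseteq\{0\}$'' into the geometric conclusion about an isolated point.
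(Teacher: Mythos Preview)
Your argument is correct and matches the paper's approach: the paper states this result as an immediate corollary of Theorem~\ref{thm:tcone} without giving an explicit proof, and your derivation via the tangent cone inclusion and the standard dimension fact is exactly the intended justification.
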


The vanishing of the resonance variety also controls 
certain finiteness properties related to the Alexander invariants.

\begin{theorem}[\cite{DP}]
\label{prop:res bg}
Let $G$ be a finitely generated group. 
\begin{enumerate}
\item \label{res1}
If $\RR (G)\subseteq  \{ 0\}$ then $\dim_{\C} \widehat{B_{\C}(G)}< \infty$.
When $G$ is $1$-formal, the converse holds as well.

\item \label{res2}
$\RR (G)\subseteq  \{ 0\}$ if and only if $\dim_{\C} \B (G)< \infty$.
\end{enumerate}
\end{theorem}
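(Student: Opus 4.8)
The plan is to reduce both statements to the presentation \eqref{eq:lin alex pres} of the infinitesimal Alexander invariant $\B(G)$ as a finitely presented graded $S$-module, where $S=\Sym(H_\C)$ is a polynomial ring, and then to invoke standard commutative algebra (Hilbert series / dimension theory) together with the identification, recorded at the end of \S\ref{subsec:cjl}, of $\RR(G)$ (away from $0$) with the support variety $V(\ann(\B(G)))\subseteq H^1(G,\C)$. First I would prove part \eqref{res2}. Since $\B(G)$ is a finitely generated graded $S$-module, $\dim_\C\B(G)<\infty$ if and only if $\B(G)$ has finite length, if and only if $\supp(\B(G))\subseteq\{0\}$, by the graded Nullstellensatz (the support is a homogeneous subvariety, so it is either $\{0\}$ or positive-dimensional). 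Because $\RR(G)$ agrees with $V(\ann(\B(G)))$ off the origin, and both are cones through $0$, one has $\RR(G)\subseteq\{0\}$ exactly when $\supp(\B(G))\subseteq\{0\}$; combining the two equivalences gives \eqref{res2}. I would want to be a little careful about the degenerate possibility that $\B(G)$ is concentrated in degrees that are all annihilated by the maximal ideal (e.g. $\B(G)$ supported only in degree $2$); but even then $\supp(\B(G))\subseteq\{0\}$, so the argument is unaffected.

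For part \eqref{res1}, the natural route is to compare $\widehat{B_\C(G)}$ with the degree completion $\widehat{\B(G)}$. Assume $\RR(G)\subseteq\{0\}$. By part \eqref{res2}, $\dim_\C\B(G)<\infty$, hence a fortiori $\dim_\C\widehat{\B(G)}<\infty$ (the completion of a finite-length module is itself). Now I would pass from $\B(G)$ to $B_\C(G)$: the point is that, by Theorem \ref{thm:dps-bmod} (or rather the weaker general statement underlying it), the $I$-adic completion $\widehat{B_\C(G)}$ and the degree completion $\widehat{\B(G)}$ have presentations over $\widehat{\C G_{\ab}}\cong\widehat S$ with the \emph{same associated graded}, namely the presentation \eqref{eq:lin alex pres} is the ``linearization'' of a presentation of $B_\C(G)$. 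Concretely, one chooses a finite presentation matrix for $B_\C(G)$ over $\C G_{\ab}$ whose linear part (lowest-degree terms, under the identification $\bar x_i - 1 \leftrightarrow x_i$) is exactly the matrix $\delta_3+\id\otimes\partial_G$ of \eqref{eq:lin alex pres}; see Remark \ref{rem:bpres}. Then $\gr_I(\widehat{B_\C(G)})$ is a quotient of $\B(G)$, so $\dim_\C\B(G)<\infty$ forces $\gr_I(B_\C(G))$ to have finite length, and therefore $I^m\cdot\widehat{B_\C(G)}=0$ for some $m$; since $\widehat{B_\C(G)}$ is a finitely generated $\widehat{\C G_{\ab}}$-module, nilpotence gives $\dim_\C\widehat{B_\C(G)}<\infty$. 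For the converse under the $1$-formality hypothesis, Theorem \ref{thm:dps-bmod} supplies an \emph{isomorphism} $\widehat{B_\C(G)}\cong\widehat{\B(G)}$, so $\dim_\C\widehat{B_\C(G)}<\infty$ implies $\dim_\C\widehat{\B(G)}<\infty$; but $\widehat{\B(G)}$ is the completion of the finitely generated graded $S$-module $\B(G)$ at the maximal ideal $m$, and a finitely generated graded module has finite-dimensional $m$-adic completion iff it already has finite length, so $\dim_\C\B(G)<\infty$, and part \eqref{res2} then yields $\RR(G)\subseteq\{0\}$.

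The main obstacle I anticipate is the inequality ``$\gr_I(\widehat{B_\C(G)})$ is a quotient of $\B(G)$'', i.e. verifying that the infinitesimal presentation \eqref{eq:lin alex pres} really is the leading-order term of an honest $\C G_{\ab}$-presentation of $B_\C(G)$. This is the content of the machinery of \cite{PS-imrn} (the relation between Fox calculus, the Magnus expansion, and the holonomy Lie algebra, via Sullivan's formula linking the cup product to the group commutator) — it is exactly where the filtered/graded comparison lives, and it is the step one cannot get around by soft arguments. Everything downstream of it — the Nullstellensatz characterization of finite length, the fact that nilpotent finitely generated modules over $\C G_{\ab}$ are finite-dimensional, and the completion bookkeeping — is routine commutative algebra. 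In the write-up I would cite \cite{PS-imrn} for the leading-term statement and \cite{DPS-duke} (our Theorem \ref{thm:dps-bmod}) for the $1$-formal refinement, and otherwise lean on \cite{E}.
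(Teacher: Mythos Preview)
The paper does not give its own proof of this theorem; it is quoted from \cite{DP} and used as a black box. So there is nothing in the paper to compare your argument against line by line.

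Your proposal is essentially correct. A couple of remarks that would tighten it. For part~\eqref{res2}, your reduction to ``$\supp(\B(G))\subseteq\{0\}$ iff $\dim_\C\B(G)<\infty$'' is exactly the right move, and the identification of $\RR(G)\setminus\{0\}$ with $V(\ann(\B(G)))\setminus\{0\}$ is already recorded in \S\ref{subsec:cjl}, so this part is clean.

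For the forward direction in part~\eqref{res1}, the ``main obstacle'' you flag --- that $\gr_I(B_\C(G))$ is a quotient of $\B(G)$ --- is in fact already available in the paper without appealing to Fox calculus or Magnus expansions: combine Massey's isomorphism \eqref{eq:mass}, which identifies $\gr_I^q(B_\C(G))$ with $\gr_\Gamma^{q+2}(G/G'')\otimes\C$, with the epimorphism \eqref{eq:bepi} from $\h/\h''$ onto $\gr_\Gamma(G/G'')\otimes\C$; restricting to degrees $\ge 2$ gives the desired surjection $\B(G)\twoheadrightarrow\gr_I(B_\C(G))$. Once you have that, finiteness of $\B(G)$ forces $\gr_I^q(B_\C(G))=0$ for $q\gg 0$, hence the $I$-adic filtration on $B_\C(G)$ stabilizes and $\widehat{B_\C(G)}=B_\C(G)/I^m B_\C(G)$ is finite-dimensional, as you say. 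The converse under $1$-formality via Theorem~\ref{thm:dps-bmod} and the observation that a finitely generated graded $S$-module has finite-dimensional degree completion iff it has finite length is fine.
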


Here, recall that $\widehat{B_{\C}(G)}$ denotes the $I$-adic completion 
of the $R$-module $B_{\C}(G)=H_1(G',\C)$, where $R$ is the 
group algebra $\C{G_{\ab}}$ and 
$I$ is the augmentation ideal, while $\B(G)$ denotes the 
$S$-module defined by \eqref{eq:infb}, and $S$ is the 
polynomial ring $\Sym(G_{\ab}\otimes \C)$.

\subsection{An upper bound for the Alexander invariant}
\label{ss5end}

We are now ready to state and prove the main result in this section.

\begin{theorem}
\label{thm:cv1}
Let $G$ be a finitely generated group. 
\begin{enumerate}
\item \label{v1}
If $\VV (G) \subseteq \{1\}$, then 
$\dim_{\C} B_{\C}(G) \le \dim_{\C} \B(G)$.

\item \label{v2}
If $\VV (G) \subseteq \{1\}$ and $\RR (G) \subseteq \{0\}$, then 
$\dim_{\C} B_{\C}(G) \le \dim_{\C} \B(G) <\infty$.

\item \label{v3}
If $\VV (G) \subseteq \{1\}$ and $G$ is $1$-formal, then 
$\dim_{\C} B_{\C}(G) = \dim_{\C} \B(G) <\infty$.
\end{enumerate}
\end{theorem}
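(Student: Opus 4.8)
The plan is to deduce all three parts from the results assembled in Sections \ref{sect:hac} and \ref{sect:resalex}, comparing the $R$-module $B_{\C}(G)$ with the $S$-module $\B(G)$ through their completions. The starting observation is that $\VV(G)\subseteq\{1\}$ forces $\supp(B_{\C}(G))\subseteq\{1\}$, by Corollary \ref{cor:btest}\eqref{bt1}; by Lemma \ref{lem:nilpv} this means $M:=B_{\C}(G)$ is a nilpotent $R$-module, so $I^m\cdot M=0$ for some $m$. Consequently the $I$-adic completion map $M\to\widehat{M}$ is an isomorphism, i.e.\ $\widehat{B_{\C}(G)}\cong B_{\C}(G)$ and in particular $\dim_{\C}B_{\C}(G)=\dim_{\C}\widehat{B_{\C}(G)}$. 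This is the device that converts statements about completions (which is the natural setting for comparing $B$ and $\B$) into statements about the honest modules.

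For part \eqref{v1}, I would produce a surjection (or at least a dimension inequality) $\widehat{\B(G)}\twoheadrightarrow \widehat{B_{\C}(G)}$, or more precisely a comparison of the two completed modules that is always a surjection at the level of the associated graded. The point is that both $\widehat{B_{\C}(G)}$ and $\widehat{\B(G)}$ are presented as cokernels of maps between (completed) free modules over $\widehat{S}\cong\widehat{\C{G_{\ab}}}$: the former via the abelianized Fox calculus presentation (Remark \ref{rem:bpres}), the latter via the presentation \eqref{eq:lin alex pres}. The linearized presentation is obtained from the group presentation by discarding higher-order terms in the relators, so there is a natural surjection $\widehat{\B(G)}\twoheadrightarrow\widehat{B_{\C}(G)}$ compatible with the filtrations — this is essentially the content behind the formality theorem \ref{thm:dps-bmod}, but the surjection exists without formality. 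Hence $\dim_{\C}\widehat{B_{\C}(G)}\le\dim_{\C}\widehat{\B(G)}\le\dim_{\C}\B(G)$, and combined with $\dim_{\C}B_{\C}(G)=\dim_{\C}\widehat{B_{\C}(G)}$ this gives \eqref{v1}. (If one prefers, $\dim_{\C}\widehat{\B(G)}=\dim_{\C}\B(G)$ because $\B(G)$ is a finitely generated \emph{graded} $S$-module whose degree-completion just repackages the same graded pieces when $\B(G)$ is finite-dimensional, and is $\le$ in general.)

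Part \eqref{v2} is then immediate: the extra hypothesis $\RR(G)\subseteq\{0\}$ gives $\dim_{\C}\B(G)<\infty$ by Theorem \ref{prop:res bg}\eqref{res2}, so the inequality from \eqref{v1} becomes $\dim_{\C}B_{\C}(G)\le\dim_{\C}\B(G)<\infty$. For part \eqref{v3}, when $G$ is $1$-formal the Tangent Cone theorem \ref{thm:tcone}\eqref{t2} gives $\RR(G)=\TC_1(\VV(G))\subseteq\TC_1(\{1\})=\{0\}$, so \eqref{v2} applies and already yields finiteness; what remains is to upgrade the inequality to an equality. Here I would invoke Theorem \ref{thm:dps-bmod} directly: $1$-formality gives a \emph{filtered isomorphism} $\widehat{B_{\C}(G)}\cong\widehat{\B(G)}$, hence $\dim_{\C}\widehat{B_{\C}(G)}=\dim_{\C}\widehat{\B(G)}=\dim_{\C}\B(G)$ (the last equality because $\B(G)$ is finite-dimensional, so its degree completion has the same dimension), and then $\dim_{\C}B_{\C}(G)=\dim_{\C}\widehat{B_{\C}(G)}$ from nilpotence closes the argument.

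The main obstacle I anticipate is making the comparison in part \eqref{v1} precise without assuming formality: I need the natural surjection $\widehat{\B(G)}\twoheadrightarrow\widehat{B_{\C}(G)}$ and the identification $\dim_{\C}\widehat{\B(G)}=\dim_{\C}\B(G)$ to be genuinely established rather than folklore. The cleanest route is probably to observe that the presentation \eqref{eq:lin alex pres} of $\B(G)$ is the "graded/linearized shadow" of the Fox-calculus presentation of $B_{\C}(G)$, so that passing to $I$-adic (resp.\ $\m$-adic) completions and comparing associated graded modules yields a degreewise surjection; summing dimensions over degrees — finite in case \eqref{v2}, \eqref{v3}, and harmless as an inequality in general — gives the bound. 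Everything else is bookkeeping with the finiteness dictionary of Lemmas \ref{lem:nilpv}, \ref{lem:finm} and Corollary \ref{cor:btest}.
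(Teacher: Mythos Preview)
Your architecture is right, and Parts \eqref{v2} and \eqref{v3} match the paper's argument essentially verbatim. The gap is in Part \eqref{v1}: the surjection $\widehat{\B(G)}\twoheadrightarrow\widehat{B_{\C}(G)}$ that you assert ``exists without formality'' is not justified. Saying that the linearized presentation arises by ``discarding higher-order terms in the relators'' does not by itself produce a map in the direction you need; if anything, the naive expectation is a map the other way (from the module with more relations to the one with fewer), and in general neither completion surjects onto the other without further input. You flag this yourself as the main obstacle, but you do not actually overcome it.

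The paper closes this gap by comparing \emph{associated graded} modules rather than completions. Two ingredients from \S\ref{sect:alexinv} do the work: Massey's isomorphism \eqref{eq:mass}, which identifies $\gr_I^q(B_{\C}(G))$ with $\gr_\Gamma^{q+2}(G/G'')\otimes\C$, and the holonomy epimorphism \eqref{eq:bepi}, $\h(G)/\h''(G)\twoheadrightarrow\gr_\Gamma(G/G'')\otimes\C$, whose restriction to degrees $\ge 2$ is precisely a surjection $\B(G)\twoheadrightarrow\gr_I(B_{\C}(G))$. Nilpotence of $B_{\C}(G)$ then gives $\dim_{\C}B_{\C}(G)=\sum_q\dim_{\C}\gr_I^q(B_{\C}(G))\le\dim_{\C}\B(G)$, which is exactly \eqref{eq:hilbbb}--\eqref{eq:hilb again}. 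This is the concrete incarnation of the ``degreewise surjection'' you were reaching for; once you invoke \eqref{eq:mass} and \eqref{eq:bepi}, your sketch becomes a proof.
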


\begin{proof}
{\em Part~\eqref{v1}}.
Assuming that $\VV (G) \subseteq \{1\}$,  
Lemma \ref{lem:nilpv} and Corollary \ref{cor:btest} imply 
that the module $B_{\C}(G)$ is nilpotent. Therefore, 
\begin{equation}
\label{eq:hilbbb}
\dim_{\C} B_{\C}(G) = \dim_{\C} \widehat{B_{\C}(G)}= 
\sum_{q\ge 0} \dim_{\C} \gr_I^q (B_{\C}(G)).
\end{equation}
Furthermore, formulas \eqref{eq:mass} and \eqref{eq:bepi} 
give  
\begin{equation}
\label{eq:hilb again}
\sum_{q\ge 0} \dim_{\C} \gr_I^q (B_{\C}(G)) \le \dim_{\C} \B(G).
\end{equation}
Thus, $\dim_{\C} B_{\C}(G) \le \dim_{\C} \B(G)$.
\medskip

{\em Part~\eqref{v2}}.
Assuming that $\RR (G) \subseteq \{0\}$, 
Theorem \ref{prop:res bg}\eqref{res2} insures that  
$\dim_{\C}\B (G) < \infty$.  By the above, we are done.
\medskip

{\em Part~\eqref{v3}}.
Assuming that $G$ is $1$-formal and $\VV (G) \subseteq \{1\}$, 
Corollary \ref{cor:zerores} gives $\RR (G) \subseteq \{0\}$.   
Hence, as we just saw, $\dim_{\C} \B (G) < \infty$. 

On the other hand, according to \eqref{eq:bbcompl}, the 
$1$-formality assumption on our group $G$ also implies that 
$\widehat{B_{\C}(G)}$ is isomorphic to $\widehat{\B (G)}$, 
the degree completion of the infinitesimal Alexander invariant. 
But, since  $\B (G)$ has finite dimension, 
$\widehat{\B (G)} \cong \B (G)$.  Using 
once again \eqref{eq:hilbbb}, we find that 
\begin{equation}
\label{eq:dim b}
\dim_{\C} B_{\C}(G) = \dim_{\C} \widehat{B_{\C}(G)} = 
\dim_{\C} \widehat{\B (G)} = \dim_{\C} \B (G) < \infty,
\end{equation}
and this finishes the proof.
\end{proof}

If we replace the condition $\VV (G)\subseteq \{ 1\}$ by 
$\RR (G) \subseteq \{0\}$ in Theorem \ref{thm:cv1}, 
the inequality $\dim_{\C} B_{\C}(G) \le \dim_{\C} \B(G)$ fails, 
even for $1$-formal groups. 

\begin{example}
\label{ex:orbi}
Fix an integer $m\ge 2$, and consider the group 
$G= \langle x,y \mid y^m=1 \rangle$.  Then $G$ 
is $1$-formal, and has abelianization $G_{\ab}= \Z \oplus \Z_m$. 
Since $H^1(G,\C)=\C$ and $H^2(G,\C)=0$, we infer that $\RR (G)=\{ 0\}$. 
Hence, by Theorem \ref{prop:res bg}\eqref{res2}, we have 
$\dim_{\C} \B (G)<\infty$.

The character group $\TT (G)$ consists of $m$ disjoint copies of $\C^{\times}$. 
Starting from the defining presentation of $G$, a Fox calculus computation 
shows that $\VV (G)$ consists of the identity $1$, together with the union 
of all connected components of $\TT (G)$ not containing $1$. 
Hence, by Corollary \ref{cor:btest}\eqref{bt2}, we have 
$\dim_{\C} B_{\C}(G) =\infty$.

On the other hand, Theorem \ref{prop:res bg}\eqref{res1} 
implies that $\dim_{\C} \widehat{B_{\C}(G)}<\infty$.  
Thus, this example also shows that the second implication 
from Lemma \ref{lem:finm}\eqref{fin1} cannot be reversed, 
even in the case when the module $M$ is the complexified 
Alexander invariant of a $1$-formal group.
\end{example}

\section{Automorphism groups of free groups}
\label{sect:autfn}

We now specialize to the case of Torelli groups associated to 
a finitely generated free group $F_n$. Exploiting the structure of 
various braid-like subgroups of $\Aut(F_n)$, we extract some 
information on the associated graded Lie algebras and the 
Alexander invariants of those Torelli groups.

\subsection{The free group and its automorphisms}
\label{subsec:fn}

Let $F_n$ be the free group on generators $x_1,\dots, x_n$, 
and let $\Z^n$ be its abelianization.   Identify the automorphism 
group $\Aut(\Z^n)$ with the general linear group $\GL_n(\Z)$.   
As is well-known, the map $\Aut(F_n)\to \GL_n (\Z)$ 
which sends an automorphism to the induced map on the
abelianization is surjective.   Thus, we may identify the 
symmetry group $\A(F_n)$ with $\GL_n(\Z)$.   

The Torelli group $\T_{F_n}=\ker\, (\Aut(F_n) \surj \GL_n(\Z))$ is 
classically denoted by $\IA_n$.  This group is the first term in 
the Johnson filtration $J^s_n=F^s(\Aut(F_n))$, which, in this context, 
goes back to Andreadakis \cite{An}.  

Let $\Inn(F_n)$ be the group of inner automorphisms of $F_n$. 
If $n\ge 2$, the free group $F_n$ is centerless, and so 
$\Inn(F_n)\cong F_n$. As usual, we denote by 
$\Out(F_n)=\Aut(F_n)/\Inn(F_n)$ 
the corresponding outer automorphism group.

As we know, $\Inn(F_n)$ is also a normal 
subgroup of $\IA_n$.  The quotient group, $\OA_n=\IA_n/\Inn(F_n)$,  
coincides with the outer Torelli group, 
$\wT_{F_n}=\ker\, (\Out(F_n) \surj \GL_n(\Z))$. 
We will denote the induced Johnson filtration on $\Out(F_n)$ 
by $\widetilde{J}^{s}_{n}=\pi(J^s_n)$.  

The groups defined so far in this section fit into the following 
commuting diagram, with exact rows and columns:
\begin{equation}
\label{eq:aut fn}
\xymatrixrowsep{16pt}
\xymatrix{
     &\Inn(F_n) \ar^{=}[r] \ar@{^{(}->}[d]&\Inn(F_n) \ar@{^{(}->}[d]\\
1\ar[r] &\IA_n \ar[r] \ar@{->>}^{\pi}[d]&\Aut(F_n) \ar@{->>}^{\pi}[d] \ar[r] 
& \GL_n(\Z) \ar^{=}[d] \ar[r] & 1\\
1\ar[r] &\OA_n \ar[r] &\Out(F_n) \ar[r] & \GL_n(\Z) \ar[r] & 1
}
\end{equation}

In \cite{Ma}, Magnus showed that the group $\IA_n$ has 
finite generating set 
\begin{equation}
\label{eq:ian gens}
\{\alpha_{ij}, \alpha_{ijk} \mid 1\le i\ne j\ne k \le n \},
\end{equation}
where $\alpha_{ij}$ maps $x_i$ to $x_j x_ix_j^{-1}$ 
and $\alpha_{ijk}$ maps  $x_i$ to $x_i(x_j,x_k)$, with both 
$\alpha_{ij}$ and $\alpha_{ijk}$ fixing the remaining generators.
In particular, $\IA_1=\{1\}$ and $\IA_2=\Inn(F_2)\cong F_2$ 
are finitely presented. 
On the other hand,  Krsti\'{c} and McCool showed in \cite{KM} 
that $\IA_3$ is not finitely presentable.  It is still unknown 
whether $\IA_n$ admits a finite presentation for $n\ge 4$.

\subsection{Braid-like subgroups}
\label{subsec:mccool}

In \cite{McC}, J.~McCool identified a remarkable subgroup 
of $\IA_n$, consisting of those automorphisms of $F_n$ 
which send each generator $x_i$ to a conjugate of itself.  
It turns out that McCool's group of ``pure symmetric" automorphisms, 
$\PS_n$, is precisely the subgroup generated by the Magnus 
automorphisms $\{ \alpha_{ij} \mid 1\le i\ne j \le n\}$; furthermore, 
$\PS_n$ is finitely presented.  

The ``upper triangular" McCool group, denoted $\PS_n^{+}$, is the 
subgroup generated by the automorphisms $\alpha_{ij}$ with $i> j$.  
Let $K_n$ be the subgroup generated by the automorphisms 
$\alpha_{nj}$, with $1\le j\le n-1$.  
In  \cite{CPVW},  Cohen, Pakianathan, Vershinin, and Wu show that 
$K_n$ is isomorphic to the free group $F_{n-1}$ on these generators.
They also show that $K_n$ is a normal subgroup of $\PS_n^{+}$,  
with quotient group $\PS_{n-1}^{+}$.  Moreover, the sequence 
\begin{equation}
\label{eq:psplus}
\xymatrix{
1\ar[r] &K_n\ar[r] &\PS_n^{+} \ar[r] & \PS_{n-1}^{+} \ar[r] & 1
}
\end{equation}
is split exact, with $\PS_{n-1}^{+}$ acting on $K_n\cong F_{n-1}$ by 
$\IA$-automorphisms. It follows that the upper triangular McCool group
has the structure of an iterated semidirect product of free groups,
$\PS_n^{+} = F_{n-1} \rtimes \cdots \rtimes F_2 \rtimes F_1$, 
with all extensions given by $\IA$-automorphisms. 

An analogous decomposition holds for the Artin pure braid group 
$P_n$, which is the group consisting of those automorphisms in $\PS_n$ that 
leave the word $x_1\cdots x_n\in F_n$ invariant.  
Again, there are split epimorphisms $p_n\colon P_n\surj P_{n-1}$, 
with $\ker(p_n)\cong F_{n-1}$, leading to iterated semidirect product 
decompositions, $P_n = F_{n-1} \rtimes \cdots \rtimes F_2 \rtimes F_1$, 
with all extensions given by pure braid automorphisms. 

\begin{prop}
\label{prop:bcg}
Let $G$ be one of the groups $\PS_n$,  $\PS_n^{+}$, or $P_n$, 
with $n\ge 3$.  Then 
\begin{enumerate}
\item \label{cr1} The group $G$ is very large.
\item \label{cr2} $\dim B_{\C}(G) =\infty$. 
\end{enumerate}
\end{prop}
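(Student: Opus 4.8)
The plan is to obtain part~\eqref{cr2} as a consequence of part~\eqref{cr1}: all three groups $\PS_n$, $\PS_n^{+}$, $P_n$ are finitely generated, so once such a group is known to be very large, Proposition~\ref{prop:cork} gives $\dim_{\C}B_{\C}(G)=\infty$. Hence the entire task is to produce, for each of these groups and each $n\ge 3$, an epimorphism onto a non-abelian free group.

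For $P_n$ and $\PS_n^{+}$ I would reduce to rank three. Iterating the split exact sequence~\eqref{eq:psplus} produces surjections $\PS_n^{+}\twoheadrightarrow\PS_{n-1}^{+}\twoheadrightarrow\cdots\twoheadrightarrow\PS_3^{+}$, and iterating the split epimorphisms $p_k\colon P_k\twoheadrightarrow P_{k-1}$ produces $P_n\twoheadrightarrow\cdots\twoheadrightarrow P_3$; so it is enough to treat $\PS_3^{+}$ and $P_3$. Each of these is a semidirect product $F_2\rtimes_{\phi}\Z$: one has $\PS_3^{+}=K_3\rtimes\PS_2^{+}$ with $K_3\cong F_2$ and $\PS_2^{+}=\langle\alpha_{21}\rangle\cong\Z$ acting on $K_3$ by $\IA$-automorphisms of $F_2$, and $P_3=\ker(p_3)\rtimes P_2$ with $\ker(p_3)\cong F_2$ and $P_2\cong\Z$ acting by pure-braid automorphisms, which induce the identity on $H_1$ (alternatively, $P_3\cong F_2\times\Z$ is classical). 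Because $\IA_2=\Inn(F_2)$, a generator $t$ of the $\Z$-factor acts as conjugation by a fixed element $w\in F_2$; then the assignment that is the identity on $F_2$ and sends $t$ to $w$ is a well-defined homomorphism $F_2\rtimes_{\phi}\Z\to F_2$, obviously onto (its kernel is the central infinite cyclic group generated by $w^{-1}t$, so in fact $\PS_3^{+}\cong P_3\cong F_2\times\Z$). Composing the reductions, $\PS_n^{+}$ and $P_n$ surject onto $F_2$ for every $n\ge 3$.

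For $\PS_n$ I would argue directly from McCool's presentation \cite{McC}. The group $\PS_n$ is generated by the Magnus automorphisms $\alpha_{ij}$ ($1\le i\ne j\le n$), and every defining relator is a commutator in which the generators that occur carry \emph{pairwise distinct} unordered index sets $\{p,q\}$ --- this is true of the ``disjoint'', ``common-target'', and ``triangle'' relations alike. Fix distinct indices $a\ne b$ and define $\rho\colon\PS_n\to F_2=\langle s,t\rangle$ by $\alpha_{ab}\mapsto s$, $\alpha_{ba}\mapsto t$, and $\alpha_{ij}\mapsto 1$ for every other $(i,j)$. Since $\alpha_{ab}$ and $\alpha_{ba}$ are the only generators attached to the index set $\{a,b\}$, at most one of them appears in any given relator, so $\rho$ sends each relator to a word of the shape $[s,1]$, $[1,t]$ or $[1,1]$, all trivial in $F_2$. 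Hence $\rho$ is a well-defined homomorphism and is surjective, because $s$ and $t$ are in its image; in fact $\rho$ retracts $\PS_n$ onto $\langle\alpha_{ab},\alpha_{ba}\rangle$, a subgroup that restricts faithfully to $\langle x_a,x_b\rangle$ as $\langle\Ad_{x_b},\Ad_{x_a}\rangle=\Inn(F_2)\cong F_2$.

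I expect the only delicate point is the final verification: one has to check that McCool's relators really do exhibit the claimed index-set pattern --- in particular that the ``triangle'' relation occurs in the form $[\alpha_{ik},\alpha_{kj}\alpha_{ij}]$, whose three index sets $\{i,k\},\{k,j\},\{i,j\}$ are all distinct, and not in a superficially similar variant in which $\alpha_{kj}$ and $\alpha_{jk}$ both occur (for which $\rho$ would fail to be defined). Everything else is routine bookkeeping; and with part~\eqref{cr1} established, part~\eqref{cr2} follows at once from Proposition~\ref{prop:cork}.
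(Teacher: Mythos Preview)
Your argument is correct. For $P_n$ and $\PS_n^{+}$ you follow essentially the paper's route: reduce to the rank-three case via the iterated split epimorphisms, identify the resulting group as $F_2\rtimes_{\phi}\Z$ with $\phi\in\IA_2=\Inn(F_2)$, and conclude it surjects onto $F_2$ (indeed, is $F_2\times\Z$).

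For $\PS_n$ you take a genuinely different path. The paper simply invokes the epimorphisms $\PS_n\surj\PS_{n-1}$ from \cite{CPVW} and the fact that $\PS_2=\langle\alpha_{12},\alpha_{21}\rangle\cong F_2$, so that composing projections lands in $F_2$. You instead build a retraction $\PS_n\to\langle\alpha_{ab},\alpha_{ba}\rangle\cong F_2$ directly from McCool's presentation, using the observation that in every relator the unordered index sets attached to the generators are pairwise distinct, so at most one of $\alpha_{ab},\alpha_{ba}$ can occur. This observation is correct for McCool's three relation families (disjoint supports; common conjugator $[\alpha_{ij},\alpha_{kj}]$; and the triangle relation $[\alpha_{ij}\alpha_{kj},\alpha_{ik}]$, whose index sets $\{i,j\},\{k,j\},\{i,k\}$ are distinct), so your worry in the last paragraph is unfounded. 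Your approach trades the \cite{CPVW} black box for a hands-on check of the relators; the paper's approach is shorter to state but relies on an external reference. Either way, part~\eqref{cr2} follows from part~\eqref{cr1} via Proposition~\ref{prop:cork}, exactly as you say.
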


\begin{proof}
First consider the case when $G=P_n$ or $G=\PS_n^{+}$. 
By the above discussion, there is an epimorphism 
from $G$ to a group of the form $F_2\rtimes_{\alpha} F_1$, 
where $\alpha$ belongs to $\IA_2$. But, as noted in 
\S\ref{subsec:fn}, $\IA_2=\Inn(F_2)$.  Thus, 
$F_2 \rtimes_{\alpha} F_1 \cong F_2\times F_1$, 
and so $G$ admits an epimorphism onto $F_2$.

As for the full McCool groups, 
we know from \cite{CPVW} that there are epimorphisms 
$\PS_n \surj \PS_{n-1}$.  Since $\PS_2\cong F_2$ is freely 
generated by $\alpha_{12}$ and $\alpha_{21}$, we conclude
that $\PS_n$ is very large as well. 

Part \eqref{cr2} now follows from part \eqref{cr1} and 
Proposition \ref{prop:cork}.
\end{proof}

As we shall see in Theorem \ref{thm:fhp}, the analogue of 
Proposition \ref{prop:bcg} does not hold for the ambient group, 
$G=\IA_n$, as soon as $n\ge 5$. 

\subsection{The Johnson homomorphisms}
\label{subsec:jfn}

We now record several morphisms between the various graded 
Lie algebras under consideration.  To avoid trivialities, we will 
henceforth assume that $n\ge 3$.

\begin{itemize}
\item
Let $J\colon \gr_F (\IA_n) \to \Der (\L_n)$ be the Johnson 
homomorphism.  In view of Theorems \ref{thm:outer johnson} 
and \ref{thm:free lie}, we also have an outer 
Johnson homomorphism,   
$\widetilde{J}\colon \gr_{\wF} (\OA_n) \to \widetilde{\Der} (\L_n)$.  

\item
Let $\kappa\colon K_n \to \IA_n$ be the inclusion map, 
given by the chain of subgroups $K_n< \PS_{n}^{+} 
< \PS_{n} < \IA_n$, and let 
$\gr_{\Gamma}(\kappa)\colon \gr_{\Gamma} (K_n) 
\to \gr_{\Gamma} (\IA_n)$ be the induced morphism.  

\item
Given an element $y\in \L^s_n$, let 
$\ev_y \colon \Der (\L_n) \to \L_n$ be the (degree $s$)
evaluation map, given by $\ev_y(\delta)=\delta(y)$. 
In particular, we have $\Z$-linear maps
$\ev_i:=\ev_{\bar{x}_i}\colon \Der^{*} (\L_n) \to \L^{*+1}_n$,
between the respective graded, torsion-free abelian groups.
\end{itemize}

Inserting these morphisms, as well as the $\iota$-maps 
from \eqref{eq:iota phi} in diagram \eqref{eq:jcd}, we obtain the  
following commuting diagram:
\begin{equation}
\label{eq:jpsi}
\xymatrixcolsep{7pt}
\xymatrixrowsep{20pt}
\xymatrix{
&&&& \gr_{\Gamma} (F_n)\cong \L_n 
\ar@{_{(}->}_{\overline{\Ad}}[ld] \ar@{^{(}->}^{\ad}[rd]
\\
&\gr_{\Gamma} (\IA_n) \ar^{\iota_F}[rr] \ar@{->>}^{\gr_{\Gamma}(\pi)}[dd]
&&\gr_{F} (\IA_n) \ar^{J}[rr]  \ar@{->>}^(.46){\overline{\pi}}[dd]
&& \Der (\L_n) \ar@{->>}^{q}[dd]  \ar^(.61){\ev_{i}}[rr] 
&& \L_n
\\
\L_{n-1}\cong \gr_{\Gamma} (K_n) \quad
\ar^{\gr_{\Gamma}(\kappa)}[ur] 
\ar^{\gr_{\Gamma}(\pi\circ \kappa)}[dr]   
\ar@{-->}@/_/^(.68){\psi}[urrrrr]
\ar@{-->}@/^/^(.68){\tilde\psi}[drrrrr]
\\
&\gr_{\Gamma} (\OA_n) \ar^{\iota_{\wF}}[rr] 
&&\gr_{\wF} (\OA_n) \ar^{\widetilde{J}}[rr]  
&& \widetilde{\Der} (\L_n)
}
\end{equation}

Consider the composite $\psi=J\circ \iota_F\circ \gr_{\Gamma}(\kappa)$, 
marked by the top dotted arrow in the diagram.  We need two 
technical lemmas regarding this morphism.

\begin{lemma}[\cite{CHP}, Proposition 6.2]
\label{lem:ev psi} 
For all $n$ and $s$, the restriction of $\ev_i \circ \psi$ to 
$\L_{n-1}^s$ equals
\[
\begin{cases}
0 & \text {if\, $1\le i \le n-1$},\\[2pt]
(-1)^s \ad_{\bar{x}_n} & \text{if\, $i$}.
\end{cases}
\]
\end{lemma}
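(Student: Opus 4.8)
The plan is to compute $\ev_i \circ \psi$ by unwinding the definition of $\psi = J\circ \iota_F\circ \gr_\Gamma(\kappa)$ on the explicit generators $\alpha_{nj}$ of $K_n$, and then to extend the computation from degree $1$ to all degrees $s$ by using the fact that all three maps in the composite are morphisms of graded Lie algebras together with the derivation property of the elements of $\Der(\L_n)$. First I would fix the generators: $K_n$ is the free group on $\alpha_{n1},\dots,\alpha_{n,n-1}$, where $\alpha_{nj}$ sends $x_n\mapsto x_j x_n x_j^{-1}$ and fixes the other generators. Since $\gr_\Gamma(F_{n-1})\cong\L_{n-1}$ is generated in degree $1$ and $\gr_\Gamma$, $\iota_F$, $J$ are all Lie algebra morphisms, it suffices to identify $\psi$ on the degree-$1$ classes $\overline{\alpha_{nj}}\in \gr^1_\Gamma(K_n)$ and then propagate via brackets; the degree-$s$ statement will then follow because a bracket of derivations acts on $\L_n$ by the commutator, and $\ev_i$ interacts with this in a controlled way.

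The key computation is in degree $1$: by the definition \eqref{eq:jdef} of the Johnson homomorphism, $\psi(\overline{\alpha_{nj}}) = J(\overline{\alpha_{nj}}) \in \Der^1(\L_n)$ is the derivation $\delta_{nj}$ determined by $\delta_{nj}(\bar x_k) = \overline{\alpha_{nj}(x_k)\cdot x_k^{-1}}$. For $k\ne n$ this is $0$, while for $k=n$ we get $\overline{x_j x_n x_j^{-1} x_n^{-1}} = \overline{(x_j,x_n)} = [\bar x_j,\bar x_n]$. Hence $\ev_i(\psi(\overline{\alpha_{nj}})) = \delta_{nj}(\bar x_i)$ equals $0$ for $1\le i\le n-1$ and equals $[\bar x_j,\bar x_n] = \ad_{\bar x_j}(\bar x_n) = -\ad_{\bar x_n}(\bar x_j)$ for $i=n$; this matches the claimed formula with the sign $(-1)^s$ for $s=1$ (noting $[\bar x_j,\bar x_n]=-[\bar x_n,\bar x_j]=-\ad_{\bar x_n}(\bar x_j)$). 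The next step is the inductive propagation: a general element of $\L_{n-1}^s$ is a linear combination of iterated brackets of the $\overline{\alpha_{nj}}$, so by bilinearity it is enough to handle $w = [\overline{\alpha_{n j}}, w']$ with $w'\in\L_{n-1}^{s-1}$. Then $\psi(w) = [\delta_{nj}, \psi(w')]$ in $\Der(\L_n)$, so $\ev_i(\psi(w)) = [\delta_{nj},\psi(w')](\bar x_i) = \delta_{nj}(\psi(w')(\bar x_i)) - \psi(w')(\delta_{nj}(\bar x_i))$. For $1\le i\le n-1$ the inductive hypothesis gives $\psi(w')(\bar x_i)=0$ and $\delta_{nj}(\bar x_i)=0$, so $\ev_i(\psi(w))=0$; for $i=n$ one gets $\delta_{nj}(\psi(w')(\bar x_n)) - \psi(w')([\bar x_j,\bar x_n])$, and using the inductive formula $\psi(w')(\bar x_n) = (-1)^{s-1}\ad_{\bar x_n}(w')$ together with the derivation property of $\psi(w')$ on the bracket $[\bar x_j,\bar x_n]$, everything collapses (the term $\psi(w')([\bar x_j,\bar x_n])$ splits as $[\psi(w')(\bar x_j),\bar x_n]+[\bar x_j,\psi(w')(\bar x_n)]$, and $\psi(w')(\bar x_j)=0$ since $1\le j\le n-1$) to $(-1)^s\ad_{\bar x_n}(w)$.

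The main obstacle I anticipate is purely bookkeeping: getting the sign $(-1)^s$ to come out right through the induction, since each bracket-unwinding introduces one swap of $\ad_{\bar x_n}$ past an $\ad$-type operator and one must be careful about whether $\ev_n\circ\psi$ naturally produces $\ad_{\bar x_n}(w)$ or $[w,\bar x_n]=-\ad_{\bar x_n}(w)$. A clean way to sidestep sign confusion is to phrase the inductive claim as an identity of maps, namely that $\ev_n\circ\psi|_{\L^s_{n-1}} = (-1)^s\ad_{\bar x_n}$ and $\ev_i\circ\psi|_{\L^s_{n-1}}=0$ for $i<n$ \emph{simultaneously}, so that the $i<n$ vanishing can be fed into the $i=n$ step. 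One should also double-check the base case against the convention in \eqref{eq:jdef} for which of $\alpha(x)x^{-1}$ versus $x^{-1}\alpha(x)$ is used, as this fixes the overall sign; beyond that, the argument is a routine induction on bracket length using only the derivation rule and the degree-$1$ values computed above, so no deep input is needed — indeed this is exactly Proposition 6.2 of \cite{CHP}, and the proof here is a self-contained repackaging of that computation in the present notation.
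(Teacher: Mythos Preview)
The paper does not actually supply a proof of this lemma; it is quoted from \cite{CHP}, with only a one-line remark explaining why the sign $(-1)^s$ appears (namely, the opposite commutator convention in \cite{CHP}). So there is nothing to compare against in the paper itself.

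Your argument is correct and is essentially the natural reconstruction of the computation. The base case is exactly as you say: $\delta_{nj}:=J(\overline{\alpha_{nj}})$ kills $\bar x_1,\dots,\bar x_{n-1}$ and sends $\bar x_n$ to $[\bar x_j,\bar x_n]=-\ad_{\bar x_n}(\bar x_j)$, matching $(-1)^1\ad_{\bar x_n}$ under the identification $\overline{\alpha_{nj}}\leftrightarrow \bar x_j$. For the inductive step, two small points deserve to be made explicit. First, the identification $\gr_\Gamma(K_n)\cong\L_{n-1}$ sending $\overline{\alpha_{nj}}\mapsto \bar x_j$ is what allows $\ad_{\bar x_n}$ to be read as a map $\L_{n-1}^s\to\L_n^{s+1}$ via the inclusion $\L_{n-1}\subset\L_n$; this is implicit in your writeup but should be stated. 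Second, in the $i=n$ step you also need $\delta_{nj}(w')=0$ when $w'\in\L_{n-1}\subset\L_n$, which holds because $\delta_{nj}$ vanishes on all of $\bar x_1,\dots,\bar x_{n-1}$; once you record that, the Jacobi identity gives
\[
[[\bar x_j,\bar x_n],w']-[\bar x_j,[\bar x_n,w']]=-[\bar x_n,[\bar x_j,w']],
\]
and the sign $(-1)^s$ drops out exactly as claimed. So the proposal stands as a complete, self-contained proof.
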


The sign $(-1)^s$ appears in the above because the authors of 
\cite{CHP} work with the group commutator $x^{-1}y^{-1}xy$. 

\begin{lemma}
\label{lem:im j} 
$\im(\psi) \cap \im (\ad) =\{0\}$.
\end{lemma}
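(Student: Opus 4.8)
The plan is to use the two evaluation maps $\ev_i \colon \Der^*(\L_n) \to \L^{*+1}_n$ applied after $J$, leveraging the concrete formula for $\ev_i\circ\psi$ from Lemma~\ref{lem:ev psi} together with the well-understood behavior of inner derivations under evaluation. Concretely, I would take an arbitrary element $u$ in $\im(\psi)\cap\im(\ad)$ and show $u=0$ by extracting enough information from evaluating $u$ at the generators $\bar{x}_1,\dots,\bar{x}_n$ of $\L_n$. Since $u\in\im(\psi)$, write $u=\psi(w)$ for some $w\in\L_{n-1}^s$ (working in a fixed degree $s$, which suffices as everything is graded). Since $u\in\im(\ad)$, write $u=\ad_y$ for some $y\in\L_n^s$.

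First I would compute $\ev_i(u)$ for $1\le i\le n-1$. On one hand, by Lemma~\ref{lem:ev psi}, $\ev_i(\psi(w))=0$ for $i\le n-1$. On the other hand, $\ev_i(\ad_y)=\ad_y(\bar{x}_i)=[y,\bar{x}_i]$. Hence $[y,\bar{x}_i]=0$ in $\L_n$ for all $i=1,\dots,n-1$. Next I would evaluate at $i=n$: by Lemma~\ref{lem:ev psi}, $\ev_n(\psi(w))=(-1)^s\ad_{\bar{x}_n}(w)=(-1)^s[\bar{x}_n,w]$, while $\ev_n(\ad_y)=[y,\bar{x}_n]$. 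So we also get $[y,\bar{x}_n]=(-1)^s[\bar{x}_n,w]=(-1)^{s+1}[w,\bar{x}_n]$. The key point from the first batch of equations is that $y$ centralizes the sub-Lie-algebra generated by $\bar{x}_1,\dots,\bar{x}_{n-1}$, which is the free Lie algebra $\L_{n-1}=\Lie(\bar{x}_1,\dots,\bar{x}_{n-1})$.

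**The main obstacle** is the algebraic fact that an element of $\L_n = \Lie(\bar{x}_1,\dots,\bar{x}_n)$ which commutes with each of $\bar{x}_1,\dots,\bar{x}_{n-1}$ (equivalently, lies in the centralizer of the free subalgebra $\L_{n-1}$) must in fact be a scalar multiple of $\bar{x}_n$ when it has positive degree, and more precisely must be zero once one also accounts for the remaining constraint. I would handle this via the standard structure theory of free Lie algebras: centralizers in a free Lie algebra are themselves free of rank one, generated by a primitive element; since $\bar{x}_1$ is primitive, its centralizer is exactly $\C\bar{x}_1$ (or $\Z\bar{x}_1$ over the integers), so $y\in\C\bar{x}_1$. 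Repeating with $\bar{x}_2$ forces $y\in\C\bar{x}_1\cap\C\bar{x}_2=\{0\}$ as soon as $n-1\ge 2$, i.e. $n\ge 3$, which is our standing assumption. Therefore $y=0$, hence $u=\ad_y=0$, proving $\im(\psi)\cap\im(\ad)=\{0\}$.

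Alternatively, to avoid invoking the centralizer theorem, I would argue more elementarily: the conditions $[y,\bar{x}_i]=0$ for $i=1,\dots,n-1$ together say $\ad_{\bar x_i}(y)=0$, and since $\Der(\L_n)$ acts faithfully and the inner derivations $\ad_{\bar x_i}$ have no common kernel beyond the obvious one, one deduces $y\in Z(\L_n)\cap(\text{something})$; but $Z(\L_n)=0$ for $n\ge 2$ by Theorem~\ref{thm:free lie}\eqref{fl4}, and a short induction on degree using the Hall basis (a positive-degree bracket $[y,\bar x_1]$ vanishing forces $y$ to have no component involving letters other than $\bar x_1$ in a suitable normal form, etc.) pins down $y=c\,\bar x_n$, after which $[y,\bar x_1]=c[\bar x_n,\bar x_1]\ne 0$ unless $c=0$. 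Either route closes the proof; I expect the centralizer-of-free-Lie-algebra approach to be cleanest to write, with the only real content being the citation of that classical fact.
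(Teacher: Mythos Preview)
Your proposal is correct and follows essentially the same route as the paper: evaluate at $\bar{x}_i$ for $1\le i\le n-1$, use Lemma~\ref{lem:ev psi} to get $[y,\bar{x}_i]=0$, and then invoke the fact that in a free Lie algebra two commuting elements are linearly dependent (the paper cites \cite{MKS} for this), which forces $y=0$ once $n\ge 3$. Your detour through $\ev_n$ is unnecessary but harmless, and the ``alternative'' sketch you offer at the end is vaguer than your main argument and not needed.
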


\begin{proof}
It is enough to consider homogeneous elements of a fixed 
degree $s\ge 1$.  So suppose $\psi (\bar{w})=\ad_{\bar{u}}$, 
for some $\bar{w}\in \L^s_{n-1}$ and $\bar{u}\in \L^s_{n}$. 
By Lemma \ref{lem:ev psi}, we have 
$\ev_{i} (\psi (\bar{w}))=0$, for each $1\le i\le n-1$. 
Therefore, $0=\ev_{i} (\ad_{\bar{u}})=[\bar{u}, \bar{x}_i]$. 
Since the Lie algebra $\L_n$ is free, $\bar{u}$ and $\bar{x}_i$ 
must be linearly dependent over $\Z$,  for each $1\le i\le n-1$ 
(cf.~\cite{MKS}). 
Since $n\ge 3$, this forces $\bar{u}=0$, and we are done. 
\end{proof}

\subsection{Associated graded Lie algebra and Alexander invariant}
\label{subsec:gr ia}

We are now in a position to prove the main result of this section. 

\begin{theorem}
\label{thm:gr ia} 
For each $n\ge 3$, the $\Q$-vector spaces $\gr_{\Gamma}(\IA_n) \otimes \Q$ 
and $\gr_{\Gamma}(\OA_n) \otimes \Q$ are infinite-dimensional.
\end{theorem}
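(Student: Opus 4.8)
The plan is to exhibit a copy of the free Lie algebra $\gr_{\Gamma}(K_n)\cong\L_{n-1}$ inside each of $\gr_{\Gamma}(\IA_n)$ and $\gr_{\Gamma}(\OA_n)$, via the morphisms $\gr_{\Gamma}(\kappa)$ and $\gr_{\Gamma}(\pi\circ\kappa)$ from diagram~\eqref{eq:jpsi}. Since $K_n\cong F_{n-1}$ with $n-1\ge2$, Theorem~\ref{thm:free lie}\eqref{fl4} gives $\gr^s_{\Gamma}(K_n)\ne0$ for all $s\ge1$, so $\gr_{\Gamma}(K_n)\otimes\Q$ is infinite-dimensional; and since $\Q$ is flat over $\Z$, an injection of abelian groups $A\inj B$ induces an injection $A\otimes\Q\inj B\otimes\Q$, so it suffices to prove that the two morphisms above are injective.

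First I would show that $\psi=J\circ\iota_F\circ\gr_{\Gamma}(\kappa)$ is injective. Fixing a degree $s$ and composing with the evaluation map $\ev_n$, Lemma~\ref{lem:ev psi} identifies $\ev_n\circ\psi$ on $\L^s_{n-1}$ with $(-1)^s\ad_{\bar{x}_n}$, so it is enough to check that $\ad_{\bar{x}_n}$ is injective on $\L_{n-1}$. But the centralizer of the free generator $\bar{x}_n$ in the free Lie algebra $\L_n$ is exactly $\Z\bar{x}_n$ --- a nonzero element commuting with $\bar{x}_n$ would span with it an abelian, hence (by Shirshov--Witt) at most $1$-dimensional, subalgebra --- and $\L_{n-1}\cap\Z\bar{x}_n=0$. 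Hence $\psi$ is injective, and since $J$ is a monomorphism (Theorem~\ref{thm:johnson-hom}), so is $\iota_F\circ\gr_{\Gamma}(\kappa)$, and therefore so is $\gr_{\Gamma}(\kappa)$. This proves the assertion for $\IA_n$.

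For $\OA_n$, I would chase diagram~\eqref{eq:jpsi}, together with the commuting squares~\eqref{eq:iotapi cd} and~\eqref{eq:jcd}, to identify the outer composite $\tilde\psi=\widetilde{J}\circ\iota_{\wF}\circ\gr_{\Gamma}(\pi\circ\kappa)$ with $q\circ\psi$, where $q\colon\Der(\L_n)\surj\widetilde{\Der}(\L_n)$ is the canonical projection with kernel $\im(\ad)$ --- this uses $\gr_{\Gamma}(\pi\circ\kappa)=\gr_{\Gamma}(\pi)\circ\gr_{\Gamma}(\kappa)$ and the relations $\iota_{\wF}\circ\gr_{\Gamma}(\pi)=\bar{\pi}\circ\iota_F$ and $q\circ J=\widetilde{J}\circ\bar{\pi}$. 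By Lemma~\ref{lem:im j} we have $\im(\psi)\cap\im(\ad)=\{0\}$, so $q$ is injective on $\im(\psi)$; together with the injectivity of $\psi$ this makes $\tilde\psi$ injective. As before, it follows that $\gr_{\Gamma}(\pi\circ\kappa)$, hence $\gr_{\Gamma}(\pi)\circ\gr_{\Gamma}(\kappa)$, is injective, so $\gr_{\Gamma}(\OA_n)$ too contains a copy of $\L_{n-1}$, and the proof is complete.

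The real content sits in the two preparatory lemmas: Lemma~\ref{lem:ev psi}, an explicit evaluation of the Johnson homomorphism on the generators of $K_n$, and Lemma~\ref{lem:im j}, which eliminates nonzero inner derivations from $\im(\psi)$ and is where freeness of $\L_n$ and the hypothesis $n\ge3$ genuinely enter. Granting those, the main obstacle is essentially the bookkeeping in diagram~\eqref{eq:jpsi}; the injectivity of $\ad_{\bar{x}_n}$ on $\L_{n-1}$ and the flatness step upgrading an integral embedding to infinite $\Q$-dimension are routine.
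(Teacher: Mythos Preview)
Your proof is correct and follows essentially the same route as the paper: both arguments hinge on Lemma~\ref{lem:ev psi} (to identify $\ev_n\circ\psi$ with $\pm\ad_{\bar{x}_n}$, injective on $\L_{n-1}$) and Lemma~\ref{lem:im j} (to pass from $\psi$ to $\tilde\psi=q\circ\psi$). The only cosmetic difference is that the paper phrases the conclusion as ``$\im(\psi)$ has infinite rank'' via the surjection $\ev_n\colon\im(\psi)\surj[\bar{x}_n,\L_{n-1}]\cong\L_{n-1}$, whereas you extract the slightly sharper statement that $\psi$ (hence $\gr_\Gamma(\kappa)$) is injective; either way the infinite-dimensionality follows immediately.
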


\begin{proof}
By Lemma \ref{lem:ev psi}, the evaluation map 
$\ev_n \colon \Der (\L_n)\to \L_n$ restricts to
a surjective, $\Z$-linear map 
\begin{equation}
\label{eq:evnl}
\xymatrix{\ev_n \colon \im (\psi) \ar@{->>}[r]& 
[\bar{x}_n, \L (\bar{x}_1, \dots, \bar{x}_{n-1})] \cong \L_{n-1}}.
\end{equation}
Since $n\ge 3$, Theorem \ref{thm:free lie} insures that 
$\dim_{\Q} \L_{n-1} \otimes \Q= \infty$. Hence, 
$\rank \im(\psi)=\infty$, and thus 
$\dim_{\Q} \gr_{\Gamma}(\IA_n) \otimes \Q=\infty$.  

Now let $\tilde{\psi}$ 
be the homomorphism indicated by the bottom dotted arrow 
in diagram \eqref{eq:jpsi}. In view of  Lemma \ref{lem:im j}, 
and the fact that $\ker(q)=\im(\ad)$, the map $q$ restricts 
to an isomorphism $q\colon \im(\psi) \isom  \im(\tilde{\psi})$. 
Hence, 
$\rank \im(\tilde{\psi})=\infty$, and thus 
$\dim_{\Q} \gr_{\Gamma}(\OA_n) \otimes \Q=\infty$.   
\end{proof}

\begin{corollary}
\label{cor:gr ia} 
Let $G$ be either $\IA_n$ or $\OA_n$, and assume 
$n\ge 3$.  Then the $\Q{G}_{\ab}$-module $B_{\Q}(G)$ does 
{\em not}\/ have trivial $G_{\ab}$-action.
\end{corollary}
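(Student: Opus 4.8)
The plan is to argue by contradiction, assembling two facts already proved in this section: the finiteness criterion of Proposition~\ref{prop:alex gr} and the infinite-dimensionality statement of Theorem~\ref{thm:gr ia}. So I would suppose that the $\Q{G}_{\ab}$-module $B_{\Q}(G)$ \emph{does} have trivial $G_{\ab}$-action, with $G$ equal to $\IA_n$ or $\OA_n$ and $n\ge 3$, and aim to derive a contradiction.

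First I would record that $G$ is finitely generated: $\IA_n$ is generated by the Magnus automorphisms \eqref{eq:ian gens}, and $\OA_n=\IA_n/\Inn(F_n)$ is a quotient of $\IA_n$. Since $\ch\Q=0$, Proposition~\ref{prop:alex gr} then applies (with $\k=\Q$) and yields $\gr^3_{\Gamma}(G)\otimes\Q=0$.

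Next I would propagate this single vanishing to all higher degrees. Because $\gr_{\Gamma}(G)$ is generated as a Lie algebra by its degree-$1$ piece $G_{\ab}$ (as recalled in \S\ref{subsec:grg}), one has $\gr^{s+1}_{\Gamma}(G)\otimes\Q=[\,G_{\ab}\otimes\Q,\ \gr^{s}_{\Gamma}(G)\otimes\Q\,]$ for every $s\ge1$; hence $\gr^3_{\Gamma}(G)\otimes\Q=0$ forces $\gr^{s}_{\Gamma}(G)\otimes\Q=0$ for all $s\ge3$. Consequently $\gr_{\Gamma}(G)\otimes\Q$ is concentrated in degrees $1$ and $2$, and therefore finite-dimensional, $G$ being finitely generated. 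This contradicts Theorem~\ref{thm:gr ia}, which asserts $\dim_{\Q}\gr_{\Gamma}(G)\otimes\Q=\infty$ for $n\ge3$; hence $B_{\Q}(G)$ cannot have trivial $G_{\ab}$-action.

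I do not expect a genuine obstacle here: the substantive input is Theorem~\ref{thm:gr ia} itself, which rests on the Johnson-homomorphism computations in Lemmas~\ref{lem:ev psi} and~\ref{lem:im j}, together with Proposition~\ref{prop:alex gr}. The only point requiring a word of justification is the passage from $\gr^3=0$ to $\gr^{\ge3}=0$, and that is immediate from degree-$1$ generation. (Alternatively, one could invoke the ``thus'' clause stated alongside Proposition~\ref{prop:alex gr} in the introduction directly, bypassing this step.)
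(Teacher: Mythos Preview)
Your proof is correct and follows the same argument as the paper's: assume triviality, invoke Proposition~\ref{prop:alex gr} to get $\gr_{\Gamma}^3(G)\otimes\Q=0$, propagate to all $q\ge 3$ via degree-$1$ generation, and contradict Theorem~\ref{thm:gr ia}. The only difference is that you make the finite-generation hypothesis of Proposition~\ref{prop:alex gr} explicit, which the paper leaves implicit.
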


\begin{proof}
By Proposition \ref{prop:alex gr}, triviality of $B_{\Q}(G)$ 
implies $\gr_{\G}^3 (G)\otimes \Q=0$. 
Therefore, $\gr_{\G}^q (G)\otimes \Q=0$ for all $q\ge 3$, 
since the associated graded Lie algebra is generated in 
degree one. In particular, 
$\dim_{\Q} \gr_{\G} (G)\otimes \Q <\infty$, contradicting 
Theorem \ref{thm:gr ia}.
\end{proof}

\section{Arithmetic group symmetry and the first resonance variety}
\label{sec:coho}

In this section we exploit the natural $\GL_n(\Z)$-action on the cohomology 
ring of $\OA_n$ to show that the first resonance variety of this group 
vanishes, in the range $n\ge 4$. 

\subsection{Structure of the abelianization}
\label{subsec:coho}

We apply the machinery developed in Sections \ref{sect:johnson}
and \ref{sect:outer johnson} to the group $G=F_n$.   Recall that 
in \S\ref{subsec:fn} we identified  the symmetry group $\A(F_n)$ with the 
general linear group $\GL_n(\Z)$.  The conjugation action of this group  
on the kernels $\IA_n$ and $\OA_n$ from the two exact rows in \eqref{eq:aut fn} 
induces $\GL_n (\Z)$ actions by graded Lie algebra automorphisms on 
both $\gr_{\G} (\IA_n)$ and $\gr_{\G} (\OA_n)$.  

Denote by $H$ the free abelian group $H_1 (F_n, \Z)=\Z^n$, 
viewed as a $\GL_n (\Z)$-module via the defining representation. 
Using Theorem \ref{thm:free lie}, we may identify the associated graded 
Lie algebra $\gr_{\G}(F_n)$ with the free Lie algebra $\L_n=\Lie(H)$.
It is now a standard exercise to identify 
$\Der^1 (\L_n)=H^* \otimes  (H\wedge H)$ and 
$\widetilde{\Der}^1 (\L_n)=H^* \otimes (H\wedge H)/H$. 
With these identifications at hand, the exact sequence 
\eqref{eq:outder} for $\g=\L_n$, in degree $1$, takes the form
\begin{equation}
\label{eq:outder fn}
\xymatrix{
0\ar[r] &H \ar^(.3){\ad}[r] &H^* \otimes  (H\wedge H) \ar[r] 
&H^* \otimes (H\wedge H)/H \ar[r] & 0
}.
\end{equation}

Let $\{ e_1, \dots, e_n\}$ be the standard $\Z$-basis of $H$, with
dual basis denoted $\{ e_1^*, \dots, e_n^*\}$.  By construction,
$\ad (e_i)= \sum_{j=1}^n e_j^* \otimes (e_i \wedge e_j)$, for all $i$.
It follows that the exact sequence \eqref{eq:outder} is split exact 
in the case $\g=\L_n$. In particular, all the terms in sequence 
\eqref{eq:outder fn} are (finitely generated) free abelian groups.

From sections \ref{subsec:jfilt} 
and \ref{subsec:out torelli}, we know that the symmetry group 
$\A(F_n)=\GL_n(\Z)$ naturally acts on the abelianizations 
$(\IA_n)_{\ab}=\gr_{\G}^1(\IA_n)$ and $(\OA_n)_{\ab}=\gr_{\G}^1(\OA_n)$. 
These groups, viewed as $\GL_n(\Z)$-rep\-resentation spaces, 
were computed by Andreadakis \cite{An} for $n=3$, 
and by F.~Cohen and J.~Pakianathan, B.~Farb, and 
N.~Kawazumi in general.  The next theorem summarizes 
those  results, essentially in the form given by Pettet in \cite{Pe}.

\begin{theorem}
\label{thm:pettet}
For each $n\ge 3$, the maps 
\begin{equation*}
\xymatrixrowsep{2pt}
\xymatrix{
J\circ \iota_F \colon (\IA_n)_{\ab} \ar[r] & H^* \otimes (H\wedge  H)
\\
\wJ \circ \iota_{\wF} \colon (\OA_n)_{\ab} \ar[r] & H^* \otimes (H\wedge H) /H
}
\end{equation*}
are $\GL_n(\Z)$-equivariant isomorphisms.
\end{theorem}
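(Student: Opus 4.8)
The plan is to prove the $\IA_n$-statement directly, deduce the $\OA_n$-statement by passing to a quotient, and dispatch equivariance at the end. For the $\IA_n$-statement, the first step is to evaluate $J\circ\iota_F$ on the Magnus generating set \eqref{eq:ian gens}. Since $\alpha_{ij}(x_i)\cdot x_i^{-1}=(x_j,x_i)\in\Gamma^2(F_n)$ and $\alpha_{ijk}(x_i)\cdot x_i^{-1}\equiv (x_j,x_k)\bmod\Gamma^3(F_n)$, while both automorphisms fix the remaining generators, formula \eqref{eq:jdef} shows that, under the identification $\Der^1(\L_n)=H^*\otimes(H\wedge H)$ recalled just before \eqref{eq:outder fn}, the map $J\circ\iota_F$ sends the class of $\alpha_{ij}$ to $e_i^*\otimes(e_j\wedge e_i)$ and the class of $\alpha_{ijk}$ to $e_i^*\otimes(e_j\wedge e_k)$. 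Restricting attention to the generators $\alpha_{ij}$ together with the $\alpha_{ijk}$ with $j<k$, these images run exactly once over a $\Z$-basis of $H^*\otimes(H\wedge H)$; in particular $J\circ\iota_F$ is surjective.

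For injectivity I would invoke the classical computation of $H_1(\IA_n,\Z)$ --- due to Andreadakis for $n=3$ and to Cohen--Pakianathan, Farb, and Kawazumi in general, as recorded in \cite{Pe} --- according to which $(\IA_n)_{\ab}$ is free abelian of rank $n\binom{n}{2}$, equal to the rank of $H^*\otimes(H\wedge H)$. A surjection of finitely generated free abelian groups of equal rank is an isomorphism, so $J\circ\iota_F$ is an isomorphism. Equivalently, by Theorem \ref{thm:johnson-hom} the map $J$ is injective, so this amounts to the equality $J^2_n=\IA_n'$ via Theorem \ref{thm:two filt}; but that equality is itself part of the same classical input, which is the one genuinely nontrivial ingredient of the whole argument.

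For the $\OA_n$-statement I would pass to quotients of the $\IA_n$-picture. Since $Z(\L_n)=0$ for $n\ge 2$ by Theorem \ref{thm:free lie}\eqref{fl4}, the degree-one part of diagram \eqref{eq:exact} in Theorem \ref{thm:piad} is a short exact sequence $0\to H\to(\IA_n)_{\ab}\to(\OA_n)_{\ab}\to 0$, while on the derivation side the degree-one instance of \eqref{eq:outder} is the sequence \eqref{eq:outder fn}, $0\to H\to H^*\otimes(H\wedge H)\to H^*\otimes(H\wedge H)/H\to 0$. By Lemma \ref{lem:barad}\eqref{bar2} and \eqref{bar3}, the composite of $\gr_\Gamma(\Ad)$ with $J\circ\iota_F$ equals $\ad$, so $J\circ\iota_F$ carries the distinguished copy of $H$ in $(\IA_n)_{\ab}$ isomorphically onto $\ad(H)$; hence, chasing \eqref{eq:jcd} together with \eqref{eq:exact}, $\wJ\circ\iota_{\wF}$ is precisely the map induced by the isomorphism $J\circ\iota_F$ on the two quotients, and is therefore itself an isomorphism.

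Finally, $\GL_n(\Z)$-equivariance of $J\circ\iota_F$ follows from Proposition \ref{prop:johnson equiv} for $J$ and the equivariance of $\iota_F$ recorded in \S\ref{subsec:jfilt}, together with the routine identification of the action \eqref{eq:der act} on $\Der^1(\L_n)=H^*\otimes(H\wedge H)$ with the natural $\GL_n(\Z)$-action on that tensor product; equivariance of $\wJ\circ\iota_{\wF}$ then follows from Theorem \ref{thm:outer johnson} and the equivariance of $\iota_{\wF}$, or simply by passing to the quotient. The main obstacle is the injectivity step: I see no short self-contained route to the rank of $(\IA_n)_{\ab}$, so one must either reproduce the Magnus-expansion bookkeeping bounding the independent relations among the generators \eqref{eq:ian gens}, or import that count from \cite{Pe}.
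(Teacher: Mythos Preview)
Your proposal is correct, and in fact goes well beyond what the paper does: the paper offers no proof of Theorem~\ref{thm:pettet} at all, but simply records it as a known result, attributing it to Andreadakis (for $n=3$) and to Cohen--Pakianathan, Farb, and Kawazumi in general, in the form given by Pettet~\cite{Pe}. Your write-up is an accurate reconstruction of how that result fits into the paper's framework: the surjectivity computation on Magnus generators is routine and correct, the passage from $\IA_n$ to $\OA_n$ via the diagrams \eqref{eq:exact}, \eqref{eq:iotapi cd}, and \eqref{eq:jcd} is exactly what the machinery of \S\S\ref{sect:outer torelli}--\ref{sect:outer johnson} is set up to deliver, and the equivariance is indeed a formal consequence of Proposition~\ref{prop:johnson equiv} and Theorem~\ref{thm:outer johnson}.

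You are also right to flag the injectivity step as the one place where genuine outside input is needed. The rank computation $\rank(\IA_n)_{\ab}=n\binom{n}{2}$ (equivalently, the equality $J_n^2=\IA_n'$, via Theorem~\ref{thm:two filt}) is precisely the content being cited from \cite{An,Pe}; there is no shortcut within the paper, and the paper does not pretend otherwise---indeed, Corollary~\ref{cor:jprime}\eqref{jo1} explicitly derives $J_n^2=\IA_n'$ \emph{from} Theorem~\ref{thm:pettet}, not the other way around. So your honest assessment of the obstacle matches the paper's own dependency structure.
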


As an application of this theorem, and of the general theory 
from \S\S\ref{sect:johnson}--\ref{sect:outer johnson}, 
we derive the following consequence.

\begin{corollary}
\label{cor:jprime}
For each $n\ge 3$, the following hold. 
\begin{enumerate}
\item \label{jo1}
$J_n^2 =\IA_n'$ and $\wJ_n^2 =\OA_n'$. 
\item \label{jo2}
We have an exact sequence
\begin{equation*}
\label{eq:iaseq}
\xymatrix{1\ar[r] & F_n' \ar^(.43){\Ad}[r] 
& \IA_n' \ar[r] & \OA_n' \ar[r] & 1},
\end{equation*}
with $\OA_n'$ acting on $(F_n')_{\ab}$ by $\bar{\alpha}\cdot \bar{x}= 
\overline{\alpha(x)}$, for all $\alpha\in \IA_n' $ and $x\in F_n'$. 

\end{enumerate}
\end{corollary}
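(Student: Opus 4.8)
The plan is to obtain Corollary~\ref{cor:jprime} as a direct application of Corollary~\ref{cor:outer johnson2} to the free group $G=F_n$, with $n\ge 3$. First I would verify the three hypotheses of that corollary. Residual nilpotence of $F_n$ is Theorem~\ref{thm:free lie}\eqref{fl1}; the identification $\gr_{\Gamma}(F_n)\cong \L_n$ together with the vanishing of its center is Theorem~\ref{thm:free lie}\eqref{fl2} and \eqref{fl4} (using $n\ge 2$). The remaining hypothesis---injectivity of $J\circ\iota_F\colon \gr^1_{\G}(\T_{F_n})\to \Der^1(\gr_{\Gamma}(F_n))$---is supplied by Theorem~\ref{thm:pettet}, which asserts that this map is in fact a ($\GL_n(\Z)$-equivariant) isomorphism, hence injective. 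Here I would also recall from \S\ref{subsec:fn} that $\T_{F_n}=\IA_n$, $\wT_{F_n}=\OA_n$, and $\A(F_n)=\GL_n(\Z)$.

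Granting the hypotheses, Corollary~\ref{cor:outer johnson2} immediately yields $F^2(\IA_n)=\IA_n'$, $\wF^2(\OA_n)=\OA_n'$, and the exact sequence~\eqref{eq:adprime} with $G=F_n$. To convert the first two equalities into the statement of part~\eqref{jo1}, I would note that $F^2(\Aut(F_n))\subseteq F^1(\Aut(F_n))=\IA_n$, so $F^2(\Aut(F_n))=F^2(\IA_n)$; since $J_n^2=F^2(\Aut(F_n))$ by definition, this gives $J_n^2=\IA_n'$. Applying $\pi$ and using $\wJ_n^2=\pi(J_n^2)$ together with $\wF^2(\Out(F_n))=\wF^2(\OA_n)$ yields $\wJ_n^2=\OA_n'$ in the same way.

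For part~\eqref{jo2}, the exact sequence is exactly \eqref{eq:adprime}. It remains only to identify the $\OA_n'$-action on $(F_n')_{\ab}=H_1(F_n',\Z)$. In the extension, $\IA_n'$ acts on the normal subgroup $\Ad(F_n')\cong F_n'$ by conjugation, and by the equivariance identity~\eqref{eq:adequiv}, conjugation by $\alpha\in\IA_n'$ corresponds under $\Ad$ to the map $x\mapsto \alpha(x)$ on $F_n'$. Passing to abelianizations, the subgroup $\Ad(F_n')$ acts by inner automorphisms of $F_n'$, which are trivial on $F_n'/F_n''$ since $(F_n',F_n')=F_n''$; hence the action descends to $\OA_n'=\IA_n'/\Ad(F_n')$ and is given by $\bar{\alpha}\cdot\bar{x}=\overline{\alpha(x)}$, as claimed. (Equivalently, this is the restriction to $\IA_n'$ of the $\Aut(F_n)$-action~\eqref{eq:ax} on $B(F_n)$, which descends because $\Ad(F_n')$ acts trivially.)

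Every step here is routine given the results of Sections~\ref{sect:johnson}--\ref{sect:outer johnson}; the only genuinely external input is Pettet's computation (Theorem~\ref{thm:pettet}), which furnishes the degree-one injectivity needed to run Corollary~\ref{cor:outer johnson2}, and the only place calling for mild care is the bookkeeping that matches $J_n^2$ with $F^2(\IA_n)$ (and $\wJ_n^2$ with $\wF^2(\OA_n)$) and the several occurrences of the ``$\alpha\mapsto\alpha(\cdot)$'' action.
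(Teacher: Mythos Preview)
Your proposal is correct and follows exactly the paper's approach: invoke Theorems~\ref{thm:free lie} and \ref{thm:pettet} to verify the hypotheses of Corollary~\ref{cor:outer johnson2} for $G=F_n$, and read off the conclusions. The paper's proof is two sentences (``the desired conclusions follow at once''); your additional bookkeeping matching $J_n^2$ with $F^2(\IA_n)$ and unpacking the $\OA_n'$-action via~\eqref{eq:adequiv} simply makes explicit what the paper leaves implicit.
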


\begin{proof}
Theorems \ref{thm:free lie} and \ref{thm:pettet} guarantee that 
all the hypotheses of Corollary 
\ref{cor:outer johnson2} are satisfied for the group $G=F_n$. 
The desired conclusions follow at once.
\end{proof}

Set $U:=H^* \otimes (H\wedge  H)$ and $L:=H^* \otimes (H\wedge H) /H$. 
Plainly, both $\GL_n(\Z)$-representations, $U\otimes \C$ and $L\otimes \C$, 
extend to rational representations of $\GL_n (\C)$. 
As shown by Pettet in \cite{Pe}, the $\GL_n (\C)$-representation 
$L\otimes \C$ is irreducible, while $U\otimes \C$ is not. 
Because of this, 
we will focus our attention for the rest of this section on the group $\OA_n$. 

\subsection{Cohomology and $\sl_n (\C)$-representation spaces}
\label{subsec:rep}

Consider now the cup-product map in the low-degree 
cohomology of $\OA_n$,
\begin{equation}
\label{eq:cup}
\cup_{\OA_n} \colon H^1 (\OA_n, \C) \wedge H^1 (\OA_n, \C) \to H^2 (\OA_n, \C),
\end{equation} 
and set 
\begin{equation}
\label{eq:v and k}
V= H^1 (\OA_n, \C),  \qquad 
K=\ker (\cup_{\OA_n} ). 
\end{equation}
By Theorem \ref{thm:pettet}, we have $V= (L\otimes \C)^*= 
H_{\C} \otimes (H_{\C}^* \wedge H_{\C}^*) /H_{\C}^*$, where 
$H_{\C}=H_1(F_n,\C)=\C^n$.

It follows from the general setup discussed in \cite{DP} that 
the vector space $K\subset V\wedge V$ is $\SL_n(\C)$-invariant. 
Let $\sl_n (\C)$ be the Lie  algebra of $\SL_n(\C)$.  The work 
of Pettet \cite{Pe} determines explicitly the infinitesimal 
$\sl_n (\C)$-representations associated to $V$ and $K$.

To explain this result, let us first review some basic notions 
from the representation theory of $\sl_n (\C)$ and $\gl_n (\C)$.  
Following the setup in Fulton and Harris' book \cite{FH}, denote 
by $\{ t_1,\dots, t_n \}$ the dual coordinates of
the diagonal matrices from $\gl_n (\C)$, and set 
$\lambda_i =t_1+ \cdots+t_i$, for $i=1,\dots,n$.
Let $\dd_n$ be the standard diagonal Cartan subalgebra of $\sl_n (\C)$. 
Then
\begin{equation}
\label{eq:diagonal}
\dd_n^* =\C\text{-span}\, \{ t_1,\dots, t_n \}/ \C\cdot \lambda_n. 
\end{equation}

The strictly upper-triangular matrices in $\sl_n (\C)$ are 
denoted by $\sl_n^{+} (\C)$.  The corresponding set of positive 
roots is $\Phi_n^+= \{ t_i-t_j \mid 1\le i<j \le n \}$. 
Finally, the finite-dimensional irreducible representations of $\sl_n (\C)$ 
are parametrized by tuples $\mathbf{a}= (a_1,\dots, a_{n-1})\in \N^{n-1}$.   
To such a tuple $\mathbf{a}$, there corresponds an irreducible representation 
$V(\lambda)$, with highest weight $\lambda =\sum_{i<n} a_i \lambda_i$. 

\begin{theorem}[\cite{Pe}]
\label{thm:pet}
Fix $n\ge 4$, and set $\lambda= \lambda_1 +\lambda_{n-2}$ 
and $\mu= \lambda_1 +\lambda_{n-2} +\lambda_{n-1}$.  Then 
$V=V(\lambda)$ and $K=V(\mu)$, as $\sl_n (\C)$-modules.  
\end{theorem}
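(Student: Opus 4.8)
The plan is to compute both sides as $\sl_n(\C)$-modules by decomposing a few ``small'' representations into irreducibles, and then to use Pettet's $\SL_n(\C)$-equivariant description of $\cup_{\OA_n}$ (built on the identification of $(\OA_n)_{\ab}$ in Theorem~\ref{thm:pettet}) to cut out the kernel $K$. Throughout I would pass freely between $\GL_n(\C)$- and $\sl_n(\C)$-modules, using that over $\SL_n$ one has $H_{\C}=V(\lambda_1)$, $H_{\C}^*\cong\wedge^{n-1}H_{\C}=V(\lambda_{n-1})$, and $H_{\C}^*\wedge H_{\C}^*\cong\wedge^{n-2}H_{\C}=V(\lambda_{n-2})$.

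First I would settle $V$. As recalled before the statement, $V=(L\otimes\C)^*\cong\bigl(H_{\C}\otimes(H_{\C}^*\wedge H_{\C}^*)\bigr)/H_{\C}^*$, the quotient being by the split copy of $H_{\C}^*$ dual to $\ad$. As an $\sl_n(\C)$-module, $H_{\C}\otimes(H_{\C}^*\wedge H_{\C}^*)\cong V(\lambda_1)\otimes V(\lambda_{n-2})$, which by Pieri's rule equals the multiplicity-free sum $V(\lambda_1+\lambda_{n-2})\oplus V(\lambda_{n-1})$. The submodule $H_{\C}^*\cong V(\lambda_{n-1})$ being split off and irreducible, Schur's lemma identifies it with the $V(\lambda_{n-1})$-summand; hence $V\cong V(\lambda_1+\lambda_{n-2})=V(\lambda)$, as claimed.

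Next I would turn to $K=\ker(\cup_{\OA_n})\subseteq\wedge^2 V$. Since $K$ is $\SL_n(\C)$-invariant and $\SL_n(\C)$ is connected, $K$ is an $\sl_n(\C)$-submodule of $\wedge^2 V(\lambda)$, hence a direct sum of some of its irreducible constituents. So the first step is a Littlewood--Richardson computation (a plethysm $e_2[s_{(2,1,\dots,1)}]$) producing the decomposition of $\wedge^2 V(\lambda)$ into irreducibles, in which the partition $\mu=(3,2,\dots,2,1)$ attached to the weight $\lambda_1+\lambda_{n-2}+\lambda_{n-1}$ appears with multiplicity one. It then remains to prove $K=V(\mu)$. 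For $V(\mu)\subseteq K$, I would write down an explicit highest-weight vector of weight $\mu$ in $\wedge^2 V$ using the model $V\subset H_{\C}\otimes(H_{\C}^*\wedge H_{\C}^*)$ and check it is annihilated by $\cup_{\OA_n}$, either via Pettet's explicit equivariant formula for the cup product, or via the dual statement that $\cup_{\OA_n}^*$ is the comultiplication into $H_2(\OA_n,\C)$, which in this range is governed by the degree-$2$ bracket coming from the (outer) Johnson homomorphism. For the reverse inclusion $K\subseteq V(\mu)$, I would show that every other constituent of $\wedge^2 V$ survives $\cup_{\OA_n}$: either by a dimension count, comparing $\dim\wedge^2 V-\dim V(\mu)$ with Pettet's value for $\dim\im(\cup_{\OA_n})$, or by checking directly that $\cup_{\OA_n}$ is nonzero on a highest-weight vector of each such constituent.

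The Pieri/Littlewood--Richardson bookkeeping and the Schur's-lemma arguments are routine. The crux, and the only place where input beyond formal representation theory is needed, is the last paragraph: one must use Pettet's cocycle-level, $\SL_n(\C)$-equivariant description of $\cup_{\OA_n}$ to conclude that $\im(\cup_{\OA_n})$ is exactly the complement of the $V(\mu)$-isotypic component in $\wedge^2 V$. Granting this, the two displayed isomorphisms follow.
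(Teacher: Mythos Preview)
The paper does not supply its own proof of this theorem: it is quoted directly from Pettet \cite{Pe}, with no argument given beyond the citation. So there is no ``paper's proof'' to compare against.

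Your sketch is a reasonable outline of how such a result is established. The identification $V=V(\lambda)$ via Pieri is routine and correct (note that in the paper $V$ is realized as $\ker(\ad^*)\subset H_{\C}\otimes(H_{\C}^*\wedge H_{\C}^*)$ rather than as a quotient, but in the semisimple setting this is the same irreducible summand). For $K$, however, your own proposal makes explicit that the essential content---determining $\im(\cup_{\OA_n})$ inside $\wedge^2 V$---is precisely what you would import from Pettet, either via her equivariant cup-product formula or her dimension count. So your argument is, at bottom, the same appeal to \cite{Pe} that the paper makes, preceded by some representation-theoretic bookkeeping that repackages her computation in the language of highest weights. This is fine as exposition, but it is not an independent proof: the plethysm $\wedge^2 V(\lambda)$ has several irreducible constituents for $n\ge 4$, and singling out $V(\mu)$ as the kernel genuinely requires Pettet's cocycle-level analysis of the cup product on $H^*(\IA_n)$ and its passage to $\OA_n$, which you have not reproduced.
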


\subsection{A maximal vector for $V$}
\label{subsec:max vect}

We now find a highest weight vector $v_0$ for the $\sl_n(\C)$-repre\-sentation 
space $V=V(\lambda)$.  Let $\{ e_1,\dots, e_n \}$ be the standard 
basis of the vector space $H_{\C}= H_1(F_n, \C)= \C^n$, 
and denote by $\{ e_1^*,\dots, e_n^* \}$ the dual basis of $H_{\C}^*$. 
From \S\S\ref{subsec:coho}--\ref{subsec:rep}, we know that 
\begin{equation}
\label{eq:h1oa}
H_1(\OA_n, \C)= \coker ( \ad\colon  
H_{\C} \to H_{\C}^* \otimes H_{\C} \wedge H_{\C}),
\end{equation}
where $\ad (e_i)= \sum_{j=1}^n e_j^* \otimes e_i \wedge e_j$. 
It follows that 
\begin{equation}
\label{eq:v again}
V= \ker\, ( \ad^*\colon   H_{\C} \otimes H_{\C}^*\wedge H_{\C}^* \to H_{\C}^* ),
\end{equation}
where the linear map $\ad^*$ is given by
\begin{equation}
\label{eq:iota star}
\ad^*(e_i\otimes e_j^* \wedge e_k^*)=
\begin{cases}
-e_k^* & \text{if $i=j$},\\
0 & \text{if $\abs{\{i,j,k\}}=3$}.
\end{cases}
\end{equation}

We also know from Theorem \ref{thm:pet} that $V=V(\lambda)$ 
as $\sl_n(\C)$-modules, where $\lambda= t_1-t_{n-1}-t_n$. 
Note that, for each $x\in \sl_n(\C)$, we have
\begin{equation}
\label{eq:xcdot}
x\cdot (e_i\otimes e_j^* \wedge e_k^*) =x\cdot e_i\otimes e_j^* \wedge e_k^* -
e_i\otimes e_j^* \circ x \wedge e_k^* -e_i\otimes e_j^* \wedge e_k^* \circ x.
\end{equation}

For each pair of distinct integers $l,m\in [n]$, denote by $x_{lm}$ 
the endomorphism of $H_{\C}$ 
sending $e_m$ to $e_l$, and $e_p$ to $0$ for $p\ne m$. 
Then $\{x_{lm} \}_{1\le l<m\le n}$ is a $\C$-basis of $\sl_n^+(\C)$.  

\begin{lemma}
\label{lem:v0}
The element $v_0=e_1\otimes (e_{n-1}^*\wedge e_n^*)$ is a 
maximal vector for $V=V(\lambda)$. 
\end{lemma}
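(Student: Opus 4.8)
The plan is to check directly that $v_0$ is a nonzero vector of $V$ annihilated by the positive part $\sl_n^+(\C)$; in an irreducible highest-weight module such a vector is automatically a maximal (highest-weight) vector, so this suffices. Since Theorem~\ref{thm:pet} already identifies $V$ with the irreducible module $V(\lambda)$, there is nothing to prove beyond a handful of short computations, all of them powered by the explicit descriptions \eqref{eq:v again}, \eqref{eq:iota star}, and \eqref{eq:xcdot}, together with the fact that $\{x_{lm}\}_{1\le l<m\le n}$ is a $\C$-basis of $\sl_n^+(\C)$.

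First I would check $v_0\in V$. Because $n\ge 4$, the three indices $1,n-1,n$ are pairwise distinct, so the second case of \eqref{eq:iota star} applies to the summand $e_1\otimes e_{n-1}^*\wedge e_n^*$ and gives $\ad^*(v_0)=0$; hence $v_0\in\ker(\ad^*)=V$ by \eqref{eq:v again}, and clearly $v_0\ne 0$. For completeness I would also record the $\dd_n$-weight of $v_0$: for a diagonal element $h\in\dd_n$ with entries $h_1,\dots,h_n$ one has $h\cdot e_i=h_ie_i$ and, by the dual action, $h\cdot e_j^*=-h_je_j^*$, so the Leibniz rule applied across the three tensor and wedge factors of $v_0$ gives $h\cdot v_0=(h_1-h_{n-1}-h_n)\,v_0$. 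Thus $v_0$ has weight $t_1-t_{n-1}-t_n$, which is exactly $\lambda$ (modulo $\C\cdot\lambda_n$ this equals $\lambda_1+\lambda_{n-2}$, in agreement with Theorem~\ref{thm:pet}).

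The remaining --- and only mildly computational --- step is to show $x_{lm}\cdot v_0=0$ for each pair $1\le l<m\le n$. Expanding by \eqref{eq:xcdot}, the first term contains $x_{lm}\cdot e_1$, which vanishes since $l<m$ forces $m\ge 2$; the third term contains $e_n^*\circ x_{lm}$, which vanishes since $l<m\le n$ forces $l\le n-1$; and the second term contains $e_{n-1}^*\circ x_{lm}$, which is nonzero only when $l=n-1$, in which case necessarily $m=n$ and the term reduces to $-e_1\otimes e_n^*\wedge e_n^*=0$. Hence $x_{lm}\cdot v_0=0$ in all cases, and combined with the first step this shows $v_0$ is a maximal vector for $V=V(\lambda)$. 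I expect the only obstacle to be keeping the index bookkeeping in \eqref{eq:xcdot} straight; there is no conceptual difficulty, and this verification is precisely what the explicit presentation of $V$ in \eqref{eq:v again} and \eqref{eq:iota star} was set up to make routine.
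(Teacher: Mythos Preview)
Your proposal is correct and follows exactly the same approach as the paper: verify that $v_0\ne 0$, that $\ad^*(v_0)=0$, that $\sl_n^+(\C)\cdot v_0=0$, and that $\weight(v_0)=\lambda$. The paper's proof simply lists these three checks and declares them ``straightforward direct computation,'' whereas you actually carry out the computations in detail; your case analysis for $x_{lm}\cdot v_0$ is accurate and the weight identification $t_1-t_{n-1}-t_n=\lambda$ matches what the paper records just before the lemma.
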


\begin{proof}
Clearly, $v_0\ne 0$. We need to verify the following facts:
\begin{enumerate}
\item \label{i} 
$\ad^*(v_0)=0$.
\item \label{ii} 
$\sl_n^+(\C)\cdot v_0=0$.
\item \label{iii} 
$\weight(v_0)=\lambda$.
\end{enumerate}
All three statements are checked by straightforward direct computation.
\end{proof}

\subsection{Symmetry of resonance varieties}
\label{subsec:sym res}

We now recall a result from \cite{DP}, which will provide a 
key representation-theoretic tool for computing the resonance 
varieties $\RR (\OA_n)$, for $n\ge 4$.  

\begin{lemma}[\cite{DP}]
\label{lem:tor32}
Let $S$ be a semisimple, linear algebraic group over $\C$, with 
standard decomposition of its Lie algebra, $\ss= \ss^- +\dd + \ss^+$. 
Let $B\subseteq S$ be the associated Borel subgroup, with 
Lie algebra $\dd + \ss^+$. Let $V$ be an irreducible, rational 
$S$-representation, and consider a Zariski closed, $S$-invariant 
cone, $\RR \subseteq V$. If $\RR \ne \{ 0\}$, then 
\begin{enumerate}
\item \label{borel1}
$B$ has a fixed point in the projectivization $\PP(\RR) \subseteq \PP (V)$. 
\item \label{borel2} 
$\RR$ contains a maximal vector of the $\ss$-module $V$. 
\end{enumerate}
\end{lemma}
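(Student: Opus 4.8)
The plan is to derive both statements from the Borel fixed point theorem, with \eqref{borel2} following from \eqref{borel1} via the standard dictionary between Borel-fixed lines and maximal vectors. First I would form the projectivization $\PP(\RR)$, i.e.\ the image of $\RR\setminus\{0\}$ under the canonical map $V\setminus\{0\}\to\PP(V)$. Since $\RR$ is a Zariski closed cone, it is cut out by homogeneous polynomials, so $\PP(\RR)$ is a Zariski closed subvariety of $\PP(V)$, and in particular a complete (projective) variety; it is nonempty precisely because $\RR\ne\{0\}$. Because $V$ is a rational $S$-representation, $S$ acts morphically on $\PP(V)$, and the $S$-invariance of the cone $\RR$ makes $\PP(\RR)$ an $S$-stable --- hence $B$-stable --- closed subvariety.

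Next I would invoke the Borel fixed point theorem: the group $B$ is connected and solvable, so its morphic action on the nonempty complete variety $\PP(\RR)$ has a fixed point $[v_0]\in\PP(\RR)$, which is exactly assertion \eqref{borel1}. For \eqref{borel2}, I would unwind what it means for $[v_0]$ to be $B$-fixed: the subgroup $B$ stabilizes the line $\C v_0\subseteq V$, and passing to Lie algebras the subalgebra $\dd+\ss^+$ stabilizes $\C v_0$ as well. In particular $\dd\cdot v_0\subseteq\C v_0$, so $v_0$ is a weight vector for the Cartan subalgebra $\dd$. To see that $\ss^+\cdot v_0=0$, take a root vector $e\in\ss^+$; then $e\cdot v_0=c\,v_0$ for some $c\in\C$, and iterating gives $e^N\cdot v_0=c^N v_0$, while $e$ acts nilpotently on the finite-dimensional module $V$, so $e^N\cdot v_0=0$ for $N\gg 0$, whence $c=0$. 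As the positive root vectors span $\ss^+$, we conclude $\ss^+\cdot v_0=0$, so $v_0$ is a maximal vector of the $\ss$-module $V$; and $v_0\in\RR$ since $\RR$ is a cone and $[v_0]\in\PP(\RR)$.

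I do not expect a serious obstacle here: the argument is essentially an application of standard structure theory, and the only points that need a little care are the verification that $\PP(\RR)$ really is a complete variety (it is, being closed in $\PP(V)$) and the translation of a $B$-fixed line into the two defining properties of a maximal vector. Note that irreducibility of $V$ is not needed to produce a maximal vector inside $\RR$; it only enters afterwards, through the uniqueness up to scalar of the highest weight vector, to identify the $v_0$ found above with \emph{the} highest weight vector of $V$, which is the form in which the lemma is applied to $\RR(\OA_n)$.
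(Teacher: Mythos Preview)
Your proof is correct and follows essentially the same approach as the paper's: part \eqref{borel1} is Borel's fixed point theorem applied to the complete variety $\PP(\RR)$, and part \eqref{borel2} is the standard translation of a $B$-fixed line into a maximal vector. The paper's own proof is just a two-line citation (Borel's theorem for \eqref{borel1}, and \cite[Lemma 3.2]{DP} for \eqref{borel2}); you have simply spelled out the details. Your closing remark that irreducibility is only used afterwards, to identify the maximal vector with \emph{the} highest weight vector, is also on point.
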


\begin{proof}
The first assertion is a direct consequence of Borel's fixed point 
theorem \cite{Hu75}. The argument from \cite[Lemma 3.2]{DP} 
proves the second assertion.
\end{proof}

We will apply this general result to the algebraic group $S=\SL_n(\C)$, 
with Lie algebra $\ss=\sl_n(\C)$. 
Denote by $B\subseteq \SL_n (\C)$ the closed, connected, solvable 
subgroup consisting of all upper-triangular matrices from $\SL_n (\C)$. 
Then $B$ is the Borel subgroup containing the standard 
diagonal maximal torus, with Lie algebra $\dd_n +\sl_n^+(\C)$.

Let $V=V(\lambda)$ be the irreducible representation discussed 
previously.   Clearly, the co-restriction of the cup-product 
map, $\cup =\cup_{\OA_n}\colon V\wedge V \to V \wedge V/K$, 
is $S$-equivariant.  Thus, the resonance variety $\RR =\RR (\OA_n) \subseteq V$, 
as well as its projectivization, $\PP(\RR) \subseteq \PP (V)$,
inherit in a natural way an $S$-action.   Therefore, all the basic 
premises of Lemma \ref{lem:tor32} are met in our situation.

\subsection{Vanishing resonance}
\label{ss83}
Before proceeding, we need one more technical lemma. 
Recall that $v_0$ is a maximal  vector for the 
irreducible $\sl_n(\C)$-module $V=V(\lambda)$; 
in particular, $v_0\ne 0$. 
Let $K\subset V \wedge V$ be the kernel of the 
cup-product map $\cup=\cup_{\OA_n}$.  
By Theorem \ref{thm:pet}, we have that $K=V(\mu)$, where  
$\mu=\lambda -t_n$. Let $u_0\in K$ be a maximal vector 
for this irreducible $\sl_n(\C)$-module.

\begin{lemma}
\label{lem:lem3.5}
Suppose $v_0\in \RR (\OA_n)$. There is then an element $w\in V$ 
(of weight $\mu-\lambda=- t_n$) such that $u_0=v_0\wedge w \ne 0$. 
\end{lemma}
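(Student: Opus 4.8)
The plan is to deduce the statement from the definition of the resonance variety together with elementary highest-weight theory for $\sl_n(\C)$. First I would unravel the hypothesis: by the description \eqref{eq:rv} of $\RR(\OA_n)$, the assumption $v_0\in\RR(\OA_n)$ means there is a class $u\in V=H^1(\OA_n,\C)$, with $u\notin\C\cdot v_0$, such that $v_0\cup u=0$; equivalently, $v_0\wedge u$ lies in $K=\ker(\cup_{\OA_n})$. Since $v_0\ne 0$ and $u\notin\C\cdot v_0$, this element $v_0\wedge u\in K$ is nonzero. I would then package this by introducing the linear map $\phi\colon V\to V\wedge V$, $w\mapsto v_0\wedge w$, whose kernel is exactly the line $\C\cdot v_0$; the hypothesis now says precisely that $\im(\phi)\cap K\ne\{0\}$.

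The real content is the observation that $\im(\phi)$ is stable under the Borel subalgebra $\mathfrak{b}=\dd_n+\sl_n^+(\C)$. This is where Lemma \ref{lem:v0} enters: because $v_0$ is a \emph{maximal} vector, a one-line Leibniz-rule computation gives $x\cdot(v_0\wedge w)=v_0\wedge(x\cdot w)$ for $x\in\sl_n^+(\C)$, and $x\cdot(v_0\wedge w)=v_0\wedge(\lambda(x)\,w+x\cdot w)$ for $x\in\dd_n$; in both cases the result lies in $\im(\phi)$. Since $K$ is $\SL_n(\C)$-invariant, hence $\mathfrak{b}$-invariant, the intersection $\im(\phi)\cap K$ is a nonzero, finite-dimensional $\mathfrak{b}$-submodule of $V\wedge V$.

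Finally I would invoke Lie's theorem: the solvable Lie algebra $\mathfrak{b}$ has a common eigenvector $v$ in $\im(\phi)\cap K$; since $[\mathfrak{b},\mathfrak{b}]=\sl_n^+(\C)$, any such eigenvector is annihilated by $\sl_n^+(\C)$ and is a $\dd_n$-weight vector, i.e.\ a maximal vector of the $\sl_n(\C)$-module $V\wedge V$. But $v\in K=V(\mu)$, which is irreducible by Theorem \ref{thm:pet}, so $v$ must have weight $\mu$, and therefore spans the same line as $u_0$. Hence $u_0\in\im(\phi)$, that is, $u_0=v_0\wedge w$ for some $w\in V$, and $u_0\ne 0$ because it is a maximal vector. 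Comparing $\dd_n$-weights in $u_0=v_0\wedge w$ — the weight of $v_0$ is $\lambda$ and that of $u_0$ is $\mu$ — and decomposing $w$ into $\dd_n$-weight components, only the component of weight $\mu-\lambda=-t_n$ survives, so I may replace $w$ by that component. The main obstacle here is conceptual rather than computational: the key realization is that $\im(\phi)$ is Borel-stable \emph{precisely because} $v_0$ is a highest-weight vector (this would fail for a generic $v_0$); once that is in hand, the representation theory (Lie's theorem plus irreducibility of $K$) forces the conclusion, and the only delicate point left is the bookkeeping of weights at the very end.
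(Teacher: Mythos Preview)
Your argument is correct and follows essentially the same line as the paper's proof: both introduce the map $\phi=\ell_0\colon w\mapsto v_0\wedge w$, observe that $\im(\phi)$ is stable under $\sl_n^+(\C)$ precisely because $v_0$ is a maximal vector, and then extract from the nonzero intersection $\im(\phi)\cap K$ a vector annihilated by $\sl_n^+(\C)$, which by irreducibility of $K=V(\mu)$ must be a scalar multiple of $u_0$. The only difference is tactical: the paper first checks (via a height argument on weights) that $\sl_n^+(\C)$ acts by nilpotent operators on $K$ and then invokes Engel's theorem, whereas you invoke Lie's theorem for the Borel subalgebra directly, which yields a common eigenvector that is automatically a weight vector and is annihilated by $[\mathfrak{b},\mathfrak{b}]=\sl_n^+(\C)$; your route is slightly more streamlined but not genuinely different in substance.
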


\begin{proof}
Let $\ell_0\colon V\to V \wedge V$ denote left-multiplication 
by $v_0$ in the exterior algebra $\bigwedge V$. By definition 
of resonance, the vector $v_0$ belongs to $\RR(\OA_n)$ if and 
only if $\im (\ell_0)\cap K \ne 0$. 
An argument similar to the one from \cite[Lemma 3.5]{DP} yields the 
desired element $w\in V$. For the reader's convenience, we include 
the details of the proof.

The Lie algebra $\sl_n^+ (\C)$ decomposes into a direct sum 
of $1$-dimensional vector spaces, each one spanned by a matrix 
$x_{\alpha}$, with $\alpha$ running through the set of positive roots, 
$\Phi^{+}_n$. Let $K_{\nu}$ be a non-trivial weight 
space of $K=V(\mu)$, and let $\alpha_1,\dots ,\alpha_r\in \Phi^{+}_n$.  
Then 
$x_{\alpha_1}\cdots x_{\alpha_r}  (K_{\nu}) \subseteq  K_{\nu'}$, 
where $\nu'=\nu + \sum_{i=1}^r \alpha_i$.  If $\nu'\ne 0$, 
then, by maximality of $\mu$, we can write $\nu'=\mu-\beta$, 
where $\beta$ is a linear combination of elements in $\Phi_n^{+}$, 
with coefficients in $\Z_{>0}$, see \cite[Theorem 20.2(b)]{H72}. 
Choosing an integer $r$ so that $r\ge \height(\mu-\nu)$ guarantees 
that $K_{\nu'}=0$.  It follows that each element of $\sl_n^+ (\C)$ 
acts nilpotently on $K$.

From Lemma \ref{lem:v0}\eqref{ii}, we know that the 
Lie algebra $\sl_n^+ (\C)$ annihilates the vector $v_0$; 
hence, the linear map $\ell_0$ is $\sl_n^+ (\C)$-equivariant. 
Since $v_0$ belongs to the weight subspace $V_{\lambda}$,
it follows that 
$\ell_0 (V_{\lambda'})\subseteq (V\wedge V)_{\lambda +\lambda'}$,
for each weight subspace $V_{\lambda'}\subseteq V$. 
Applying Engel's theorem (cf.~\cite[Theorem 3.3]{H72}), 
we infer that the $\sl_n^+ (\C)$-module $\im (\ell_0)\cap K$ 
contains a non-zero vector annihilated by $\sl_n^+ (\C)$. 
Since this vector has weight $\mu$, it must be equal (up 
to a non-zero scalar) to the maximal vector $u_0$. 
Therefore, $u_0=\ell_0(w)$, for some $w\in V_{\mu -\lambda}$, 
and we are done.
\end{proof}

We are now ready to state and prove the main result of this section.

\begin{theorem}
\label{thm:res oan}
For all $n\ge 4$, 
\[
\RR(\OA_n)=\{0\}.
\]
\end{theorem}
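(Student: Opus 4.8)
The plan is to argue by contradiction, combining the Borel fixed-point input of Lemma~\ref{lem:tor32} with the weight computations set up in Lemmas~\ref{lem:v0} and~\ref{lem:lem3.5}. So suppose $\RR=\RR(\OA_n)\ne\{0\}$. As recalled in \S\ref{subsec:sym res}, $\RR$ is a Zariski-closed, $\SL_n(\C)$-invariant cone inside the \emph{irreducible} $\sl_n(\C)$-module $V=V(\lambda)$; hence Lemma~\ref{lem:tor32}\eqref{borel2} places a maximal vector of $V$ inside $\RR$. Since the highest-weight space of an irreducible module is one-dimensional, such a maximal vector is a nonzero scalar multiple of the vector $v_0=e_1\otimes e_{n-1}^*\wedge e_n^*$ produced in Lemma~\ref{lem:v0}, so $v_0\in\RR$. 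Feeding this into Lemma~\ref{lem:lem3.5} yields an element $w\in V$ of weight $\mu-\lambda=-t_n$ with $v_0\wedge w=u_0\ne 0$, where $u_0$ is the maximal vector of $K=V(\mu)$.

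The next step is to identify the weight space $V_{-t_n}$ explicitly, using the presentation $V=\ker\bigl(\ad^*\colon H_\C\otimes\bigwedge^2 H_\C^*\to H_\C^*\bigr)$ from \eqref{eq:v again}--\eqref{eq:iota star}. A short bookkeeping check over the diagonal trace-zero matrices shows that $t_i-t_j-t_k=-t_n$ (with $j<k$) forces $i=j$ and $k=n$, so the $-t_n$-weight space of $H_\C\otimes\bigwedge^2 H_\C^*$ is spanned by $e_i\otimes e_i^*\wedge e_n^*$ for $1\le i\le n-1$, each of which $\ad^*$ sends to $-e_n^*$. Hence $w=\sum_{i=1}^{n-1}c_i\,e_i\otimes e_i^*\wedge e_n^*$ for some scalars $c_i$ subject to $\sum_i c_i=0$.

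The contradiction then comes from maximality of $u_0$. Since $\sl_n^+(\C)$ annihilates both $u_0$ (a maximal vector of $K$) and $v_0$ (Lemma~\ref{lem:v0}\eqref{ii}), for every $x\in\sl_n^+(\C)$ we get $0=x\cdot(v_0\wedge w)=v_0\wedge(x\cdot w)$. Computing with $x=x_{lm}$ via \eqref{eq:xcdot} gives $x_{ln}\cdot w=0$ and, for $1\le l<m\le n-1$, $x_{lm}\cdot w=(c_m-c_l)\,e_l\otimes e_m^*\wedge e_n^*$. When $(l,m)\ne(1,n-1)$ the vectors $v_0$ and $e_l\otimes e_m^*\wedge e_n^*$ are distinct members of the standard basis of $H_\C\otimes\bigwedge^2 H_\C^*$ (both lying in $V$), so $v_0\wedge(e_l\otimes e_m^*\wedge e_n^*)\ne 0$ in $\bigwedge^2 V$, forcing $c_l=c_m$. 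For $n\ge 4$ the consecutive pairs $(1,2),(2,3),\dots,(n-2,n-1)$ are all distinct from $(1,n-1)$ and connect all the indices $1,\dots,n-1$, so $c_1=\dots=c_{n-1}$; combined with $\sum_i c_i=0$ this gives $w=0$, contradicting $u_0=v_0\wedge w\ne 0$. Therefore $\RR(\OA_n)=\{0\}$. (The argument visibly breaks for $n=3$, where $(1,n-1)=(1,2)$ is the only available pair and no relation survives, consistent with the $n\ge 4$ hypothesis.)

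The step I expect to be the main obstacle is the explicit bookkeeping: correctly describing $V_{-t_n}$ so that no spurious basis vectors are included, and verifying that $x_{lm}\cdot w$ has exactly the stated form, so that the only relation lost among the $c_i$ is the one attached to the excluded pair $(1,n-1)$ — which is precisely the feature that separates $n\ge 4$ from $n=3$.
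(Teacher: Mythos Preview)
Your proof is correct and follows essentially the same route as the paper's: contradiction via Lemma~\ref{lem:tor32} to place $v_0$ in $\RR$, then Lemma~\ref{lem:lem3.5} to produce $w\in V_{-t_n}$, the explicit description $w=\sum_{i<n}c_i\,e_i\otimes e_i^*\wedge e_n^*$ with $\sum c_i=0$, and finally the computation $x_{lm}\cdot w=(c_m-c_l)\,e_l\otimes e_m^*\wedge e_n^*$ to force all $c_i$ equal (hence zero) when $n\ge 4$. The only cosmetic difference is that the paper phrases the key constraint as $x\cdot w\in\C\cdot v_0$ rather than $v_0\wedge(x\cdot w)=0$, and you supply a bit more detail on the weight-space identification and on why the consecutive pairs suffice to connect all indices.
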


\begin{proof}
Suppose that $\RR(\OA_n) \ne \{ 0\}$.  Then, by  
Lemma \ref{lem:tor32}, we must have $v_0\in \RR(\OA_n)$. 
Thus, by Lemma \ref{lem:lem3.5}, there is an element $w\in V$ 
(of weight $- t_n$) such that $u_0=v_0\wedge w$. 
Since $\weight (w)=-t_n$, we have
\begin{equation}
\label{eq:w}
w=\sum_{i=1}^{n-1} c_i e_i  \otimes e_i^* \wedge e_n^*,
\end{equation}
for some $c_i\in \C$.  To finish the proof, it is enough to show 
that all coefficients $c_i$ vanish, which will contradict the fact 
that $u_0=v_0\wedge w\ne 0$.

Applying $\ad^*$ to both sides of \eqref{eq:w}, we get 
$0=-\big(\sum_{i<n} c_i\big) e_n^*$, and thus 
\begin{equation}
\label{eq:sum c}
\sum_{i<n} c_i = 0.
\end{equation}

Now, since $u_0$ is a maximal vector for $K=V(\mu)$, we 
must have $xu_0=0$, for every $x\in \sl_n^+(\C)$.  Since 
$u_0=v_0\wedge w$, we have $xu_0=xv_0\wedge w 
+ v_0 \wedge xw$. On the other hand, since  $v_0$ is a maximal 
vector for $V(\lambda)$, we also have $xv_0=0$. Hence,
$v_0\wedge x w=0$, and we conclude that
\begin{equation}
\label{eq:xw}
xw\in \C\cdot v_0.
\end{equation}

Now take $x=x_{jk}$, 
where $1\le j<k<n$.  A simple computation shows that
\begin{equation}
\label{eq:xjk}
x_{jk}\cdot (e_i\otimes e_i^* \wedge e_n^*) = 
\begin{cases}
-e_j\otimes e_k^* \wedge e_n^* &\quad\text{if $i=j$}, \\
e_j\otimes e_k^* \wedge e_n^* &\quad\text{if $i=k$}, \\
0 &\quad\text{otherwise}.
\end{cases}
\end{equation}
Therefore:
\begin{equation}
\label{eq:xjk w}
x_{jk} w=(c_k-c_j) e_j\otimes e_k^* \wedge e_n^*.
\end{equation}

Finally, pick a pair $(j,k)\ne (1,n-1)$. Using 
\eqref{eq:xw} and \eqref{eq:xjk w}, we get:
\begin{equation}
\label{eq:ckj}
c_k-c_j=0.
\end{equation}
Using the fact that $n\ge 4$, we obtain:
\begin{equation}
\label{eq:cn1}
c_1=c_2=\cdots =c_{n-1}.
\end{equation}
Equation \eqref{eq:sum c} now yields $c_i=0$, for all $i<n$, 
and we are done. 
\end{proof}

In view of Theorem \ref{prop:res bg}, the above result has 
some immediate consequences pertaining to the $I$-adic 
completion of the complexified Alexander invariant $B_{\C}(\OA_n)$, 
and the infinitesimal Alexander invariant $\B(\OA_n)$.

\begin{corollary}
\label{cor:boan}
For $n\ge 4$, the vector spaces $\widehat{B_{\C}(\OA_n)}$ 
and $\B(\OA_n)$ are finite-dimensional.
\end{corollary}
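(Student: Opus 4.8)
The plan is to read this corollary off directly from the resonance computation just completed, combined with the general machinery of Theorem \ref{prop:res bg}. The group $\OA_n$ is finitely generated: as recorded after diagram \eqref{eq:aut fn}, $\IA_n$ is finitely generated by Magnus' theorem, and $\OA_n=\IA_n/\Inn(F_n)$ is a quotient of it. Hence all the standing hypotheses of Theorem \ref{prop:res bg} are satisfied for $G=\OA_n$, and we are free to apply it.

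First I would invoke Theorem \ref{thm:res oan}, which gives $\RR(\OA_n)=\{0\}$ for $n\ge 4$; in particular $\RR(\OA_n)\subseteq\{0\}$. Then Theorem \ref{prop:res bg}\eqref{res1} yields at once $\dim_{\C}\widehat{B_{\C}(\OA_n)}<\infty$, while Theorem \ref{prop:res bg}\eqref{res2} yields $\dim_{\C}\B(\OA_n)<\infty$. That is the whole argument.

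The one point worth flagging explicitly is that no $1$-formality assumption on $\OA_n$ is needed: part \eqref{res1} of Theorem \ref{prop:res bg} deduces finiteness of the $I$-adic completion $\widehat{B_{\C}(\OA_n)}$ from the vanishing of resonance unconditionally, and only the converse implication there requires formality. (Whether $\OA_n$ is $1$-formal—and thus whether $B_{\C}(\OA_n)$ itself, rather than merely its completion, is finite-dimensional—is a separate and more delicate matter, not settled by this corollary; it will be addressed later when $\VV(\OA_n)$ is shown to be finite.) There is really no obstacle at this stage: all of the genuine difficulty sits upstream, in Theorem \ref{thm:res oan}, where the equality $\RR(\OA_n)=\{0\}$ is extracted from $\sl_n(\C)$-representation theory via the maximal-vector analysis of Lemmas \ref{lem:v0} and \ref{lem:lem3.5} together with Pettet's explicit description of the co-restriction of $\cup_{\OA_n}$. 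Once that input is granted, the corollary is purely formal.
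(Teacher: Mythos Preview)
Your proof is correct and matches the paper's approach exactly: the corollary is stated there as an immediate consequence of Theorem~\ref{thm:res oan} and Theorem~\ref{prop:res bg}, with no additional argument given. Your extra remark verifying finite generation of $\OA_n$ and your observation that $1$-formality is not needed are both apt and make the deduction more self-contained than the paper's one-line justification.
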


\section{Homological finiteness}
\label{sect:oian}

In this final section, we show that the first Betti numbers 
of $\OA_n'$ and $\IA_n'$ are finite, and derive some consequences. 

\subsection{The first Betti number of $\OA_n'$}
\label{subsec:b1oan}

We start with the outer automorphism group.  Before proceeding, 
we need to recall a general result from \cite{DP}. 
Let $S$ be a complex, simple linear algebraic group defined over $\Q$, 
with $\Q$-$\rank (S)\ge 1$, and let $D$ be an arithmetic subgroup 
of $S$.  

\begin{theorem}[\cite{DP}]
\label{thm:B}
Suppose $D$ acts on a free, finitely generated abelian  
group $L$, such that the $D$-action on $L\otimes \C$ extends to 
a rational, irreducible $S$-representation. Then, the associated $D$-action
on $\TT (L)$ is {\em geometrically irreducible}, i.e., the only 
$D$-invariant, Zariski closed subsets of $\TT (L)$
are either equal to $\TT (L)$, or finite.
\end{theorem}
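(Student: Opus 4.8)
The plan is to combine the Borel density theorem with the hypothesis that $L\otimes\C$ is an irreducible $S$-module. Since $S$ is $\C$-simple, defined over $\Q$, and $\Q$-$\rank(S)\ge 1$, its group of real points is a noncompact simple real Lie group, and an arithmetic subgroup $D$ is a lattice in it by the Borel--Harish-Chandra theorem; hence $D$ is Zariski dense in $S$, and so is every finite-index subgroup of $D$. This yields a transfer principle: whenever a finite-index subgroup of $D$ acts on a finite-dimensional complex vector space through the restriction of a rational $S$-representation, every $D$-invariant subspace is automatically $S$-invariant. Applying this to $W:=L\otimes\C$ and to its dual $W^{\vee}=T_{1}\TT(L)$ (both $S$-irreducible), the only $D$-invariant subspaces of $W$ or of $W^{\vee}$ are $0$ and the whole space. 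Since subtori of $\TT(L)$ correspond bijectively to saturated sublattices of the character lattice $\cong L$, and $D$-invariance of a subtorus translates into $D$-invariance of the complexified sublattice inside $W$, it follows that the only $D$-invariant subtori of $\TT(L)$ are $\{1\}$ and $\TT(L)$.

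Next I would reduce to a clean situation. Let $Z\subseteq\TT(L)$ be $D$-invariant, Zariski closed, and infinite; the goal is to prove $Z=\TT(L)$. Being infinite, $Z$ has an irreducible component $Z_{0}$ of positive dimension, and a finite-index subgroup $D_{1}\le D$ stabilizes $Z_{0}$; replacing $D$ by $D_{1}$ (still Zariski dense) and $Z$ by $Z_{0}$, we may assume $Z$ is irreducible, $D$-invariant, of dimension $d\ge 1$, and we must show $d=r:=\dim\TT(L)$. The key observation is that the identity $1\in\TT(L)$ is fixed by every automorphism of the torus, hence by $D$: if $1\in Z$, then the tangent cone $\TC_{1}(Z)\subseteq T_{1}\TT(L)=W^{\vee}$ is a $D$-invariant closed cone of dimension $d\ge 1$, so its linear span is a nonzero $D$-invariant subspace of $W^{\vee}$, hence all of $W^{\vee}$ by the transfer principle. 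In particular, if $Z$ is smooth at $1$ then $T_{1}Z=W^{\vee}$, forcing $d=r$ and $Z=\TT(L)$.

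It remains---and this is the main obstacle---to handle a $D$-invariant irreducible $Z$ that either avoids $1$ or is singular there, i.e. to bring the only available $D$-fixed point into play. One device is to pass to $\overline{ZZ^{-1}}$, which is $D$-invariant (since $g(ZZ^{-1})=(gZ)(gZ)^{-1}$), irreducible, contains $1$, and has dimension $\ge d$; iterating, the ascending chain $Z\subseteq\overline{ZZ^{-1}}\subseteq\overline{(ZZ^{-1})(ZZ^{-1})}\subseteq\cdots$ consists of irreducible closed subvarieties of nondecreasing, bounded dimension, so it stabilizes at a closed $D$-invariant subgroup of $\TT(L)$, which must equal $\TT(L)$ by the subtorus dichotomy. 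This shows that the subgroup generated by $Z$ is Zariski dense, but not yet that $Z$ itself is; equivalently, one must still exclude a proper positive-dimensional $D$-invariant $Z$. The route I would pursue is a tangent-distribution argument: using the group law to translate tangent spaces to the identity produces a $D$-equivariant Gauss map $\gamma\colon Z^{\mathrm{sm}}\to\mathrm{Gr}(d,W^{\vee})$, where $D$ acts on the Grassmannian through the $S$-module $W^{\vee}$; the Zariski closure of $\mathrm{im}(\gamma)$ is then a $D$-invariant, hence $S$-invariant, closed subvariety of $\mathrm{Gr}(d,W^{\vee})$ of dimension $\le d$, and I would argue---using that a proper $S$-invariant closed subvariety of a projective $S$-variety contains a closed orbit, and therefore a Borel-fixed point (Borel's fixed point theorem, as invoked in Lemma~\ref{lem:tor32}), together with the restrictions that $S$-irreducibility of $W^{\vee}$ and the dimension bound impose on $d$-dimensional $B$-submodules---that $\gamma$ must be constant. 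Then $Z$ is a coset of (the closure of) the connected subgroup integrating the corresponding subspace $\Pi\subseteq W^{\vee}$; $D$-invariance of $Z$ makes $\Pi$ a $D$-invariant subspace of $W^{\vee}$, so $\Pi=0$ (impossible, as $d\ge 1$) or $\Pi=W^{\vee}$, whence $Z=\TT(L)$. The delicate point throughout---and where the representation theory, rather than mere Zariski density, must do the work---is exactly this exclusion of a positive-dimensional Gauss image; an alternative worth spelling out is the orbit version, taking $\rho\in Z$ of infinite order, proving $\overline{\langle D\rho\rangle}=\TT(L)$ via the subtorus dichotomy, and then upgrading this to $\overline{D\rho}=\TT(L)$.
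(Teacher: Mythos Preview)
The paper does not give its own proof of this statement: Theorem~\ref{thm:B} is quoted verbatim from \cite{DP} and used as a black box, so there is nothing in the present paper to compare your argument against line by line.

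As for your proposal, the opening is sound: Borel density makes any finite-index subgroup of $D$ Zariski dense in $S$, so $D$-invariant linear subspaces of $W$ or $W^{\vee}$ are $S$-invariant, and the subtorus dichotomy follows. The reduction to an irreducible positive-dimensional component is also fine. The genuine gap is the step from ``no proper $D$-invariant subtorus'' to ``no proper $D$-invariant positive-dimensional closed subvariety.'' Neither of your two suggested bridges is complete. In the Gauss-map route, the ingredients you list do not force $\gamma$ to be constant: for every $1\le d\le r-1$ the Grassmannian $\mathrm{Gr}(d,W^{\vee})$ contains $B$-fixed points (any $d$-dimensional $B$-stable subspace in the weight flag), so finding a Borel-fixed point in $\overline{\operatorname{im}\gamma}$ gives no contradiction, and nothing you wrote excludes a positive-dimensional $S$-invariant image. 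In the orbit route, $\overline{\langle D\rho\rangle}=\TT(L)$ only says that the $D$-orbit of $\rho$ generates a Zariski-dense subgroup; it does not imply $\overline{D\rho}=\TT(L)$, and you offer no mechanism for the upgrade. You yourself flag both points as ``delicate'' or ``worth spelling out,'' which is accurate: they are exactly where the real content of the theorem lies, and the sketch does not supply it.
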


Now note that the discrete group $D=\SL_n (\Z)$ is an arithmetic 
subgroup of the linear algebraic group $S=\SL_n (\C)$.  Moreover, 
$S$ is a simple group defined over the rationals, and 
the $\Q$-rank of $S$ equals $n-1$.  Thus, the pair 
$(S,D)$ satisfies all requirements needed in the 
above theorem.

\begin{theorem}
\label{thm:a}
For $n\ge 4$, the following hold.
\begin{enumerate}
\item \label{tai1}
The first characteristic variety $\VV (\OA_n)$ is finite.
\item \label{tai2}
The Alexander polynomial of $\OA_n$ is a non-zero constant, modulo units.
\item \label{tai3}
If $N$ is a subgroup of $\OA_n$ containing $\OA'_n$, then $b_1(N)<\infty$.
\end{enumerate}
\end{theorem}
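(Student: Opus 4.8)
The plan is to establish part~\eqref{tai1} first; parts~\eqref{tai2} and~\eqref{tai3} will then follow formally. Write $L=(\OA_n)_{\ab}$, which by the discussion in \S\ref{subsec:coho} is free abelian of positive rank, and recall that $\VV(\OA_n)$ is a Zariski closed subset of $\TT(L)=\TT(\OA_n)$ because $\OA_n$ is finitely generated. From the bottom row of diagram~\eqref{eq:aut fn}, the conjugation action of $\Out(F_n)$ on its normal subgroup $\OA_n$ induces an action on $L$; since inner automorphisms of $\OA_n$ act trivially on $L$, this descends to an action of $\GL_n(\Z)=\Out(F_n)/\OA_n$ on $L$, hence on $\TT(L)$, under which $\VV(\OA_n)$ is invariant (characteristic varieties being invariant under automorphisms of the group). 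Restricting to the arithmetic subgroup $D=\SL_n(\Z)$ of $S=\SL_n(\C)$, and using Pettet's result that $L\otimes\C$ is an irreducible rational $\GL_n(\C)$-representation (hence also an irreducible $\SL_n(\C)$-representation), Theorem~\ref{thm:B} applies and shows that the $D$-invariant Zariski closed set $\VV(\OA_n)$ is either finite or all of $\TT(L)$.

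To discard the second possibility I would use the vanishing of resonance: by Theorem~\ref{thm:res oan}, $\RR(\OA_n)=\{0\}$ for $n\ge 4$, so Corollary~\ref{cor:zerores}\eqref{rz1} shows that the identity $1$ is either absent from $\VV(\OA_n)$ or an isolated point of it. But $\TT(L)$ is a connected torus of positive dimension, in which no point is isolated, so $\VV(\OA_n)\ne\TT(L)$; hence $\VV(\OA_n)$ is finite, proving~\eqref{tai1}.

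Part~\eqref{tai2} is then immediate: $\OA_n$ is finitely generated by Magnus' theorem, and $b_1(\OA_n)=\rank L\ge 2$ for $n\ge 4$, so Corollary~\ref{cor:delta const} gives that $\Delta_{\OA_n}$ equals, up to units, a non-zero constant. For part~\eqref{tai3}, let $N$ be a subgroup with $\OA_n'\subseteq N\subseteq\OA_n$. Since $N$ contains the derived subgroup it is normal in $\OA_n$, and $A:=\OA_n/N$ is a finitely generated abelian group. Pick a classifying space $X=K(\OA_n,1)$ with finite $1$-skeleton, which is possible because $\OA_n$ is finitely generated, and let $X^{\nu}$ be the Galois cover corresponding to the quotient $\nu\colon\OA_n\surj A$, so that $\pi_1(X^{\nu})=N$ and therefore $\dim_{\C}H_1(X^{\nu},\C)=\dim_{\C}(N_{\ab}\otimes\C)=b_1(N)$. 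Now Theorem~\ref{thm:dfext}, applied with $q=1$, gives that $b_1(N)<\infty$ if and only if $\im(\nu^*)\cap\VV(\OA_n)$ is finite; the latter holds since $\VV(\OA_n)$ is finite by part~\eqref{tai1}, and we are done.

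I expect the only genuinely delicate point to be the bookkeeping in part~\eqref{tai1}: one must verify carefully that the $\SL_n(\Z)$-action leaving $\VV(\OA_n)$ invariant is exactly the one on $L$ whose complexification is the irreducible module appearing in Pettet's work, so that the hypotheses of Theorem~\ref{thm:B} are genuinely satisfied. Once that identification is in place, and with Theorems~\ref{thm:res oan} and~\ref{thm:B} taken as given, everything else is a routine assembly of results already established in the paper.
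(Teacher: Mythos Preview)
Your proof is correct and follows the paper's argument closely for parts~\eqref{tai1} and~\eqref{tai2}; the only cosmetic difference in~\eqref{tai1} is that you invoke Corollary~\ref{cor:zerores}\eqref{rz1} to rule out $\VV(\OA_n)=\TT(L)$, whereas the paper appeals directly to the tangent cone inclusion of Theorem~\ref{thm:tcone}\eqref{t1}---these are equivalent.

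For part~\eqref{tai3} your route diverges from the paper's. The paper argues via the extension $1\to\OA_n'\to N\to N/\OA_n'\to 1$: it first deduces $b_1(\OA_n')<\infty$ from part~\eqref{tai1} and Corollary~\ref{cor:btest}\eqref{bt2}, then uses the Hochschild--Serre spectral sequence to bound $b_1(N)$. Your approach instead appeals directly to the Dwyer--Fried criterion (Theorem~\ref{thm:dfext}) applied to the abelian cover of a $K(\OA_n,1)$ corresponding to $\nu\colon\OA_n\surj\OA_n/N$. This is a cleaner packaging: it avoids the spectral sequence entirely and handles all such $N$ uniformly in one stroke, at the cost of invoking the slightly heavier Theorem~\ref{thm:dfext} rather than just Corollary~\ref{cor:btest}. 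Both arguments are short and standard; yours is arguably more in the spirit of the machinery developed in \S\ref{sect:hac}.
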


\begin{proof}
\eqref{tai1}
From Theorem \ref{thm:pettet}, we know that the abelian group 
$L=(\OA_n)_{\ab}$ is torsion-free (of finite rank). Note that the $D$-action 
on $L$ coming from $\Out (F_n)$-conjugation in the exact sequence 
\begin{equation}
\label{eq:oaout}
\xymatrix{1\ar[r] & \OA_n \ar[r] & \Out (F_n) \ar[r] & \GL_n (\Z) \ar[r] & 1}
\end{equation}
extends to an irreducible, rational $S$-representation in $L\otimes \C$. 

Consider the $D$-action on the character torus $\TT (\OA_n)= \Hom (L, \C^{\times})$,
associated to the above $D$-representation in $L$.
In view of \cite[Lemma 5.1]{DP}, the characteristic variety $\VV (\OA_n)$ 
is a $D$-invariant, Zariski closed subset of $\TT (\OA_n)$. 
Furthermore, by Theorem \ref{thm:B}, the affine torus 
$\TT (\OA_n)$ is geometrically $D$-irreducible.  

Now suppose $\VV (\OA_n)$ is infinite.  The above two facts 
would then force $\VV (\OA_n)= \TT (\OA_n)$. 
Theorem \ref{thm:tcone}\eqref{t1} would then imply that 
$\RR (\OA_n)=H^1 (\OA_n, \C)$, 
contradicting Theorem \ref{thm:res oan}.

\eqref{tai2}
By the above, $\VV (\OA_n)$ is finite. Since $b_1(\OA_n)\ge 2$,  
the desired conclusion follows from Corollary \ref{cor:delta const}.

\eqref{tai3} Consider the exact sequence
\begin{equation}
\label{eq:oan}
\xymatrix{1\ar[r] & \OA_n' \ar[r] & N \ar[r] & N/\OA_n' \ar[r] & 1} .
\end{equation}
The quotient group $N/\OA_n'$ is a subgroup of $L=\OA_n/\OA_n'$; 
hence $b_1 (N/\OA_n')<\infty$.  Using Part \eqref{tai1} and 
Corollary \ref{cor:btest}\eqref{bt2}, we find that 
$b_1 (\OA_n')<\infty$. Finally, a standard application 
of the Hochschild-Serre spectral sequence for the 
extension \eqref{eq:oan} shows that $b_1(N)<\infty$.
\end{proof} 

\subsection{A nilpotence lemma}
\label{subsec:nilp2}
 
Let $R$ be the group ring of $\Z^n=(F_n)_{\ab}$, and let 
$B(F_n)=F_n'/F_n''$ be the Alexander invariant of $F_n$, 
viewed as an $R$-module, as in Example \ref{ex:bfree}.   
From Proposition \ref{prop:rlin} and the discussion preceding 
it, we know that the Torelli group $\IA_n=\T_{F_n}$ acts 
$R$-linearly on the module $B(F_n)$, via 
$\alpha\cdot \bar{x}=\overline{\alpha(x)}$.   

Restrict now this action to the derived subgroup of $\IA_n$, 
and consider the module of coinvariants, denoted $B(F_n)_{\IA_n'}$.  
By definition, this is the quotient of $B(F_n)$ by the $\Z$-submodule 
generated by all elements of the form $\bar{x} - \alpha\cdot \bar{x}$, 
with $x\in F_n'$ and $\alpha\in \IA_n'$. Since $\IA_n'$ acts $R$-linearly, 
this $\Z$-submodule is actually an $R$-submodule.  Thus, the 
canonical projection map,
\begin{equation}
\label{eq:linp}
\xymatrix{p\colon B(F_n) \ar@{->>}[r]& B(F_n)_{\IA_n'}}, 
\end{equation}
is an $R$-linear map.  The next lemma shows that the quotient module 
is (two-step) nilpotent, provided $n$ is sufficiently large.

\begin{lemma}
\label{lem:isq}
Let $I$ be the augmentation ideal of $R$.  
If $n\ge 5$, then 
\[
I^2\cdot B(F_n)_{\IA_n'} =0.
\]
\end{lemma}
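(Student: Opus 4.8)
The plan is to compute inside the $R$-module $\bar B:=B(F_n)_{\IA_n'}$, where $R=\Z{\Z^n}$ and $I\subseteq R$ is its augmentation ideal, and to prove that $(\bar x_p-1)(\bar x_q-1)\cdot \bar b_{kl}=0$ in $\bar B$ for every pair of indices $p,q$ and every generator $\bar b_{kl}$, where $b_{kl}$ denotes the class of $(x_k,x_l)$ in $B(F_n)=F_n'/F_n''$. By Example~\ref{ex:bfree}, $\bar B$ is generated as an $R$-module by the classes $\bar b_{kl}$ ($1\le k<l\le n$), and $I^2$ is generated as an ideal by the products $(\bar x_p-1)(\bar x_q-1)$; so the displayed vanishing gives $I^2\bar B=0$, which is the assertion. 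The one structural input is that, by Proposition~\ref{prop:rlin}, the Torelli group $\IA_n$ acts $R$-linearly on $B(F_n)$, hence on $\bar B$; since $\IA_n'$ acts trivially on the coinvariants $\bar B$, this action factors through the abelian group $(\IA_n)_{\ab}$, and therefore the $R$-linear operators induced on $\bar B$ by any two elements of $\IA_n$ commute.

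The heart of the argument is a commutator calculation with the Magnus generators. Fix indices $i,j,b,k,l$ with $i\ne j$, $i\ne b$, and $i,k,l$ pairwise distinct, and consider $\alpha_{ij}\colon x_i\mapsto x_jx_ix_j^{-1}$ and $\alpha_{ikl}\colon x_i\mapsto x_i(x_k,x_l)$, both in $\IA_n$. Using the Witt--Hall identity \eqref{eq:bilg}, the fact that conjugation by an element of $F_n'$ is trivial on $F_n'/F_n''$, and the $R$-action formula \eqref{eq:ai action} (which yields $\overline{(u,x_c)}=(1-\bar x_c)\bar u$ for $u\in F_n'$), I would first establish the identities
\[
\alpha_{ij}\cdot b_{ic}=b_{ic}+(\bar x_c-1)\,b_{ij},\qquad
\alpha_{ij}\cdot b_{kl}=b_{kl},\qquad
\alpha_{ikl}\cdot b_{ic}=b_{ic}-\bar x_i(\bar x_c-1)\,b_{kl}
\]
in $B(F_n)$, valid for all $c\ne i$. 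Applying $\alpha_{ij}$ and $\alpha_{ikl}$ to $b_{ib}$ in the two possible orders and using $R$-linearity, every term cancels except one, so the two results differ by $\bar x_i(\bar x_b-1)(\bar x_j-1)\,b_{kl}$. Passing to $\bar B$, where the two induced operators commute, this difference vanishes, i.e. $\bar x_i(\bar x_b-1)(\bar x_j-1)\cdot \bar b_{kl}=0$. Since $\bar x_i$ is a unit in $R$ and hence acts invertibly on $\bar B$, we conclude $(\bar x_b-1)(\bar x_j-1)\cdot \bar b_{kl}=0$.

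Finally I would bring in the hypothesis $n\ge 5$. Given arbitrary $p,q$ and a generator $\bar b_{kl}$ (so $k\ne l$), the set $\{p,q,k,l\}$ has at most four elements, so for $n\ge5$ there is an index $i\notin\{p,q,k,l\}$; taking $j=p$ and $b=q$ in the identity above (the calculation never used $j\ne b$) gives $(\bar x_p-1)(\bar x_q-1)\cdot\bar b_{kl}=0$, and hence $I^2\cdot B(F_n)_{\IA_n'}=0$. I expect the only delicate part to be the Witt--Hall bookkeeping leading to the three displayed formulas --- in particular keeping track of the sign $b_{qp}=-b_{pq}$ and of the conjugation ($R$-module) action --- together with the conceptual step of recognizing that the useful relation comes from commuting these two specific Magnus generators modulo $\IA_n'$, which produces a single-term relation once one divides out the unit $\bar x_i$; the case $n=4$ with $p,q,k,l$ distinct shows the hypothesis $n\ge 5$ is genuinely needed.
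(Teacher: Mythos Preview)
Your argument is correct, and your formulas for the action of the Magnus generators on $B(F_n)$ check out (including the sign in $\alpha_{ikl}\cdot b_{ic}=b_{ic}-\bar x_i(\bar x_c-1)b_{kl}$); the commutator computation indeed yields the single term $\bar x_i(\bar x_b-1)(\bar x_j-1)b_{kl}$, and the counting argument at the end uses $n\ge 5$ exactly as needed.

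The paper takes a different and somewhat shorter route. Rather than commuting two Magnus generators, it writes down directly an element of $\IA_n'$: given $i,j,k,l$ and an auxiliary index $m\notin\{i,j,k,l\}$, it sets $v=(x_l,(x_i,x_j))\in\Gamma^3(F_n)$ and defines $\alpha\colon x_m\mapsto v x_m$, $x_h\mapsto x_h$ for $h\ne m$. Since $v\in\Gamma^3$, this $\alpha$ lies in $J_n^2$, which equals $\IA_n'$ by Corollary~\ref{cor:jprime}; then a single application of the Witt--Hall identity to $\alpha((x_m,x_k))$ shows that $\overline{(x_m,x_k)}-\alpha\cdot\overline{(x_m,x_k)}=(x_k-1)(x_l-1)\,\overline{(x_i,x_j)}$ in $B(F_n)$, so this element dies in the coinvariants. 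Your approach trades this for more bookkeeping but has a conceptual advantage: you never invoke the identification $J_n^2=\IA_n'$, only the trivial fact that group commutators lie in $\IA_n'$, so your proof is self-contained and does not depend on Theorem~\ref{thm:pettet}. The paper's approach, by contrast, produces the needed element of $\IA_n'$ in one stroke (once that identification is available) and avoids having to verify that the $\IA_n$-action descends to $\bar B$.
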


\begin{proof}
Let $x_1,\dots, x_n$ be free generators for the group $F_n$. 
As is well-known, $I$ is generated (as an $R$-module) 
by all elements of the form $x_k-1$; thus, $I^2$ 
is generated by all elements of the form $(x_k-1)(x_l-1)$. 
In view of the above remarks, it is enough to prove that
\begin{equation}
\label{eq:final}
(x_k-1)(x_l-1)\cdot p \big(\overline{(x_i, x_j)}\big)=0,
\quad \text{for all $i,j,k,l\in [n]$,}
\end{equation}
where the bar denotes the class modulo the second derived subgroup.

Since $n\ge 5$, there is an index $m\in [n]\setminus \{ i,j,k,l \}$. 
Set $v:= (x_l, (x_i, x_j))\in \G^3(F_n)$, and
consider the automorphism $\alpha\colon F_n\to F_n$ 
sending $x_h$ to $x_h$ for $h\ne m$, and $x_m$ to
$vx_m$. By definition \eqref{eq:jfilt}, $\alpha$ belongs 
to $J_n^2= \IA_n'$. 
Using \eqref{eq:bilg}, we obtain:
\[
\alpha((x_m, x_k))= (vx_m, x_k) = 
{}^v(x_m, x_k) \cdot (v, x_k)\equiv (x_m, x_k) 
\cdot (v, x_k)\  \bmod F_n''.
\]
Therefore, 
$\overline{(x_k, v)}=\overline{(x_m, x_k)}-\alpha\cdot \overline{(x_m, x_k)}$, 
and thus $p\big(\overline{(x_k, v)}\big)=0$. 
On the other hand, 
\[
\overline{(x_k, v)}= (x_k-1)\cdot \overline{(x_l, (x_i, x_j))}= 
(x_k-1)(x_l-1)\cdot  \overline{(x_i, x_j)}. 
\]
Since $p$ is $R$-linear, this computation completes the proof. 
\end{proof}

\subsection{The first Betti number of $\IA_n'$}
\label{subsec:nr}
We are now in a position to state and prove the main result of 
this section.  In view of Corollary \ref{cor:jprime}\eqref{jo1}, 
this result settles in the negative a conjecture formulated in \cite{CHP}.

\begin{theorem}
\label{thm:fhp}
If $n\ge 5$, then $\dim_{\C} H_1 (\IA_n', \C)<\infty$. 
\end{theorem}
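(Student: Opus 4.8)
The plan is to exploit the extension $1\to F_n'\to \IA_n'\to \OA_n'\to 1$ furnished by Corollary \ref{cor:jprime}\eqref{jo2}, feeding it into the Hochschild-Serre five-term exact sequence in homology with $\C$-coefficients. This produces an exact sequence
\begin{equation*}
H_1(F_n',\C)_{\OA_n'}\longrightarrow H_1(\IA_n',\C)\longrightarrow H_1(\OA_n',\C)\longrightarrow 0,
\end{equation*}
where the coinvariants are taken with respect to the conjugation action of $\OA_n'$ on $H_1(F_n',\C)$. By Theorem \ref{thm:a}\eqref{tai3}, applied to the subgroup $N=\OA_n'$ of $\OA_n$, the term $H_1(\OA_n',\C)$ is finite-dimensional. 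Since $H_1(\IA_n',\C)$ is then an extension of $H_1(\OA_n',\C)$ by a quotient of $H_1(F_n',\C)_{\OA_n'}$, the whole statement reduces to proving that this last coinvariant space is finite-dimensional over $\C$.

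Next I would reinterpret this coinvariant space algebraically. We have $H_1(F_n',\C)=B_{\C}(F_n)$, and, chasing \eqref{eq:adequiv}, the conjugation action appearing in the extension is $\bar\alpha\cdot\bar x=\overline{\alpha(x)}$ for $\alpha\in\IA_n'$ and $x\in F_n'$, which is exactly the restriction of the $\IA_n$-action on $B(F_n)$ from \S\ref{subsec:tg bg}. Since $F_n'\le\IA_n'$ enters via $\Ad$ and $(g,x)\in F_n''$ whenever $g,x\in F_n'$, the subgroup $F_n'$ acts trivially on $B(F_n)=F_n'/F_n''$; hence this $\IA_n'$-action factors through $\OA_n'$ and $H_1(F_n',\C)_{\OA_n'}=B_{\C}(F_n)_{\IA_n'}$. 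Moreover, by Proposition \ref{prop:rlin}, $\IA_n'$ acts by $\C\Z^n$-linear transformations on $B_{\C}(F_n)$, so the subspace spanned by the elements $\bar x-\alpha\cdot\bar x$ is a $\C\Z^n$-submodule and $B_{\C}(F_n)_{\IA_n'}$ inherits the structure of a module over the group algebra $\C\Z^n$.

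Finally I would conclude from the structure of this module. Since $F_n$ is finitely generated, $B_{\C}(F_n)$ is a finitely generated $\C\Z^n$-module, and hence so is its quotient $B_{\C}(F_n)_{\IA_n'}$. By Lemma \ref{lem:isq}, tensored with $\C$ --- and this is where the hypothesis $n\ge 5$ enters --- we have $I^2\cdot B_{\C}(F_n)_{\IA_n'}=0$, where $I\subseteq\C\Z^n$ is the augmentation ideal. Thus $B_{\C}(F_n)_{\IA_n'}$ is a finitely generated module over the finite-dimensional $\C$-algebra $\C\Z^n/I^2$, and is therefore a finite-dimensional $\C$-vector space. Combined with the reduction in the first paragraph, this yields $\dim_{\C}H_1(\IA_n',\C)<\infty$. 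Within this argument the only point requiring care is the compatibility, noted above, between the coinvariants produced by the Hochschild-Serre sequence and the module-theoretic coinvariants of the Alexander invariant; the genuinely substantive ingredients --- Theorem \ref{thm:a} on the finiteness of $\VV(\OA_n)$ and the nilpotence Lemma \ref{lem:isq} --- are already established, so no serious obstacle remains beyond this bookkeeping.
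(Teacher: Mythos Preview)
Your proof is correct and follows essentially the same route as the paper: the five-term Hochschild--Serre sequence for the extension $1\to F_n'\to \IA_n'\to \OA_n'\to 1$, finiteness of $H_1(\OA_n',\C)$ via Theorem~\ref{thm:a}\eqref{tai3}, and finiteness of the coinvariants $B_\C(F_n)_{\IA_n'}$ via Lemma~\ref{lem:isq} together with the $\C\Z^n$-module structure coming from Proposition~\ref{prop:rlin}. Your explicit check that the $\OA_n'$-coinvariants coincide with the $\IA_n'$-coinvariants (since $F_n'$ acts trivially on its own abelianization) is a useful clarification that the paper leaves implicit.
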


\begin{proof}
By Corollary \ref{cor:jprime}, we have an exact sequence 
\begin{equation}
\label{eq:iaseq again}
\xymatrix{1\ar[r] & F_n' \ar^(.43){\Ad}[r] 
& \IA_n' \ar[r] & \OA_n' \ar[r] & 1}.
\end{equation}
Furthermore, in view of the discussion from \S\ref{subsec:tg bg}, 
the conjugation action of $\OA_n'$ on $B_{\C}(F_n)=H_1(F_n', \C)$ 
coming from this group extension is given by $\bar{\alpha}\cdot z= 
\alpha_*(z)$, for all $\alpha\in \IA_n' $ and $z\in H_1(F_n', \C)$. 

As is well-known (see e.g.~\cite[p.~202]{HS}), 
extension \eqref{eq:iaseq again} gives rise to an exact sequence
\begin{equation}
\label{eq:h1seq}
\xymatrix{H_1(F_n', \C)_{\IA_n'} \ar[r] & H_1(\IA_n', \C) \ar[r] 
& H_1(\OA_n', \C)\ar[r] & 0}, 
\end{equation}
where, as before, the co-invariants $H_1(F_n', \C)_{\IA_n'}$ 
are taken with respect to the natural action of $\IA_n'$ on 
$H_1(F_n', \C)$.  Of course, the $\C$-vector space 
$H_1(F_n', \C)$ is infinite dimensional; nevertheless, 
the co-invariants under the $\IA_n'$-action 
form a finite-dimensional quotient space, i.e., 
\begin{equation}
\label{eq:b1bfn}
\dim_{\C} H_1(F_n', \C)_{\IA_n'}<\infty.
\end{equation} 

Indeed, it follows from \eqref{eq:linp} that 
$H_1(F_n', \C)_{\IA_n'}$ is a finitely generated 
module over $\C{\Z^n}$.  By Lemma \ref{lem:isq}, 
this module is nilpotent; hence, 
it must be finite-dimensional over $\C$, by
standard commutative algebra.

On the other hand, Theorem \ref{thm:a}\eqref{tai3} gives that 
$\dim_{\C} H_1(\OA_n', \C)<\infty$.  Putting this fact together 
with \eqref{eq:h1seq} and \eqref{eq:b1bfn} finishes the proof.
\end{proof}

This theorem yields further information on the group of 
$\IA$-automorphisms of $F_n$. 

\begin{theorem}
\label{thm:b}
For $n\ge 5$, the following hold.
\begin{enumerate}
\item \label{tbi1}
The characteristic variety $\VV (\IA_n)$ is finite.
\item \label{tbi2}
The Alexander polynomial of $\IA_n$ is a non-zero constant, modulo units.
\item \label{tbi3}
For every subgroup $N$ of $\IA_n$ containing $\IA'_n$, $b_1(N)<\infty$.
\end{enumerate}
\end{theorem}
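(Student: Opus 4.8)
The plan is to derive all three assertions as formal consequences of Theorem~\ref{thm:fhp}, following verbatim the template of Theorem~\ref{thm:a}. For part~\eqref{tbi1}, recall that $\IA_n$ is finitely generated (Magnus, \S\ref{subsec:fn}), so the machinery of Section~\ref{sect:hac} applies with $\k=\C$. By construction the complexified Alexander invariant is $B_{\C}(\IA_n)=H_1(\IA_n',\C)$, which Theorem~\ref{thm:fhp} asserts is finite-dimensional when $n\ge 5$; Corollary~\ref{cor:btest}\eqref{bt2} then yields at once that $\VV(\IA_n)=\VV^1_1(\IA_n,\C)$ is a finite set. (Via Corollary~\ref{cor:jprime}\eqref{jo1}, $\IA_n'=J_n^2$, so this is precisely the statement $\dim_{\C}H_1(J_n^2,\C)<\infty$ that settles Conjecture~\ref{conj:chp} in the negative.)

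For part~\eqref{tbi2}, I would first note that by Theorem~\ref{thm:pettet} the abelianization $(\IA_n)_{\ab}\cong H^*\otimes(H\wedge H)$ is free of rank $n\binom{n}{2}\ge 2$; combining $b_1(\IA_n)\ge 2$ with the finiteness of $\VV(\IA_n)$ from part~\eqref{tbi1}, Corollary~\ref{cor:delta const} gives that $\Delta_{\IA_n}$ is, up to units, a non-zero constant. For part~\eqref{tbi3}, I would mimic Theorem~\ref{thm:a}\eqref{tai3}: given $\IA_n'\le N\le \IA_n$, consider the extension
\[
1\longrightarrow \IA_n' \longrightarrow N \longrightarrow N/\IA_n' \longrightarrow 1 .
\]
Here $N/\IA_n'$ embeds in the finite-rank free abelian group $(\IA_n)_{\ab}$, so $b_1(N/\IA_n')<\infty$, while $b_1(\IA_n')<\infty$ by Theorem~\ref{thm:fhp}; the five-term exact sequence in $\C$-homology attached to this extension (the standard Hochschild--Serre consequence, cf.~\cite[p.~202]{HS}) then exhibits $H_1(N,\C)$ as an extension of the finite-dimensional space $H_1(N/\IA_n',\C)$ by a quotient of the finite-dimensional space $H_1(\IA_n',\C)_{N/\IA_n'}$, whence $b_1(N)<\infty$.

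No serious obstacle remains at this stage: the genuine content has already been carried out --- the $\sl_n(\C)$-representation-theoretic vanishing $\RR(\OA_n)=\{0\}$, the arithmetic-group geometric-irreducibility argument giving that $\VV(\OA_n)$ is finite, the two-step nilpotence of $B(F_n)_{\IA_n'}$ established in Lemma~\ref{lem:isq}, and the assembly in Theorem~\ref{thm:fhp}. Theorem~\ref{thm:b} is essentially a bookkeeping exercise translating Theorem~\ref{thm:fhp} through the characteristic-variety and Alexander-polynomial formalism of Section~\ref{sect:hac}; the only points needing a moment's care are the finite-generation hypothesis required in Section~\ref{sect:hac}, the bound $b_1(\IA_n)\ge 2$, and reading off the correct tail of the five-term sequence.
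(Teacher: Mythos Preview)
Your proposal is correct and follows essentially the same approach as the paper: part~\eqref{tbi1} via Theorem~\ref{thm:fhp} and Corollary~\ref{cor:btest}\eqref{bt2}, and parts~\eqref{tbi2}--\eqref{tbi3} by transcribing the arguments of Theorem~\ref{thm:a}\eqref{tai2}--\eqref{tai3} to $\IA_n$. The paper's own proof is in fact briefer than yours, simply pointing to these ingredients without spelling out the details you supply (finite generation, $b_1(\IA_n)\ge 2$, the five-term sequence).
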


\begin{proof}
The first assertion follows from Corollary \ref{cor:btest}\eqref{bt2} and 
Theorem \ref{thm:fhp}.  
The other two assertions follow by the same argument as in the 
proof of Theorem \ref{thm:a}, parts \eqref{tai2} and \eqref{tai3}.
\end{proof}

We conclude with some questions raised by 
Theorems \ref{thm:a} and \ref{thm:b}.

\begin{question}
\label{q:oan}
Let $G=\OA_n$ ($n\ge 4$), or $G=\IA_n$ ($n\ge 5$).
\begin{enumerate}
\item \label{qo1} Is $G$ $1$-formal?
\item \label{qo2} Is $\VV(G)=\{1\}$? 
\item \label{qo3} What is $b_1(G')$?
\end{enumerate}
\end{question}

Suppose the answer to questions \eqref{qo1} and \eqref{qo2} 
is yes. Then, by Theorem \ref{thm:cv1}\eqref{v3}, one would be able 
to express the Betti number from question \eqref{qo3} as 
$b_1(G')=\dim_{\C} \B(G)$, which in principle is much easier 
to compute.

\begin{ack}
Much of this work was done at the Centro di Ricerca Matematica 
Ennio De Giorgi in Pisa, in May-June 2010. The authors wish to 
thank the organizers of the Intensive Research Period on 
Configuration Spaces: Geometry, Combinatorics and Topology 
for their warm hospitality, and for providing an excellent 
mathematical environment. We also wish to thank the referee 
for an inspiring and thorough report. 
\end{ack}

\newcommand{\arxiv}[1]
{\texttt{\href{http://arxiv.org/abs/#1}{arxiv:#1}}}
\renewcommand{\MR}[1]
{\href{http://www.ams.org/mathscinet-getitem?mr=#1}{MR#1}}
\newcommand{\doi}[1]
{\texttt{\href{http://dx.doi.org/#1}{doi:#1}}}

\end{document}